\def\g{{\gamma}}
\def\u{{\omega}}
\def\A{{\mathcal A}}
\def\B{{\mathcal B}}
\def\C{{\mathcal C}}
\def\D{{\mathcal D}}
\def\F{{\mathcal F}}
\def\G{{\mathcal G}}
\def\H{{\mathcal H}}
\def\I{{\mathcal I}}
\def\L{{\mathcal L}}
\def\P{{\mathcal P}}
\def\R{{\mathcal R}}
\def\S{{\mathcal S}}
\def\T{{\mathcal T}}
\def\U{{\mathcal U}}
\def\V{{\mathcal V}}
\def\TR{{\mathbf{T}_{\mathcal{R}}}}
\def\DD{{\mathbb D}}
\def\ZZ{{\mathbb Z}}
\def\dddots{\mathord{\rlap{$\cdots$}\dotsc}}
\newcommand{\eps}{\varepsilon}
\newcommand{\EE}{{\mathsf E}} %Expected value
\newcommand{\Paths}{{\mathbf P}}
\DeclareMathOperator*{\Clos}{clos}
\DeclareMathOperator*{\Int}{int}
\DeclareMathOperator*{\dist}{dist}
\newcommand{\norm}[1]{\lVert#1\rVert}
\def\dd{ {\partial}}
\newtheorem{thm}{Theorem}[section]
\newtheorem{introthm}{Theorem}[section]
\newtheorem{lemma}[thm]{Lemma}
\newtheorem{cor}[thm]{Corollary}
\newtheorem{prop}[thm]{Proposition}
\newtheorem{claim}[thm]{Claim}
\newtheorem{asm}[thm]{Assumption}
\theoremstyle{definition}
\newtheorem{definition}[thm]{Definition}
\newtheorem{remark}[thm]{Remark}
\title[Convergence of spherical averages for actions of Fuchsian Groups]{Convergence of spherical averages\\for actions of Fuchsian Groups}
\author{Alexander I. Bufetov}
\address{\begin{flushleft} Alexander I. Bufetov\\
\upshape Aix--Marseille Universit\'e, CNRS, Centrale Marseille, I2M, UMR 7373,\\
39 rue F. Joliot Curie Marseille France;\\
Steklov Mathematical Institute of Russian Academy of Sciences,\\Gubkina str. 8,  119991, Moscow, Russia\\
\texttt{bufetov@mi-ras.ru}\\
  \end{flushleft}}
\author{Alexey Klimenko}
\address{\begin{flushleft} Alexey Klimenko\\
\upshape Steklov Mathematical Institute of Russian Academy of Sciences,\\Gubkina str. 8,  119991, Moscow, Russia;\\
and National Research University Higher School of Economics,\\ Usacheva str. 6, 119048,  Moscow, Russia \\
\texttt{klimenko@mi-ras.ru} \end{flushleft}}
\author{Caroline Series}
\address{\begin{flushleft}Caroline Series\\
\upshape Mathematics Institute, University of Warwick \\
Coventry CV4 7AL, UK\\
\texttt{C.M.Series@warwick.ac.uk \\http://www.maths.warwick.ac.uk/\~{}masbb/} \end{flushleft}}
\date{\today}
\begin{document}

 \begin{abstract}
 We prove pointwise convergence of spherical averages for a measure-preser\-ving action of a Fuchsian group. The proof is based on a new variant of the Bowen--Series symbolic coding for Fuchsian groups that, developing a method introduced by Wroten, simultaneously encodes all possible shortest paths representing a given group element. The resulting coding is self-inverse, giving a reversible Markov chain to which methods previously introduced by the first author for the case of free groups may be applied.   \\
 {\bf MSC classification: 20H10, 22D40, 37A30}
  \\
 {\bf Keywords: Ergodic theorem, pointwise convergence, Fuchsian group, spherical averages}
\end{abstract}

\maketitle

\tableofcontents

\section{Introduction}

\subsection{Formulation of the main result}Let  $G$ be a finitely generated group with a symmetric set of generators $G_0$.
For $g\in G$, denote by $|g|$ the length of the shortest
word in $G_0$ representing $g$.
Let $S(n)$ be be the sphere of radius $n$ in $G$:
\begin{equation*}
 S(n)=\{g\in G: |g|=n\}.
\end{equation*}
Suppose that $G$ acts on a probability space $(X, \mu)$ by measure-preserving
transformations $T_g$, $g\in G$.
For a function $f\in L^1(X,\mu)$ consider spherical averages
\begin{equation}\label{eq:sph-avg}
\mathbf{S}_n(f) = \frac1{\#S(n)} \sum\limits_{g\in S(n)} f\circ T_g.
\end{equation}

The main result of this paper, Theorem \ref{thm:main} below, gives the almost sure convergence of spherical averages for measure-preserving actions of Fuchsian groups and for $f \in L \log L(X,\mu)$, that is, whenever $\int |f | \log^+ | f |d \mu < \infty$.

Let $G$ be a Fuchsian group and let $\R$ be a fundamental domain for $G$. Assume that the sides of $\R$ are paired by a set of elements $G_0 \subset G$. As is well known, $G_0$ is a symmetric set of generators for $G$.   The images of $\R$ under the action of $G$ induce a tessellation $\TR=\{g\R:g\in G\}$ of the hyperbolic disk $\DD$.  Following~\cite{BowSer}, we say that $\R$ has \emph{even corners}
if the geodesic extension of every side of $\R$ is entirely contained in $\TR$, more precisely in the  union of boundaries of all domains $g\R\in\TR$.

 Let $v\in\DD$ be a vertex of $\TR$. If $\R$ has even corners, then the boundary of $\TR$ in a small neighbourhood of $v$ consists of $n$ geodesic segments intersecting at $v$ and dividing our neighbourhood into $2n$ sectors. Write $n=n(v)$ and let $N(\R)$ denote the number of sides of $\R$ inside~$\DD$.  We need the following assumption on $\R$.

\begin{asm}\label{asm:R}\quad
\begin{enumerate}[itemsep=0pt,label={(\roman*)}]
	\item $\R$ has even corners,
	\item\label{cond:asm-R-ii}One of the following conditions holds for $\R$:
	\begin{itemize}
		\item $N(\R)\ge5$,
		\item $N(\R)=4$ and either $\R$ is non-compact or $\R$ is compact and does not have two opposite vertices $v,v'$ such that $n(v)=n(v')=2$,
		\item $N(\R)=3$ and $\R$ is non-compact.
	\end{itemize}
\end{enumerate}
\end{asm}

Our main result is the following:

\begin{introthm}\label{thm:main}
Let $G$ be a non-elementary Fuchsian group $G$  and let $\R$ be its fundamental domain with side-pairing transformations $G_0$ and  satisfying Assumption~\ref{asm:R}.
Let $G$ act on a Lebesgue  probability space $(X,\mu)$ by measure-preserving transformations. 
Denote by $\I_{G_0^2}$ the sigma-algebra of sets invariant under all maps $T_{g_1g_2}$, $g_1,g_2\in G_0$.
Then, for any function $f\in L\log L(X,\mu)$, as $n\to\infty$, we have
\begin{equation*}
\mathbf{S}_{2n}(f)\to\EE(f|\I_{G_0^2})\quad\text{almost surely and in $L^1$}.
\end{equation*}
\end{introthm}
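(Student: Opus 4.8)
The plan is to reduce the almost-sure convergence of $\mathbf{S}_{2n}$ to the convergence of the even powers of a single self-adjoint Markov operator, following the method introduced by the first author for free groups; the new input is the self-inverse Bowen--Series coding constructed, under Assumption~\ref{asm:R}, in the preceding part of the paper. Using that coding, one represents geodesic words in $G_0$ as admissible paths in a finite directed labelled graph, the key feature being that reversing a path and inverting each of its labels again produces an admissible path (sending $g$ to $g^{-1}$). Equip the associated topological Markov chain with its stationary measure $\nu$; self-inverseness makes $\nu$ \emph{reversible}. On $L^1(X\times E,\mu\times\nu)$, with $E$ the edge set, put
\[
(\mathsf{P}h)(x,e)=\sum_{e'\colon e\to e'}p(e,e')\,h\bigl(T_{a(e')}x,\,e'\bigr),
\]
$p$ being the transition probabilities and $a(e')\in G_0$ the label of $e'$. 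Then $\mathsf{P}$ is a positive $(\mu\times\nu)$-preserving contraction with $\mathsf{P}\mathbf 1=\mathbf 1$, and reversibility of $\nu$ together with $T_a^{-1}=T_{a^{-1}}$ and the label-inversion symmetry of the coding give $\mathsf{P}=\mathsf{P}^{*}$ on $L^2$ --- this self-adjointness, unavailable for a one-sided coding, is exactly what the self-inverse construction buys.

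Next I would express spherical sums through $\mathsf{P}$. Let $\iota\colon L^1(X)\to L^1(X\times E)$ be the constant lift $\iota f(x,e)=f(x)$ and $\pi h(x)=\int h(x,e)\,d\nu(e)$ the fibrewise average, so that $\pi$ is the $L^2$-adjoint of $\iota$ and $\pi\iota=\mathrm{id}$. The combinatorial heart of the matter is that $\pi\mathsf{P}^{n}\iota f$ computes, up to finitely many corrections coming from the initial states, the path-weighted sphere sums of $f$; these must then be reconciled with the true averages $\mathbf{S}_{n}(f)$, the subtlety being that the coding records \emph{all} shortest representatives of each $g\in S(n)$, so the path weights and the geodesic multiplicities have to be controlled. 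This is precisely where Assumption~\ref{asm:R} enters --- to exclude the degenerate fundamental domains for which this bookkeeping fails --- and the target is
\[
\mathbf{S}_{2n}(f)-\pi\mathsf{P}^{2n}\iota f\longrightarrow 0\qquad\text{a.s.\ and in }L^1,
\]
obtained by comparing the two averaging schemes using equidistribution of the Markov chain and the Ces\`aro-level ergodic theorem.

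Since $\mathsf{P}=\mathsf{P}^{*}$ has spectrum in $[-1,1]$, the spectral theorem gives $\mathsf{P}^{2n}h\to Qh$ in $L^2$, where $Q$ is the spectral projection onto $\ker(\mathsf{P}^{2}-I)=\ker(\mathsf{P}-I)\oplus\ker(\mathsf{P}+I)$ (passing to the subsequence $2n$ is forced here: a genuine $(-1)$-eigenvalue makes $\mathsf{P}^n$ oscillate). For almost everywhere convergence I would invoke Rota's theorem on iterated conditional expectations: a self-adjoint doubly stochastic operator dilates to a reverse martingale, so $\mathsf{P}^{2n}h$ is realized as $\EE(F\mid\mathcal B_n)$ along a decreasing sequence of $\sigma$-algebras, convergent a.e.\ for $h\in L^2$ by Doob; to reach $f\in L\log L$ rather than $L^p$, $p>1$, one needs the maximal estimate $\bigl\|\sup_n|\mathsf{P}^{2n}h|\bigr\|_1\lesssim 1+\int|h|\log^{+}|h|$, which follows, as in the free-group case, from the Burkholder--Davis--Gundy inequality for the dilating martingale and the $L\log L$ martingale maximal bound, plus a Banach-principle approximation. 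It remains to see that $Q$ descends to $\EE(\cdot\mid\I_{G_0^2})$: if $h$ is $\I_{G_0^2}$-measurable then $\iota h$ is $\mathsf{P}^{2}$-invariant, whereas conversely any $\mathsf{P}^{2}$-invariant element of $L^2(X\times E)$, projected along $\pi$, is invariant under every $T_{g_1g_2}$ with $g_1,g_2\in G_0$ --- using that two-step admissible paths realize all such products together with the strict-convexity argument for convex combinations of measure-preserving isometries --- so $\mathrm{range}(\pi Q)=L^2(X,\I_{G_0^2})$; for $f$ with $\EE(f\mid\I_{G_0^2})=0$ the identity $\langle\iota f,h\rangle=\langle f,\pi h\rangle=0$ for all $h\in\mathrm{range}\,Q$ forces $Q\iota f=0$, hence $\pi\mathsf{P}^{2n}\iota f\to 0$. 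Combined with the second paragraph, this yields $\mathbf{S}_{2n}(f)\to\EE(f\mid\I_{G_0^2})$ a.s.\ and in $L^1$.

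The step I expect to be the main obstacle is the combinatorial reduction of the second paragraph: constructing a self-inverse coding that faithfully records all shortest paths yet still produces the clean identity $\mathbf{S}_{2n}(f)\approx\pi\mathsf{P}^{2n}\iota f$, and in particular controlling the geodesic multiplicities so that the two averaging schemes genuinely agree in the limit --- this is what Assumption~\ref{asm:R} is tailored to make work, and it is the reason the various low-complexity cases ($N(\R)\le 4$, opposite order-two vertices) must be singled out. The sharp $L\log L$ (rather than $L^p$) maximal inequality is a secondary technical difficulty, handled along the lines of the free-group case.
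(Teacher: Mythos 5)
Your outline correctly anticipates the broad architecture — self-inverse coding, a Markov operator on $L^1(X\times E)$, Rota's ``Alternierende Verfahren'' plus an $L\log L$ maximal inequality, and identification of the limit with $\EE(\cdot\mid\I_{G_0^2})$. But the load-bearing claim $\mathsf P=\mathsf P^{*}$ is false, and this is not a repairable detail: it is the crux of the whole difficulty. The self-inverse coding produces a time-reversing \emph{involution} $\iota$ on the state space, not a fixed-point symmetry. The states themselves are permuted ($A_0(e)\leftrightarrow A_0(e^{-1})$, $A_L[i_-,i_+](e)\leftrightarrow A_R[i_+,i_-](e^{-1})$, $B\leftrightarrow D$, etc.), and what the Parry measure and the coding actually yield (Lemma~\ref{lem:adjoint}) is the weaker identity $P^{*}=UPU$ where $U$ implements $\iota$ (together with a twist by $\u$), with $U=U^{-1}=U^{*}$. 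Unless $\iota=\mathrm{id}$, which it is not, $P$ is not self-adjoint. Consequently your spectral-theorem step ($\mathsf P^{2n}\to Q$ for a spectral projection $Q$) and your direct application of Rota's theorem to $\mathsf P^{2n}=(\mathsf P^{*})^{n}\mathsf P^{n}$ both break. Rota's theorem gives control over $(R^{*})^{n}R^{n}$, and with $P\neq P^{*}$ there is a genuine gap between that expression and the powers $P^{2n}$ arising from the spherical sums. Bridging it is the whole content of the paper's Assumption~\ref{asm:ineq} (the pointwise inequality $WQ^{2n-a}\varphi\le C\sum_{j}(Q^{*})^{n}Q^{n+j}\varphi+A_n\varphi$) and of the geometric Lemma~\ref{lem:wye} that verifies it by ``cutting'' a thickened path of length $2N$ at a midpoint and comparing it to two paths sharing an initial segment, with an $O(\lambda^N)$ error set. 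None of this appears in your proposal, and without it there is no maximal inequality and no almost-sure convergence.

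A secondary but also serious gap: you treat the combinatorial identification of $\pi\mathsf P^{2n}\iota f$ with $\mathbf S_{2n}(f)$ as a ``bookkeeping'' issue about multiplicities of geodesic representatives, to be ``reconciled'' by equidistribution. In fact the paper avoids multiplicity problems altogether by coding \emph{thickened paths} (convex hulls of all shortest words for a given $g$), not individual geodesics, and the payoff is an exact bijection between thickened paths of length $n$ and admissible state sequences in $\Paths^{S\to F}_{n-1}$ (Theorem~\ref{thm:th-path-vs-sphere}), hence an exact formula $\widetilde{\mathbf S}_n(f)=\lambda^{n-1}\sum_{j\in\Xi_S}h_j(P^{n-1}U\varphi^{(f)})_j$ (Lemma~\ref{lem:sph-avg-PU}), not an approximate one. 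Setting this up is what the convexification machinery of Section~\ref{sec:thick-paths-struct} is for, and it is also where the exclusions in Assumption~\ref{asm:R} enter (to control which vertices admit convexification steps). Your proposal, by contrast, never introduces the thickened-path viewpoint at all, so it is missing the device that makes the exact operator formula possible in the first place.
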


The condition that $\R$ has even corners is not as restrictive as it  appears. In fact it is clear that our result only depends on the generators $G_0$ and the coding, and not on the precise geometry of $\R$. Thus Theorem~\ref{thm:main}  extends immediately to any presentation of a  Fuchsian group for which one can find deformed group  $G'$ which has a fundamental domain $\R'$ with the same  pattern of sides and side-pairings and even corners, see~\cite{BufSer, BowSer} and \cite{Ser-Trieste} for a detailed discussion.   The need to restrict to spheres of even radius can be seen by considering the action of the free group $F_2$ on the two-element set ${\{0,1\}}$ in which both generators of ${F_2}$ act by interchanging the elements, in which case the value of $\mathbf{S}_{n}(f)$ depends on the parity of $n$.

We note that the conditions of Assumption 1.1 are not quite identical with those in~\cite{Ser-Inf}, \cite{BirSer} and elsewhere, the main difference being  the weaker restriction if $N(\R) = 4$. In fact all results of those papers should apply under these somewhat weaker assumptions.

The Ces\`aro convergence of the averages $\mathbf{S}_{2n}(f)$ is proven in~\cite{BufSer} using the Bowen--Series Markovian coding \cite{BowSer}, see also \cite{BirSer, Ser-Inf, Ser-Trieste}, in order to reduce the statement to the ergodic theorem for Markov operators, cf. \cite{Buf-FAA,Buf-AMSTr}. The Bowen--Series coding allows one to assign states to group generators in a suitable  product representing an arbitrary group element as a shortest word in such a way that the admissible sequences of states form a Markov chain.  This gives rise to a Markov operator as described in~\cite{BufSer}.  However the proof in~\cite{Buf-Annals},  which establishes   convergence of the spherical averages themselves for free groups, does not extend in any obvious way. This is because  the argument of~\cite{Buf-Annals} relies on a symmetry condition for the coding, namely that the coding is  reversible or self-inverse, which allows one to relate the Markov operator generated by the coding to its adjoint.  The Bowen--Series coding of~\cite{BowSer} fails to be symmetric in this sense.

The main construction of this paper is a new self-inverse coding for Fuchsian groups, which allows us to adapt the proof in~\cite{Buf-Annals} to this new case.

This new coding is constructed using a variant of the coding introduced by Matthew Wroten~\cite{Wroten}, see also a related idea  in~\cite{FlPlot} and~\cite{Ser-Zeeman}.
Wroten's idea is  to encode all possible representations of a group element as a shortest word simultaneously. This involves assigning states to all possible ways of building up shortest words step by step. The set of states together with allowed transitions defines a Markov chain with the property that  the transition rules, that is the set of all admissible paths, can be inverted.  From this we construct an associated Markov operator  with the required symmetry condition on its adjoint, and then derive a suitably modified version of  the convergence theorem in~\cite{Buf-Annals}.

It would be interesting to obtain a similar coding for a more general hyperbolic groups. In particular, it is not clear to us how to invert paths in the classical Cannon--Gromov coding \cite{Cannon,Gromov}.

To explain the ideas in a bit more detail, let us briefly describe Wroten's approach in our setting. Every shortest word in the Fuchsian group $G$ corresponds to a shortest path in the Cayley graph of $G$ relative to the given generators $G_0$. This graph is embedded in~$\DD$ by sending $g \in G$ to $gO \in \DD$, where $O$ is some fixed base point in $\Int \R$. Vertices $gO, hO$ are joined by an edge if and only if $g^{-1}h \in G_0$.
If $\beta$ is a shortest path in the Cayley graph, we refer to the sequence of domains traversed by the edges of $\beta$ also as a shortest path.
If $g \in G$ then the \emph{thickened path} $[g]$  associated to $g$ is by definition the collection of all those $h\R, h \in G$  which are traversed by some shortest path  from $\R$ to $g\R$. Every domain $h\R\in [g]$ is endowed with an \emph{index}, which equals the distance in the Cayley graph from $\R$ to $h\R$. The set of all domains with index $k$ we will refer to as a \emph{level} of $[g]$ and denote by $[g]_k$.

Now the coding works as follows. 
We will define a space of states $\Xi = \{X_1, \ldots, X_k\}$ and a $\Xi \times \Xi$  transition matrix $\Pi=(\Pi_{ij})$ such that $\Pi_{ij} = 1$ if transition from $X_i$ to $X_j$ is possible and $\Pi_{ij} = 0$ otherwise. There is a subset $\Xi_S\subset \Xi$  of start states, and another subset $\Xi_F \subset \Xi$ of end states.

The states in $\Xi$ represent how $[g]_k$ and $[g]_{k+1}$ are attached to each other. It turns out that every $[g]_k$ contains at most two fundamental domains and  the domains from $[g]_{k+1}$ are glued to the ones from $[g]_k$ across  one, two or three sides, see Figure~\ref{fig:configs}. We endow this geometrical configuration with some additional data to obtain a Markov chain generating thickened paths; in particular, the data records the generators needed to carry out the gluing. Then we define a transition matrix $\Pi$ and subsets $\Xi_S$ and $\Xi_F$ of $\Xi$ and prove that thickened paths from $\R$ to $g\R$ with $|g|=n$ are in one-to-one correspondence with admissible sequences of length $n$  starting in $\Xi_S$ and ending in $\Xi_F$.  The required  reversiblity or self-inverse symmetry condition  follows since inverting a thickened path yields a thickened path and the coding preserves this symmetry.

In terms of the associated Markov operators, this symmetry property can be expressed as follows. We introduce two maps $\g,\u\colon \Xi\to G$, closely related to the attaching maps between $[g]_k$ and $[g]_{k+1}$, see Section~\ref{sec:sph-avg}.
These maps satisfy certain relations, see  Lemma~\ref{lem:g-and-u}. Following~\cite{Buf-Annals} we then construct Markov operators $P$ and $U$ on $L^1(X\times\Xi)$, which as a consequence of these relations satisfy   $P^*=UPU$ and $U^*=U^{-1}=U$. Then we can apply the Alternierende Verfahren method in a manner similar to~\cite{Buf-Annals}. 

For this application we need an inequality between $P^n$ and $(P^*)^kP^k$, which is the basis for the maximal inequality in the Alternierende Verfahren scheme. For free groups this inequality was $cUP^{2n-1}\varphi\le (P^*)^nP^n\varphi$ for any nonnegative $\varphi$, see~\cite{Buf-Annals}.
In the present case the inequality becomes more complicated, both  because the index  on the right hand side may vary slightly and also because there are a small number of possible sequences for which the required geometrical statements fail. To correct this,  terms on the right hand side of the inequality have to be summed over a small bounded interval of  indices near $n$,  and the inequality also contains an error term $A_n\varphi$, see~\eqref{eq:asm-ineq} in Section~\ref{sec:proof-main} below.  The proof of the geometrical statement associated to the proof of this inequality, Lemma~\ref{lem:wye}, is one of the most technically complicated parts of the paper.

A short announcement of the results of this paper with a more detailed outline of the coding can be found in~\cite{BKS-MRR}.

\subsection{Organization of the paper}
The paper is organized as follows. In the next section we give some notation and preliminaries regarding Fuchsian groups and their fundamental domains. In particular, we show in  Lemma~\ref{lem:no-trig} that under Assumption~\ref{asm:R} three geodesic lines in the boundary of the tessellation~$\TR$ cannot form a triangle.

Section~\ref{sec:struct-th-path} deals with the local structure of the thickened paths. Namely, we show that each thickened path is split into \emph{bottles} by levels (\emph{bottlenecks}) which contain only one copy of the fundamental domain, and the structure of each bottle is then described by Lemma~\ref{lem:bottle-local}. The local rules from this description give rise to the construction of the topological Markov chain in Section~\ref{sec:markov-coding}. 
The main result of this section, Theorem~\ref{thm:th-path-vs-sphere}, shows that every thickened path can be produced by this Markov chain, and conversely that every path defined by this chain is indeed a thickened path. This result  should be of independent interest and may have application elsewhere.
 
In Section~\ref{sec:operations} we present  some techniques for cutting and joining  thickened paths which we use in Section~\ref{subsec:strconn-aper}  to show that the Markov chain is strongly connected and aperiodic or, in other words, its adjacency matrix $\Pi$ has a power with all elements positive. The same techniques are  also  used in Section~\ref{sec:proof-main}.   
 
Section~\ref{sec:sph-avg} shows that these properties of the Markov chain allow us to construct its Parry measure and then to relate the spherical averages for our group to powers of a Markov operator $P$ associated to this coding. We also show that  the symmetry of the coding yields a relation between $P$ and its adjoint $P^*$.

Section~\ref{sec:proof-main} concludes the proof of the main theorem. To do this, we first formulate the new general theorem on pointwise convergence of powers of a Markov operator, Theorem~\ref{thm:convergence}.  Most  of Section~\ref{sec:proof-main} is then devoted to checking    that the conditions necessary for this theorem apply in our case,  including the most complicated one, that involving an inequality between the operator and its adjoint, as discussed above. This is proved in Section~\ref{subsec:proof-ineq} using techniques from Section~\ref{sec:operations}.  The proof of Theorem~\ref{thm:main} assuming  Theorem~\ref{thm:convergence} is concluded in Section~\ref{subsec:conclusion}. 

Finally, in Section~\ref{sec:proof-convergence} we give the proof of the new general result, Theorem~\ref{thm:convergence} on pointwise convergence for Markov operators.
As discussed above, the argument here follows that in~\cite{Buf-Annals} and is based on Rota's ``Alternierende Verfahren'' scheme.

We remark that many of the proofs, especially in Sections~\ref{sec:operations} and~\ref{subsec:proof-ineq}, may seem rather long and complicated; this is partly because of the generality in which we are working. In many cases the situation with $N(\R) \geq 5$ simplifies considerably; on the other hand the cases  $N(\R) = 3,4$ simplify in different ways and $N(\R) = 3$ encompasses in particular the modular group $SL(2,\ZZ)$. In almost all cases (to be precise, everywhere except in case (4) of Proposition~\ref{lem:str-conn}), our proofs depend only on the geometry of $R$ and not on analysing the particular pattern of side pairings.

\subsection{Historical remarks}

For two rotations of a sphere, convergence of spherical averages was established by Arnold and Krylov \cite{ArnKr}, and a general mean ergodic theorem for actions of free groups was proved by Guivarc'h \cite{Guivarch}.

The first general pointwise ergodic theorem for convolution averages on a countable group is due to Oseledets \cite{Oseled} who
relied on the martingale convergence theorem.
The first general pointwise ergodic theorems for free semigroups and groups were given
by R.I.~Grigorchuk in 1986 \cite{Grig86}, where the main result is Ces{\`a}ro convergence of spherical averages
for measure-preserving actions of a  free semigroup and group.  Convergence of the actual spherical averages for free groups was established by Nevo \cite{Nevo94} for functions in $L^2$ and Nevo and Stein \cite{NeSt94} for functions in $L^p$, $p>1$ using spectral theory methods.
Nevo, Stein, and Margulis \cite{NeSt97,MarNeSt} considered ball averages for actions of connected semisimple Lie group with finite center and no nontrivial compact factors and showed that these ball averages converge almost everywhere and in $L^p$, $p>1$.
Note that, as shown by Tao~\cite{Tao}, whose argument is inspired by Ornstein's counterexample \cite{Ornst}, pointwise convergence of spherical averages for functions in~$L^1$ does not hold even for actions of free groups.

The method of Markov operators in the proof of ergodic theorems for actions of free semigroups and groups
was suggested by R. I. Grigorchuk \cite{Grig99,Grig00}, J.-P. Thouvenot (oral communication), and in \cite{Buf-FAA}.
In \cite{Buf-Annals} pointwise convergence is proved for Markovian spherical averages under the additional assumption that
the Markov chain be reversible. The key step in \cite{Buf-Annals} is the triviality of the tail sigma-algebra for the corresponding
Markov operator; this is proved using Rota's ``Alternierende Verfahren'' \cite{Rota}, that is to say, martingale convergence.
Another result in this direction was obtained in \cite{BowBufRom}; it states the mean convergence for analogues of spherical averages for an arbitrary Markov chain satisfying very mild conditions. It is not known whether similar result holds for pointwise convergence.

The study of Markovian averages is  motivated by the problem of ergodic theorems for general countable groups, specifically, for groups admitting a Markovian coding such as Gromov hyperbolic groups \cite{Gromov} (see e.g. Ghys--de la Harpe \cite{GhysDelaharpe} for a detailed discussion of the Markovian coding for Gromov hyperbolic groups). The first results on convergence of spherical averages for Gromov hyperbolic groups, obtained  under strong exponential mixing assumptions on the action, are due to Fujiwara and Nevo \cite{FuNe}. For actions of hyperbolic groups on finite spaces, an ergodic theorem was obtained by L. Bowen in \cite{Bow}.

Ces{\`a}ro convergence of spherical averages for all measure-preserving actions of Markov semigroups, and, in particular, Gromov hyperbolic groups, was established in \cite{BufKhrKlim,BufKlim-TrMIAN}; earlier partial results were obtained in \cite{Buf-RMS,Buf-AMSTr}. In the special case of hyperbolic groups a shorter proof of this theorem, using the method of Calegari and Fujiwara \cite{CalFuji}, was later given by Pollicott and Sharp \cite{PolSh}.
Using the method of amenable equivalence relations, Bowen and Nevo \cite{BowNe-EM,BowNe-JAM,BowNe-AnENS, BowNe-GGD} established ergodic theorems for ``spherical shells''  in Gromov hyperbolic groups. For further background see the surveys~\cite{Nevo06,GorNev,BufKlim-EJC}.

\medskip

\subsection{Acknowledgments}
The research of A.~Bufetov on this project has received funding from the European Research Council (ERC) under the European Union's Horizon 2020 research and innovation programme under grant agreement No 647133 (ICHAOS).
A.~Klimenko's research was partially funded by Russian Fund of Basic Research (grants No. 18-31-20031 and 18-51-15010).

\section{Definitions and notation}
\label{sec:defs-notat}

Let $G$  be a finitely generated non-elementary
Fuchsian group acting on the hyperbolic disk $\DD$ with  fundamental domain $\R$, which we take to be closed.
We suppose $\R$ to be a finite-sided convex polygon with
vertices contained in $\overline\DD=\DD \cup \dd \DD$,
such that the interior angle at each vertex is strictly less than $\pi$.
By a \emph{side} of $\R$ we mean the closure in $\DD$ of the
geodesic arc joining a pair of adjacent vertices.
We allow the infinite area case in which some adjacent vertices on $ \dd \DD$
are joined by an arc contained in $ \dd \DD$; we do not
count these arcs as sides of $\R$.
Further we usually mean by \emph{vertices} of $\R$ only vertices inside~$\DD$.
Sometimes it is convenient to count as vertices also the side ends that belong to $\dd\DD$, these instances will be specified explicitly.
Two sides are called \emph{adjacent} if they share a common vertex lying in $\DD$. We refer to each image $g\R$ of $\R$ by an element $g \in G$  as a \emph{domain}.

We assume that the sides of $\R$ are paired; that is, for each side $s$
of $\R$ there is a (unique) element $e\in G$ such that
$e(s)$ is also a side of $\R$ and the domains $\R$ and $e(\R)$ are adjacent
along $e(s)$.  Notice that this includes the possibility that
$e(s) = s$,  in which case $e$ is elliptic of order $2$
and the side $s$ contains the   fixed point of $e$ in its interior.
The condition  that the vertex angle be strictly less than $\pi$  excludes the possibility that
the fixed point of $e$ is counted as a vertex of $\R$. Since the element pairs the side to itself the possibility of more than one elliptic fixed point on one side is excluded, for the existence of two such points implies the existence of infinitely many contained in the one side. Note also that the treatment of order two elliptic fixed points in~\cite{BirSer} and elsewhere is slightly different.

%%%%%%%%%%%%%%%%%%%%
\begin{figure}[hbt]
\centering
\includegraphics{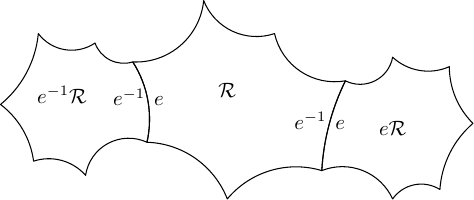}
\caption{Labelling the sides of the fundamental domain $\R$. Note that the label $e$ is interior to $\R$ on  the side of $\R$ adjacent to the domain $e^{-1}\R$.}
\label{fig:labelling}
\end{figure}
%%%%%%%%%%%%%%%%

We denote by $\dd \R$ the union of the sides of $\R$, in other words,
$\dd \R$ is the part of the  boundary of $\R$ inside the disk $\DD$.
 Each side of $\dd \R$ is assigned  two labels, one interior to $\R$ and one exterior, in such a way that
the interior and exterior labels are mutually inverse elements of $G$.
We label  the side $s \subset \dd R$ interior to $\R$   by  $e$ if $e$ carries $s$ to another side $e(s)$  of $\R$, while we label the
same side exterior to $\R$ by $e^{-1}$, see Figure~\ref{fig:labelling}. With this convention, $\R$ and $e^{-1}(\R)$ are adjacent along the side $s$ whose \emph{interior} label is $e$, while the side  $e(s)$  has interior label $e^{-1}$.

Let $G_0$ denote the set of group elements which label  sides of $\R$. The labelling extends to a $G$-invariant labelling of all sides of the tessellation  $\TR$ of $\DD$ by images of $\R$, where by a \emph{side} of $\TR$, we mean a side of $g\R$ for some $g \in G$.  The conventions have been chosen in such a way that if two domains $g \R, h\R$ are adjacent along a common side $s$, then  $h^{-1}g \in G_0$ and the label on $s$ interior to $g \R$ is $h^{-1}g$, while that on the side interior to $h\R$ is $g^{-1}h$.
Suppose that $O$ is a fixed basepoint in $\R$ and that $\gamma$ is an oriented path in $\DD$ from $O$ to $gO$, $g \in G$, which avoids   all vertices of  $\TR$,   and which passes through in order adjacent domains  $\R = g_0\R, g_1\R,  \ldots, g_n \R = g\R$. Then the  labels of the sides crossed by $\gamma$, read in such a way that if $\gamma$ crosses from $g_{i-1}\R $ into $g_i\R$ we read off the label $e_i = g_{i-1}^{-1} g_i$ of the common side  interior to $g_i\R$,
are in order $e_1,  e_2, \ldots, e_n$ so that   $g= e_1 e_2 \ldots e_n$.
This proves the well-known fact that  $G_0$ generates $G$, see for example~\cite{Bea}.

As explained in the introduction, the fundamental domain $\R$ is said to have \emph{even corners} if for each side $s$ of $\R$,
the complete geodesic in $\DD$ which extends $s$ is contained in the sides of $\TR$.
This condition is satisfied for example, by the regular $4g$-gon of interior angle ${\pi}/{2g}$ whose sides can be paired with the standard generating set
\begin{equation*}
\biggl\{ a_i, b_i, i = 1, \dots, g \biggm|\prod_{i=1}^g [a_i,b_i]  \biggr\}
\end{equation*}
to form a surface of genus $g$. It is also satisfied by the modular group $SL(2,\ZZ)$ with the classical fundamental domain $\{z:|\Re z|<1/2, |z|>1\}$ in the upper half plane. For further discussion on the even corners condition, see the references in the introduction.

Note that under the even corners condition there exists a ``chequered coloring'' of the domains in $\TR$ (or elements of $G$):
one can color each  domain either in black or white in such a way that each side of $\TR$ separates domains of different color.

We will frequently consider the union (or the collection) of all $2n(v)$ domains in $\TR$ adjacent to a vertex~$v$. We call
this the \emph{flower} at $v$ and denote it by $\F_v$ and refer to the individual domains in $\F_v$ as \emph{petals}, while the sides between its petals we call its \emph{radii}.
Note that $\F_v$ is a convex polygonal domain. Indeed, it is a star domain with respect to $v$ and the internal angle at any vertex $u$ on its boundary
contains either one or two sectors; since $n(u)\ge 2$, this angle does not exceed $\pi$.
Moreover, the angle  $\pi$ may occur only at the common vertex $w$ of two petals of the flower, and in this case $n(w)=2$.

Let us also denote the geodesic line passing through a side $s$ or a pair of vertices $u,v$ in $\TR$ as $\ell(s)$ or $\ell(uv)$.

We start with some properties of the tessellation~$\TR$ which are consquences of Assumption~\ref{asm:R}.

\begin{lemma}\label{lem:no-trig}
Under Assumption~\ref{asm:R} there are no vertices $a$, $b$, $c$ of $\TR$ such that the lines $\ell(ab)$, $\ell(bc)$, and $\ell(ca)$ belong to $\dd\TR$.
\end{lemma}

\begin{proof}
Assume the contrary: there exists a triangle $\Delta=abc$ in $\dd\TR$.
Note that $\Delta$ cannot be a fundamental domain  since Assumption~\ref{asm:R} excludes compact triangular domains.
Therefore, on $\dd\Delta$ there is a point $p$ belonging to at least two fundamental domains in $\Delta$. Then there is a ray $\alpha$ in $\dd\TR$ that starts at $p$ and goes inside $\Delta$. The ray $\alpha$ cuts $\Delta$ into two regions, at least one of them being triangular. Choose this region as a new triangle $\Delta'$, which also violates the statement of the lemma.

This process can be repeated indefinitely, and each iteration decreases the number of fundamental domains inside the triangle. However, this number is finite since the area of the triangle is finite, and we arrive at a contradiction.
\end{proof}

The next proposition was stated under slightly stronger assumptions in~\cite[Lemma 2.2]{BowSer} in the case in which  $P$ is a fundamental domain. We will use it for $P$ equal to either a fundamental domain or a flower, see Corollary~\ref{cor:flower-no-intersect} below.

\begin{lemma}\label{lem:no-intersect}
Suppose that Assumption~\ref{asm:R} holds for $\TR$ and consider a convex polygon~$P$ with sides lying in $\dd\TR$.
Take any two different lines $\ell_1,\ell_2$ from $\dd\TR$ that intersect $\dd P$ but not $\Int P$. Then either $\ell_1$ and $\ell_2$ do not intersect or they intersect at a vertex of $P$.
\end{lemma}

\begin{proof}
Assume the contrary: $\ell_1\cap\ell_2=p\notin\dd P$, see Figure~\ref{fig:lem-no-intersect}. The line $\ell_j$ meets $\dd P$ either in a side $s_j$ or a vertex $v_j$. In the former case let $v_j$ be the end of $s_j$ closest to $p$.
Let $\gamma=u_0u_1\dots u_k$ ($u_0=v_1$, $u_k=v_2$) be the part of $\partial\P$ between $v_1$ and $v_2$ that lies inside the triangle $\Delta=v_1v_2p$. Note that the $s_j$'s are not included in $\gamma$. 

%%%%%%%%%%%%%%%%%%%%
\begin{figure}[hbt]
\centering
\includegraphics{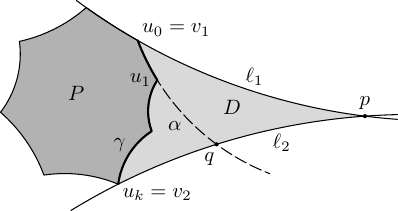}
\caption{The proof of Lemma~\ref{lem:no-intersect}.}
\label{fig:lem-no-intersect}
\end{figure}
%%%%%%%%%%%%%%%%

Among all pairs $(\ell_1,\ell_2)$ violating the statement of the lemma choose the one for which the number $k$ of sides of $\gamma$ is minimal.

If $k=1$,  Lemma~\ref{lem:no-trig} for the triangle $v_1v_2p$ in $\dd\TR$ yields a contradiction.
Otherwise, consider the ray $\alpha\subset\ell(u_0u_1)$ that is the continuation of the side $u_0u_1$ past the point $u_1$. This ray enters the domain $D$ bounded by the curve $\gamma$ and the segments $v_1p$, $v_2p$, since the inner angle of $D$ at $u_1$ is more that $\pi$.
This ray must exit $D$ at some point $q$, which cannot belong to $\gamma$ (otherwise $P$ is not convex) or to $v_1p$ (since $\ell(u_0u_1)$ and $\ell_1$ intersect only at $v_1$).
Therefore, $q$ belongs to $v_2p$, and $\ell_1'=\ell(u_0u_1)$, $\ell_2'=\ell_2$ is a pair of lines also violating the statement of the lemma but for which 
$\gamma'=\gamma\setminus u_0u_1$ has 
$k-1$ sides.
\end{proof}

\begin{cor}\label{cor:flower-no-intersect}Suppose that Assumption~\ref{asm:R} holds for $\TR$.
Consider any two different sides $s_1,s_2$ of fundamental domains lying on the boundary of the flower $\F_v$. Then either $\ell(s_1)$ and $\ell(s_2)$ coincide,
or they   intersect  at one or other end of either $s_1$ or $s_2$, or they do not intersect.
\end{cor}

\begin{proof}
We need some care since \emph{one} side of $\F_v$ \emph{as a convex polygon} can contain \emph{two} sides of~$\TR$: if a common vertex $w\ne v$ of two domains in $\F_v$
has $n(w)=2$, then two sides on the boundary of $\F_v$ adjacent to $w$ lie on the same geodesic and thus they form one side of~$\F_v$ as a polygon.
Let us refer to such side of the polygon $\F_v$ as a \emph{compound} side.

In view of Lemma~\ref{lem:no-intersect}, we only need to rule out the case in which
 $s_1$ and $s_2$ are contained in two adjacent compound sides of the polygon $\F_v$, separated by an intervening side or sides. We will show that compound sides cannot be adjacent.
Indeed, assume that both $u_1u_2$ and $u_2u_3$ are compound, where the $u_i$ are vertices of petals of $\F_v$. Thus $u_1u_2$ and $u_2u_3$ contain vertices $w_1$ and $w_2$  in their interiors.  Since each $vw_j$ is a side common to two petals of $\F_v$, it follows that either $vw_1u_2w_2$  or $vw_1u_2$ is a fundamental domain. We see that either $N(\R)=4$ and $n(w_{1,2})=2$,  or $N(\R)=3$ and $\R$ is compact, both of which cases are excluded by Assumption~\ref{asm:R}. 
\end{proof}
 
\section{Structure of thickened paths}
\label{sec:struct-th-path}

As explained in the introduction, a thickened path between two  domains $\A$ and $\B$ in $\TR$ is the union of all translates of $\R$
crossed by any possible shortest paths between $\A$ and $\B$. In this section we describe the detailed structure of thickened paths. The  results will be used in  Section~\ref{sec:markov-coding}  to construct a Markov coding that generates all possible thickened paths:
we will show there that  the  features discussed are also \emph{sufficient} conditions for a union of fundamental domains to be a thickened path.

From now on, we suppose Assumption~\ref{asm:R}  holds for $G$ and $\R$ and do not mention this explicitly in the statements.

\subsection{Thickened paths}
\label{subsec:thick-paths-definition}

 Let $\A$, $\B$ be two fundamental domains from $\TR$. A \emph{path} from $\A$ to $\B$ is a sequence $\underline\R=(\R_0=\A,\R_1,\dots,\R_n=\B)$ of domains from $\TR$,
so that $\R_i$ and $\R_{i+1}$ have a common side for all $i=0,\dots, n-1$. The number $n$ here is the \emph{length} of the path $\underline\R$.
Equivalently, if $\R_i=g_i\R$ then $\underline\R$ is a path if and only if $\underline g=(g_0,\dots,g_n)$ is a path from $a=g_0$ to $b=g_n$ in the Cayley graph of $G$ with respect to $G_0$.

The path $\underline\R$ is \emph{shortest} if its length is minimal among all paths with the same ends.
The \emph{distance} $\dist(\A,\B)$ between $\A$ and $\B$ is the length of  a shortest path between them.

The \emph{thickened path} from $\A$ to $\B$ is the collection of all domains in $\TR$ that belong to some shortest path from $\A$ to $\B$.
This thickened path $\underline\S$ is decomposed into \emph{levels} $\S_k$, $k=0,1, \dots, n=\dist(\A,\B)$.
Namely, a domain $\C\in\underline\S$ belongs to the level $\S_k$ if $\dist(\A,\C)=k$ and therefore, $\dist(\C,\B)=n-k$.
We also observe that two domains $\C$, $\C'$ in $\underline\S=(\S_0,\dots,\S_n)$ can have a common side only if their levels $k,k'$ differ by one.
Indeed, if $k<k'-1$, then $n=\dist(\A,\B)\le \dist(\A,\C)+\dist(\C,\C')+\dist(\C',\B)=k+1+(n-k')<n$.
The case $k=k'$ is impossible: the cycle $\A\C\C'\kern -2pt\A$ of odd length $2k+1$ contradicts  the ``chequered coloring'' of $\TR$ (see the discussion of the even corners condition in Section~\ref{sec:defs-notat}).

\subsection{Convexity of thickened paths}
\label{subsec:thick-paths-convex}

In this subsection we prove that the thickened path between two domains $\A$ and $\B$ is the smallest convex union of domains containing them. 
We begin by describing an alternative method of finding the distance between two fundamental domains.

Let us say that a geodesic $\gamma$ in $\dd\TR$ \emph{separates} domains $\A$ and $\B$ if $\A$ and $\B$ lie in different half-planes with respect to $\gamma$ and denote the set of geodesics separating $\A$ and $\B$ by $\mathbf{S}_{\A,\B}$.
In particular, if $\A$ and $\B$ share a common side $s$, we have $\mathbf{S}_{\A,\B}=\{\ell(s)\}$.

Now consider any path $\underline{\R}=(\R_0,\dots,\R_n)$ from $\A$ to $\B$ and let $s_i$ be the common side of $\R_i$ and $\R_{i+1}$. Then every geodesic $\gamma\in\mathbf{S}_{\A,\B}$ appears at least once among the $\ell(s_i)$, $i=0,\dots,n-1$. (Note that we are not at this point assuming the $\ell(s_i)$ are distinct, see below.) Indeed, otherwise for every $i$ the domains $\R_i$ and $\R_{i+1}$ lie in the same half-plane with respect to $\gamma$, so by transitivity $\A=\R_0$ and $\B=\R_n$ also lie in the same half-plane. This means that $\dist(\A,\B)\ge\#\mathbf{S}_{\A,\B}$. Let us show that this inequality is indeed an equality.  

\begin{lemma}\label{lem:Dist-NSep}
	The distance between two fundamental domains $\A$ and $\B$ equals the number of geodesics in $\mathbf{S}_{\A,\B}$.
\end{lemma}

\begin{proof}
	From the consideration above we see that it is sufficient to construct a path of length $m=\#\mathbf{S}_{\A,\B}$ from $\A$ to $\B$. To do so choose points $a\in\Int\A$ and $b\in\Int\B$ so that the geodesic segment $I=ab$ does not pass through any vertex of $\TR$. 
	Then the geodesic lines from $\dd\TR$ crossed by $I$ are exactly the geodesics separating $a$ and $b$, or, equivalently, $\A$ and $\B$. The points of intersection of $I$ with these $m$ lines from $\mathbf{S}_{\A,\B}$ are different and by convexity $I$ cannot enter any domain twice. Hence $I$ traverses $(m+1)$ domains $\R_0=\A,\R_1,\dots,\R_m=\B$, and for any $i=0,\dots, m-1$ the domains $\R_i$ and $\R_{i+1}$ have a common side.
\end{proof}

\begin{remark}\label{rem:cross-once}
	Let us say that a path $\underline \R=(\R_0=\A,\dots,\R_n=\B)$ \emph{crosses a line $\gamma$} if $\gamma = \ell(s_i)$ for some $i$, where $s_i=\R_i\cap\R_{i+1}$. Then if $\gamma$ is the shortest path between $\A$ and~$\B$, it crosses every line $\gamma$ from $\mathbf{S}_{\A,\B}$ exactly once (i.e. $\gamma = \ell(s_i)$ for only one $i$) and does not cross any other lines.
\end{remark}

\begin{proof}
	We have seen above that every line from $\mathbf{S}_{\A,\B}$ appears in the sequence $\{\ell(s_i)\}_{i=0}^{n-1}$ \emph{at least} once. If some line from $\mathbf{S}_{\A,\B}$ appears twice in this sequence, or if any line outside of  $\mathbf{S}_{\A,\B}$ appears there, we have $n>\#\mathbf{S}_{\A,\B}$. On the other hand, for the shortest path we have $n=\dist(\A,\B)=\#\mathbf{S}_{\A,\B}$ by Lemma~\ref{lem:Dist-NSep}.
\end{proof}

The following proposition describes a thickened path as a convex set.

\begin{prop}\label{prop:NonSep}Let $\A$ and $\B$ be two fundamental domains in $\TR$. The  thickened path  from $\A$ to $\B$ is the  minimal convex union of fundamental domains that contains both $\A$ and $\B$.  If $\R$ is compact, then its boundary contains at least one side of $\A$ and  one  of $\B$.
\end{prop}

\begin{proof}
\noindent\textit{Step 1.} Denote by $\mathbf{NS}_{\A,\B}$ the set of all lines $\ell\subset\dd\TR$
 that do  not separate $\A$ and $\B$. For every $\ell\in\mathbf{NS}_{\A,\B}$ consider
the half-plane $H_\ell$ bounded by $\ell$ and containing both $\A$ and $\B$.
Set
\begin{equation*}
\G=\bigcap_{\ell\in\mathbf{NS}_{\A,\B}} H_\ell.
\end{equation*}
We claim that $\G$ is the thickened path from $\A$ to $\B$.
By the previous corollary, no shortest path from $\A$ to $\B$ intersects any line $\ell\in \mathbf{NS}_{\A,\B}$.
Therefore, the thickened path from $\A$ to $\B$ is contained in $\G$.

On the other hand, consider any fundamental domain $\C\subset\G$. As in the proof of Lemma~\ref{lem:Dist-NSep}, choose generic points $a\in\A$, $b\in B$, $c\in C$ so that
the segments $ac$ and $cb$ do not contain vertices. The sequences $\R_0=\A,\R_1,\dots,\R_k=\C$ and $\R_k=\C,\R_{k+1},\dots,\R_{k+l}=\B$ of fundamental domains
traversed respectively by $ac$ and $cb$ are shortest paths from $\A$ to $\C$ and $\C$ to $\B$ respectively. Since $\G$ is convex, the segments $ac$ and $cb$ lie in $\G$ and hence so do all the $\R_j$'s.

It remains to show that $\R_0 ,\R_1,\dots,\R_{k+l}$ is a shortest path. Let $\ell_j$ be the geodesic line separating $\R_j$ and $\R_{j+1}$. Then the lines $(\ell_j)_{j=0}^{k+l-1}$ are the consecutive geodesics intersected by the path $acb$. Every geodesic $\gamma$ in $\dd\TR$ intersects either none or two sides of the triangle $abc$. If $\gamma$ intersects the sides $ac$ and $cb$, then $\gamma$ does not separate $a$ and $b$. But then $c\in\C$ does not belong to $H_\gamma$, and hence to $\G$. On the other hand, $\gamma$ intersects $ab$ if and only if it belongs to $\mathbf{S}_{\A,\B}$, thus each line from $\mathbf{S}_{\A,\B}$ crosses the curve $acb$ exactly once. 
Therefore, $k+l=\#\mathbf{S}_{\A,\B}=\dist(\A,\B)$.

\smallskip

\noindent\textit{Step 2.} Assume that $\G'\subset\G$ is a smaller convex union of fundamental domains containing $\A$ and $\B$.
Note that $\G$ (hence $\G'$) contains only finite number of fundamental domains, since there are only finitely many domains at  distance not more than $\dist(\A,\B)$ from $\A$. Thus $\G'$ is a convex polygon and the supporting half-planes of its sides are $H_{\ell_k}$, $k=1,\dots, K$, for some lines $\ell_k\in\mathbf{NS}_{\A,\B}$.
Therefore,
\begin{equation*}
\G'=\bigcap_{k=1}^K H_{\ell_k} \supset \bigcap_{\ell\in\mathbf{NS}_{\A,\B}} H_{\ell}=\G.
\end{equation*}

\noindent\textit{Step 3.} To prove the final statement, choose generic points $a\in\A$, $b\in\B$ so that the line~$\ell(ab)$ does not contain any vertices of $\TR$. Denote by $\alpha$ the ray in $\ell(ab)$ starting from $a$ in the direction away from $b$. Let $a'$ be the first intersection of~$\alpha$ with $\dd\TR$ and let $\ell'\ni a'$ be the corresponding geodesic line from $\dd\TR$; $a'$ exists since $\A$ is compact. Thus $\ell'\in\mathbf{NS}_{\A,\B}$ as it does not cross the segment $ab$.
Therefore, no point on $\alpha$ after $a'$ belongs to $H_{\ell'}$ and hence to $\G$, and no point before $a'$ is separated from $a$ by any $H_\ell$. Thus $\alpha\cap\G=aa'$, so $a'\in\dd\G$ and the side of $\A$ that contains $a'$ belongs to $\dd\G$.
\end{proof}

\subsection{Levels of thickened paths}

Each level in a thickened path is a union of domains.  We now show that each level can contain at most two domains.  

Let $\underline\S=(\S_0=\A,\S_1,\dots,\S_n=\B)$ be a thickened path from $\A$ to $\B$. Consider its closure $\Clos_{\overline\DD}\underline\S$ and its boundary $\dd_{\overline\DD}\underline\S$ in $\overline\DD$. They are homeomorphic respectively to a closed disk and a circle.

Consider the intersection $\dd_{\overline\DD}\underline\S\cap \dd_{\overline\DD}\A$. It is nonempty: if $\R$ is compact, this is stated in Proposition~\ref{prop:NonSep}, otherwise any point of $\dd_{\overline\DD}\A\cap\dd\DD$ belongs to $\dd_{\overline\DD}\underline{\S}$. Moreover, it is connected.  Indeed, take any $p$ and $p'$ lying on different sides in this intersection and consider a segment $J\subset\A$ connecting $p$ and $p'$. Then $J$ separates $\Clos_{\overline\DD}\underline\S$ into two connected components. If both components contain fundamental domains (besides parts of $\A$),
choose any $\C\ne\A,\B$ such that $\B$ and $\C$ lie in different components. Then the path from $\C$ to $\B$ inside $\G$ must cross $J$ and hence $\A$, so $\C$ cannot belong to a shortest path from $\A$ to $\B$. Therefore, one connected component contains only a part of $\A$. But then the boundary of this connected component apart from $J$ is a segment of $\dd_{\overline\DD}\underline\S\cap \dd_{\overline\DD}\A$ that connects $p$ and $p'$.

We conclude that $\dd_{\overline\DD}\underline\S\setminus (\dd_{\overline\DD}\A\cup \dd_{\overline\DD}\B)$ consists of two arcs, which we call the \emph{left} and \emph{right} boundaries of $\underline\S$ and denote $\dd_{L,R}\underline\S$.
Namely, going clockwise around $\dd_{\overline\DD}\underline\S$ we pass through an arc of $\dd_{\overline\DD}\A$, then $\dd_{L}\underline\S$, then an arc of $\dd_{\overline\DD}\B$, and $\dd_{R}\underline\S$. Both $\dd_{L,R}\underline\S$ are oriented from $\dd_{\overline\DD}\A$ to $\dd_{\overline\DD}\B$. Sometimes we will use the same notation $\dd_{L,R}\underline\S$ for the parts of these boundaries that lie inside $\DD$.

\begin{prop}\label{prop:thick-path-LR} Let $\underline\S=(\S_0=\A,\S_1,\dots,\S_n=\B)$ be a thickened path from $\A$ to $\B$.
Consider the sequence of adjacent domains $\L_0=\A,\L_1,\dots,\L_m=\B$ which meet $\dd_L\underline\S$ in a point or side and the similar sequence of domains $\R_0=\A,\R_1,\dots,\R_{m'}=\B$ which meet $\dd_R\underline\S$.
Then:\\
1) both these sequences are shortest paths from $\A$ to $\B$, hence $m=m'=n$;\\
2) every domain in $\underline\S$ belongs to one of these two sequences, hence $\S_j=\{\L_j,\R_j\}$ for every $j=0,\dots,n$; it is possible that $\L_j=\R_j$.
\end{prop}

\begin{proof}
1. Consider the side $s_j$ between $\L_j$ and $\L_{j+1}$. Then $\ell(s_j)$ passes inside $\underline\S$ and hence separates $\A$ and $\B$ by Proposition~\ref{prop:NonSep}.
Moreover, every geodesic $\ell$ in $\TR$ separating $\A$ and $\B$ intersects $\underline{\S}$ in a segment $I(\ell)$ with one end on $\dd_L\underline{\S}$ and another one on  $\dd_R\underline{\S}$. Thus if $s_j\subset I(\ell)$ then $s_j$ is adjacent to the end of $I(\ell)$ lying on $\dd_L\underline{\S}$. Therefore, each of the $n$ geodesics separating $\A$ and $\B$ produces exactly one such $s_j$, so $m=n$.

2. This is Lemma 2.7 in \cite{BirSer}. We give a proof for the sake of completeness.

Assume that the fundamental domain~$\C$ lies strictly inside $\underline{\S}$. Then $\C$ is compact. Let $s_i$, $i=1,\dots,N(\R)$ be the consecutive sides of $\C$ and let $H_i$ be the half-plane bounded by $\ell(s_i)$ that does not contain $\C$.
By Lemma~\ref{lem:no-intersect} the lines $\ell(s_j)$ and $\ell(s_k)$ intersect only for adjacent sides $s_j$ and $s_k$. Therefore, $H_j\cap H_k\ne\varnothing$ also only for adjacent $s_j$ and $s_k$. This means that at most two lines $\ell(s_j)$ separate $\A$ from $\C$ and at most two   separate $\B$ from $\C$. Hence there are at least $N(\R)-4$ lines of the form $\ell(s_j)$ that do  not separate $\A$ and $\B$. In the case $N(\R)\ge 5$ we arrive at the contradiction with Step 1 in Proposition~\ref{prop:NonSep}: a line not separating $\A$ and $\B$ cannot enter the interior of $\underline{\S}$.

If $N(\R)=4$ the only remaining case (up to  renumbering the $s_j$) is that $\A\subset H_1\cap H_2$, $\B\subset H_3\cap H_4$. Then $p=s_2\cap s_3$, $q=s_4\cap s_1$ are opposite vertices of $\C$. Assumption~\ref{asm:R} states that $n(p)>2$ or $n(q)>2$. If, say, $n(p)>2$, there is a line~$\ell^*$ from $\dd\TR$ that intersects $\dd\C$ only at $p$. Denote the half-plane bounded by $\ell^*$ and not containing $\C$ by $H^*$.
Lemma~\ref{lem:no-trig} yields that $\ell^*$ does not intersect $\ell(s_1)$ and $\ell(s_4)$, thus $H_1\cap H^*=H_4\cap H^*=\varnothing$. Therefore, again using Step 1 in the proof of Proposition~\ref{prop:NonSep}, 
both $\A$ and $\B$ lie outside   $H^*$, so we arrive at the same contradiction for $\ell^*$.
\end{proof}

 \subsection{Bottles and bottlenecks}
 
Continuing our consideration of thickened paths, we call levels which contain only one domain \emph{bottlenecks}; for such levels we have  $\L_j = \R_j$. 
Thus a domain $\C\in\underline\S$ is a bottleneck if all shortest paths from $\S_0=\A$ to $\S_n=\B$ pass through $\C$.
The bottlenecks divide a thickened path into \emph{bottles}. Namely, if $\S_r$ and $\S_s$ are bottlenecks and $\S_k$, $r<k<s$ are not, then
$\underline\S_r^s=\bigcup_{k=r}^s\S_k$ is a \emph{bottle} (so each bottle has two bottlenecks, one at each end).

 We now focus on the structure of one bottle. Note that if in a thickened path $\underline\S$ the levels $\S_j=\{\A'\}$ and $\S_k=\{\B'\}$ contain one domain each, then $(\S_i)_{i=j}^k$ is the thickened path from $\A'$ to $\B'$. Therefore, every bottle is a thickened path between its two bottlenecks, and we assume for the rest of this section that $\underline\S=(\S_0,\S_1,\dots,\S_n)$ is a bottle: $\S_j$ contains two domains for every $j=1,\dots,n-1$ and one domain for $j=0$ and $j=n$.

Denote by $s^L_j$ (respectively, $s^R_j$) the common side of $\L_j$ and $\L_{j+1}$ (respectively, $\R_j$ and $\R_{j+1}$) and consider the set
\begin{equation*}
\mathbb A=\bigcup_{i=0}^{n}\bigl(\Int(\L_i)\cup \Int(\R_i)\bigr)\cup \bigcup_{i=0}^{n-1} \bigl(\Int(s^L_i)\cup\Int(s^R_i)\bigr).
\end{equation*}
This set is homeomorphic to an annulus and can be retracted to the boundary of the closed disk $\Clos_{\overline\DD}(\underline\S)$.
Now let
\begin{equation*}
\T=\Int\underline\S\setminus\mathbb A.
\end{equation*}
This set is a union of all vertices and sides of $\TR$ that lie inside $\underline\S$ and have no points in common with $\dd_{\overline\DD}\underline\S$;
note that $\T$ cannot contain fundamental domains by Proposition~\ref{prop:thick-path-LR}.
Thus set $\T$ is closed, connected and simply connected. In other words, the graph~$\T$ is a tree. We call $\T$ the \emph{core} of the bottle. 

\begin{prop}The  core $\T$ of a bottle is a linear graph, that is, its vertices can be enumerated as $v_0,v_1,\dots,v_r$ in such a way that for every $j=1,\dots, r$ the vertices $v_{j-1}$ and $v_j$ are adjacent and its edges are sides $v_{j-1}v_j$ for $j=1,\dots, r$.
\end{prop}

\begin{proof}
A tree is a linear graph if and only if it has not more than two \emph{leaves}, a leaf being a vertex with only one adjacent edge.  Let us check that the core $\T$ cannot have more than two leaves. Consider any leaf $v$, and let $s\subset\T$ be the only edge adjacent to $v$. Then all other sides in $\TR$ adjacent to $v$ must have their other end on $\dd\underline\S$, so going around $v$ starting from $s$, one traverses some interval $\mathbf{S}_v$, of length $2n(v)$,  of the circuit
\begin{equation*}
\dots-\L_1-\dots-(\L_n=\R_n)-\dots-\R_1-(\R_0=\L_0)-\dots
\end{equation*}
Note that all domains in the interval $\mathbf{S}_v$ except the first and the last meet $\T$ only in $v$. If all domains in $\mathbf{S}_v$ are $\L_j$'s: 
$\mathbf{S}_v=(\L_k,\dots,\L_{k+2n(v)-1})$, then the side $s$ separates $\L_k$ and $\L_{k+2n(v)-1}$, so the path $(\L_0,\dots,\L_n)$ is not shortest since it can be shortcut by crossing  $s$. Therefore, every interval $\mathbf{S}_v$ must contain some domains of type $\R_j$, so that either $\L_n$ or $\L_0$ is internal to the sequence. Since a domain cannot be internal to two $\mathbf{S}_v$'s there are at most two such intervals and hence at most two leaves.
\end{proof}

Continuing with the circuit of a bottle as above, the same argument that the paths $\underline\L=(L_0,\dots,\L_n)$ and $\underline\R=(R_0,\dots,\R_n)$ are shortest, gives the following.

\begin{lemma}
If $v_j$ is incident to the domains $\L_i,\dots,\L_{i+k-1}$ or $\R_i,\dots,\R_{i+k-1}$  then $k\le n(v_j)+1$.
\end{lemma}

\begin{proof} If this is not the case, then  the part of path $\underline\L$ between $\L_i$ and $\L_{i+k-1}$ can be replaces by a  shorter one which goes   around the other side of $v_j$, and similarly for $\underline\R$.
\end{proof}

Let $l_j'$ and $r_j'$ be the number of domains in $\underline\L$ and $\underline\R$ respectively that are incident to~$v_j$ so that $l_j'\le n(v_j)+1$,  and define $l_j=l'_j-n(v_j)$, $r_j=r'_j-n(v_j)$. 

The next lemma describes the shape of the core of a bottle. As above, the core is a union of geodesic segments joining the  vertices $v_0,   \dots,  v_r$. Items (1) and  (2) together say that the segments turn through at most one sector (petal)  at each vertex $v_{j}$. Item (3)  implies  that successive bends alternate between turns to the right and turns to the left.   

\begin{lemma}\label{lem:bottle-prop}
Let $\T$ be the core of a bottle, that is, the chain of sides between vertices $v_0,\dots,v_r$.\\
1) We have $l_j,r_j\le 1$ for all $j=0,\dots,r$.\\
2) If $0<j<r$ then $l_j+r_j=0$, hence $(l_j,r_j)\in\{(-1,1),(0,0),(1,-1)\}$.\\
If $j=0$ or $j=r$, where $r>0$, then $l_j+r_j=1$, hence $(l_j,r_j)\in\{(0,1),(1,0)\}$.\\
If $j=0=r$ then $l_j+r_j=2$, hence $(l_j,r_j)=(1,1)$.\\
3) The sequences $(l_j)_{j=0}^r$ and $(r_j)_{j=0}^r$  cannot contain a segment of the form $1,0,\dots,0,1$ 
(with $r \ge 0$  zeroes between the two $1$'s).\\
4) The side $v_kv_{k+1}$ separates either $\L_s$ and $\R_{s+1}$ if the last $(l_j,r_j)\ne(0,0)$ with $j\le k$ has $r_j=1$,
or $\L_{s+1}$ and $\R_s$ if this $(l_j,r_j)$ has $l_j=1$.
\end{lemma}

\begin{proof}
1) This follows from the previous lemma.

2) All $2n(v)$ domains around $v$ are counted either in $l'_j$ or in $r'_j$. The only domains that are counted both in $l'_j$ and $r'_j$ are $\L_0=\R_0$ and $\L_n=\R_n$.
As we have seen above, these domains are incident only to $v_0$ and $v_r$ respectively.

%%%%%%%%%%%%%%%%%%%%
\begin{figure}[bt]
\centering
\includegraphics{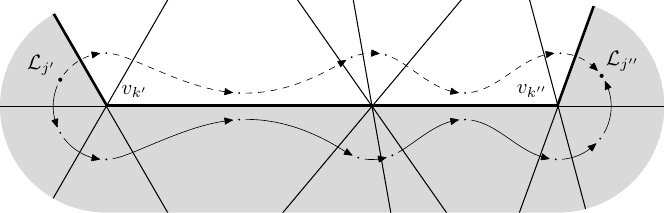}
\caption{The proof of Lemma~\ref{lem:bottle-prop}. The solid lines represent the subpath $(\L_{j'},\dots,\L_{j''})$ and the dashed ones indicate a shorter alternative.}
\label{fig:lem-bottle}
\end{figure}
%%%%%%%%%%%%%%%%

3) Assume that $l_{k'}=l_{k''}=1$, $l_i=0$ for $k'<i<k''$. Then the edges $v_{k'}v_{k'+1},\dots,\allowbreak v_{k''-1}v_{k''}$ form a geodesic segment $\g$. Let $\L_{j'}$ be the first domain in $\underline\L$ adjacent to $v_{k'}$ and $\L_{j''}$ be the last one adjacent to $v_{k''}$. The condition implies that 
the path from $\L_{j'}$ to $\L_{j''}$ crosses $\g$   both at $v_{k'}$ and $v_{k''}$. But this is impossible by Remark~\ref{rem:cross-once}, see Figure~\ref{fig:lem-bottle}.

4) We prove this by induction on $k$. The statement clearly holds for $k=0$.
Suppose the edge $v_{k-1}v_k$ separates $\L_s$ and $\R_{s'}$, while $v_kv_{k+1}$ separates $\L_t$ and $\R_{t'}$.
If $(l_k,r_k)=(0,0)$ then $t=s+n(v_k)-1$, $t'=s'+n(v_k)-1$, hence $t-t'=s-s'$, so the statement for $k-1$ implies it for $k$.

Similarly, if $(l_k,r_k)=(1,-1)$ then the previous nonzero $(l_j,r_j)$ should have $r_j=1$ by item 3, hence $s'=s+1$ by the induction assumption.
Also $t=s+n(v)$, $t'=s'+n(v)-2$, thus $t=t'+1$ and we have proved the statement for $k$.
\end{proof}

Besides describing the core of the bottle, Lemma~\ref{lem:bottle-prop} also allows us to describe how adjacent levels in the bottle are attached to one other.  
\begin{lemma}\label{lem:bottle-local}
Let $\underline\S=(\S_0,\S_1,\dots,\S_n)$ be a bottle.\\
1) Every level $\S_k$, $k=1,\dots,n-1$ contains two domains with a common vertex $v$.
The flower $\F_{v}$ is split by these two domains into two sectors, each containing an odd number of petals.
The sector bounded by the two radii of $\F_v$ which make up $\S_{k-1}\cap\S_{k}$ we call the ``past'' sector and that 
bounded by the  radii making up $\S_{k}\cap\S_{k+1}$ the ``future''  sector.\\
2a) If the future sector for $\S_k$ contains at least three petals, then $\S_{k+1}$ consists of the two petals in this sector adjacent to $\S_k$.\\
2b) If the future sector for $\S_k$ contains only one petal, there are the following possibilities: \\
\qquad (i) $k+1=n$ and $\S_n$ is the only domain in the future sector; or\\
\qquad (ii) let the boundary of the future sector be $v_Lvv_R$. Then $\S_{k+1}$ contains either the two domains from $\F_{v_L}$ adjacent to $\S_{k}$ (the ``left'' subcase),  or the two domains from $\F_{v_R}$ adjacent to $\S_{k}$ (the ``right'' subcase). \end{lemma}

\begin{proof}This is straightforward by induction on $k$. The situation in (2b) corresponds to the transition from the levels belonging to the flower $\F_{v_j}$ to the levels belonging to $\F_{v_{j+1}}$. The ``left'' (respectively, ``right'') subcase means that $v_jv_{j+1}$ separates $\L_k$ and $\R_{k+1}$,
(respectively, $\L_{k+1}$ and $\R_k$). Thus if both transitions $v_{j-1}\to v_j$ and $v_j\to v_{j+1}$ belong to the same (``left'' or ``right'') subcase, then the segment  $v_{j-1}v_jv_{j+1}$ of the core curve is straight by item~4 in~Lemma~\ref{lem:bottle-prop}. Similarly, if $v_{j-1}\to v_j$ is the ``left'' subcase and $v_j\to v_{j+1}$ is of ``right'' one, the core curve
$v_{j-1}v_jv_{j+1}$  bends to the right through one petal.
\end{proof}

\section{The Markov coding}
\label{sec:markov-coding}

As was explained in the introduction, states of our topological Markov chain should describe how the ``past'' level $\S_-=\S_k$ of a thickened path is attached to its ``future'' level $\S_+=\S_{k+1}$. More specifically, a state of the Markov chain should describe the arrangement of $\S_-$ and $\S_+$ up to the $G$-action. The set $\widehat \Xi$ of possible arrangements  is listed in Definition~\ref{defn:listingstates}.  However, to construct the actual states $ \Xi$ of our Markov chain we have to endow these arrangements with some additional data;  this is done in Subsection~\ref{subsec:states}.
In Subsection~\ref{subsec:transitionmatrix} we list the admissible transitions  between states,  thus defining the transition matrix $\Pi$. In Subsection~\ref{subsec:correspondence} we
prove the important result that there is a bijective correspondence between admissible sequences  and thickened paths.  
Finally, Subsection~\ref{subsec:involution} presents a time-reversing involution on the set of states.

\subsection{States of the Markov chain}
\label{subsec:states}

\subsubsection{Types of adjacency} As we have seen in Lemma~\ref{lem:bottle-local}, the adjacency graph for the domains in $\S_-\cup \S_+$ has one of the following types as illustrated in Figure~\ref{fig:configs} below:  
\begin{enumerate}[itemsep=0pt,label={$\Alph*$.}]
	\item $\#\S_-=1$, $\#\S_+=1$, and the graph contains the only possible edge from $\S_-$ to $\S_+$. This corresponds to a trivial bottle.
	\item $\#\S_-=1$, $\#\S_+=2$, and the graph contains both edges from $\S_-$ to $\S_+$. This state starts a bottle.
	\item $\#\S_-=2$, $\#\S_+=2$, and the edges join the left domain in $\S_-$ to the left domain in $\S_+$ and the right domain in $\S_-$ to the right domain in $\S_+$ (this is the case from item~2a of Lemma~\ref{lem:bottle-local}).
	\item  $\#\S_-=2$, $\#\S_+=1$, and the graph contains both edges from $\S_-$ to $\S_+$. This state ends a bottle (the case 2b (i) of the lemma).
	\item $\#\S_-=2$, $\#\S_+=2$, and the graph contains three edges, the two described for type $C$, and one more. This type is subdivided into the type $E_L$, where the third edge goes from the left domain in $\S_-$ to the right domain in $\S_+$, and the type $E_R$, where it goes from the right domain in $\S_-$ to the left domain in $\S_+$. The states of type $E$ correspond to the transitions from one flower to the next one inside a bottle (the ``left'' and ``right'' subcases in
case 2b (ii) of the lemma).
\end{enumerate}

\subsubsection{Labelling} \label{subsubsec:labelling}

The notation for each state of our Markov chain includes the type $A\dots E$ of the state from the list above and the label(s)  corresponding to generators  on the sides separating $\S_-$ and $\S_+$. More precisely, these separating sides form a polygonal curve, which is co-oriented from $\S_-$ to $\S_+$ and thus oriented from left to right when looking from $\S_-$ to $\S_+$. The notation for a state is found by  recording in order from left to right the labels on the $\S_+$-side of the separating sides, see Definition~\ref{defn:listingstates} below.

Importantly, the same orientation on this separating curve allows us to define ``left'' and ``right'' domains in each of $\S_\pm$, namely, $\L_\pm$ (respectively, $\R_\pm$) is the only domain in $\S_\pm$ that borders the leftmost (respectively, the rightmost) edge of the separating curve. As before, if $\S_\pm$ contains only one domain we have $\L_\pm=\R_\pm$.

%%%%%%%%%%%%%%%%%%%%
\begin{figure}[hbt]
\centering
\includegraphics{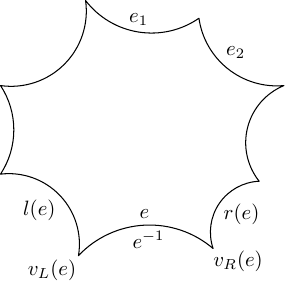}
\caption{The definitions of vertices $v_L(e)$, $v_R(e)$ and labels $l(e)$, $r(e)$. Labels $e_1,e_2\in G_0$ are adjacent.}
\label{fig:notation-l-r}
\end{figure}
%%%%%%%%%%%%%%%%

Clearly, the labels which appear in the notation of the state must satisfy some restrictions. To express these we introduce some notation regarding vertices, sides, and labels as shown in Figure~\ref{fig:notation-l-r}.
For any $e\in G_0$ consider the side $s_e$ of $\R$ so that its label inside $\R$ is $e$.
We co-orient this side from the outside to the inside of $\R$, and the corresponding orientation of $s_e$ allows us to define  its \emph{left vertex} $v_L(e)$ and the \emph{right vertex} $v_R(e)$. Note that $v_L(e)$ or $v_R(e)$ is undefined if the corresponding end of~$s_e$ lies on $\dd\DD$. The same notation $v_{L,R}(s)$ will be used for the ends of a co-oriented side~$s$ of the tessellation~$\TR$.

\begin{definition}\label{defn:adjacent} The labels $e_1$ and $e_2$ are called \emph{adjacent} if the sides of $\R$ with these \emph{outgoing} labels have a common vertex, i.~e.\ either $e_1=e_2$, or $v_L(e_1^{-1})=v_R(e_2^{-1})$, or vice versa, see Figure~\ref{fig:notation-l-r}.
\end{definition}
We now define maps $l$ and $r$ on the set of labels as shown in Figure~\ref{fig:notation-l-r}.
Informally speaking, we do the following: for $e\in G_0$ we go around $v_L(s_e)$ in the  counterclockwise direction, then the next side we cross after $s_e$ has the label $l(e)$ outside $\R$. Similarly, going clockwise around $v_R(s_e)$ we obtain $r(e)$.
Formally we define $l(e)$ and $r(e)$ as the labels such that $v_R(l(e)^{-1})=v_L(e)$, $v_L(r(e)^{-1})=v_R(e)$. Note that $l(e)$ or $r(e)$ is undefined if the corresponding end of $s_e$ lies on $\dd\DD$.

%%%%%%%%%%%%%%%%%%%%
\begin{figure}[tb]
\centering
\begin{tabular}{rcrc}
a)&\raisebox{5ex}{\raisebox{-\height}{\includegraphics{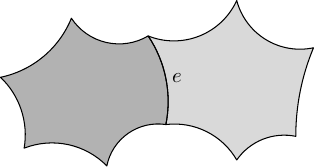}}}&\qquad b)&\raisebox{5ex}{\raisebox{-\height}{\includegraphics{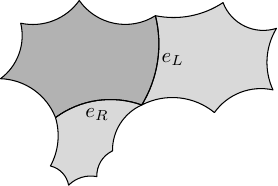}}}\\
c)&\raisebox{5ex}[10ex]{\raisebox{-\height}{\includegraphics{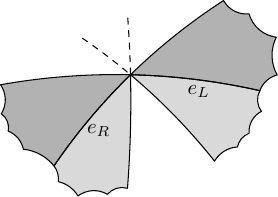}}}&\qquad d)&\raisebox{5ex}{\raisebox{-\height}{\includegraphics{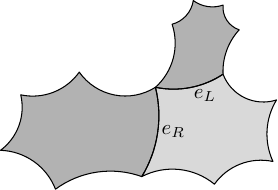}}}\\
\multicolumn{4}{c}{e)\hspace{\arraycolsep}\raisebox{5ex}[10ex]{\raisebox{-\height}{\includegraphics{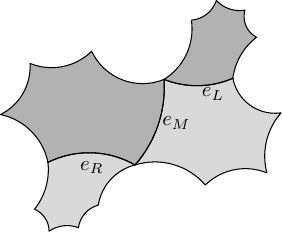}}}}
\end{tabular}
\caption{Configurations for states of the Markov coding:\protect\\ a)~$A(e)$, b)~$B(e_L,e_R)$, c)~$C_2(e_L,e_R)$, d)~$D(e_L,e_R)$, e)~$E_R(e_L,e_M,e_R)$.\protect\\
The domains in $\S_-$ and $\S_+$ are indicated respectively by the dark and the light shades of gray.}
\label{fig:configs}
\end{figure}
%%%%%%%%%%%%%%%%

\subsubsection{The possible arrangements $\widehat{\Xi}$} The following definition specifies the set $\widehat{\Xi}$ of  all possible arrangements  of $\S_-$ and $\S_+$ up to the action of $G$. Later, we will refine this in order to  list the actual states $\Xi$ of the Markov chain.

\begin{definition}\label{defn:listingstates}
The set $\widehat{\Xi}$ 
consists of the following elements (see Figure~\ref{fig:configs}):
\begin{itemize}
	\item $A(e)$: $\#\S_-=\#\S_+=1$, and $e$ is the label on the $\S_+$-side of the common side of $\S_-$ and $\S_+$.
	\item $B(e_L,e_R)$: $\#\S_-=1$, $\#\S_+=2$, $e_L$ and $e_R$ are $\S_+$-labels on the common sides of $\S_-$ with the left and the right domains in $\S_+$ respectively. Since these sides of $\S_-$ are adjacent, we have that $v_L(e_L^{-1})=v_R(e_R^{-1})$.
	\item $C_k(e_L,e_R)$: $\#\S_-=\#\S_+=2$, and all four domains in $\S_\pm$ share a common vertex~$v$. The label $e_L$ (respectively, $e_R$) is the  $\S_+$-label on the common side of the left (respectively, right) domains in $\S_-$ and $\S_+$, and the sector of the flower at~$v$ between these two sides that contains $\S_-$ consists of $2k+1$ petals.
	Denoting $n(e_L,e_R)=n(v)=n(v_R(e_L))=n(v_L(e_R))$, then $1\le k\le n(e_L,e_R)-2$ and we have $l^{2k+1}(e_L^{-1})=e_R$.
	\item $D(e_L,e_R)$: $\#\S_-=2$, $\#\S_+=1$, $e_L$ and $e_R$ are $\S_+$-labels on the common sides of the left and the right domain in $\S_-$ with the domain $\S_+$. The adjacency condition gives
	$v_R(e_L)=v_L(e_R)$.
	\item $E_{L,R}(e_L,e_M,e_R)$: $\#\S_-=\#\S_+=2$. The four domains in $\S_-$ and $\S_+$ do not have a common vertex, and there are three sides separating them. The state $E_L$ represents the case when these sides form an \textsf{N}-shaped line, that is, the left past domain borders both future domains via sides with the $\S_+$-labels $e_L$ and $e_M$, and the right past domain borders only the right future domain via the side with the label~$e_R$. Thus we have $v_L(e_L^{-1})=v_R(e_M^{-1})$ and $v_R(e_M)=v_L(e_R)$. The state $E_R$ is the same with left and right inverted: the boundary is \scalebox{-1}[1]{\textsf{N}}-shaped, and $v_R(e_L)=v_L(e_M)$, $v_L(e_M^{-1})=v_R(e_R^{-1})$.
\end{itemize}
\end{definition}

\subsubsection{Refining the arrangements.}

It is clear that every configuration of adjacent levels in a thickened path belongs to the set $\widehat{\Xi}$. On the other hand, the set of all possible sequences of configurations cannot be generated by a Markov chain. For example, for a vertex $v$ with $n(v)\ge 3$ it is allowed that $\S_i$, $\S_{i+1}$, $\S_{i+2}$ are consecutive petals around $v$, say, in the counterclockwise direction. Then if $e$ is the label on the future side of $\S_i\cap \S_{i+1}$, the label on the future side of $\S_{i+1}\cap \S_{i+2}$ is $l(e)$, and we have that the transition $A(e)\to A(l(e))$ is admissible.
On the other hand, a long sequence $A(e)\to A(l(e))\to A(l(l(e)))\to\dots$ is not admissible, since the respective sets $\S_i$ are still the consecutive petals around $v$, and a thickened path cannot have $v$ on its boundary and contains more than $n(v)$ petals around~$v$.

To solve this problem we endow the states of type $A$ with some additional information  based on the following statement.

\begin{prop}\label{prop:conv-boundary}Let $\underline\S$ be a thickened path. Suppose a vertex $v\in\partial\underline\S$ belongs to the boundary of $\S_k$ for $k=i,\dots,j+1$, where $j>i$. Then either
\begin{enumerate}[itemsep=0pt]
	\item $j=i+2$ and both pairs $(\S_i,\S_{i+1})$, $(\S_{i+1},\S_{i+2})$ represent $E$-states (see Figure~\ref{fig:EE-case}) or
	\item for all $k=i+1,\dots,j-1$ the pair $(\S_k,\S_{k+1})$ represents a state of type $A$, for $k=i$ it represents a state of type $A$ or $D$, and for $k=j$ it represents a state of type $A$ or $B$, depending on whether $\S_i$ or $\S_{j+1}$ respectively contains two domains. Moreover $j-i \leq n(v)$.
\end{enumerate}
\end{prop}

\begin{proof} Suppose first that a vertex $v\in\DD$ belongs to three consecutive levels $\S_k$, $\S_{k+1}$, $\S_{k+2}$ of the thickened path, and $\#\S_{k+1}=2$. If, say, $v$ belongs to $\dd_L\underline{\S}$, then $\dd_L\S_{k+1}$ consists of the  vertex $v$ only. Then $\R$ must be compact and hence $N(\R)\ge 4$.

The two domains in $\S_{k+1}$ must be  petals of $\F_u$ for some vertex $u \neq v$. 
The left domain $\L_{k+1}$ in the level $\S_{k+1}$ must meet  $\S_{k+2}$ along at least two sides, one emanating from $v$ and one from $u$. Moreover these sides, being common sides of one domain in two levels, must themselves be adjacent, hence meet in a vertex $w$ say. By the same argument,  $\L_{k+1}$ meets $\S_{k}$ along two adjacent sides which meet in a vertex $w'$ say. 
Hence $N(\R)=4$, and each of $\L_{k+1}\cap \S_{k}$ and $\L_{k+1}\cap \S_{k+2}$ contains two sides, see Figure~\ref{fig:EE-case}. We deduce that  the states representing the pairs $(\S_k,\S_{k+1})$, $(\S_{k+1},\S_{k+2})$ are of types $E_R$ and $E_L$ respectively. In particular, this means that $v$ cannot belong to four consecutive levels of the thickened path, and we see that were are in case (1)  of the proposition.   

%%%%%%%%%%%%%%%%%%%%
\begin{figure}[hbt]
\centering
\includegraphics{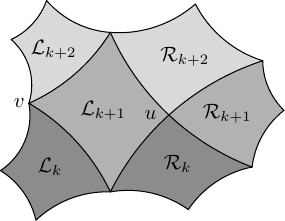}
\caption{Two consecutive $E$ states, see Proposition~\ref{prop:conv-boundary}.}
\label{fig:EE-case}
\end{figure}
%%%%%%%%%%%%%%%%

It remains to consider the case when $\#\S_k=1$ for all $k=i+1,\dots,j-1$. If $\S_i $ contains two domains and $\S_{i+1}$ contains one, then $(\S_i , \S_{i+1})$  must be of type $D$; otherwise $\S_i$  contains one domain and   $(\S_i , \S_{i+1})$  must be of type $A$, with a similar argument for the transition $(\S_j , \S_{j+1})$, which implies case (2)  of the proposition.   
\end{proof}

\begin{remark}\label{rem:EE-case}Assumption~\ref{asm:R} yields that in the first case in this proposition we have $n(v)\ge 3$. Indeed, $N(\R)=4$, $\R$ is compact, and for the common vertex $u$ of the two domains in $\S_{k+1}$ we have $n(u)=2$, see Figure~\ref{fig:EE-case}.  
\end{remark}  

Let $(\S_k,\S_{k+1})$ form a configuration $A(e)$ and $s_k$ be the common side of $\S_k$ and $\S_{k+1}$.
Then one can define four numbers $i_{\pm,L}$ and $i_{\pm,R}$ as follows:
$i_{-,\alpha}$ (resp., $i_{+,\alpha}$), $\alpha\in\{L,R\}$, is the number of $m\le k$ (resp., $m\ge k+1$) such that $\S_m$ contains $v_\alpha(s_k)$. If the vertex $v_\alpha(s_k)$ is not defined, we set $i_{\pm,\alpha}=1$.

Note that it is not possible to have $i_{-,L}>1$ and $i_{-,R}>1$ simultaneously: these conditions mean that both $\partial_L\S_k$ and $\partial_R\S_k$ each consist of a vertex only, say $v$ and $v'$. Indeed  since the sides  of $\S_k$ adjacent to $v$ and $v'$ must both be in common with $\S_{k+1}$, and since by assumption the transition is of type $A$,  this would mean that $vv'$ is the common side $s$ of $\S_k, \S_{k+1}$. But the sides $t, t'$ of $\S_k$ adjacent to $s$  must also both be adjacent to $\S_{k-1}$. Now $\ell(t), \ell(t')$ cannot meet, otherwise we have a triangle in $\TR$, so they separate $\S_{k-1}$ into two disconnected components which is impossible. The same argument applies  to $i_{+,L}$ and $i_{+,R}$.

The convexity of $\underline\S$ at $v_\alpha(s_k)$, $\alpha=L,R$, implies that $i_{-,\alpha}+i_{+,\alpha}\le n(v_\alpha(s_k))$. It follows that the configuration $A(e)$ can be subdivided as follows, see Figure~\ref{fig:type-A}:

%%%%%%%%%%%%%%%%%%%%
\begin{figure}[tb]
\centering
\begin{tabular}{lll}
a)\enskip $A_0(e)$ &b)\enskip $A_L[2,1](e)$&c)\enskip $A_{LR}[3,2](e)$\\[4pt]%
\includegraphics{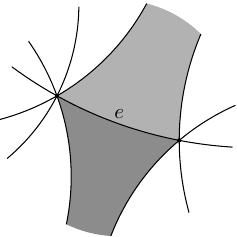}\hspace*{1em}&%
\includegraphics{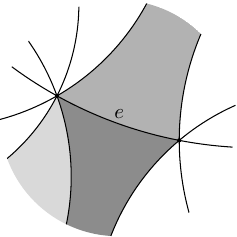}\hspace*{1em}&%
\includegraphics{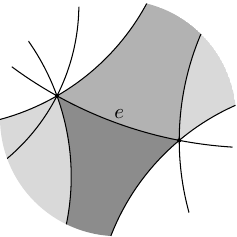}\\
\end{tabular}\\[\bigskipamount]
\begin{tabular}{cc}
\multicolumn{1}{b{5cm}}{\raggedright d)\enskip Impossible ``$A_R[2,2](e)$'':\\ $i_-+i_+>n(v_R(e))$, thus\\
convexity in $v_R(e)$ fails}&%
\multicolumn{1}{b{5cm}}{\raggedright e)\enskip Impossible subtype:\\ both $i_{+,L}$ and $i_{+,R}$\\ are greater than~$1$}\\[4pt]%
\includegraphics{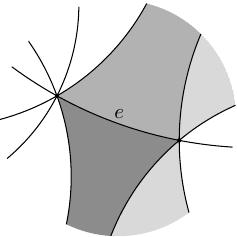}\qquad&%
\includegraphics{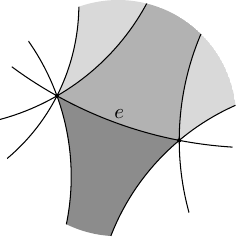}\\%
\end{tabular}
\caption{Possible (a--c) and impossible (d--e) subtypes for type $A$ states.
The dark and medium gray domains are respectively the past and the future domain for the current state, light gray domains are other domains from the thickened path. In the figure, $n(v_L(e))=4$, $n(v_R(e))=3$.}
\label{fig:type-A}
\end{figure}
%%%%%%%%%%%%%%%%

\begin{itemize}
\item $A_0(e)$: all four $i_{\pm,L/R}$ equal one.
\item $A_L[i_-,i_+](e)$: here $i_{-,L}=i_-$, $i_{+,L}=i_+$, $i_{-,R}=i_{+,R}=1$, and the indices $i_\pm$ should satisfy $3\le i_-+i_+\le n(v_L(e))$.
\item $A_R[i_-,i_+](e)$: symmetric to the previous case; here $3\le i_-+i_+\le n(v_R(e))$.
\item $A_{LR}[i_-,i_+](e)$: here $i_{-,L}=i_-$, $i_{+,R}=i_+$, and $i_{+,L}=i_{-,R}=1$. The conditions on the indices $i_\pm$ are $2\le i_-\le n(v_L(e))-1$, $2\le i_+\le n(v_R(e))-1$.
\item $A_{RL}[i_-,i_+](e)$: symmetric to the previous case; here $2\le i_-\le n(v_R(e))-1$, $2\le i_+\le n(v_L(e))-1$.
\end{itemize}

\begin{remark}\label{rem:specialcase}
If $N(\R)=3$ and $\R$ has a compact side (as is the case for example for the classical fundamental domain for the group $SL(2,\ZZ)$), some of these states may be absent. Namely, let $s$ be the only compact side of $\R$ and $g$ be its label outside of~$\R$.
If $(\S_k,\S_{k+1})$ has the form $A(g)$, then $\S_{k+2}$ must contain at least one of the domains adjacent to the sides of $\S_{k+1}$, hence either $i_{+,L}$ or $i_{+,R}$ is greater than one so there are no states $A_0(g)$.   Similarly, either $i_{-,L}>1$ or $i_{-,R}>1$. This case needs special consideration in several statements below, and we usually refer to it as ``the special case from Remark~\ref{rem:specialcase}''.

Note that even in this case the list of $A_{\dots}(g)$-states is not completely empty. Indeed, since $s$ is the only compact side, it must be paired to itself: $g=g^{-1}$. Therefore, the ends of $s$ are swapped by the action of $g$, hence $n(v_L(g))=n(v_R(g))=n$. Let $\alpha$ and $\beta$ be the angles of $\R$ at the ends of $s$. Consider the flower around a vertex $v\in\DD$. Note that the sides incident to $v$ are alternately compact and non-compact, and the angles between these sides are alternately $\alpha$ and $\beta$. Therefore, $n\alpha+n\beta=2\pi$. On the other hand, the sum of angles in the hyperbolic triangle $\R$ is $\alpha+\beta<\pi$. Consequently, $n\ge 3$, and for example, the state $A_{LR}[2,2](g)$ is allowed.
\end{remark}

\subsubsection{The states of the Markov chain.} Finally we are able to list the states ${\Xi}$.
\begin{definition}
The \emph{set of states} $\Xi$ of our Markov chain is the set of all states of types $B,C,D,E$ from the set $\widehat{\Xi}$ and of all subtypes of type $A$ states   enumerated in the previous list. We denote the projection from $\Xi$ to $\widehat{\Xi}$ by $\pi$.
\end{definition}

Finally, let us define sets $\Xi_S, \Xi_F\subset\Xi$ as follows:
\begin{align*}
\Xi_S&{}=\{A_0(e), A_L[1,i_+](e), A_R[1,i_+](e), B(e_L,e_R)\},\\
\Xi_F&{}=\{A_0(e), A_L[i_-,1](e), A_R[i_-,1](e), D(e_L,e_R)\},
\end{align*}
where the parameters $i_{\pm}$, $e$, $e_L$, $e_R$ admit all possible values. In the special case from Remark~\ref{rem:specialcase} these definitions are amended as follows: if $g=g^{-1}$ is the label on the compact side of $\R$, we include $A_{LR}[2,i_+](g), A_{RL}[2,i_+](g)$ in $\Xi_S$
and $A_{LR}[i_-,2](g), A_{RL}[i_-,2](g)$ in $\Xi_F$ in place of the $A$ states listed above.

\subsection{The admissible transitions}\label{subsec:transitionmatrix}

\begin{definition}\label{def:transitions}
The set of \emph{admissible transitions} in our Markov coding is enumerated in the following list. We denote by $\Pi$ the $\Xi\times\Xi$ adjacency matrix for the corresponding topological Markov chain and write $j\to j'$ if the transition from $j$ to $j'$ is admissible according  to this list.
(Recall that the adjacency of labels was defined near the beginning of this section, see Figure~\ref{fig:notation-l-r}.)
\begin{itemize}
	\item $A_0(e)\to
	\begin{cases}
	A_0(e')&\text{for $e'$ non-adjacent to $e^{-1}$},\\
	A_L[1,i_+](e')&\text{for $e'$ non-adjacent to $e^{-1}$, any admissible $i_+$},\\
	A_R[1,i_+](e')&\text{for $e'$ non-adjacent to $e^{-1}$, any admissible $i_+$},\\
	B(e_L,e_R)&\text{for any $e_L,e_R$ non-adjacent to $e^{-1}$}.
	\end{cases}$
	\item If $i_+>1$ then \\
	$A_L[i_-,i_+](e)\to
	\begin{cases}
	A_L[i_-+1,i_+-1](l(e)),&\\
	A_{LR}[i_-+1,j_+](l(e)),&\llap{$\biggl|\,$}\parbox{0.3\textwidth}{if $i_+=2$,\newline for any admissible $j_+$,}\\
	B(l(e),r(l(e))^{-1}),&\text{if $i_+=2$}.
	\end{cases}$
	\item $A_L[i_-,1](e)\to (\text{the same cases as for $A_0(e)$})$.
	\item $A_{RL}[i_-,i_+](e)\to (\text{the same cases as for $A_L[1,i_+](e)$})$.
	\item The transitions for the $A_R$- and $A_{LR}$-states are similar with the exchange of left and right.
	\item $B(e_L,e_R)\to C_1(r(e_L),l(e_R))$ if $n(e_L,e_R)\ge 3$,\\
	if $n(e_L,e_R)=2$ the transitions for $B(e_L,e_R)$ are the same as for $C_{n(e_L,e_R)-2}(e_L,e_R)$ below.
	\item $C_i(e_L,e_R)\to C_{i+1}(r(e_L),l(e_R))$, for $i<n(e_L,e_R)-2$,
	\item $C_{n(e_L,e_R)-2}(e_L,e_R)\to
	\begin{cases}
	D(r(e_L),l(e_R)),&\\
	E_L(r(r(e_L)^{-1}),r(e_L),l(e_R)),&\\
	E_R(r(e_L),l(e_R),l(l(e_R)^{-1})).&
	\end{cases}$
	\item
	$D(e_L,e_R)\to\begin{cases}
	A_0(e')&\text{for $e'$ non-adjacent to $e_L^{-1},e_R^{-1}$},\\
	 \begingroup\arraycolsep=0pt\begin{array}{l}A_L[1,i_+](e')\\A_R[1,i_+](e')\end{array}\Biggr\}\endgroup&\llap{$\biggl|\,$}\parbox{0.5\textwidth}{for $e'$ non-adjacent to $e_L^{-1},e_R^{-1}$,\newline any admissible $i_+$,}\\
	B(e_L',e_R')&\llap{$\left|\vbox to 24pt{}\right.\mathsurround=0pt\nulldelimiterspace=0pt\,$}\parbox{0.5\textwidth}{for $e_L',e_R'$ either not adjacent to $e_L^{-1},e_R^{-1}$,\newline or adjacent via a vertex $v$ with $n(v)>2$\newline or adjacent via two such vertices,}\\
    \begingroup\arraycolsep=0pt\begin{array}{l}A_L[2,i_+](l(e_L))\\A_{LR}[2,i_+](l(e_L))\end{array}\Biggr\}\endgroup&\llap{$\biggl|\,$}\parbox{0.5\textwidth}{if $n(v_L(e_L))>2$,\newline for any admissible $i_+$,}\\
    \begingroup\arraycolsep=0pt\begin{array}{l}A_R[2,i_+](r(e_R))\\A_{RL}[2,i_+](r(e_R))\end{array}\Biggr\}\endgroup&\llap{$\biggl|\,$}\parbox{0.5\textwidth}{if $n(v_R(e_R))>2$,\newline for any admissible $i_+$.}\\
	\end{cases}$
	\item $E_L(e_L,e_M,e_R)$ has the same set of transitions as $B(e_L,e_M)$.
	\item $E_R(e_L,e_M,e_R)$ has the same set of transitions as $B(e_M,e_R)$.
\end{itemize}
\end{definition}

\subsection{Correspondence with thickened paths}\label{subsec:correspondence}

We now come to the important result that admissible sequences of  states do indeed  correspond to thickened paths.  
Precisely, define
\begin{multline}\label{eq:PathsSF} \Paths_{N-1}^{S\to F}
= \\ \{(j_0,\dots,j_{N-1})\subset\Xi^N: j_0\in\Xi_S, j_{N-1}\in\Xi_F,  \Pi_{j_k,j_{k+1}}=1\text{ for }k=0,\dots,N-2\}.
\end{multline}

We will show that $\Paths_{N-1}^{S\to F}$ is in 1:1-correspondence with the set of the thickened paths of length $N$.

\begin{definition} \label{defn:generates}Say that a sequence $\underline j$ of states \emph{generates} a sequence of domains $\underline\S$ if for each $k$ the pair $(\S_k,\S_{k+1})$ represents the configuration $\pi(j_k)$. 
\end{definition} 
\begin{thm}\label{thm:th-path-vs-sphere}
Let $\underline\S=(\S_0,\dots,\S_N)$ be a thickened path starting at $\R$. Then there exists a unique sequence of states $\underline j\in\Paths_{N-1}^{S\to F}$ which generates the sequence $\underline\S$.
Moreover, this mapping of thickened paths of length $N$ starting in $\R$ to the set $\Paths_{N-1}^{S\to F}$ is a bijection.
\end{thm}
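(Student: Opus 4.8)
The plan is to establish the bijection by constructing the map in both directions and checking that they are mutually inverse. First I would define the forward map $\Phi$ sending a thickened path $\underline{\S}$ to a sequence of states. By Corollary~\ref{cor:conv-sections} together with Proposition~\ref{prop:conv-descr}, each adjacent pair $([\underline\S]_k,[\underline\S]_{k+1})$ falls into exactly one of the configuration types $A,\dots,E$ of $\widehat\Xi$, so we get a well-defined element $\pi(j_k)\in\widehat\Xi$; the content is to show that this lifts canonically to a state $j_k\in\Xi$, i.e.\ that the extra combinatorial data attached to type $A$ states (the subtype and the indices $i_\pm$) is determined by $\underline\S$. This is precisely what Proposition~\ref{prop:conv-boundary} together with the discussion of the numbers $i_{\pm,L},i_{\pm,R}$ provides: for a type $A$ pair at level $k$ the quantities $i_{-,\alpha},i_{+,\alpha}$ are read off from how many consecutive levels of $\underline\S$ touch the vertices $v_\alpha(s_k)$, and the convexity of $\dd\underline\S$ forces $i_{-,\alpha}+i_{+,\alpha}\le n(v_\alpha(s_k))$ and forbids the impossible subtypes (d) and (e) of Figure~\ref{fig:type-A}. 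One also checks that $j_0\in\Xi_S$ and $j_{N-1}\in\Xi_F$: since $[\underline\S]_0=\R$ is the single initial domain, the vertices $v_\alpha(s_0)$ are touched by only one past level, giving $i_{-,\alpha}=1$, and symmetrically at the terminal end; the special case from Remark~\ref{rem:specialcase} is handled by the amended definitions of $\Xi_S,\Xi_F$.

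Next I would verify that the sequence $(j_0,\dots,j_{N-1})$ produced this way actually lies in $\Paths_{N-1}^{S\to F}$, i.e.\ that every consecutive pair $(j_k,j_{k+1})$ is an admissible transition in the sense of Definition~\ref{def:transitions}. This is a case analysis over the type of $j_k$. The geometric input is again local: Proposition~\ref{prop:conv-section} describes how a nontrivial section is built from crossings of wide-top/wide-bottom trapezoids and parallelograms, which forces the $C\to C$ and $C\to\{D,E_L,E_R\}$ transitions and the index bookkeeping in the $C_i(e_L,e_R)$ states via the relation $l^{2k+1}(e_L^{-1})=e_R$; Proposition~\ref{prop:conv-boundary} governs what can follow a type $A$ or type $E$ pair around a shared vertex, which is exactly where the $A_L[i_-,i_+]\to A_L[i_-+1,i_+-1]$ decrement rule and the "$B$ has the same transitions as $C_{n-2}$ when $n=2$" convention come from. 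The label formulas ($l(e),r(e)$, adjacency) match by the definitions in Figure~\ref{fig:notation-l-r} and the conventions on $\S_+$-labels.

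For the inverse direction I would, given $\underline{j}\in\Paths_{N-1}^{S\to F}$, reconstruct $\underline\S$ level by level: starting from $[\underline\S]_0=\R$, the state $j_0$ together with its labels determines $[\underline\S]_1$ (which domains, attached across which sides of $\R$), and inductively $j_k$ determines $[\underline\S]_{k+1}$ from $[\underline\S]_k$ uniquely, because a state records both the combinatorial adjacency type and the labels on the separating curve. One must check this is consistent (the two descriptions of a level coming from $j_{k-1}$ and from $j_k$ agree) and that the resulting family of domains with indices $0,\dots,N$ satisfies properties 1--5 of Lemma~\ref{lem:indexing}, is convex, and has every nontrivial section of the shape in Proposition~\ref{prop:conv-section} --- then Proposition~\ref{prop:conv-descr} certifies that $\underline\S$ is a genuine thickened path from $\R$. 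That $\Phi$ and this reconstruction are mutually inverse is then immediate from the fact that at each step the data determines, and is determined by, the geometry.

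The main obstacle I expect is the bookkeeping around shared vertices: making sure that the index data $[i_-,i_+]$ on a chain of type $A$ states, and the $C_i$ counters inside a nontrivial section, are both read correctly from $\underline\S$ and propagated correctly by the transition rules, with no off-by-one error, and that the various degenerate situations ($n(v)=2$, noncompact $\R$, the special case of Remark~\ref{rem:specialcase} with $N(\R)=3$) are all consistently absorbed into the amended definitions of $\Xi_S$, $\Xi_F$ and the transition list. The rest is a fairly mechanical, if lengthy, matching of local pictures against the formal definitions.
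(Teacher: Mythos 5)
Your proposal follows essentially the same three-part structure as the paper's own proof: read off the state sequence from the thickened path's local configurations (lifting type~$A$ configurations to subtyped states via the counts $i_{\pm,L/R}$ and Proposition~\ref{prop:conv-boundary}), verify admissibility of the resulting transitions, and reconstruct a thickened path from any admissible state sequence via Proposition~\ref{prop:conv-descr}. The one point you treat as ``immediate'' --- that the forward map and the reconstruction are mutually inverse --- is precisely where the paper's proof does real work: since the indices $i_{\pm,L/R}$ are not local data, the uniqueness of the lift from $\widehat\Xi$-sequences to $\Xi$-sequences needs the observation that $i_{-,L/R}(j_k)$ is determined by $\pi(j_{k-1})$, $\pi(j_k)$ and $i_{-,L/R}(j_{k-1})$, hence propagates forward from the initial condition $i_{-,L/R}(j_0)=1$ forced by $j_0\in\Xi_S$, with $i_{+,L/R}$ propagated symmetrically backward from $j_{N-1}\in\Xi_F$; you correctly flag this bookkeeping as the main obstacle, and that propagation argument is its resolution.
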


\begin{proof}
The proof is split into two parts. First, we show that for every thickened path~$\underline\S$ there exists a unique sequence $\underline j\in\Paths_{N-1}^{S\to F}$ that generates $\underline\S$. Second, we prove that the sequence $\underline\S$ of domains generated by any sequence $\underline j\in\Paths_{N-1}^{S\to F}$ is the thickened path between its ends.

\medskip

\noindent\textbf{Part 1.} Every thickened path $\underline\S=(\S_0,\dots,\S_N)$ can be generated by a unique sequence $\underline j\in\Paths_{N-1}^{S\to F}$.

\smallskip

\noindent\textit{Step 1.} By hypothesis, each pair $(\S_k,\S_{k+1})$ in a thickened path $\underline\S$ represents a unique configuration $\hat{\jmath}_k\in\widehat{\Xi}$. Further, for every configuration of type $A$ one can recover the  indices $i_{\pm,L/R}$ as described above, thus arriving at the states $j_k$ with $\pi(j_k)=\hat{\jmath}_k$. Note that if $\hat{\jmath}_0=A(e)$ then the state $j_0$ has $i_{-,L}=i_{+,R}=1$, so $j_0\in\Xi_S$.
In the special case from Remark~\ref{rem:specialcase} we need to amend these indices as follows: if $\pi(j_0)=A(g)$, where $g$ is the label on the compact side, then either $i_{+,L}\ge 2$ or $i_{+,R}\ge 2$. In the former case we then set $i_{-,L}=1$, $i_{-,R}=2$ and in the latter we set $i_{-,L}=2$, $i_{-,R}=1$; this corresponds to the addition of the ``virtual domain'' $\S_{-1}$ to our thickened path. Note that the state $A_{RL}[2,i_+](g)$ has the same set of allowed transitions as the non-existent ``state $A_L[1,i_+](g)$''.

Now we have to check that all transitions $j_k\to j_{k+1}$ are admissible. There are three types of restrictions on the pair of states $(j_k,j_{k+1})$ in the list of Definition~\ref{def:transitions}.

First, there are restrictions on the configurations $\hat{\jmath}_k,\hat{\jmath}_{k+1}$, $\pi(j_k) = \hat {\jmath}_k$. For example, if $\hat{\jmath}_k=C_l(e_L,e_R)$, then
$\S_+=\S_{k+1}$ is a pair of petals meeting at a vertex $v$ with $2n(v)-2l-1$ petals in the ``future'' sector in $\F_v$. Therefore, if $l<n(v)-2$ by Lemma~\ref{lem:bottle-local} we see that $\S_{++}=\S_{k+2}$ is the pair of petals in the ``future'' sector adjacent to $\S_+$, hence $\hat{\jmath}_{k+1}=C_{l+1}(e'_L,e'_R)$ with $e'_L=r(e_L)$, $e'_R=l(e_R)$.

Second, there are restrictions on the indices $i_\pm$ of $A$-states.
If $j_k$ is an $A$-state with $i_{+,L}>1$ then $\S_{k+2},\dots, \S_{k+i_{+,L}+1}$ should contain the consecutive petals around the vertex $v_L(s_k)$ going from $\S_{k+1}$ in the counterclockwise direction.
Hence by Proposition~\ref{prop:conv-boundary}
the sequence $(j_{k+1},\dots,j_{k+i_{+,L}})$ is either of type $(A,\dots,A)$ or
$(A,\dots,A,B)$. Therefore, if $i_{+,L}>2$ then $\hat{\jmath}_{k+1}=A(l(e))$,
and for its indices $i'_{\pm,L}$ we have $i'_{-,L}=i_{-,L}+1$,
$i'_{+,L}=i_{+,L}-1$. Similarly, if $i_{+,L}=2$ we have either $\hat{\jmath}_{k+1}=B(l(e),r(l(e))^{-1})$ or $\hat{\jmath}_{k+1}=A(l(e))$ with the same relations for $i'_{\pm,L}$. In the latter case $i'_{+,L}=1$ so we have two subcases: either $i'_{+,R}=1$ or  $i'_{+,R}>1$, which correspond in Definition~\ref{def:transitions} to the transitions to $A_L$- and $A_{LR}$-states respectively.

Finally, there are restrictions related to the convexity of $\dd\underline\S$ (see Proposition~\ref{prop:NonSep}) which we need to check for the boundary vertices~$v$ that are incident to at least three levels in~$\underline{\S}$. These cases are enumerated in Proposition~\ref{prop:conv-boundary}. In the cases when the corresponding sequence of states contains $A$-states, the convexity is guaranteed by the inequalities on the indices $i_\pm$ for these states, so we need to consider only the cases when $(j_k,j_{k+1})$ have types $(E_L, E_R)$, $(E_R,E_L)$, and $(D,B)$. In the first two cases the convexity at $v$ holds, see Remark~\ref{rem:EE-case}.
The remaining case $(D,B)$ is specially mentioned in Definition~\ref{def:transitions}: if $v$ is a common vertex of $\S_{k-1}\cap\S_{k}$ and $\S_{k}\cap\S_{k+1}$, we require that $n(v)>2$.

\smallskip

\noindent\textit{Step 2.}  It remains to prove that the above-constructed sequence~$\underline j$ is the only one in $\Paths_{N-1}^{S\to F}$ that generates $\underline\S$. Namely, we have to check that the indices $i_{\pm,L/R}$ for $A$-states cannot be chosen in a different way. One can see that the ``past'' indices $i_{-,L/R}$ for the state $j_k$ are uniquely defined by the configurations $\pi(j_{k-1}),\pi(j_k)$ and by the past indices for the state $j_{k-1}$ (assuming $j_{k-1}$ has type $A$). The past indices for $j_0\in\Xi_S$ are $i_{-,L/R}(j_0)=1$, hence one can successively find these indices for all successive states $j_1,\dots,j_{N-1}$.
Similarly, the ``future'' indices $i_{+,L/R}(j_k)$ are successively found starting from the end of the sequence: $i_{+,L/R}(j_{N-1})=1$.

The special case from Remark~\ref{rem:specialcase} again needs separate consideration if $\hat{\jmath}_0=A(g)$. Here $\hat\jmath_1=A(e)$, where $e$ is a label on a non-compact side of $\R$, and either $e=l(g)$ or $e=r(g)$.
In the former case, say,  this yields $i_{+,L}(j_0)\ge 2$, thus $j_0\in\Xi_S$ implies
$j_0=A_{RL}[2,i_+](g)$ with some $i_+$. Therefore we find the past indices for $j_0$ and can now proceed as in the general case.

\medskip
 
\noindent\textbf{Part 2.} Every sequence $\underline{j}\in\Paths_{N-1}^{S\to F}$ generates a thickened path. 

\smallskip

\noindent\textit{Step 1.} We begin by constructing the $\S_k$'s inductively, starting with $\S_0=\R$. To define $\S_{k+1}$ we take a configuration $(\S_-,\S_+)$ representing $\pi(j_k)$,
choose $h$ such that $h\S_-=\S_k$ and define $\S_{k+1}=h\S_+$. The choice of such an $h$ is possible for $k=0$ since all states from $\Xi_S$ have only one ``past'' domain,
and for $k\ge 1$ this is possible since if $j\to j'$ is admissible, and $(\S_-,\S_+)$, $(\S_-',\S_+')$ are configurations
representing $\pi(j)$ and $\pi(j')$ respectively, then $\S_+$ and $\S_-'$ can be translated to each other by an element of $G$: $\S_-'=h\S_+$.
Moreover, as  described in~\ref{subsubsec:labelling}, one can define  left  domains $\L_\pm$ and  right domains $\R_\pm$ in $\S_\pm$, as well as
$\L_\pm',\R_\pm'$ in $\S'_\pm$, and these definitions agree: $\L_-'=h\L_+$, $\R_-'=h\R_+$. Thus we have defined all levels $\S_k$ and the domains $\L_k,\R_k\subset\S_k$ for all $k=0,\dots, N$. Moreover $\S_N$ contains only one domain since $j_{N-1}\in\Xi_F$.

We need to show that $\G = \bigcup\underline\S=\bigcup_{k=0}^N \S_k$ is the thickened path from $\S_0$ to $\S_N$ and that the $\S_k$'s are its levels.
 This is obtained from the following statement, which will be established below.
 
\begin{claim}\label{clm:GenerThP}
1.~The intersection $\S_l\cap \S_m$ with $l\ne m$ contains no fundamental domains, and contains sides only if $|l-m|=1$.\\
2.~The set $\bigcup\underline\S$ is convex.
\end{claim}

To prove the result given Claim~\ref{clm:GenerThP}, let $\underline\T$ be the actual thickened path from~$\S_0$ to~$\S_N$.
The second item in the claim together with Proposition~\ref{prop:NonSep} gives $\bigcup\underline\T\subset \bigcup\underline\S$.
Now consider some shortest path $\underline\A=(\A_0=\S_0,\A_1,\dots,\A_M=\S_N)$ from $\S_0$ to $\S_N$. Then $\A_m\subset\bigcup\underline\T\subset \bigcup\underline\S$. Define $d_m$ by $\A_m\subset\S_{d_m}$ so that in particular $d_M = N$. The first item of the claim  implies that $|d_m-d_{m-1}|=1$ hence $d_M\le M$. On the other hand, the two length $N$ paths $\underline \L = (\L_0,\dots,\L_N)$ and $\underline \R = (\R_0,\dots,\R_N)$
connect  $\S_0$ and $\S_N$ and hence $M \leq N$.
 Therefore  $M=N$ and  $\underline \L, \underline \R$ are shortest paths connecting $\S_0$ and $\S_N$. Thus $\underline \L, \underline \R$  are contained  in $\bigcup\underline\T$.
But $\bigcup\underline\S = \underline \L \cup  \underline \R$ and  hence $\bigcup\underline\S\subset\bigcup\underline\T$.

\smallskip

\noindent\textit{Step 2.} To verify Claim~\ref{clm:GenerThP}, we will construct a nested sequence of convex regions $  \D_k \supset \D_{k+1}  $ whose intersection is $\underline \S$, with some further properties listed in Claim~\ref{clm:FkThPath}.

We begin with  some notation. From here to the end of the proof we consider  all domains, curves, vertices, etc. to lie in the closure $\overline{\DD}$ of the hyperbolic plane.

 Let $s_k=\S_k\cap\S_{k+1}$ be the curve separating $\S_k$ and $\S_{k+1}$. This curve is the union of no more than three sides of domains in $\TR$, and, as one can see from the adjacency matrix, the curves $s_{k-1}$ and $s_k$ contain no common sides of $\dd\S_k$. Moreover, $\dd\S_k\setminus(s_{k-1}\cup s_k)$ is a union of two curves, one joining the left ends of $s_{k-1}$ and $s_k$, and the other joining the  right ends; it is possible that one or both of these curves consists of a single vertex and no sides, see the discussion following Remark~\ref{rem:EE-case}. We denote these curves by $\dd_L\S_k$ and $\dd_R\S_k$ respectively. For $k=0, N$ there is only one $s_j$ to remove, so we define $\dd_O\S_0=\dd\S_0\setminus s_0$,
$\dd_O\S_N=\dd\S_N\setminus s_{N-1}$. For $k=1,\dots,N-1$ we also denote $\dd_O\S_k=\dd_L\S_k\cup\dd_R\S_k$.  

Next, let us orient the curves $\dd_{\dots}\S_k$ in such a way that $\S_k$ lies locally to the left of the curve when moving in positive direction.
Then all these curves can be joined into one closed oriented curve in the following order, so that the end of each curve coincides with the beginning of the next:
\begin{multline*}
\dots,\dd_O\S_N,\dd_L\S_{N-1},\dots,\dd_L\S_{k+1},
\underbrace{\dd_L\S_k,\dots,\dd_L\S_1,\dd_O\S_0,\dd_R\S_1,\dots,\dd_R\S_k}_{\dd_O\S_0^k},\\
\dd_R\S_{k+1},\dots,\dd_R\S_{N-1},\dd_O\S_N,\dots
\end{multline*}
Denote the part of this curve bracketed in the formula by $\dd_O\S_0^k$.

\smallskip

\noindent\textit{Step 3.} Since our object is to show that the region $\underline \S$  is convex, we need to check that no more than $n(v)-2$ consecutive boundary curves $\dd_\alpha\S_j$  consist of only one (and hence the same) vertex and no sides.
 
Let $j_k$ be an $A$-state with indices $i_{\pm,L/R}$.
Denote by $d_{-,\alpha}$  ($\alpha=L,R$) the maximal $d$ such that each of $\dd_\alpha\S_{k-d+1},\dots,\dd_\alpha\S_k$ contain only one (and hence the same) vertex,
and by $d_{+,\alpha}$ the maximal $d$ such that each of $\dd_\alpha\S_{k+1},\dots,\dd_\alpha\S_{k+d}$ contains only one vertex.
Then one can check from the adjacency matrix that $d_{\pm,\alpha}=i_{\pm,\alpha}-1$.

Now from the adjacency matrix we obtain the following converse of Proposition~\ref{prop:conv-boundary}: if $\dd_\alpha\S_k$ contains no sides,
then the pair of states $(j_{k-1},j_k)$ can be of the following types: $(A,A)$, $(D,A)$, $(A,B)$, $(D,B)$, $(E_R,E_L)$.
Moreover in all  cases the transition rules ensure that if
\begin{equation*}
v=\dd_\alpha\S_k=\dd_\alpha\S_{k+1}=\dots=\dd_\alpha\S_{k+m-1},\quad \alpha\in\{L,R\},
\end{equation*}
then $n(v)-2\ge m$.
 
\smallskip

\noindent\textit{Step 4.} We now construct the sets $\D_k$ referred to above.
 Let $\mathcal H_k$ be the collection of all half-planes $H$ such that $\dd H$ contains a side of $\dd_O\S_k$ and $H$ contains $\S_k$.
Denote
\begin{equation*}
\mathcal H_0^k=\bigcup_{j=0}^k \mathcal H_j, \qquad
\D_k=\bigcap_{H\in\mathcal H^0_k} H.
\end{equation*}
By construction, $\D_k$ is a union of fundamental domains and is convex, moreover clearly $\D_{k}\subset\D_{k-1}$. In more detail:
\begin{claim}\label{clm:FkThPath}
For $k=0,\dots, N-1$ we have the following (see Figure~\ref{fig:markov-to-thpath}):\\
(i) The curve $s_k$ lies in the interior of $\D_k$, joins two points on its boundary, and divides $\D_k$ into two parts.\\
(ii) One of these parts is the union of all $\S_j$ with $j=0,\dots, k$. We denote this part by $\D^-_k$ and the other part by $\D^+_k$.\\
(iii) $\dd\D^-_k=s_k\cup\dd_O\S_0^k$, while $\dd\D^+_k$ consists of $s_k$ and two rays $\gamma_{k,L}$, $\gamma_{k,R}$ that are continuations of the first and the last sides in $\dd_O\S_0^k$ beyond the ends of $s_k$. These rays do not intersect inside $\DD$.\\
(iv) $\D_{k}\subset\D_{k-1}$, or, more precisely, $\S_{k}\cup \D^+_{k}\subset\D^+_{k-1}$.\\
Finally, for $k=N$ we have $\D_N=\bigcup\underline{\S}$.
\end{claim}

To deduce Claim~\ref{clm:GenerThP} from this statement note that if $l=m+b$, $b>0$, then
\begin{equation*}
\S_{m+b}\subset\D^+_{m+b-1}\subset \D^+_{m+b-2}\subset \dots\subset \D^+_m,
\end{equation*}
hence $\S_l$ has no domain in common with $\S_m\subset\D^-_m$. Moreover, if $\S_m\cap\S_{m+b}$ contains a common side $s$, then
$s\subset \D^+_m\cap\D^-_m=s_m$, and by construction one of the two domains adjacent to $s_m$ belongs to $\S_m$ and the other does to $\S_{m+1}$.
This proves the first statement in Claim~\ref{clm:GenerThP}. The second is immediate from $\D_N=\bigcup\underline{\S}$.

\smallskip

\noindent\textit{Step 5.} Finally, let us verify Claim~\ref{clm:FkThPath} by induction on $k$.
The base, $k=0$, is clear.

Let us assume that this claim holds for some $k$ and check it for $k+1$.
Since $s_k$ lies in the interior of $\D_k$, the points of $\S_{k+1}$ that are close to $s_k$ lie inside $\D_k^+$, and since $\D_k^+$ is the union of fundamental domains, $\S_{k+1}\subset \D^+_k$.

To construct $\D_{k+1}$ we need to add to the intersection defining $\D_k$ the half-planes $H\in\H_{k+1}$. Assume first that $k+1<N$.

If $\dd_\alpha\S_{k+1}$ for  $\alpha=L,R$ consists of a single vertex, there are no new half-planes to be added on $\dd_\alpha\S_{k+1}$.
Otherwise,  $\dd_\alpha\S_{k+1} $ consists of sides joining a sequence of vertices $u_0^\alpha u_1^\alpha\dots u_{m_{\alpha}}^\alpha$. Let $H^\alpha_j$ be the half-plane in $\H_{k+1}$ with $\dd H^\alpha_j= \ell(u_{j-1}^{\alpha}u_{j}^\alpha)$ and let $\beta^\alpha_j\subset\ell(u_{j-1}^{\alpha}u_{j}^\alpha)$ be the ray starting from $u_{j-1}^{\alpha}$ and passing through $u_{j}^\alpha$,
see Figure~\ref{fig:markov-to-thpath}.

%%%%%%%%%%%%%%%%%%%%
\begin{figure}[ht]
\centering
\includegraphics{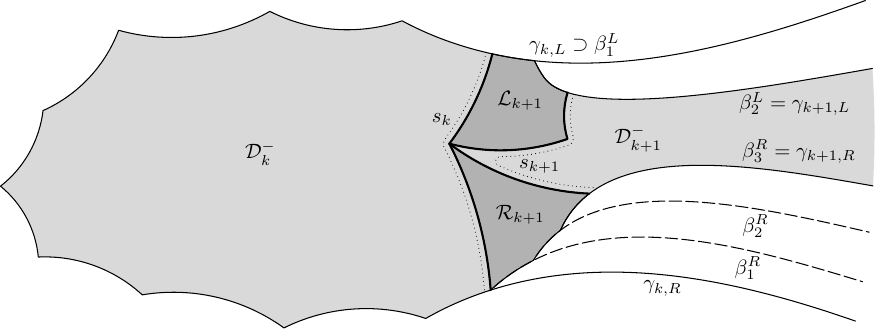}
\caption{Illustrating the proof of Claim~\ref{clm:FkThPath}.}
\label{fig:markov-to-thpath}
\end{figure}
%%%%%%%%%%%%%%%%

Add the half-planes $H^\alpha_j$ to the intersection one by one with $j$ increasing. When $H^\alpha_j$ is added,  we cut the intersection $\D_k$ along the ray $\beta^\alpha_j$ and remove the part that does not contain $\S_{k+1}$. The removed regions are shown in white in Figure~\ref{fig:markov-to-thpath}. Note that it is possible that the ray $\beta^\alpha_1$ is contained in $\gamma_{k,\alpha}$, in which case we just skip it.

Note that $\beta^L_j$ and $\beta^R_{j'}$ do not intersect for any $j, j'$. This follows from Lemma~\ref{lem:no-intersect} applied to the domain $P=\S_{k+1}$ if $\S_{k+1}$ contains only one fundamental domain, and to $P=\F_u$ if both domains in $\S_{k+1}$ share a vertex~$u$.

Let us now check that $s_{k+1}$ lies in the interior of $\D_{k+1}$.
Let $v_{j,L}$, $v_{j,R}$ be the left and right ends of $s_j$ and let $v_{j,L}u_{j,L}$ and $u_{j,R}v_{j,R}$ be the sides of~$s_j$ adjacent to these ends. We will show that $v_{k+1,L}u_{k+1,L}$ lies inside $\D_{k+1}$.

Consider two cases. First, if $v:=v_{k+1,L}$ coincides with $v_{k,L}$ then by Step 3, for some $r\le n(v)$ the domains $\L_{k+1-r},\dots,\L_{k+2}$ are  consecutive petals in the flower $\F_v$ while $\L_{k+1-r}$ is not in $\F_v$, and the ray $\gamma_{k+1,L}=\gamma_{k,L}$ contains the side $wv\subset\dd_L\S_{k+1-r}$. But then the angle $wvu_{k+1,L}$ contains at most $n(v)-1$ petals, so $vu_{k+1,L}$ is not the continuation of $wv$ and hence is not contained in $\gamma_{k+1,L}$.

Similarly, if $v\ne v_{k,L}$, then $\gamma_{k+1,L}$ is the continuation of the side $wv=u^L_{m_L-1}u^L_{m_L}$ of $\dd_L\S_{k+1}$ adjacent to $v$, and the angle $wvu_{k+1,L}$ contains only one sector, namely, $\L_{k+1}$. Thus again $vu_{k+1,L}$ lies inside $\D_{k+1}$.

This proves item (i) of Claim~\ref{clm:FkThPath} if $s_{k+1}$ contains one or two sides. If it contains three sides: $s_{k+1}=v_{k+1,L}u_{k+1,L}u_{k+1,R}v_{k+1,R}$, it remains to rule out the possibility that $u_{k+1,L}\in\gamma_{k+1,R}$.
But in this case  the triangle $u_{k+1,L}u_{k+1,R}v_{k+1,R}$ has all its sides lying in $\dd\TR$, and this is impossible by~Lemma~\ref{lem:no-trig}. Therefore, $u_{k+1,L}$ and $u_{k+1,R}$ lie inside $\D_{k+1}$ and
item (i) is fully established.

Further, by construction $\D^-_{k+1}\setminus\D^-_k$ is bounded by $\dd_O\S_{k+1}\cup s_k\cup s_{k+1}=\dd\S_{k+1}$, hence $\D^-_{k+1}\setminus\D^-_k=\S_{k+1}$, and item (ii) holds. Items (iii) and (iv) for $k+1$ are also now clear.

In the case $k+1=N$ we likewise consecutively cut $\D_{N-1}$ along the rays through the segments of $\dd_O\S_N$; on the last step we cut along the last segment of this boundary. Therefore, $\D_N\setminus\D^{-}_{N-1}$ is bounded by
$s_{N-1}\cup\dd_O\S_N=\dd\S_N$, hence $\D_N\setminus\D^{-}_{N-1}=\S_N$,  verifying
the final statement of  Claim~\ref{clm:FkThPath}.
\end{proof}

\subsection{The time-reversing involution}
\label{subsec:involution}

The Markov coding defined above has the following property: the Markov chain with time reversed, that is, the Markov chain with the matrix $\Pi^T$, is the same as the initial one with the states renamed. This is possible precisely because the thickened path between domains $\B$ and $\A$ is exactly the thickened path from $\A$ to $\B$ read in the opposite direction. Thus we obtain an  involution which inverts arrangements and states; informally it swaps the past and the future domains for each state.
Precisely, we define the involution $\iota\colon \Xi\to\Xi$  by:
\begin{gather*}
A_0(e)\leftrightarrow A_0(e^{-1}),\qquad
A_{\alpha}[i_-,i_+](e)\leftrightarrow A_{\alpha}[i_+,i_-](e^{-1})\quad (\alpha=LR, RL),\\
A_L[i_-,i_+](e)\leftrightarrow A_R[i_+,i_-](e^{-1}),\qquad
B(e_L,e_R)\leftrightarrow D(e_R^{-1},e_L^{-1}),\\
C_k(e_L,e_R)\leftrightarrow C_{n(e_L,e_R)-k-1}(e_R^{-1},e_L^{-1}),\\
E_\alpha(e_L,e_M,e_R)\leftrightarrow E_\alpha(e_R^{-1},e_M^{-1},e_L^{-1}) \quad(\alpha=L,R).
\end{gather*}

\begin{prop} The involution $\iota$ maps the topological Markov chain with the adjacency matrix $\Pi$
	to the same chain with reversed time, that is, $\Pi_{\iota(j)\iota(k)}=\Pi_{kj}$. Also, $\iota(\Xi_S)=\Xi_F$ and vice versa.
\end{prop}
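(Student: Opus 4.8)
The plan is to read the involution $\iota$ as the operation of inspecting a configuration \emph{from the other side}: if a state $j$ records how the past level $\S_-=[\underline\S]_m$ sits against the future level $\S_+=[\underline\S]_{m+1}$, then $\iota(j)$ records the same pair of levels with past and future interchanged. Concretely, the polygonal curve separating $\S_-$ from $\S_+$ then acquires the opposite co-orientation, hence the opposite orientation, so its left and right ends swap, and along it one reads the $\S_-$-side labels, which are the inverses of the $\S_+$-side labels taken in reverse order. Running through the types $A$--$E$ one checks that the formulas defining $\iota$ are exactly this operation; for instance $B(e_L,e_R)$ (one past, two future domains, $\S_+$-labels $e_L,e_R$ from left to right) becomes the configuration with two past and one future domain and $\S_-$-labels $e_R^{-1},e_L^{-1}$ read left to right, that is $D(e_R^{-1},e_L^{-1})$, and the other clauses are the same. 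Since reversing a thickened path $([\underline\S]_0,\dots,[\underline\S]_N)$ again yields a thickened path (the definition of a thickened path is symmetric in its endpoints, cf.\ Corollary~\ref{cor:shortest-thick}), and since reversal replaces the ordered pair of states attached to a triple of consecutive levels by the $\iota$-images of those states in the opposite order, the identity $\Pi_{\iota(j)\iota(k)}=\Pi_{kj}$ follows. To keep the argument independent of the strong-connectedness proof of Subsection~\ref{subsec:properties}, rather than invoking Theorem~\ref{thm:th-path-vs-sphere} I would verify directly that the list of admissible transitions in Definition~\ref{def:transitions} is invariant under $(j\to k)\mapsto(\iota(k)\to\iota(j))$; by involutivity of $\iota$ one direction suffices.

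Two preliminary facts are needed. First, $\iota$ is well defined and $\iota^2=\mathrm{id}$: involutivity is immediate since each clause is visibly symmetric (e.g.\ $A_L[i_-,i_+](e)\mapsto A_R[i_+,i_-](e^{-1})\mapsto A_L[i_-,i_+](e)$), and the index bounds are preserved because $n(\cdot)$ is $G$-invariant while the side-pairing element $e$ carries $s_e$, co-oriented from outside to inside of $\R$, to $s_{e^{-1}}$ co-oriented from inside to outside; being orientation-preserving on $\DD$ it thus exchanges the two ends of these sides, so that $n(v_L(e))=n(v_R(e^{-1}))$, $n(v_R(e))=n(v_L(e^{-1}))$ and $n(e_L,e_R)=n(e_R^{-1},e_L^{-1})$. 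Second, one records the elementary identities governing how $l$ and $r$ compose around a single vertex and interact with $e\mapsto e^{-1}$ — in particular $r(l(e)^{-1})=e^{-1}$ and $l(r(e)^{-1})=e^{-1}$ (one step counter-clockwise around $v_L(e)$ followed by one step clockwise around the same vertex returns to $s_e$) — together with the observation that the relation ``$e_1$ and $e_2$ are adjacent'' is symmetric and stable under $(e_1,e_2)\mapsto(e_2^{-1},e_1^{-1})$, since two separating sides meeting at a vertex still meet there once the separating curve is reoriented. These are precisely the identities matching the label data produced by $\iota$ to the label data demanded by Definition~\ref{def:transitions}.

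The verification is then bullet-by-bullet. The $A\to A$ transitions that move $i_+$ down and $i_-$ up along $l$ become, under $\iota$ (which swaps $L\leftrightarrow R$, swaps $i_-\leftrightarrow i_+$ and turns $l$ into $r$ on inverted labels), the mirror-image $A_R$ transitions already listed; the transitions out of $A_0$, $A_L[i_-,1]$ and $D$ — all landing in $\{A_0,A_L[1,\cdot],A_R[1,\cdot],B\}$ — map to transitions into $\{A_0,A_R[\cdot,1],A_L[\cdot,1],D\}$, which are admissible by the same bullets and the $D$-bullet; $B(e_L,e_R)\to C_1(r(e_L),l(e_R))$ maps to $C_{n-2}(l(e_R)^{-1},r(e_L)^{-1})\to D(e_R^{-1},e_L^{-1})$ with $n=n(e_L,e_R)$, which is the listed $C_{n-2}\to D$ transition once $r(l(e_R)^{-1})=e_R^{-1}$, $l(r(e_L)^{-1})=e_L^{-1}$ are used; $C_i\to C_{i+1}$ maps to $C_{n-i-2}\to C_{n-i-1}$; and the three transitions out of $C_{n-2}$ map respectively to a $D\to B$ transition and to $E_R$- and $E_L$-transitions (recall $E_\alpha$ inherits the transitions of a $B$-state). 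The clauses carrying extra geometric side-conditions are self-matching: the pair $(D,B)$ with its ``$n(v)>2$ if the separating sides meet at $v$'' requirement maps to another $(D,B)$ pair with the same (symmetric) requirement, while the pairs $(E_L,E_R)$ and $(E_R,E_L)$ permitted by Proposition~\ref{prop:conv-boundary} are exchanged. The special case of Remark~\ref{rem:specialcase} ($N(\R)=3$, the only compact side carrying a label $g=g^{-1}$) is compatible: since $g^{-1}=g$, the surviving states $A_{LR}[i_-,i_+](g)$ and $A_{RL}[i_-,i_+](g)$ are permuted among themselves by $\iota$, as are the amended definitions of $\Xi_S$ and $\Xi_F$. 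Finally $\iota(\Xi_S)=\Xi_F$ is read off the formulas directly — $A_0(e)\leftrightarrow A_0(e^{-1})$, $A_L[1,i_+](e)\leftrightarrow A_R[i_+,1](e^{-1})$, $A_R[1,i_+](e)\leftrightarrow A_L[i_+,1](e^{-1})$, $B(e_L,e_R)\leftrightarrow D(e_R^{-1},e_L^{-1})$ — so $\iota$ carries the displayed generating list of $\Xi_S$ onto that of $\Xi_F$, and being an involution it supplies the reverse inclusion.

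I expect the only genuine effort to be bookkeeping: holding the left/right and past/future conventions fixed across the $A$-subtypes and the $E$-states, and pinning down the handful of $l,r$ identities around a vertex in the degenerate low-$N(\R)$ configurations. There is no new geometric input beyond Section~\ref{sec:thick-paths-struct}: the content is simply that the reversal of a thickened path is a thickened path and that $\iota$ implements this reversal on states, which is exactly the symmetry the coding was designed to have.
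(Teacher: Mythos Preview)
Your proposal is correct and takes the same approach as the paper: the paper's proof consists of the single sentence ``This follows directly from the definitions,'' and you have spelled out precisely that direct verification. Your bullet-by-bullet check of Definition~\ref{def:transitions} under $(j\to k)\mapsto(\iota(k)\to\iota(j))$, together with the preliminary identities $n(v_L(e))=n(v_R(e^{-1}))$ and $r(l(e)^{-1})=e^{-1}$, is exactly the bookkeeping the authors leave implicit.
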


\begin{proof}
This follows directly from the definitions.
\end{proof}

\section{Operations with thickened paths}
\label{sec:operations}

In this section we develop some techniques for manipulating thickened paths which will be used in 
Section~\ref{subsec:strconn-aper} to establish strong connectivity and aperiodicity of the Markov chain. The same techniques will also be used in Section~\ref{subsec:proof-ineq} to verify the convergence conditions for Theorem~\ref{thm:main}.

\subsection{Adjusting the labels of states}

Recall from Definition~\ref{defn:generates} that a sequence $\underline j$ of states  generates  a sequence of domains $\underline\S$ if for each $k$ the pair $(\S_k,\S_{k+1})$ represents the configuration $\pi(j_k)$.  In certain circumstances, we will need to adjust the 
sequence $\underline j$ while leaving the  sequence it generates, and hence the sequence $\pi(j_k)$, unchanged. Such an adjustment is achieved by the following technical lemma. It will be crucial later to note that the required changes do not propagate beyond a definite bounded distance which depends only on the tessellation $\TR$. 

As we have noted in the proof of Theorem~\ref{thm:th-path-vs-sphere}, the sequence $\pi(\underline j)$ is uniquely determined by $\underline{\S}$, while the indices $i_{-,L/R}$ of any $A$-state $j_k$ are defined uniquely from the corresponding indices for the state $j_{k-1}$, and the indices $i_{+,L/R}$ are similarly defined in the backwards direction.

\begin{lemma}\label{lem:change-states}
	Suppose that the admissible sequence $\underline j=(j_0,\dots,j_{m-1})$ generates a sequence of domains $\underline\S=(\S_0,\dots,\S_m)$. Assume that $\pi(j_{m-1})=A(e)$. Let $j'_{m-1}$ be any $A$-state such that $\pi(j'_{m-1})=A(e)$ and $i_{-,\alpha}(j_{m-1})=i_{-,\alpha}(j'_{m-1})$ for $\alpha=L,R$. Then there exists an admissible sequence $\underline j'=(j'_0,\dots,j'_{m-1})$ generating the same sequence $\underline{\S}$. Moreover, if $s_i=\S_i\cap\S_{i+1}$ then $j_i=j'_i$ whenever  either $j_i$ is not of type $A$ or  $s_i$ and $s_{m-1}$ have no common points. In particular, $j_i = j'_i$ for $i \leq m-n_0 +1$ where $n_0 = \max \{ n(v): v \text{ is a vertex of }\TR\}$.  (While the lemma is trivial if $\R$ has no vertices inside $\DD$, we set $n_0=2$ in this case. This will be used below.) 
\end{lemma}

We remark that the same result holds \emph{mutatis mutandi} adjusting states forwards from~$j_0$.

\begin{proof}
	First assume that $i_{-,L}(j_{m-1})=i_{-,R}(j_{m-1})=1$. Then the states $j_{m-1}$ and $j'_{m-1}$ belong to the set $\{A_0(e)$, $A_L[1,i_+](e)$, $A_R[1,i_+](e)\}$. One can see from Definition~\ref{def:transitions} that all these states have the same set of allowed preceding states, so the sequence $\underline j'=(j_0,\dots,j_{m-2},j'_{m-1})$ is admissible.
	
	Now assume that, say, $i_{-,L}(j_{m-1})=k>1$. Then the states $j_{m-1}$ and $j'_{m-1}$ belong to the set $\{A_L[k,i_+](e)$, $A_{LR}[k,i_+](e)\}$. Let $l=i_{+,L}(j_{m-1})$, i.~e.~$l=i_+$ if $j_{m-1}=A_L[k,i_+](e)$ and $l=1$ if $j_{m-1}=A_{LR}[k,i_+](e)$; let $l'$ be defined in the same way for $j'_{m-1}$. Then the suffix $(j_{m-k},\dots,j_{m-1})$ of the sequence $\underline j$ has the following form:
	\begin{equation}\label{eq:last-turn}
		\begingroup\arraycolsep=0pt
		\left.
        \begin{array}[c]{c}
		A_L[1,l+k-1](e_1)\\
		A_{RL}[i_-,l+k-1](e_1)\\
		D(e_1,\tilde e_1)
		\end{array}
		\right\},
		A_L[2,l+k-2](e_2),\dots,A_L[k-1,l+1](e_{k-1}),
		\left\{
		\begin{array}[c]{c}
		A_L[k,i_+](e)\\
		A_{LR}[k,i_+](e).
		\end{array}
        \right.
		\endgroup
	\end{equation}
	Here $e_{i+1}=l(e_i)$ for $i=1,\dots,k-1$, where $e_k=e$. Formally speaking, it is possible that the whole sequence~$\underline j$ is only a suffix of the sequence in~\eqref{eq:last-turn}; the proof for this case is the same.

	Define the sequence $\underline j'$ as follows: $j'_i=j_i$ for $i=0,\dots,m-k-1$, for $i=m-k,\dots,m-2$ define $j'_i$ by the formula \eqref{eq:last-turn} with $l'$ in place of $l$ (that is, $j'_{m-k}=j_{m-k}$ if $j_{m-k}=D(e_1,\tilde e_1)$).  	Observe that these states are allowed: it is clear that all indices are positive and
	\begin{multline}\label{eq:last-turn-est}
	i_{-,L}(j'_{m-i})+i_{+,L}(j'_{m-i})=(k+1-i)+(l'+i-1)\\
    {}=k+l'=i_{-,L}(j'_{m-1})+i_{+,L}(j'_{m-1})\le n(v_L(e)),
	\end{multline}
	and since $s_{m-i}$ and $s_{m-1}$ have the same left end, we may replace $e$ by $e_{k+1-i}$ in the right-hand side of this formula. Also it is clear that $\underline j'$ is an admissible sequence.
	
	To prove the last statement observe that if $n_0=2$, then $\pi(\hat{\jmath})=A(e)$ implies $\hat{\jmath}=A_0(e)$, hence $\underline j =\underline j'$, while for $n_0\ge 3$ we have the following two cases. If $l=l'$, then $j'_i=j_i$ for all $i\le m-2$; otherwise $\max(l,l')\ge 2$, so \eqref{eq:last-turn-est} for $\underline j$ and $\underline j'$ yields $k\le n_0-2$, and $j'_i=j_i$ for $i\le m-k-1$. Hence in all cases $j'_i=j_i$ for $i\le m-n_0+1$. 
	\end{proof}

\subsection{Narrowing}
The next lemma shows how a sequence $\underline{\S}$ can be ``narrowed''  by reducing its final level  from two domains to  one. 
This  will be useful, for example, when  $\underline{\S}=(\S_0,\dots,\S_N)$ is a thickened path between its ends  and we need 
to find a thickened path $\underline{\S'}$ between $\S_0$ and  some intermediate domain $\L_k$ for which  $\S_k$ contains two domains.

 One way to  deal with this situation is to consider the  thickened path as a minimal convex union of fundamental domains. Then one can see that the desired thickened path is a subset of $(\S_0,\dots,\S_k)$ obtained from  $\bigcup_{j=0}^k\S_j$ by cutting along a line $\ell\subset\dd\TR$ incident to a common vertex of $\L_k$ and $R_k$ as shown in Figure~\ref{fig:narrowing}. 
 
However, since below we are mostly interested in the corresponding sequences of states, from now on we will consider not only thickened paths but any sequence  $(\S_0,\dots,\S_N)$ of domains generated by admissible sequences of states $(j_0,\dots,j_{N-1})$, in other words, we drop the conditions  that $j_0\in\Xi_S$, $j_{N-1}\in\Xi_F$.

Suppose the sequence $\underline\S=(\S_0,\dots,\S_m)$ is generated by an admissible sequence of states $\underline j=(j_0,\dots,j_{m-1})$. By the \emph{directed adjacency graph} associated to $\underline\S$ we mean the graph whose vertices are the domains in $\underline\S$ with a directed edges going from a domain $\A\in\S_k$ to $\B\in\S_{k+1}$ whenever $\A$ and $\B$ share a common side. 

\begin{lemma}\label{lem:narrowing}  
Suppose that the sequence $\underline\S=(\S_0,\dots,\S_m)$ is generated by an admissible sequence of states $\underline j=(j_0,\dots,j_{m-1})$. 
Assume that $\S_m$ contains two domains $\L_m,\R_m$  and let $\S'_m$ be one of them, for definiteness $\L_m$. Then there is a  narrowed sequence $\underline\S'$ from $\S_0$ to $\S'_m $ which can be described as follows.

1. Let $\S'_l$ be the set of domains $\A\subset\S_l$ such that there exists a path in the directed graph from $\A$ to $\S'_m$.
Let $k$ be the maximal number such that there is an edge $\R_k\to\L_{k+1}$; we set $k=-1$ if there is no such edge. Then $\S'_l=\S_l$ for $l\le k$ and $\S_l=\L_l$ for $l \ge k+1$.

2. The sequence $\underline\S'$ can be generated by an admissible sequence $\underline j'=(j'_0,\dots,j'_{m-1})$,  where one can assume that $j_i=j_i'$ if $\pi(j_i)=\pi(j'_i)$ and either $j_i$ is not of type $A$ or $s_i$ has no common vertex with $s_k$. In particular, $j_0=j_0'$ if $\S'_{n_0-1}=\S_{n_0-1}$, where $n_0$ is defined in Lemma~\ref{lem:change-states}. 
\end{lemma}

\begin{proof}
1. This is illustrated in  Figure~\ref{fig:narrowing}. 
 Let us go backward from $l=m$ to $l=0$. Every path from $\A\in\S'_l$ to $\S'_m$ should pass through $\B\in\S'_{l+1}$. For $l\ge k+1$ there is only one edge, $\L_l\to\L_{l+1}$ that ends in $\S'_{l+1}=\L_{l+1}$, hence $\S'_l=\L_l$. Similarly, for $l\le k$ we obtain $\S'_l=\S_l$.

2. Assume that $\S'_m=\L_m$ and let $k$ be the same as in the first statement. Then, following Lemma~\ref{lem:bottle-prop} (4) and still referring to Figure~\ref{fig:narrowing},  the state~$j_k$ is of type either $B$ or $E_R$ (if $k\ge 0$), and the states $j_{k+1},\dots,j_{m-1}$ are of types $C$ and $E_L$. 

Let us define the states $j'_l$ for $l=k,\dots,m-1$ as follows (see Figure~\ref{fig:narrowing}a):
\begin{center}
\begin{tabular}{|c|c|}
\hline
$j_l$&$j'_l$\\
\hline
$B(e_L,e_R)$&$A_R[1,n(e_L,e_R)-1](e_L)$ or $A_{LR}[i_-,n(e_L,e_R)-1](e_L)$\\
$E_R(e_L,e_M,e_R)$&$D(e_L,e_M)$\\
$C_i(e_L,e_R)$&$A_R[i+1,n(e_L,e_R)-i-1](e_L)$\\
$E_L(e_L,e_M,e_R)$&$A_R[1,n(e_L,e_M)-1](e_L)$\\
\hline
\end{tabular}
\end{center}
In the first line there are several options, we will choose one of them later.
Note also that if the table suggests $A_R[1,1](e)$, it should be replaced by $A_0(e)$.
It is clear by the construction that for all $l\ge k$ the state $j'_l$ is well-defined, $\pi(j'_l)$ is represented by the pair $(\S'_l,\S'_{l+1})$ and the transition
$j'_l\to j'_{l+1}$ is allowed.

%%%%%%%%%%%%%%%%%%%%
\begin{figure}[bt]
	\centering
	a)\enskip\raisebox{5ex}{\raisebox{-\height}{\includegraphics{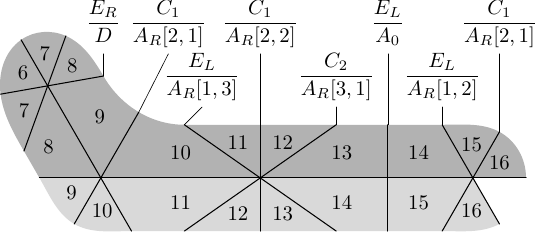}}}\\[12pt]
	\raisebox{7ex}{b)}\enskip{\includegraphics{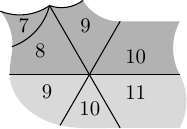}}\qquad
	\raisebox{7ex}{c)}\enskip{\includegraphics{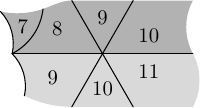}}
	\caption{The proof of Lemma~\ref{lem:narrowing}.
	The original sequence $\underline\S$ contains all shaded domains, while $\underline\S'$ contains only those shaded dark gray. Numbers inside the domains indicate their levels.	The states $j_i$ and $j_i'$ are shown above the curves $s_i$. In all figures $k=8$.}
	\label{fig:narrowing}
\end{figure}
%%%%%%%%%%%%%%%%%%%%
If $k=-1$, this ends the proof; otherwise we have to define $j'_l$ for $l=0,\dots,k-1$, as well as to choose $j'_k$ from the set given above in such a way that the transitions $j'_0\to\dots\to j'_k$ are allowed.
By default we set $j'_l=j_l$, although sometimes a correction is needed: this will be stated explicitly and only occurs in the final paragraph of the proof.

If $j_k=E_R(e_L,e_M,e_R)$, then either
$n(e_L,e_M)>2$ and $j_{k-1}=C_{n(e_L,e_M)-2}(\hat e_L,\hat e_R)$, or
$n(e_L,e_M)=2$ and
 $j_{k-1}\in\{B(\hat e_L,\hat e_R), E_R(\tilde e,\hat e_L,\hat e_R), E_L(\hat e_L,\hat e_R,\tilde e)\}$. 
In all these cases $e_L=r(\hat e_L)$, $e_M=l(\hat e_R)$ and the transition $j_{k-1}\to j'_k=D(e_L,e_M)$ is allowed.

If $j_k=B(e_L,e_R)$, there are several cases. As above, let $s_j=\S_j\cap\S_{j+1}$.

First of all, suppose that $s_k$ and $s_{k-1}$ have no common vertices,
i.~e.\ $j_{k-1}$ equals $A_0(\hat e)$, $A_L[i_-,1](\hat e)$, $A_R[i_-,1](\hat e)$, or $D(\hat e_L,\hat e_R)$ with $e_L,e_R$ non-adjacent to $\hat e^{-1}$ (or $\hat e_L^{-1},\hat e_R^{-1}$).
Then we set $j'_k=A_R[1,n(e_L,e_R)-1](e_L)$, so the transition $j_{k-1}\to j'_k$ is allowed.

Now assume that the left ends of $s_k$ and $s_{k-1}$ coincide (see Figure~\ref{fig:narrowing}b), i.~e.\ $j_{k-1}$ equals
$A_L[i_-,2](\hat e)$, $A_{RL}[i_-,2](\hat e)$ with $e_L=l(\hat e)$, or  $j_{k-1}$ equals $D(\hat e_L,\hat e_R)$ with $e_L=l(\hat e_L)$.
In the first of these three subcases we set $j'_k=A_{LR}[i_-+1,n(e_L,e_R)-1](e_L)$, and in the remaining two
we set $j'_k=A_{LR}[2,n(e_L,e_R)-1](e_L)$.

It remains to consider the case when the right (but not left) ends of $s_k$ and $s_{k-1}$ coincide (see Figure~\ref{fig:narrowing}c), hence $s_{k-1}$ and $s'_k=\S'_{k}\cap\S'_{k+1}$ have no common vertices.
Then either $j_{k-1}$ equals $A_R[i_-,2](\hat e)$ or $A_{LR}[i_-,2](\hat e)$ with $e_R=r(\hat e)$, or  it equals $D(\hat e_L,\hat e_R)$ with $e_R=r(\hat e_R)$.
Let $j'_k=A_R[1,n(e_L,e_R)-1](e_L)$, so in the last subcase the transition $j_{k-1}\to j'_k$ is admissible.
In the first two subcases apply Lemma~\ref{lem:change-states} to define the sequence $(j_0',\dots,j_{k-1}')$: if $j_{k-1}=A_R[i_-,2](\hat e)$, let $j'_{k-1}=A_R[i_-,1](\hat e)$,
and if $j_{k-1}=A_{LR}[i_-,2](\hat e)$, let $j'_{k-1}=A_L[i_-,1](\hat e)$.  

The last part of the statement, which estimates the common part of $\underline j$ and $\underline j'$, follows from the corresponding part in Lemma~\ref{lem:change-states}: $\S'_{n_0-1}=\S_{n_0-1}$ yields $k\ge n_0-1$, so when we apply that lemma with $m=k\ge n_0-1$, we can have $j_i\ne j'_i$ only for $i>m-n_0+1\ge 0$.
\end{proof}
 
\subsection{Joining}

The next lemma deals with how to join  two sequences of domains, $\underline\S^-=(\S_{-m},\dots,\S_0)$ and $\underline\S^+=(\S_0,\dots,\S_n)$  which have a common end $\S_0$  containing only one domain. 
The result  of such a join is not necessarily a thickened path between its ends, since  the union $\U=\bigcup\underline\S^-\cup\bigcup\underline\S^+$ may fail to be convex at the vertices of $\S_0$.

If the union is convex, we show that it can be generated by an admissible sequence of states. If convexity fails at some vertex $v$, there are two cases: either $v$ is incident  to $n(v)+1$ domains in the union, or to more than $n(v)+1$.  In the former case we will show that  the union can be enlarged to a thickened path,  while in 
 the latter case the union cannot be so enlarged: the part of $\U$ between the first and the last domain adjacent to $v$ can be shortcut by a path that goes ``the other way around~$v$''.  A possible enlargement in the first case is illustrated in Figure~\ref{fig:convexification}.

%%%%%%%%%%%%%%%%%%%%
\begin{figure}[hbt]
	\centering
	\includegraphics{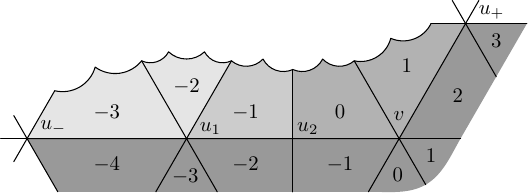}
	\caption{Joining two paths at a minimally concave vertex, see Definition~\ref{defn:min-concave}. The union of the two thickened paths shown in dark gray  is minimally non-convex at the vertex $v$. The lighter colours show the consecutive flowers we add to the path:	we first add the flower $\F_v$, so the adjacent vertex $u_2$ on $\gamma$ becomes minimally concave, then we add $\F_{u_2}$, making $u_1=w_-$ minimally concave, and then $\F_{u_1}$.
	The vertices $u_\pm$ are non-concave since they are incident to at most $n(u_\pm)-1$ domains in $\U$ and the addition of the flowers increases this number by one. For details see Lemma~\ref{lem:joining} (3).}
	\label{fig:convexification}
\end{figure}
%%%%%%%%%%%%%%%%%%%%

Consider the case in which  convexity fails  because $v$ is incident  to $n(v)+1$ domains in the union $\U$. Then
one can add to $\U$ the remaining domains in the flower $\F_v$, and these domains can be assigned levels in such a way that the levels of adjacent domains differ by one, see Figure~\ref{fig:convexification}. However, this adds one domain adjacent to a vertex $u$ next to $v$ in $\dd\U$, so if $\dd\U$ had a straight angle at $u$, now $u$ is adjacent to $n(u)+1$ domains in $\U'=\U\cup\F_v$. Thus we need to add $\F_u$, and so on until we arrive to the ends $u_\pm$ of the maximal geodesic segments  in $\dd\U$ starting from $v$ in both directions. Since the vertices $u_\pm$ are incident to less than $n(u_\pm)$ domains in $\U$,  the addition of one more domain does not destroy convexity at these points.
It can be shown that the resulting union of fundamental domains can be generated by an admissible sequence of states,  and hence  is a thickened path between its ends.

In fact, the whole analysis of thickened paths presented in this paper can be performed using this ``convexification'' technique, i.~e.\ the successive addition of flowers to vertices of a collection of domains, where the convexity failed; one can find a detailed exposition of this approach in the first version of this preprint~\cite{BKS-arXv1}. Rather than doing this, however, we keep track of states  using  Lemma~\ref{lem:joining} below.  

 \begin{definition} \label{defn:min-concave} Suppose given sequences of domains $\underline\S^-=(\S_{-m},\dots,\S_0)$ and $\underline\S^+=(\S_0,\dots,\S_n)$ such that $\S_0$ contains only one domain, and as usual denote $s_l=\S_l\cap\S_{l+1}$. We call the common vertex $v$ of $s_{-1}$ and $s_0$  \emph{concave}, if it is incident to more than $n(v)$ domains in the sequence $\underline\S=(\S_{-m},\dots,\S_n)$. It is \emph{minimally concave} it is incident to exactly $n(v)+1$ domains in  $\underline\S=(\S_{-m},\dots,\S_n)$.
\end{definition}

\begin{lemma}\label{lem:joining}
1. Let sequences of levels $\underline\S^-=(\S_{-m},\dots,\S_0)$ and $\underline\S^+=(\S_0,\dots,\S_n)$ be generated
by sequences of states $\underline j^-=(j_{-m},\dots,j_{-1})$ and $\underline j^+=(j_0,\dots,j_{n-1})$.
Assume that $\S_0$ contains only one domain
and that the curves $s_{-1}$ and $s_0$ have no common sides. Then at most one vertex in $s_{-1}\cap s_0$ is concave.

2a. Assume that there are no concave common vertices. Then the sequence $\underline\S = (\S_{-m},\dots,\S_0,\dots,\S_n)$ can be generated by a sequence of states $\underline{\hat\jmath}$.

2b. (See Figure~\ref{fig:convexification}.) Assume that there is a minimally concave common vertex $v$.
Let $u_-v$ and $vu_+$ be the maximal geodesic segments in $\dd\underline\S^-\setminus\dd\S_0$ and $\dd\underline\S^+\setminus\dd\S_0$ respectively.
Let $w_\pm$ be the internal vertices of the curve $u_-vu_+$ that are closest to $u_\pm$; it is possible that one or both of $w_\pm$ coincide with $v$. Define the curve $\gamma=w_-vw_+$. Then there exists a sequence $\underline{\hat\S}=(\hat{\S}_{-m},\dots,\hat{\S}_{n})$
generated by an admissible sequence of states $\underline{\hat\jmath}$ such that
\begin{enumerate}[label={(\roman*)}]
	\item $\S_i\subset \hat{\S}_i$ for all $i$;
	\item $\hat{\S}_i=\S_i$ if $\S_i$ has no common vertex with $\gamma$;
	\item $\bigcup\underline{\hat\S}$ is the union of $\bigcup\underline{\S}$ and of all flowers $\F_w$, where $w$ is a vertex of $\gamma$;
\end{enumerate}
3.  Let $\delta=s_{-1}\cup s_0$ in the case from 2a and $\delta=u_-vu_+$ in the case from 2b. Then one can assume that $\hat\jmath_t=j_t$ if $\pi(\hat \jmath_t)=\pi(j_t)$ and either $j_t$ is not of type $A$ or $s_t$ has no common points with $\delta$. In particular, $j_{n-1}=\hat\jmath_{n-1}$ if $n\ge n_0-1$ and (in the case from 2b) $\S_{n-n_0+1}=\hat{\S}_{n-n_0+1}$, where $n_0$ is defined in Lemma~\ref{lem:change-states}. 
\end{lemma}

\begin{proof}
1. Assume the contrary: both ends $v_L,v_R$ of the curves $s_{-1}$ and $s_0$ coincide. Then $\S_0$ is compact, so $N(\R)=4$, and $j_{-1}$ is of type $D$ and $j_0$ is of type $B$.
Therefore, both $v_L$ and $v_R$ are incident only to $\S_{-1},\S_0,\S_1$, but at least one of $v_L,v_R$ has $n(v)\ge 3$ by Assumption~\ref{asm:R}.

\smallskip

2a. Since $\S_0$ contains one domain, the state $j_{-1}$ is of type $D$ or $A$, and the state $j_0$ is of type $A$ or $B$.

First suppose that $s_{-1}$ and $s_0$ have no common vertices. Then if $j_{-1}$ is of type $A$, apply Lemma~\ref{lem:change-states} for $\hat\jmath_{-1}$ with $i_{+,L/R}(\hat\jmath_{-1})=1$, otherwise let $\underline{\hat\jmath}^-=\underline j^-$. Similarly, if $j_0$ is of type $A$, apply the analogue of Lemma~\ref{lem:change-states} with time inverted for the state $\hat\jmath_{0}$ with $i_{-,L/R}(\hat\jmath_0)=1$, otherwise let $\underline{\hat\jmath}^-=\underline j^-$.

The case when $s_{-1}$ and $s_0$ share both ends is trivial: here $j_{-1}$ is of type $D$, $j_0$ is of type $B$, and the convexity in the common vertices $v_{L,R}$ means that $n(v_{L,R})\ge 3$, so the transition $j_{-1}\to j_0$ is admissible.

Finally, suppose that $s_{-1}$ and $s_0$ share their left end only: $s_{-1}\cap s_0=v_L$.
Let $n_+$ (respectively, $n_-$) be the number of levels $\S_k$ with $k\ge 0$ (respectively, $k\le 0$) that are adjacent to $v_L$. By our assumption,   $n_-+n_+-1\le n(v_L)$, $n_\pm\ge 2$.

Let us construct the sequence $\underline{\hat\jmath}^-=(\hat\jmath_{-m},\dots,\hat\jmath_{-1})$ as follows. If $j_{-1}$ is of type $D$ we set $ \underline{\hat\jmath}^-=\underline j^-$; otherwise the construction is performed in two steps. First we have to ensure that $i_{-,L}(\hat{\jmath}_{-1})=n_--1$. This is in fact true for $j_{-1}$ unless all domains in $\underline{\S}^-$ are consecutive petals in the same flower. Namely, let us say that the state $j_l$ (with $l<0$) is ``correct'', if either it is not of type $A$, or it is of type $A$ and the index $i_{-,L}(j_l)$ coincides with the number of domains in $(\S_{-m},\dots,\S_l)$ that are incident to the left end of~$s_l$. One can check that if the transition $j_l\to j_{l+1}$ is not of type $A\to A$,  or if it is of type $A\to A$ and $\dd_L\S_{l+1}$ contains at least one side, then $j_{l+1}$ is correct. Also, if $j_l\to j_{l+1}$ is of type $A\to A$ \emph{and} the state $j_l$ is correct, then $j_{l+1}$ is correct.

Thus the state $j_{-1}$ is correct unless the states $j_{-m},\dots,j_{-1}$ are of type $A$ and the curves $s_{-m},\dots,s_{-1}$ are segments with the same left end $v_L$.
If $j_{-1}$ is correct, let $\underline{\tilde\jmath}^-=\underline j^-$, otherwise use Lemma~\ref{lem:change-states} to obtain the sequence $\underline{\tilde\jmath}^-$ with $i_{-,L}(\tilde\jmath_{-m})=1$; note that since $i_{-,L}(\tilde\jmath_{-m})\le i_{-,L}(j_{-m})$, the state $\tilde\jmath_{-m}$ is allowed. Now $\tilde\jmath_{-m}$ and hence $\tilde\jmath_{-1}$ are correct.

Apply Lemma~\ref{lem:change-states} again to transform $\underline{\tilde\jmath}^-$
into $\underline{\hat\jmath}^-$ with $i_{+,L}(\hat\jmath_{-1})=n_+$ and $i_{+,R}(\hat\jmath_{-1})=1$.
Note that the state $\hat\jmath_{-1}$ is allowed, since $i_{-,L}(\hat\jmath_{-1})=i_{-,L}(\tilde\jmath_{-1})=n_{-}-1$ due to its ``correctness'', so $i_{-,L}(\hat\jmath_{-1})+i_{+,L}(\hat\jmath_{-1})=n_-+n_+-1\le n(v_L)$.

Likewise, we construct a sequence $\underline{\hat\jmath}^+=(\hat\jmath_{0},\dots,\hat\jmath_{n-1})$ with $i_{-,L}(\hat\jmath_{0})=n_-$, $i_{+,L}(\hat\jmath_{0})=n_+-1$ that generates the same domains $\underline\S^+$. It remains to check that the transition $\hat{\jmath}_{-1}\to\hat{\jmath}_0$ is admissible. Clearly it belongs to one of the four types, $D\to B$, $D\to A$, $A\to B$, and $A\to A$. In the first case the argument from part (1) above works,  while the next two cases correspond to the transitions
$D(e_L,e_R)\to A_{L,LR}[2,\dots](l(e_L))$ and $A_{L,RL}[\dots,2](e)\to B(l(e),\dots)$, which are admissible. Finally, the case $A\to A$ splits into four subcases:
\begin{equation*}
\begin{array}{l}
A_L[i_-,i_+](e)\\
A_{RL}[i_-,i_+](e)
\end{array}
\Biggr\}\to \Biggl\{
\begin{array}{l}
A_L[i'_-,i'_+](l(e))\\
A_{LR}[i'_-,i'_+](l(e)).\\
\end{array}
\end{equation*}
Definition~\ref{def:transitions} reduces the existence of such a transition to the subcase $A_L\to A_L$, replacing $A_{RL}[i_-,i_+](e)$ by $A_L[1,i_+](e)$ and
$A_{LR}[i'_-,i'_+](l(e))$ by $A_{L}[i'_-,1](l(e))$. In the resulting transition
\begin{equation}
\label{eq:reduc-case}
A_L[\tilde{\imath}_-,\tilde{\imath}_+](e)\to
A_L[\tilde{\imath}'_-,\tilde{\imath}'_+](l(e))
\end{equation}
one can express the indices in terms of $n_\pm$,
so that in all four subcases the transition \eqref{eq:reduc-case} is of the form $A_L[n_--1,n_+](e)\to A_L[n_-,n_+-1](l(e))$, which is allowed.

\smallskip

2b. Let $k$ be such that $\dd_L\S_{-k}\supset u_-w_-$, see Figure~\ref{fig:convexification}. Let us show that the states $j_{-k+1},\dots,j_{-1}$ are of type $A$, while $j_{-k}$ is of type $A$ or $D$.
Indeed, let $u_0=u_-,u_1,\dots,u_p=v$ be the consecutive vertices on $u_-v$, thus $u_-w_-=u_0u_1$.
Then there are $-k=i_0<i_1<\dots<i_{p-1}\le -1$ such that $\dd_L\S_{i_t}=u_tu_{t+1}$ for $t=0,\dots,p-1$, and for $t=0,\dots, p-1$ and $i_t<i<i_{t+1}$ we have $\dd_L\S_i=u_{t+1}$, here $i_p=0$.
By Proposition~\ref{prop:conv-boundary} each segment $\mathbf J_t=(j_{i_t},\dots,j_{i_{t+1}-1})$ has the form
\begin{equation*}
(E_R,E_L)\quad\text{or}\quad(A\text{ or }D,A,\dots,A,A\text{ or }B).
\end{equation*}
Since $\S_0$ contains one domain, the segment $\mathbf J_{p-1}$ has the form
$(A\text{ or }D,A,\dots,A)$. Assume that $p>1$ and $\mathbf J_{p-1}$ starts with $D$. Then $\mathbf J_{p-2}$ ends with $B$ or $E_L$, hence $\L_{i_{p-1}}$ intersects each of $s_{i_{p-1}-1}$ and $s_{i_{p-1}}$ in one side and these sides meet in a vertex of $\L_{i_{p-1}}$. Thus if $\dd_L\S_{i_{p-1}}=u_{p-1}u_p$, the domain $\L_{i_{p-1}}$ would be a compact triangle, which is not allowed. Hence $\mathbf J_{p-1}$ has the form $(A,\dots,A)$ and $\mathbf J_{p-2}$ has the form $(A\text{ or }D,\dots,A)$. Repeating this argument $p$ times, we obtain the desired statement.

Moreover, let $e_l$ be the internal label on $u_{l-1}u_l$ and let $n_l=n(u_l)$.
Then one can see that the sequence $j_{-k},\dots, j_{-1}$ has the following structure:
\begin{gather*}
\left.
\begin{array}{r}
A_L[1,n_1-1](l(e_1))\\
A_{RL}[i_-,n_1-1](l(e_1))\\
D(l(e_1),r(l(e_1))^{-1})
\end{array}
\right\}
,A_L[2,n_1-2](l^2(e_1)),\dots,
A_L[n_1-1,1](l^{n_1-1}(e_1)),\\
\dddots,A_L[1,n_t-1](l(e_t)),A_L[2,n_t-2](l^2(e_t)),\dots,
A_L[n_t-1,1](l^{n_t-1}(e_1)),\dddots\\
A_L[1,m_--1](l(e_p)),\dots,
A_L[n_--2,m_--n_-](l^{n_--2}(e_p)),\hspace*{0.3\textwidth}\\
\hspace*{0.5\textwidth}
\left\{
\begin{array}{l}
A_L[n_--1,m_--n_-+1](l^{n_--1}(e_p))\\
A_{LR}[n_--1,i_+](l^{n_--1}(e_p))^{\dagger}
\end{array}
\right.
\end{gather*}
where $n_-$ is the number of levels in $\underline{\S}^-$ adjacent to $v$, and the number $m_-$ can be chosen arbitrarily satisfying
the inequality $n_-\le m_-\le n(v)$;
the case marked with $\dagger$ is allowed only if $m_-=n_-$.
Here every segment separated by ``$\dddots$'' is one of the $\mathbf J_t$'s. (Note that here and below we somewhat abuse the notation for states: thus for example, $A_{LR}[1,i_+](e)$ means $A_R[1,i_+](e)$, $A_L[1,1](e)$ means $A_0(e)$, etc.)

Similarly, let $K$ be such that $\dd_L\S_{K+1}\supset w_+u_+$, let $U_0=v,U_1,\dots,U_q=u_+$ be the consecutive vertices on $vu_+$, let $E_l$ be the external label on $U_{l-1}U_{l}$ and let $N_l=n(U_{l-1})$.
Then the sequence $(j_0,\dots, j_{K})$ has the form
\begin{gather*}
\left.
\begin{array}{l}
A_L[m_+-n_++1,n_+-1](l^{1-n_+}(E_1))\\
A_{RL}[i_+,n_+-1](l^{1-n_+}(E_1))^{\dagger}
\end{array}
\right\},\hspace*{0.5\textwidth}\\
\hspace*{0.2\textwidth}A_L[m_+-n_++2,n_+-2](l^{2-n_+}(E_1)),\dots
,A_L[m_+-1,1](l^{-1}(E_1)),\\
\dddots,A_L[1,N_t-1](l^{1-N_t}(E_t)),A_L[2,N_t-2](l^{2-N_t}(E_t)),\dots,
A_L[N_t-1,1](l^{-1}(E_t)),\dddots\\
A_L[N_q-1,1](l^{1-N_q}(E_q)),\dots,A_L[N_q-2,2](l^{-2}(E_q)),
\left\{
\begin{array}{l}
A_L[N_q-1,1](l^{-1}(E_q))\\
A_{LR}[N_q-1,i_+](l^{-1}(E_q))\\
B(l^{-1}(E_q),r(l^{-1}(E_q)^{-1})),
\end{array}
\right.
\end{gather*}
where $n_+$ is the number of levels in $\underline{\S}^+$ adjacent to $v$, the number $m_+$ should satisfy $n_+\le m_+\le n(v)$,
and the case marked with $\dagger$ is allowed only if $m_+=n_+$.

%%%%%%%%%%%%%%%%%%%%
\begin{figure}[hbt]
\centering
\includegraphics{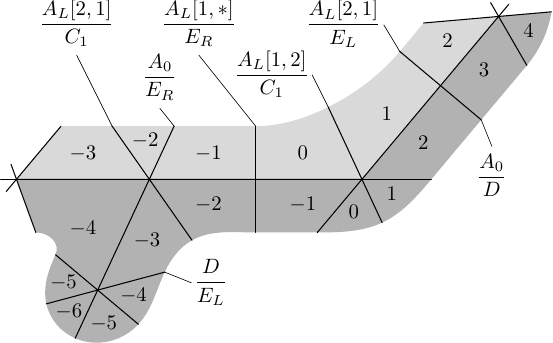}
\caption{The proof of Lemma~\ref{lem:joining}. The original sequence $\underline\S^-\cup\underline\S^+$ contains only dark gray domains, while $\underline{\hat\S}$ contains both light and dark gray ones. Numbers inside the domains indicate their levels. The states $j_i$ and $\hat\jmath_i$ are shown above or below the curves $s_i$. A
The asterisk in $j_{-1}$ represents $1$ or $2$. It is also possible that $j_{-1}=A_R[1,i_+]$ and/or $j_0=A_{RL}[i_-,2]$.}
\label{fig:joining}
\end{figure}
%%%%%%%%%%%%%%%%

Let us now define the states $\hat{\jmath}_{-k},\dots,\hat{\jmath}_{K}$, see Figure~\ref{fig:joining}.
In some sense this is opposite to the transformation in the proof of Lemma~\ref{lem:narrowing}.
For $l=-k,\dots,-1$ we use the following substitutions:
\begin{align*}
  \begingroup
  \left.
  \def\arraystretch{1.2}%
  \begin{array}{@{}l@{}}
  A_L[1,\dots](l(e))\\
  A_{RL}[\dots](l(e))\\
  \end{array}
  \right\}\endgroup&{}\mapsto
  \begin{cases}
  B(e^{-1},l(e)),&\text{if $l=-k$},\\
  E_R(l(e^{-1})^{-1},e^{-1},l(e)),&\text{otherwise},
  \end{cases}\\
  A_{L,LR}[i,\dots](l^i(e))&{}\mapsto C_{i-1}(r^{i-1}(e^{-1}),l^i(e)),\quad i\ge 2,\\
  D(l(e),\tilde e)&{}\mapsto E_L(e^{-1},l(e),\tilde e),
\end{align*}
while for $l=0,\dots,K$ we use
\begin{align*}
  \begingroup
  \left.
  \def\arraystretch{1.2}%
  \begin{array}{@{}l@{}}
  A_{L}[\dots,1](l^{-1}(f))\\
  A_{LR}[\dots](l^{-1}(f))\\
  \end{array}
  \right\}\endgroup&{}\mapsto
  \begin{cases}
  D(f^{-1},l^{-1}(f)),&\text{if $l=K$},\\
  E_L(r(f),f^{-1},l^{-1}(f)),&\text{otherwise},
  \end{cases}\\
  A_{L,RL}[\dots,i](l^{-i}(f))&{}\mapsto C_{n(v_L(f))-i}(r^{-i+1}(f^{-1}),l^{-i}(f)),\quad i\ge 2,\\
  B(l^{-1}(f),\tilde e)&\mapsto E_R(f^{-1},l^{-1}(f),\tilde e).
\end{align*}
It is straightforward to check that all transitions within
$(\hat{\jmath}_{-k},\dots,\hat{\jmath}_{-1})$ and
$(\hat{\jmath}_{0},\dots,\hat{\jmath}_{K-1})$ are admissible.
Let us verify that $\hat\jmath_{-1}\to\hat\jmath_0$ is also admissible.

Indeed, if $n_->2$, we have $\hat\jmath_{-1}=C_{n_--2}(r^{n_--2}(e_1^{-1}),l^{n_--1}(e_1))$, and if $n_-=2$, this state is of type $B$ or $E_R$
and has the same allowed next states as  ``the state $C_0(e_1,l(e_1))$''.
Similarly, $\hat\jmath_0=C_{n(v)-n_++1}(r^{2-n_+}(f_1^{-1}),l^{1-n_+}(f_1))$; for $n_+=2$ this formula means that $\hat\jmath_0$
has the same allowed previous states as this ``state $C_{n(v)-1}(\dots)$''.

By the assumptions of the lemma, $v$ is adjacent to $n(v)+1$ domains in $\underline\S$, hence
\begin{equation*}
n(v)=n_-+n_+-2,\quad
r^{n(v)-1}(e_1^{-1})=f_1^{-1},\quad
l^{n(v)+1}(e_1)=f_1.
\end{equation*}
Indeed, going around $v$ from $u_{p-1}v$ to $vu_1$ in an anti-clockwise direction, one crosses all these $n(v)+1$ domains in $\underline\S$,
while going clockwise one passes through the remaining $n(v)-1$ domains in $\F_v$.
Therefore, the transition $\hat\jmath_{-1}\to\hat\jmath_{0}$ takes the form $C_s(e_L,e_R)\to C_{s+1}(l(e_L),r(e_R))$, which is admissible.

It remains to define the states $\hat\jmath_{l}$ for $l<-k$ and $l>K$.
To do so, we join the sequences $\underline\S^{--}=(\S_{-m},\dots,\S_{-k})$, and $\underline\S^{++}=(\S_{K+1},\dots,\S_{n})$ with the 
respective sequences of states $\underline j^{--}=(j_{-m},\dots,j_{-k-1})$ and $\underline j^{++}=(j_{K+1},\dots,j_{n-1})$ to the constructed sequences
$\underline{\hat\S}^\diamond=(\hat\S_{-k},\dots,\hat\S_{K+1})$
and $\underline{\hat\jmath}^\diamond=(\hat\jmath_{-k},\dots,\hat\jmath_{K})$.

If $\S_{-k}$ contains two domains, then $j_{-k}$ is of type $D$, $\hat\jmath_{-k}$ is of type $E_L$, and they have the same set of admissible previous states, thus the joining is just a concatenation.

Otherwise let us show that the joining can be done by item 2a of this lemma.
Compare the conditions there for the joining of $\underline\S^{--}$ and $\underline{\hat\S}^\diamond$ and 
for that of $\underline\S^{--}$ and $(\S_{-k},\dots,\S_0)$, which is clearly admissible. 
Since $\hat s_{-k}=s_{-k}\cup u_0u_1$ and $u_0u_1\subset\dd\underline\S^-$, the curves $s_{-k-1}$ and $\hat s_{-k}$ have no common sides.
If $s_{-k-1}$ and $s_{-k}$ share their right end $w$, no domains adjacent to $w$ are added in $\underline{\hat\S}^\diamond$, so $w$ is convex for both joinings.
Now consider $u_0$. By  definition the angle of $\dd_L\underline\S^-$ at $u_0$ is less than $\pi$, hence it is adjacent to less than $n(u_0)$ domains in the first joining.
The only domain adjacent to $u_0$ that is added in  $\underline{\hat\S}^\diamond$ is the one adjacent to $\S_{-k}$ via the side $u_0u_1$.
Therefore, $u_0$  is adjacent to at most $n(u_0)$ domains in the second joining.
Note also that since $\hat\jmath_{-k}$ is of type $B$, the sequence $\underline{\hat\jmath}^\diamond$ stays the same after this joining.

Therefore, both sequences can be joined to $\underline{\hat\S}^\diamond$ at the same time. This operation can change only those states in $\underline j^{--}$ and $\underline j^{++}$ where 
the corresponding levels are adjacent to an end of $\hat s_{-k}$ or $\hat s_{K}$. Moreover, this end should be adjacent to different numbers of domains in $\underline{\hat\S}^\diamond$ and in $(\S_{-k},\dots,\S_{K+1})$, hence it is $u_-$ or $u_+$.

\smallskip

 3. This statement is again a direct consequence of the estimates in Lemma~\ref{lem:change-states}. 
In the case from 2a we replace $j_0$ by $\hat{\jmath}_{0}$ representing the same configuration and adjust the next states by that lemma. Hence we can get $j_{t}\ne \hat{\jmath}_t$ only for $t\le n_0-3$, and not for $t=n-1\ge n_0-2$.
Similarly, in the case 2b the equality $\S_{n-n_0+1}=\hat{\S}_{n-n_0+1}$ means that $K<n-n_0+1$, hence the modification of $\underline j^{++}=(j_{K+1},\dots,j_{n-1})$ reaches to at most $j_{(K+1)+n_0-3}$, and $(K+1)+n_0-3<n-1$, whence again $\hat j_{n-1}=j_{n-1}$. 
\end{proof}

The following corollary describes a precise sense in which the operations of narrowing and joining are mutually inverse. It is needed in the proof of Lemma~\ref{lem:wye} which is crucial for verifying Assumption~\ref{asm:ineq} for convergence in Section~\ref{subsec:thm-convergence}.  
\begin{cor}\label{cor:join-narrow}
Let $\underline{\hat\S}$ be constructed from $\underline\S^-$ and $\underline\S^+$ as in Lemma~\ref{lem:joining}. Then applying Lemma~\ref{lem:narrowing} to the sequence $\underline{\hat\S}^-=(\hat\S_i)_{i=-m}^0$ or $\underline{\hat\S}^+=(\hat\S_i)_{i=0}^n$ and the domain $\hat\S'_0=\S_0$, we arrive at the original sequences $\underline\S^\pm$.
\end{cor}

\begin{proof}
If there are no concave vertices in $\underline\S$, then $\underline{\hat\S}=\underline\S$, $\hat\S_0$ contains only one domain, hence no domains are removed when applying Lemma~\ref{lem:narrowing}. Let us now assume that the vertex $v$, which is the common left end of $s_0$ and $s_{-1}$,  is minimally concave.
Using the notation of Lemma~\ref{lem:joining}, one can see that any side in $u_-v$ separates $\hat\L_{l+1}$ and $\hat\R_l=\S_l$ for some $l<0$, while sides in $vu_+$ separate $\hat\R_l=\S_l$ and $\hat\L_{l-1}$ with $l>0$. Hence it is impossible to connect the domain $\B$ in $\underline{\hat\S}$ to $\hat\R_0=\S_0$ by a path of adjacent domains in $\underline{\hat\S}$ with monotonic indices if and only if $\B$ is one of the domains
$\hat\L_{-k+1},\dots,\hat\L_{K}$, which comprise $\bigcup\underline{\hat\S}\setminus\bigcup\underline{\S}$. Therefore, these are the domains removed when we apply Lemma~\ref{lem:narrowing}.  
\end{proof}

\section{Strong connectivity and aperiodicity}
\label{subsec:strconn-aper}

 In this section we will establish strong connectivity and aperiodicity of the Markov chain $(\Xi, \Pi)$ constructed in Section~\ref{sec:markov-coding}.
Together these properties yield the existence of $N>0$ such that all entries of the matrix $\Pi^{N}$ are positive, where $\Pi$ is the adjacency matrix as defined in~\ref{subsec:transitionmatrix}.  
\begin{definition}
	Let $X$ and $M$ be respectively the set of states and the adjacency matrix of a topological Markov chain.\\
	1. $(X,M)$ is   \emph{strongly connected} if for any $x,y\in X$ there exists a sequence  $z_0=x,z_1,\dots,\allowbreak z_k=y$ such that $z_j\to z_{j+1}$ is an admissible transition for all $j=0,1,\dots,k-1$.\\
	2. $(X,M)$ is   \emph{aperiodic} if there does not exist $c>1$, together with a   map $\tau\colon X\to \mathbb{Z}/c\mathbb{Z}$,
	such that for every admissible transition $x\to y$ one has $\tau(y)=\tau(x)+1$.
\end{definition}

\subsection{Head and tail paths} We start by constructing some special short admissible sequences, of length depending only on the geometry of $\TR$, which will be useful in enabling us to pass from one state to another. 
Since these sequences can  be attached  either to the beginning or end of an admissible sequence, we  call them ``head'' and ``tail'' sequences, whence the notation, $\underline\H^e$ and $\underline\T^e$.

\begin{prop}\label{prop:tail-paths}
	There exists $M\le 4$, depending only on $\TR$, such that for every $e\in G_0$ there is an admissible sequence of states $\underline{i}^e=(i^e_0\to\dots\to i^e_{M-1}=\mathbf{t}(e))$  with the following properties.  First, $i^e_0=A_0(e)$ unless $\R$ belongs to the special case of Remark~\ref{rem:specialcase} and $e$ is the label on the compact side, in which case $i^e_0$ can be chosen to be either $A_{LR}[2,2](e)$ or $A_{RL}[2,2](e)$.  Let   $\underline\T^e=(\T_0^e=\R,\T_1^e,\dots,\T_M^e)$  be the sequence of levels generated by $\underline{i}^e$ and let $\gamma_{L}, \gamma_R$ be the maximal geodesic segments of the curve $\dd\underline\T^e\setminus\dd\T^e_0$ adjacent respectively to the left or right end of $s_0=\T^e_0\cap\T^e_1$ provided that this end is in $\DD$, otherwise let $\gamma_{L,R}$ be empty. Then  $\gamma_{L,R}$ have no common points with $s_{M-1}=\T^e_{M-1}\cap\T^e_M$.
	
	Similarly, there exists a sequence of states $\underline{j}^e=(\mathbf{h}(e)=j^e_{-M}\to\dots\to j^e_{-1})$ 	 so that  if $\underline{j}^e$ generates a sequence of levels $\underline\H^e=(\H_{-M}^e,\H_{-M+1}^e,\dots,\H_0^e=\R)$, then the maximal geodesic segments of the curve
	$\dd\underline\H^e\setminus\dd\H^e_0$ starting at its ends lying inside~$\DD$ have no common points with $\H^e_{-M}\cap\H^e_{-M+1}$,  and $j^e_{-1}=A_0(e)$ except for the label on the compact side in the special case of Remark~\ref{rem:specialcase}, where $j^e_{-1}$ is any of $A_{LR}[2,2](e)$, $A_{RL}[2,2](e)$. 
\end{prop}

%%%%%%%%%%%%%%%%%%%%
\begin{figure}[tbp]
	\centering
	\begingroup\tabcolsep=0pt
	a)\enskip\raisebox{5ex}{\raisebox{-\height}{\includegraphics{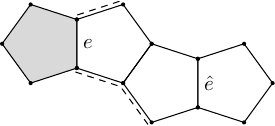}}}\qquad b)\enskip\raisebox{5ex}{\raisebox{-\height}{\includegraphics{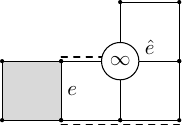}}}\\[12pt]
	c)\enskip\raisebox{5ex}{\raisebox{-\height}{\includegraphics{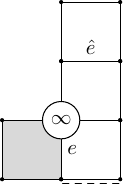}}}\qquad
	\begin{tabular}[t]{l}
		d)\enskip\raisebox{5ex}{\raisebox{-\height}{\includegraphics{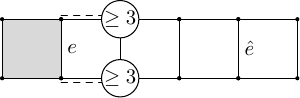}}}\\
		e)\enskip\raisebox{5ex}{\raisebox{-\height}{\includegraphics{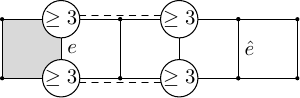}}}\\
	\end{tabular}\\[12pt]
	f)\enskip\raisebox{5ex}{\raisebox{-\height}{\includegraphics{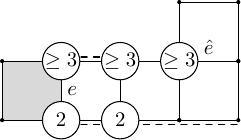}}}\qquad g)\enskip\raisebox{5ex}{\raisebox{-\height}{\includegraphics{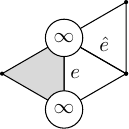}}}\qquad h)\enskip\raisebox{5ex}{\raisebox{-\height}{\includegraphics{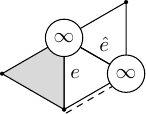}}}\\[12pt]
	i)\enskip\raisebox{5ex}{\raisebox{-\height}{\includegraphics{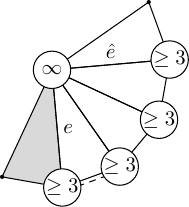}}}\qquad j)\enskip\raisebox{5ex}{\raisebox{-\height}{\includegraphics{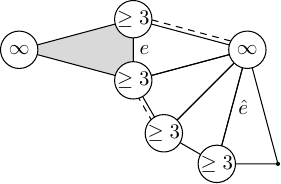}}}
	\endgroup
	\caption{``Tail'' paths $\underline\T^e$ from Proposition~\ref{prop:tail-paths}. 
	The domains $\T^e_0$ are shaded gray.
	Dashed lines show the maximal possible extent of the segments $\gamma_{L,R}$.
	Numbers in circles indicate $n(\dots)$ for the vertex, $\infty$ meaning that the vertex lies on $\dd\DD$.
	Note that $\mathbf{t}(e)=A_0(\hat e)$  in all cases except f),  in which case $\mathbf{t}(e)=A_L[2,1](\hat e)$.
	}
	\label{fig:tail-paths}
\end{figure}
%%%%%%%%%%%%%%%%
\begin{proof}
	The proposition and proof is illustrated in Figure~\ref{fig:tail-paths}. We will construct the paths $\underline{\T}^e$ in such a way that all states $i^e_k$ are of type $A$. 	
	
	If $N(\R)\ge 5$, take $M=3$ and consider a path $\underline{\T}^e$ such that $\dd_L\T_1^e$ and $\dd_R\T_2^e$ contain at least two sides each, see Fig.~\ref{fig:tail-paths}a). Then the geodesic segments $\gamma_{L,R}$ cannot pass through the internal vertices of $\dd_L\T_1^e$ and $\dd_R\T_2^e$, hence they cannot touch $\T_3^e$.
	
	If $N(\R)=4$ and $\R$ is non-compact, one can construct $\underline{\T}^e$ as shown in Fig.~\ref{fig:tail-paths}b) or~c).
	
	Suppose $N(\R)=4$ and $\R$ is compact and has no opposite vertices with $n(v)=2$. Let $s_1$ be the side of $\T_1^e=e\R$ opposite to $s_0$, and choose $\T_2^e$ to be the domain on the other side of $s_1$. Then there are the following three cases. If both ends of $s_1$ have $n(\dots)\ge 3$, we use the path shown in Fig.~\ref{fig:tail-paths}d). Alternatively, if both ends of $s_0$ have $n(\dots)\ge 3$, the same holds
	for both ends of $s_2$, the side of $\T_2^e$ opposite to $s_1$, and we use the path from Fig.~\ref{fig:tail-paths}e).
	The remaining case is when both $s_0$ and $s_1$ have ends with $n(\dots)=2$. Then these ends lie on the same (say, right) boundary,
	and the left ends of $s_0$, $s_1$, and $s_2$ all have $n(\dots)\ge 3$. Then we construct our path as shown in Fig.~\ref{fig:tail-paths}f).
	
	If $N(\R) = 3$ and all of  the sides are non-compact, we can use the paths shown in Fig.~\ref{fig:tail-paths}g),~h).
	
	Finally, in the special case from Remark~\ref{rem:specialcase} the paths $\underline{\T}^e$ are defined as shown in
	Fig.~\ref{fig:tail-paths}i),~j). If $e$ is the label on the compact side then as in  (\ref{fig:tail-paths}j) we have $i_0^e=A_{LR}[2,2](g)$, while the analogous symmetrical path gives the sequence with $i_0^e=A_{RL}[2,2](g)$.
	
	As for the second statement concerning the head paths $\underline\H^e$, one can set $j^e_{-k}=\iota(i^{e^{-1}}_{k-1})$, $\H^e_{-k}=\T^{e^{-1}}_k$, $k=0,\dots,M$. In particular, we have $\mathbf{h}(e)=\iota(\mathbf{t}(e^{-1}))$.
\end{proof}

The next proposition describes  two important combinations of  the ``head'' and ``tail'' sequences, which will be directly used to prove connectivity and aperiodicity.

\begin{prop}\label{prop:HT-combin} 

	1. Take any $e,\hat e$ such that $\hat e\ne e^{-1}$ and let $\underline\H^e$, $\underline\T^e$ be the paths from Proposition~\ref{prop:tail-paths}.
	 Then Lemma~\ref{lem:joining} can be applied to join $\underline\H^{\hat e}$ and~$\underline\T^e$. The resulting path is generated by an admissible sequence $\underline k$ of states with $k_{-M}=\mathbf{h}(\hat e)$, $k_{M-1}=\mathbf{t}(e)$.\\
	2. Take any $e$, $\hat e$ such that $\hat e\ne e^{-1}$ and denote $\S_0=\R$, $\S_1=e\R$. At most one vertex~$w$ of $\S_1$ is shared with $\H^{\hat e}_{-1}$.  Choose any side of $\S_1$ that is not incident to~$w$ and let $\tilde e$ be the label on this side outside   $\S_1$. Denote $\S_{l+1}=e\T^{\tilde e}_l$ for $l=0,\dots,M$. Then one can apply Lemma~\ref{lem:joining} twice to join the  three sequences, $\underline\S^-=\underline\H^{\hat e}$, $\underline{\S}^{\diamond}=(\S_l)_{l=0}^1$, and $\underline{\S}^+=(\S_l)_{l=1}^{M+1}$.
	The sequence of states $\underline k$ corresponding to this joining starts with $\mathbf{h}(\hat e)$ and ends with $\mathbf{t}(\tilde e)$.
\end{prop}

\begin{proof}
	1. We use the notation of Lemma~\ref{lem:joining}. The sides $s_{-1}=\S_{-1}\cap\S_0$ and $s_0=\S_0\cap\S_1$ do not coincide since $\hat e\ne e^{-1}$.
	Assume that they have a common vertex $v$. In the non-special case $v$ is incident to only two domains, $\S_{-1}$ and $\S_0$, in $\underline\H^{\hat e}$ since $i^{\hat e}_{-1}=A_0(\hat e)$.
	Similarly, $v$ is incident to only two domains in $\underline\T^e$. Therefore, $v$ is incident only to $\S_{-1},\S_0,\S_1$ in~$\underline\S$, hence $v$ is either convex or minimally concave.
	In the special case from Remark~\ref{rem:specialcase} at most one of the sides $s_{-1}$, $s_0$ is compact, hence by the same argument $v$ is adjacent to at most four domains in $\underline\S$, and we use that $n(v)\ge 3$.
	The last part of the statement follows directly from Proposition~\ref{prop:tail-paths}.
	
	2. The paths $\underline{\S}^-$ and $\underline{\S}^+$ are generated by the sequences $\underline j^{\hat e}$ and $(i^{\tilde e}_{k-1})_{k=1}^M$ respectively, while $\underline{\S}^\diamond$ is generated by any state of the form $A_{\dots}(e)$.
	As in the first part of the proof, we can apply Lemma~\ref{lem:joining} to the paths $\underline{\S}^-$ and $\underline{\S}^\diamond$ and obtain the path $\underline{\tilde\S}$ and the sequence of states $(\tilde\jmath_l)_{l=-M}^0$ such that $s_{-M}=\S_{-M}\cap\S_{-M+1}$ have no common points with the domains added in this step and $\tilde\jmath_{-M}=\mathbf{h}(\hat e)$.
	
	Let us now consider two cases. Assume first that no domains are added at the first step. Then $\tilde s_0=s_0$ and $s_1=\S_1\cap\S_2$ are different sides of $\S_1$, and if they have a common vertex $u$, then $u\ne w$, hence $u$ is incident only to $\S_0$ and $\S_1$ in $\underline{\tilde\S}$, that is, $u$ is either convex or minimally concave in $\underline{\tilde\S}\cup \underline{\S}^+$. Therefore, we can apply Lemma~\ref{lem:joining} to $\underline{\tilde\S}$ and $\underline{\S}^+$ and obtain a path~$\underline{\hat\S}$ and corresponding  sequence $\underline k$ of states. As above, $k_M=i^{\tilde e}_{M-1}=\mathbf{t}(\tilde e)$. To show that $k_{-M}=\tilde\jmath_{-M}$, we need to check that the maximal geodesic segment $\gamma$ in $\dd\underline{\tilde\S}\setminus s_1$ starting at $u$ does not reach $s_{-M}$. Clearly, if $\gamma$ does not go past an end $z$ of $s_0$, it does not reach $s_{-M}$. On the other hand, if $\gamma$ goes past $z$, then it ends at the same point as the maximal geodesic segment of $\dd\underline\H^{\hat e}\setminus s_0$ starting from $z$, hence $\gamma$ does not reach $s_{-M}$ in this case also.
	
	Now assume that we have added some domains in the first step. This means that the vertex~$w$ exists, the curve $\tilde s_0=\tilde{\S}_0\cap\S_1$ consists of the two sides of $\S_1$ that are adjacent to $w$, and $\tilde{\S}_0=\S_0\cup\A$ consists of two domains adjacent to these sides. Therefore, the curves $\tilde s_0$ and $s_1$ have no common sides, and if they have a common vertex $\tilde w$, then $\tilde w$ is the end of $\tilde s_0$, hence it is adjacent only to $\tilde{\S}_0$ and $\tilde{\S}_1$ in $\underline{\tilde\S}$. Therefore, we may apply Lemma~\ref{lem:joining} and get $k_M=\mathbf{t}(\tilde e)$ as in the previous case. To show that $k_{-M}=\mathbf{h}(\hat e)$, it remains to check that the domains added in this step have no common points with $s_{-M}$. Indeed, no domains are added to $\tilde{\S}_l$ with $l\le 0$ since $\tilde{\S}_0$ already contains two domains. On the other hand, if $s_{-M}$ has a common point with $\hat{\S}_l$ for $l\ge 1$, then it has a common point with~$\hat\S_0=\tilde{\S}_0=\S_0\cup\A$. The side $s_{-M}$ has no common points with $\S_0$ by the construction of~$\underline\H^{\hat e}$, and if $s_{-M}$ has a common point with $\A$, we arrive at a contradiction with the first application of Lemma~\ref{lem:joining}: the addition of $\A$ then changes the indices $i_{+,L/R}$ for the state corresponding to $s_{-M}$, i.~e.\ $\tilde\jmath_{-M}\ne j^{\hat e}_{-M}$.
	\end{proof}

\subsection{Connectivity and aperiodicity}

\begin{prop}\label{lem:str-conn}     
	The topological Markov chain $(\Xi,\Pi)$ introduced in Definition~\ref{def:transitions} is strongly connected.
\end{prop}

\begin{proof}
	The scheme of the proof is the following. We will consider several cases. In each case we choose the set $\Omega\subset\Xi$ with $\iota(\Omega)=\Omega$ and prove two properties:
	\begin{enumerate}[itemsep=0pt,label={(\roman*)}]
		\item\label{item:path-to-Omega} for every $j\in\Xi$ there exists a path along the arrows in the adjacency graph of the Markov chain from $j$ to some state $k\in \Omega$;
		\item\label{item:paths-in-Omega} for every $k_1,k_2\in \Omega$ there exists a path from $k_1$ to $k_2$.
	\end{enumerate}
	Let us write $j\rightsquigarrow k$  to indicate that there exists a path going from $j$ to $k$.
	Observe that properties~\ref{item:path-to-Omega} and \ref{item:paths-in-Omega} imply strong connectivity. Namely, from \ref{item:path-to-Omega} we have that for any states $j,j'\in\Xi$ there exist $k,k'\in\Omega$ such that $j\rightsquigarrow k$ and $\iota(j')\rightsquigarrow k'$. Applying the involution $\iota$ to the second of these relations, we get $\iota(k')\rightsquigarrow j'$. Finally,  \ref{item:paths-in-Omega} yields $k\rightsquigarrow \iota(k')$, and we have $j\rightsquigarrow k\rightsquigarrow \iota(k')\rightsquigarrow j'$.
	
	Property~\ref{item:path-to-Omega} is proven in the same way in all cases.
	Namely, if $\R$ does not belong to the special case of Remark~\ref{rem:specialcase}, let $\Omega=\{A_0(e):e\in G_0\}$. First of all, we can reach an $A$-state from a state $j$ via a series of states of the form $C\dots CDA$. Then we transform an $A$-state to a state with smaller index $i_+$, arriving eventually at a state $k$ with $\pi(k)=A(\hat e)$ and $i_{+,L/R}(k)=1$. Then the state $k$ can be followed by a state $A_0(e)$ where $e$ is any label not adjacent to $\hat e^{-1}$.
	
	In the special case of Remark~\ref{rem:specialcase} we set
	\begin{equation}\label{eq:Omega0-specialcase}
		\Omega=\{A_0(e):s_e\text{ is not compact}\}.
	\end{equation}
	The procedure given above brings us either to $\Omega$ or to an $A$-state with $i_+=2$, say, $\pi(k)=A(\hat e)$, $i_{+,L}(k)=2$, $i_{+,R}(k)=1$.
	If $s_{\hat e}$ is compact, we have $k\to A_L[i_-,1](\tilde e)\to A_0(\tilde e)$, where $\tilde e=l(\hat e)$ is the label on a non-compact side, and $i_-=i_{-,L}(k)+1$.
	Similarly, if $s_{\hat e}$ is non-compact, then $s_{\tilde e}$ is compact, $k\to A_{LR}[i_-,2](\tilde e)$, and we reduce this case to the previous one.
	
	It remains to check property~\ref{item:paths-in-Omega}.
	\smallskip
	
	1. Assume $N(\R)\ge 5$. 
	Let us construct the tail paths $\underline{\T}^e$ from Proposition~\ref{prop:tail-paths} and shown in Figure~\ref{fig:tail-paths}a)  in a uniform manner, namely, we choose the domains $\T^e_{2,3}$ in such a way that $\dd_R\T^e_1$ and $\dd_L\T^e_2$ contain one side each. Denote by $t(e)$ the label such that $\mathbf{t}(e)=A_0(t(e))$; $t(e)$ is shown as $\hat e$ in Figure~\ref{fig:tail-paths}a).
	Then $t\colon G_0\to G_0$ is a bijection. Indeed, for any $e'\in G_0$,  a path $\underline\T=(\T_i)_{i=0}^3$ such that $(\T_2,\T_3)$ represents the configuration $A(e')$ and $\dd_L\T_2$ and $\dd_R\T_1$ contain one side each is unique up to the group action. Hence if we require $\T_0=\R$,  we have $\underline\T=\underline\T^e$ for some $e\in G_0$, thus $e'=t(e)$. Note also that for the map $h$ defined by $\mathbf h(e)=A_0(h(e))$ we have $h(e)=(t(e^{-1}))^{-1}$ by the construction of the paths $\underline\H^e$.
	
	Now take any $f,\hat f\in G_0$ such that $\hat f\ne f^{-1}$ and denote
	\begin{equation*}
		e=t^{-1}(f),\quad \hat e=h^{-1}(\hat f)=(t^{-1}(\hat f^{-1}))^{-1}.
	\end{equation*}
	Hence $e\ne\hat e^{-1}$, and we may consider the path from the first part of Proposition~\ref{prop:HT-combin} for these $e,\hat e$. This shows that $A_0(\hat f)\rightsquigarrow A_0(f)$ for any $f,\hat f$ such that $\hat f\ne f^{-1}$. Finally, to verify $A_0(f)\rightsquigarrow A_0(f^{-1})$ choose any $e\in G_0\setminus\{f,f^{-1}\}$ and observe that
	$A_0(f)\rightsquigarrow A_0(e)\rightsquigarrow A_0(f^{-1})$.
	
	\smallskip
	
	2. Let us assume that $N(\R)=4$ and $\R$ is compact. Define the bijection $\tau\colon G_0\to G_0$ as follows. For $e\in G_0$ consider the side of $\R$ with the inside label~$e$, then the outside label on the opposite side of $\R$ equals $\tau(e)$. Then $A_0(e)\to A_0(\tau(e))$. Choose $m$ such that $\tau^m=\mathrm{id}$.
	
	Assumption~\ref{asm:R} implies that there is a label $f\in G_0$ such that $n(v_L(f))\ge 3$ and $n(v_R(f))\ge 3$. Then we have
	\begin{equation*}
	A_0(f)\rightsquigarrow A_0(\tau^{m-1}(f))\to A_L[1,2](f)\to A_L[2,1](l(f))\to A_0(\tau(l(f)))\rightsquigarrow A_0(l(f)).
	\end{equation*}
	Similarly, $A_0(f)\rightsquigarrow A_0(r(f))$, thus $A_0(f)\rightsquigarrow A_0(g)$ for all $g\ne f^{-1}$. By the involution, we get $A_0(g)\rightsquigarrow A_0(f^{-1})$ for $g\ne f$.
	The inequalities $n(v_{L,R}(f))\ge 3$ imply that $n(v_{R,L}(f^{-1}))\ge 3$,
	so by the same argument we have $A_0(f^{-1})\rightsquigarrow A_0(g)$ for all $g\ne f$, $A_0(g)\rightsquigarrow A_0(f)$ for $g\ne f^{-1}$. Finally,
	$A_0(f)\leftrightsquigarrow A_0(f^{-1})$ since for $h\ne f,f^{-1}$ we have
	\begin{equation*}
	A_0(f)\leftrightsquigarrow A_0(h)\leftrightsquigarrow A_0(f^{-1}),
	\end{equation*}
	hence property (ii) holds.
	
	\smallskip
	
	3. The argument from the previous case works for a non-compact $\R$ with $N(\R)=4$ if there exists a label $f\in G_0$ such that for both $\alpha=L,R$ we have that either $v_\alpha(f)$ is undefined or $n(v_\alpha(f))\ge 3$. If, say, the left end of $s_f$ lies on $\dd\DD$ then one can simply write $A_0(f)\to A_0(l(f))$. (Here we use an extended definition of $l(f)$:  the left end of $s_f$ and right end of $s_{l(f)^{-1}}$ are either the same point of $\dd\DD$ or they are joined by an arc from $\dd_{\overline{\DD}}\R\cap\dd\DD$.)  
	
	\smallskip

%%%%%%%%%%%%%%%%%%%%
\begin{figure}[tb]
	\centering
	\begin{tabular}[t]{l}
		a)\enskip\raisebox{5ex}{\raisebox{-\height}{\includegraphics{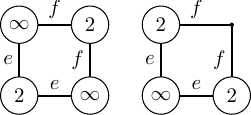}}}\\
		\quad\\
		b)\enskip\raisebox{5ex}{\raisebox{-\height}{\includegraphics{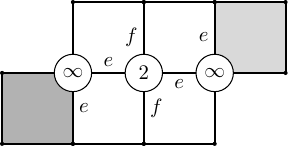}}}\\
	\end{tabular}\quad
	\begin{tabular}[t]{l}
		\quad\\[-1ex]
		c)\enskip\raisebox{5ex}{\raisebox{-\height}{\includegraphics{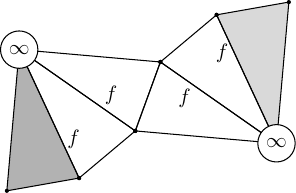}}}\\
	\end{tabular}
	\caption{The proof of Proposition~\ref{lem:str-conn} (4) and (5). The notation is similar to that of~Figure~\ref{fig:tail-paths}. The first and the last domains in the paths are shaded dark gray and light gray respectively.}
	\label{fig:str-conn}
\end{figure}
%%%%%%%%%%%%%%%%

	4. Consider now a non-compact $\R$ with $N(\R)=4$  which is not covered by the previous case. Then $\R$ has either one vertex/arc on $\dd\DD$ or two such vertices in opposite corners. It is easily checked that the only possible arrangements of side pairings are as shown in Figure~\ref{fig:str-conn}a). In both cases $\tau$ has a pair of $2$-cycles and we have
	$A_0(e)\to A_0(f) \to A_0(e)$,
	$A_0(e^{-1})\to A_0(f^{-1}) \to A_0(e^{-1})$.
	
	In the left case we have $A_0(e)\to A_0(f^{-1})$,
	$A_0(e^{-1})\to A_0(f)$ while in the right case the path shown in Figure~\ref{fig:str-conn}b) yields $A_0(e)\rightsquigarrow A_0(e^{-1})$:
	\begin{equation*}
	A_0(e)\to B(e,f)\to D(f^{-1},e^{-1})\to A_0(e^{-1}),
	\end{equation*}
	and similarly $A_0(e^{-1})\rightsquigarrow A_0(e)$.
	Thus in both cases the cycles of $\tau$ are linked.
		
	\smallskip
	
	5. Finally, consider the special case from Remark~\ref{rem:specialcase}, where
	$\Omega$ is defined by~\eqref{eq:Omega0-specialcase}.
	Let a side $s_f$ be non-compact, say $g=r(f)$, where $s_g$ is the compact side. Then property~\ref{item:paths-in-Omega}, namely, that $A_0(f)\rightsquigarrow A_0(f^{-1})$, is shown in Figure~\ref{fig:str-conn}c):
	\begin{equation*}
	A_0(f)\to A_R[1,2](f)\to A_{RL}[2,2](g)\to A_L[2,1](f^{-1})\to A_0(f^{-1}).
	\qedhere
	\end{equation*}
\end{proof}

\begin{prop}\label{lem:aperiodic}
	The topological Markov chain $(\Xi,\Pi)$ defined in Definition~\ref{def:transitions} is aperiodic.
\end{prop}

\begin{proof}
	Suppose that our Markov chain has a period $c$, that is, an index $\delta(i)\in\mathbb Z/c\mathbb Z$ can be assigned to every state $i\in\Xi$ in such a way that all allowed transitions $i\to j$ satisfy $\delta(j)=\delta(i)+1$.
	
	Take any $e_1$, $e_2$ and choose $\hat e\ne e_1^{-1},e_2^{-1}$. Then using the paths from the first part of Proposition~\ref{prop:HT-combin} we have $\delta(\mathbf{t}(e_s))=\delta(\mathbf{h}(\hat e))+2M-1$, $s=1,2$,  whence $\delta(\mathbf{t}(e_1))=\delta(\mathbf{t}(e_2))$. Therefore,  $\delta(\mathbf{t}(e))$ is the same for all $e$, we denote it by $\delta_{\mathbf{t}}$. Similarly, $\delta(\mathbf{h}(e))$ equals the same number $\delta_{\mathbf{h}}$ for all $e$, and $\delta_{\mathbf{t}}=\delta_{\mathbf{h}}+2M-1$.
	On the other hand, any path from the second part of Proposition~\ref{prop:HT-combin} yields $\delta_{\mathbf{t}}=\delta_{\mathbf{h}}+2M$.
	Therefore, $2M-1\equiv 2M\pmod{c}$, whence $c=1$.
\end{proof}

\begin{cor}\label{cor:PiN-positive}By Propositions~\ref{lem:str-conn} and~\ref{lem:aperiodic} the Markov chain $(\Xi,\Pi)$ is strongly connected and aperiodic, hence there exists $N_0>0$ such that all entries of the matrix $\Pi^{N_0}$ are positive.
\end{cor}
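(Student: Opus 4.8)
The plan is to invoke the classical fact that an irreducible, aperiodic nonnegative matrix is primitive, and to recall its short combinatorial proof in the present notation. Write $(\Pi^{n})_{ij}>0$ to mean that there is an admissible path of length exactly $n$ from $i$ to $j$. By Lemma~\ref{lem:str-conn} for every ordered pair $(i,j)$ of states there is at least one admissible path from $i$ to $j$; fix one such path and call its length $d(i,j)$.

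First I would fix a base state $x_0\in\Xi$ and look at the set $L=\{n\ge 1:(\Pi^{n})_{x_0x_0}>0\}$ of lengths of closed admissible paths through $x_0$, which is nonempty by strong connectivity and is closed under addition (concatenate two loops). The key point is that $\gcd(L)=1$. To see this, suppose $p=\gcd(L)$ and define $\tau\colon\Xi\to\ZZ/p\ZZ$ by $\tau(x_0)=0$ and $\tau(x)=d(x_0,x)\bmod p$. This is well defined: if $\gamma_1,\gamma_2$ are admissible paths $x_0\to x$ and $\delta$ is an admissible path $x\to x_0$ (which exists by Lemma~\ref{lem:str-conn}), then $|\gamma_1|+|\delta|$ and $|\gamma_2|+|\delta|$ lie in $L$, so $|\gamma_1|\equiv|\gamma_2|\pmod p$; the same argument shows that $p$ divides the length of every closed admissible path, at any vertex. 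For any admissible transition $x\to y$ one then has $\tau(y)=\tau(x)+1$, so by Lemma~\ref{lem:aperiodic} we get $p=1$, i.e.\ $\gcd(L)=1$.

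Since $L$ is an additively closed set of positive integers with $\gcd 1$, the Frobenius (``Chicken McNugget'') property of numerical semigroups gives an integer $M_0$ with $n\in L$ for all $n\ge M_0$. Now take arbitrary $i,j\in\Xi$ together with admissible paths $i\to x_0$ of length $a=d(i,x_0)$ and $x_0\to j$ of length $b=d(x_0,j)$. For every $n\ge a+b+M_0$ we may concatenate $i\to x_0$, a loop at $x_0$ of length $n-a-b\ge M_0$ (hence in $L$), and $x_0\to j$, obtaining an admissible path of length exactly $n$; thus $(\Pi^{n})_{ij}>0$. Since $\Xi$ is finite I would then set $N=M_0+\max_{i,j\in\Xi}\bigl(d(i,x_0)+d(x_0,j)\bigr)$ and conclude that all entries of $\Pi^{N}$ — indeed of $\Pi^{n}$ for every $n\ge N$ — are positive.

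There is no genuine obstacle here: the statement is standard Perron--Frobenius bookkeeping, and the only point requiring a word of care is the passage from the paper's definition of aperiodicity — the absence of a $\ZZ/p\ZZ$-grading with $p>1$ — to the number-theoretic statement $\gcd(L)=1$, which is precisely what the grading $\tau$ constructed above supplies.
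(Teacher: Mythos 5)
Your proof is correct and simply spells out the standard Perron--Frobenius primitivity argument that the paper invokes implicitly (the paper gives no separate proof block for this corollary, treating it as a classical consequence of irreducibility plus aperiodicity). The one point worth flagging as genuinely useful is that you verify the bridge between the paper's definition of aperiodicity (nonexistence of a $\mathbb{Z}/p\mathbb{Z}$-grading compatible with transitions) and the number-theoretic condition $\gcd(L)=1$; this is exactly right and is the small piece of bookkeeping a careful reader needs to supply.
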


\begin{proof}That this result is implied by strongly connectivity and aperiodicity is well known, see for example~\cite{KemSn}.   
\end{proof}
 
\section{Spherical sums and Markov operator}
\label{sec:sph-avg}

In this section we express  the spherical averages from Equation~\eqref{eq:sph-avg} in terms of powers of a Markov operator, see   Lemma~\ref{lem:sph-avg-PU}, and obtain an identity relating this Markov operator with its adjoint, see Lemma~\ref{lem:adjoint}.

\subsection{Thickened paths and spheres in the group graph}\label{subsec:g-and-u}

Consider a state $k\in\Xi$ and let $(\S_-,\S_+)$ be a representation of the configuration $\pi(k)$. Let $\L_\pm, \R_\pm$ be the left and the right domains in $\S_\pm$; as usual, if the state has only one past (respectively, future) domain then $\L_-=\R_-$ (respectively, $\L_+=\R_+$).

Define the maps $\g,\u\colon\Xi\to G$ as follows: let $\L_-=h\R$, then
\begin{equation}\label{eq:g-and-u}
 \L_+=h\g(k)^{-1}\R,\quad \R_+=h\u(k)^{-1}\R.
\end{equation}
Clearly, these definitions do not depend on the choice of a representation for $k$.

\begin{lemma}\label{lem:g-and-u}
The maps defined above   $\g$ and $\u$ satisfy the following identities:\\
1) $\u(\iota(k))=\u(k)^{-1}$ for any $k\in\Xi$,\\
2) $\g(k)=\u(j)^{-1}\g(\iota(j))^{-1} \u(k)$ for any $j,k\in\Xi$ such that $k\to j$ is an admissible transition.
\end{lemma}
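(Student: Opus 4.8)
The idea is to unwind everything from the defining relations~\eqref{eq:g-and-u}, tracking the past and future domains of the relevant states in a common frame of the tessellation.

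For part 1, fix a representation of $k$ with past domains $\P_L=h\R$, $\P_R=h\u(k)^{-1}\cdots$ — more precisely, with left/right past and future domains $\P_L,\P_R,\F_L,\F_R$. The involution $\iota$ swaps past and future and also swaps left and right (this is exactly how $\iota$ was defined on each state type). So a representation of $\iota(k)$ is obtained from the same picture by declaring $\F_R,\F_L$ to be the new (left, right) past domains and $\P_R,\P_L$ the new (left, right) future domains. Writing $\F_R=h'\R$ with $h'=h\u(k)^{-1}$ (using~\eqref{eq:g-and-u} for $\F_R$), the defining relation for $\iota(k)$ gives that its new left future domain, which is the old $\P_L=h\R$, equals $h'\u(\iota(k))^{-1}\R$. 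Hence $h\R=h\u(k)^{-1}\u(\iota(k))^{-1}\R$, and since $\R$ is a fundamental domain this forces $\u(k)^{-1}\u(\iota(k))^{-1}=\mathrm{id}$, i.e.\ $\u(\iota(k))=\u(k)^{-1}$. (One must double-check the case of states with a single past or future domain, where $\P_L=\P_R$ or $\F_L=\F_R$, but the computation is identical.)

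For part 2, the key geometric point is that an admissible transition $k\to j$ means: the future level of $k$ is the past level of $j$, glued appropriately. Concretely, pick a representation in which the future domains of $k$ coincide with the past domains of $j$, but note that the left/right convention for the future of $k$ and the past of $j$ may be the \emph{reversed} one — this is precisely why $\iota(j)$ rather than $j$ enters. So: take $\P_L(k)=h\R$. Then $\F_L(k)=h\g(k)^{-1}\R$ and $\F_R(k)=h\u(k)^{-1}\R$ by~\eqref{eq:g-and-u}. These future domains of $k$ are the past domains of $\iota(j)$ read in the natural orientation: set $\P_L(\iota(j))=\F_R(k)=h\u(k)^{-1}\R=:h_1\R$ with $h_1=h\u(k)^{-1}$. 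Applying~\eqref{eq:g-and-u} to $\iota(j)$: its left future domain is $h_1\g(\iota(j))^{-1}\R$ and this must equal $\F_L(k)$ composed with the future level of $j$ — one more application of~\eqref{eq:g-and-u}, now for $j$ itself with $\P_L(j)=h_1\R$ or the appropriate relabelling, yields $\F_L(j)=h_1\u(j)^{-1}\cdots$; equating the two descriptions of the common domain and cancelling $\R$ gives $\g(k)=\u(j)^{-1}\g(\iota(j))^{-1}\u(k)$.

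The main obstacle is bookkeeping: the left/right labels, and the past/future labels, are swapped in different ways by $\iota$ and by the transition $k\to j$, so the danger is an off-by-an-inversion error or a left/right swap. The plan is to carry out the argument type by type (the five families $A,B,C,D,E$ of states), using the explicit formulas for $\iota$ and for the admissible transitions in Definition~\ref{def:transitions}, and in each case to write down $\g$ and $\u$ explicitly in terms of the side-labels $e,e_L,e_R,\ldots$ via the maps $l,r$; the identity then reduces to a finite check that $l$ and $r$ compose correctly around each shared vertex (the relations $v_R(l(e)^{-1})=v_L(e)$ etc.), together with the relations $l^{2k+1}(e_L^{-1})=e_R$ recorded in the definition of the $C$-states. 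I expect the $C$- and $E$-state cases to be the fiddliest because they involve iterating $l$ around a flower; the $A$- and $B,D$-cases should be immediate.
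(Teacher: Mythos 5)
Part~1 of your argument is substantively correct and matches the paper's proof: if $\P_L=h\R$ then $\F_R=h\u(k)^{-1}\R$, and since $\iota(k)$ has $\widetilde\P_L=\F_R$ and $\widetilde\F_R=\P_L$, applying~\eqref{eq:g-and-u} to $\iota(k)$ gives $\P_L=\widetilde\F_R=\F_R\cdot\u(\iota(k))^{-1}$, forcing $\u(\iota(k))=\u(k)^{-1}$. (You do have a labeling slip---you call the domain that you compute as $h'\u(\iota(k))^{-1}\R$ the ``new left future,'' when by the convention $\widetilde\F_{L,R}=\P_{R,L}$ it is the new \emph{right} future, equal to $\P_L$; but the two slips cancel and the arithmetic comes out right.)

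Part~2 has a genuine gap, and it is conceptual rather than just notational. You write that ``the left/right convention for the future of $k$ and the past of $j$ may be the reversed one---this is precisely why $\iota(j)$ rather than $j$ enters,'' and you set $\P_L(\iota(j))=\F_R(k)$. Both statements are false. When $k\to j$, the future level of $k$ and the past level of $j$ are the same level of the thickened path, read with the \emph{same} left/right labeling: $\F_L(k)=\P_L(j)$ and $\F_R(k)=\P_R(j)$. The past of $\iota(j)$ is, by definition of $\iota$, the reversed \emph{future} of $j$, namely $\G_{R,L}$, not $\F_{R,L}=\F_{R,L}(k)$. Concretely, if one applies~\eqref{eq:g-and-u} to $\iota(j)$ starting from your identification $\P_L(\iota(j))=h\u(k)^{-1}\R$, its left future would be $h\u(k)^{-1}\g(\iota(j))^{-1}\R$, yet that same domain is $\widetilde\F_L(\iota(j))=\F_R(j)=\F_R(k)=h\u(k)^{-1}\R$, forcing $\g(\iota(j))=\mathrm{id}$, which is absurd. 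The correct route --- which the paper takes --- is to introduce the third level $\G$ (the future of $j$), pass from $\P_L(k)=h\R$ to $\F_L=h\g(k)^{-1}\R$ to $\G_R=h\g(k)^{-1}\u(j)^{-1}\R$, and only then use $\iota(j)$ (whose past is $\G_{R,L}$ and whose future is $\F_{R,L}$) to come back to $\F_R=h\g(k)^{-1}\u(j)^{-1}\g(\iota(j))^{-1}\R$, comparing with $\F_R=h\u(k)^{-1}\R$. That is why the identity has the shape $\g(k)=\u(j)^{-1}\g(\iota(j))^{-1}\u(k)$: the three middle factors describe an excursion forward to $\G$ and back. Your closing fallback of a case-by-case verification over the state types via the explicit $l,r$ relations would presumably succeed but is much more laborious; the domain-chasing argument with the extra level $\G$ inserted is the efficient fix.
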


\begin{proof}
1. Consider a representation $(\S_-,\S_+)$ of the state $k$ as above. Then $(\widetilde{\S}_-,\widetilde{\S}_+)=(\S_+,\S_-)$ is a representation of the state $\iota(k)$, and $\widetilde{\L}_\pm=\R_\mp$, $\widetilde{\R}_\pm=\L_\mp$. Hence if $\L_-=h\R$, $\R_+=g\R$, then $\u(k)^{-1}=h^{-1}g$, $\u(\iota(k))^{-1}=g^{-1}h$.

2. If $k\to j$ is admissible, one can consider the sets $\S_-$, $\S_+$, and $\S_{++}$ such that $(\S_-,\S_+)$ represents $k$ and $(\S_+,\S_{++})$ represents $j$.
Define $\L_\alpha,\R_\alpha$ as above for $\alpha\in\{{-},{+},{++}\}$. Let $\L_-=h\R$. Then
\begin{equation*}
\L_+=h\g(k)^{-1}\R,\quad \R_+=h\u(k)^{-1}\R,\quad \R_{++}=h\g(k)^{-1}\u(j)^{-1}\R.
\end{equation*}
On the other hand, $(\widetilde{\S}_-,\widetilde{\S}_+)=(\S_{++},\S_+)$ represents the state $\iota(j)$, hence the formula for
$\widetilde{\L}_-=\R_{++}$ yields that $\R_+=\widetilde{\L}_+=h\g(k)^{-1}\u(j)^{-1}\g(\iota(j))^{-1}\R$.
\end{proof}

Recall that the set $\Paths_{n-1}^{S\to F}$ defined by Equation~\eqref{eq:PathsSF}  is the set of all admissible sequences of length $n$ which begin and end  in  start and end states respectively. By Theorem~\ref{thm:th-path-vs-sphere}, $\Paths_{n-1}^{S\to F}$ is in bijective correspondence with the set of thickened paths of length $n$ and hence to the sphere of radius $n$ in the graph of $G$. More precisely:

\begin{lemma}Consider the map $\Phi\colon \Paths_{n-1}^{S\to F}\to G$, where
	\begin{equation*}
	\Phi(j_0\to\dots\to j_{n-1})=\u(j_{n-1})\g(j_{n-2})\dots \g(j_0).
	\end{equation*}
	Then $\Phi$ is a bijection of $\Paths_{n-1}^{S\to F}$ onto the set $S_{n}(G)=\{g\in G:|g|_{G_0}=n\}$.
\end{lemma}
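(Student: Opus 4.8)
The plan is to use the bijection between thickened paths and admissible state sequences established in Theorem~\ref{thm:th-path-vs-sphere}, together with the interpretation of $\g$ and $\u$ from \eqref{eq:g-and-u}, to identify $\Phi$ with the map sending a thickened path to its endpoint. First I would recall that, by Corollary~\ref{cor:shortest-thick}, a thickened path $\underline\S$ of length $n$ starting at $\R$ has both its initial and terminal level consisting of a single domain: the initial one is $\R = [\underline\S]_0$, and the terminal one is some $g\R = [\underline\S]_n$ with $|g| = n$. Thus to each $\underline\S$ we associate $g = g(\underline\S) \in S_n(G)$. The claim is that $\Phi$ composed with the bijection of Theorem~\ref{thm:th-path-vs-sphere} is exactly $\underline\S \mapsto g(\underline\S)$; since that bijection is onto all thickened paths and the endpoint map is onto $S_n(G)$ (every $g$ with $|g|=n$ has a thickened path from $\R$ to $g\R$, again by Corollary~\ref{cor:shortest-thick}), this will prove $\Phi$ is a bijection. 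Injectivity of $\Phi$ will follow once we know it coincides with the endpoint map applied to distinct thickened paths — but one must still check the endpoint map on thickened paths is injective, which is immediate since the terminal level of $\underline\S$ is a single domain $g\R$ and $g\R = g'\R \Rightarrow g = g'$.

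The computational heart is an induction on $n$ showing that, if $\underline j = (j_0 \to \cdots \to j_{n-1})$ corresponds to the thickened path $\underline\S$, then the ``left-chain'' partial product $\g(j_{k-1})\cdots\g(j_0)$ is the group element $h_k$ with $[\underline\S]_k$'s left domain equal to $\R h_k^{-1}$ — more precisely, reading \eqref{eq:g-and-u} with $\P_L = [\underline\S]_k$ identified as $h\R$ and $\F_L = [\underline\S]_{k+1}$'s left domain as $h\g(j_k)^{-1}\R$, one gets telescoping: starting from $[\underline\S]_0 = \R$ (so $h = e$), the left domain of $[\underline\S]_k$ is $\g(j_{k-1})^{-1}\cdots\g(j_0)^{-1}\R$. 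Here one must be careful: \eqref{eq:g-and-u} describes how $\F_L$ sits relative to $\P_L$ within a \emph{single} state $k$, but consecutive states share the level $[\underline\S]_{k+1}$, which is the future of $j_k$ and the past of $j_{k+1}$; the identity in Lemma~\ref{lem:g-and-u}(2) is precisely what guarantees the two descriptions of this shared level's domains (left and right) are consistent, so the telescoping is legitimate. At the last step, $j_{n-1} \in \Xi_F$ and $[\underline\S]_n$ is a single domain, so its (left = right) domain is $h\u(j_{n-1})^{-1}\R$ where $h\R$ is the left domain of $[\underline\S]_{n-1}$, i.e.\ $h = \g(j_{n-2})^{-1}\cdots\g(j_0)^{-1}$ (inverted appropriately). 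Tracing through, $[\underline\S]_n = \Phi(\underline j)^{-1}\R$, equivalently $g(\underline\S)^{-1}\R$, so $\Phi(\underline j) = g(\underline\S)$ as desired. One subtlety worth flagging: when $[\underline\S]_{n-1}$ has two domains, $\F_L$ and $\F_R$ of $j_{n-1}$ coincide (the terminal level is one domain), and one should check the formula uses the correct ($\u$, not $\g$) generator — this is exactly why $\Phi$ ends with $\u(j_{n-1})$ rather than $\g(j_{n-1})$, reflecting that $j_{n-1}$ is an end state and the ``future'' of the path is read off via $\u$.

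The main obstacle I anticipate is \emph{bookkeeping the left/right conventions consistently across state transitions}, especially at states of type $B$, $D$, and $E$ where the number of domains in a level jumps between one and two, and at the ``virtual domain'' adjustment in the special case of Remark~\ref{rem:specialcase}. Concretely: for a type-$B$ state $[\underline\S]_k$ is one domain but $[\underline\S]_{k+1}$ is two, so ``the left future domain'' and ``the right future domain'' are genuinely different and \eqref{eq:g-and-u} with $\g$ vs.\ $\u$ must be applied to the correct one; then the next state $j_{k+1}$ (type $C$) reads the \emph{same} pair of domains as its past, and Lemma~\ref{lem:g-and-u}(2) is invoked to match $\g(\iota(j_k))$ against $\u(j_k)$ and $\g(j_k)$ so that the running product stays coherent. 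I would organize the induction around a single invariant — say, ``after processing $j_0,\dots,j_{k-1}$, the running product $\g(j_{k-1})\cdots\g(j_0)$ equals the element carrying $\R$ to the left domain of $[\underline\S]_k$'' — and verify it is preserved by each of the transition types in Definition~\ref{def:transitions}, using Lemma~\ref{lem:g-and-u} to handle the handoff. Once the invariant is in place, surjectivity and injectivity are formal, as sketched above.
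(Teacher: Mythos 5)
Your approach is the same as the paper's: use Theorem~\ref{thm:th-path-vs-sphere} to identify $\Paths_{n-1}^{S\to F}$ with thickened paths from $\R$, then track the left domain of each level via $\g$ and $\u$ to relate $\Phi(\underline j)$ to the endpoint $g\R = [\underline\S]_n$. However, your final identification is off by an inverse. Starting from $L_0 = \R$ and iterating $L_{k+1} = L_k\,\g(j_k)^{-1}$ (reading \eqref{eq:g-and-u} as $\F_L = h\g(k)^{-1}\R$ when $\P_L = h\R$) gives $L_k = \g(j_0)^{-1}\cdots\g(j_{k-1})^{-1}\R$, and with the last step using $\u$ one arrives at $[\underline\S]_n = \g(j_0)^{-1}\cdots\g(j_{n-2})^{-1}\u(j_{n-1})^{-1}\R = \Phi(\underline j)^{-1}\R$. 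Since $[\underline\S]_n = g(\underline\S)\R$ by your own definition, this yields $g(\underline\S) = \Phi(\underline j)^{-1}$, \emph{not} $\Phi(\underline j) = g(\underline\S)$. Your line ``$[\underline\S]_n = \Phi(\underline j)^{-1}\R$, equivalently $g(\underline\S)^{-1}\R$, so $\Phi(\underline j) = g(\underline\S)$'' contains two mutually cancelling slips and is not what the computation gives.

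The conclusion still holds, but one needs an extra — and not entirely vacuous — observation to finish: $\Phi$ is the composition of the bijection of Theorem~\ref{thm:th-path-vs-sphere}, the endpoint map $\underline\S \mapsto g(\underline\S)$, and the inversion $g \mapsto g^{-1}$, and the last of these is a bijection on $S_n(G)$ because $|g| = |g^{-1}|$. The paper states this step explicitly (``it remains to use that $g\mapsto g^{-1}$ is a bijective map on the sphere $S_n(G)$''); you omit it because you believed the inverse was not there. Fix the sign, add that one sentence, and the proof matches the paper's.
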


\begin{remark}Note that for $j_{n-1}\in \Xi_F$ there is \emph{only one} future fundamental domain,  hence $\u(j_{n-1})=\g(j_{n-1})$.
	The reason for using $\omega$ rather than $\gamma$ in the final step will become apparent later, see Lemma~\ref{lem:sph-avg-PU}.
\end{remark}

\begin{proof}
	As observed above, sequences from $\Paths_{n-1}^{S\to F}$ bijectively correspond to thickened paths $\underline{\S}$ from $\R$ to $g\R$ with $g\in S_n(G)$.
	Take $\underline{j}\in\Paths_{n-1}^{S\to F}$. Let the sequence $\underline{\S}=(\S_0=\R,\dots,\S_n)$ be generated by $\underline j$ and let $\L_k$, $\R_k$ be the left and right domains in $\S_k$. Define $h_k\in G$ so that $\L_k=h_k\R$. Then
	\begin{equation*}
		g=h_{n-1}\u(j_{n-1})^{-1}=h_{n-2}\g(j_{n-2})^{-1}\u(j_{n-1})^{-1}=\dots=[\u(j_{n-1})\g(j_{n-2})\dots \g(j_0)]^{-1},
	\end{equation*}
	and it remains to use that $g\mapsto g^{-1}$ is a bijective map of the sphere $S_{n}(G)$.
\end{proof}

\subsection{Parry measure}

Let $\Pi$ be the adjacency matrix of the topological Markov chain described in Definition~\ref{def:transitions}. By Corollary~\ref{cor:PiN-positive} for some $N_0$ all elements of the matrix $\Pi^{N_0}$ are positive. The Perron--Frobenius theorem then yields that the matrix $\Pi$ has a unique (up to a scaling) eigenvector~$h$ with nonnegative coordinates and that all coordinates of~$h$ are positive:
\begin{equation*}
\sum_{j} \Pi_{ij}h_j=\lambda h_i\text{ and }h_i>0\quad \text{for all $i\in\Xi$}.
\end{equation*}
Moreover the corresponding eigenvalue~$\lambda>0$ has multiplicity one and is larger than the absolute value of any other eigenvalue of~$\Pi$.
The eigenvalue $\lambda$ is called the \emph{Perron--Frobenius} (PF) eigenvalue and $h$ is called the \emph{right Perron--Frobenius} eigenvector.
The matrix $\pi=(p_{ij})$ with entries
\begin{equation}\label{eq:Parry-pij}
p_{ij}=\frac{h_j}{\lambda h_i}\Pi_{ij}
\end{equation}
is stochastic and the corresponding Markov chain has the following property: the probability of an admissible sequence of transitions
depends only on the initial and the final states in this sequence and the number of steps:
\begin{equation}\label{eq:Parry-prod}
p_{i_0i_1}\dots p_{i_{n-1}i_n}=\frac{h_{i_n}}{\lambda^n h_{i_0}} \Pi_{i_0i_1}\dots \Pi_{i_{n-1}i_n}=\frac{h_{i_n}}{\lambda^n h_{i_0}}.
\end{equation}
The Markov measure defined by the matrix $\pi=(p_{ij})$ is called the \emph{Parry measure}. Its stationary distribution is
\begin{equation}\label{eq:StatDist}
p_i=\alpha_i h_i,
\end{equation}
where $\alpha$ is the left PF eigenvector of $\Pi$: $\alpha \Pi=\lambda\alpha$, normalized by $\alpha h=\sum_i \alpha_ih_i=1$.

The time-reversing involution on the set of states implies certain symmetries for the Parry measure.

\begin{prop}\label{prop:Parry-inv} Suppose given an involution $\iota\colon\Xi\to\Xi$  such that $\Pi_{\iota(j)\iota(k)}=\Pi_{kj}$ for all $j,k\in\Xi$.
	Then the transition probability matrix $(p_{ij})$ and the stationary distribution $(p_i)$ of the Parry measure corresponding to the matrix $\Pi$ satisfy the following equations:
	\begin{equation*}
	p_{\iota(j)}=p_{j},\qquad
	p_{\iota(j)\iota(k)}=\frac{p_k p_{kj}}{p_j}\qquad\text{for all $j,k\in\Xi$}.
	\end{equation*}
\end{prop}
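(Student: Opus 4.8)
The plan is to reduce both identities to the definitions~\eqref{eq:Parry-prod}–\eqref{eq:StatDist} of the Parry measure together with the symmetry $\Pi_{\iota(j)\iota(k)}=\Pi_{kj}$, the only nontrivial input being that $\iota$ interchanges the left and right Perron–Frobenius eigenvectors. Concretely, first I would observe that if $\alpha$ is the left PF eigenvector of $\Pi$ and $h$ the right one, then the vector $\tilde h$ with $\tilde h_j := \alpha_{\iota(j)}$ satisfies $\sum_k \Pi_{jk}\tilde h_k = \sum_k \Pi_{jk}\alpha_{\iota(k)} = \sum_k \Pi_{\iota(k)\iota(j)}\alpha_{\iota(k)} = (\alpha\Pi)_{\iota(j)} = \lambda\alpha_{\iota(j)} = \lambda\tilde h_j$, using the symmetry to rewrite $\Pi_{jk}=\Pi_{\iota(k)\iota(j)}$ and then re-indexing the sum over $\iota(k)$. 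Hence $\tilde h$ is a nonnegative right eigenvector of $\Pi$ for the eigenvalue $\lambda$, so by the uniqueness part of Perron–Frobenius (Lemma~\ref{lem:str-conn} gives strong connectivity) there is a constant $c>0$ with $\alpha_{\iota(j)} = c\,h_j$ for all $j$. Symmetrically $h_{\iota(j)} = c'\,\alpha_j$; applying $\iota$ twice and using $\iota^2=\mathrm{id}$ gives $c c' = 1$, and in fact one can absorb the constant so that $\alpha_{\iota(j)}=c h_j$, $h_{\iota(j)}=c^{-1}\alpha_j$.

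With this in hand the stationary-distribution identity is immediate: $p_{\iota(j)} = \alpha_{\iota(j)} h_{\iota(j)} = (c\,h_j)(c^{-1}\alpha_j) = \alpha_j h_j = p_j$. For the transition identity, compute directly from $p_{ij}=\frac{h_j}{\lambda h_i}\Pi_{ij}$:
\[
p_{\iota(j)\iota(k)} = \frac{h_{\iota(k)}}{\lambda h_{\iota(j)}}\Pi_{\iota(j)\iota(k)} = \frac{c^{-1}\alpha_k}{\lambda\, c^{-1}\alpha_j}\,\Pi_{kj} = \frac{\alpha_k}{\lambda\alpha_j}\Pi_{kj},
\]
again using the symmetry $\Pi_{\iota(j)\iota(k)}=\Pi_{kj}$ and the relation between $h\circ\iota$ and $\alpha$. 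On the other hand $\frac{p_k p_{kj}}{p_j} = \frac{\alpha_k h_k}{\alpha_j h_j}\cdot\frac{h_j}{\lambda h_k}\Pi_{kj} = \frac{\alpha_k}{\lambda\alpha_j}\Pi_{kj}$, and the two expressions agree.

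The only real obstacle is the first step — establishing that $\iota$ swaps the two PF eigenvectors — and this is where one must be slightly careful: it relies on the symmetry hypothesis $\Pi_{\iota(j)\iota(k)}=\Pi_{kj}$ being used to transform a left eigenvector equation into a right one, and on the uniqueness of the PF eigenvector, which in turn needs irreducibility of $\Pi$; the latter is exactly Lemma~\ref{lem:str-conn}. Once that correspondence is fixed (including pinning down the normalization constant via $\iota^2=\mathrm{id}$ and $\alpha h=1$), everything else is a one-line substitution. I would also remark that the hypothesis of the proposition is satisfied in our setting by the involution $\iota$ of Subsection~\ref{subsec:involution}, so the conclusion applies to the Parry measure of $(\Xi,\Pi)$.
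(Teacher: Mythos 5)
Your proof is correct and follows essentially the same route as the paper: show that $j\mapsto\alpha_{\iota(j)}$ is a nonnegative right eigenvector of $\Pi$ for $\lambda$, invoke Perron--Frobenius uniqueness (via strong connectivity) to get $\alpha_{\iota(j)}=ch_j$ and hence $h_{\iota(j)}=c^{-1}\alpha_j$, and then substitute into the Parry formulas. The paper packages the eigenvector-swap step slightly more compactly using the permutation matrix $J$ with $J\Pi J=\Pi^T$, but the content is identical.
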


\begin{proof}Let $J$ be the matrix for the substitution $\iota$. Then $J=J^T=J^{-1}$, $J\Pi J=\Pi^{T}$.
	As above, let $\lambda$ be the Perron--Frobenius (PF) eigenvalue for $\Pi$ and let $\alpha$ and $h$ be its left and right PF eigenvectors, normalized by $\alpha h=1$.
	Then $\alpha J$ is a left PF eigenvector for $J\Pi J=\Pi^T$, whence $(\alpha J)^T=J\alpha^T$ is a right PF eigenvector for $\Pi$.
	Therefore, $J\alpha^T$ is proportional to $h$: $\alpha_{\iota(k)}=ch_k$.
	Now
	\begin{equation*}
	p_{\iota(j)}=\alpha_{\iota(j)}h_{\iota(j)}=ch_j\cdot \frac{1}{c}\alpha_{j}=p_j
	\end{equation*}
	and
	\begin{equation*}
	p_{\iota(j)\iota(k)}=\frac{\Pi_{\iota(j)\iota(k)}h_{\iota(k)}}{\lambda h_{\iota(j)}}=\frac{\Pi_{kj}c^{-1}\alpha_k}{\lambda c^{-1}\alpha_j}=
	\frac{\Pi_{kj}h_j}{\lambda h_k}\frac{h_k\alpha_k}{h_j\alpha_j}=\frac{p_k p_{kj}}{p_j}.\qedhere
	\end{equation*}
\end{proof}

\subsection{The Markov operator}

Recall that the group $G$ acts on a Lebesgue probability space $(X,\mu)$ by measure-preserving maps $T_g$.
We denote $T_g f:=f \circ T_g^{-1}$ for any function $f\in L^p(X,\mu)$.
Denote
\begin{equation*}
\widetilde{\mathbf{S}}_n(f)=\sum_{|g|=n} T_g^{-1} f,\quad\text{then}\quad \mathbf{S}_n(f)=\frac{\widetilde{\mathbf{S}}_n(f)}{\widetilde{\mathbf{S}}_n(1)}=\frac{\sum_{|g|=n} T_g^{-1} f}{\#\{g:|g|=n\}},
\end{equation*}
where $\mathbf{S}_n(f)$ is defined by~\eqref{eq:sph-avg}.

Consider the probability space $Y=\Xi\times X$ with the product measure $\nu=p\times\mu$. Here $p(\{i\})=p_i$, where $p_i$ is defined by \eqref{eq:StatDist}.
It is convenient to identify a function $\varphi\in L^1(Y,\nu)$ with a tuple of functions $(\varphi_i)_{i\in\Xi}$, where $\varphi_i({}\cdot{})=\varphi(i,{}\cdot{})$.

Define the following operators $P,U\colon L^1(Y,\nu)\to L^1(Y,\nu)$:
\begin{equation}\label{eq:PandU}
(P\varphi)_i=\sum_j p_{ij} T^{-1}_{\g(i)}\varphi_j, \qquad
(U\varphi)_j=T^{-1}_{\u(j)}\varphi_{\iota(j)}.
\end{equation}
It is clear that $P$ and $U$ are measure-preserving Markov operators, meaning that both preserve $L^p$-norms for any $p \in [1,\infty]$ and both map the positive cone into itself.

\begin{lemma}\label{lem:sph-avg-PU}For any function $f\in L^1(X,\mu)$ define a function $\varphi^{(f)}\in L^1(Y,\nu)$ by
	\begin{equation*}
	(\varphi^{(f)})_j=
	\begin{cases}
	\dfrac{1}{h_{\iota(j)}}f,&j\in \Xi_S,\\
	0,&\text{otherwise}.
	\end{cases}
	\end{equation*}
	Then
	\begin{equation}\label{eq:Sn-as-PkU}
	\widetilde{\mathbf{S}}_n(f)=\lambda^{n-1}\sum_{j\in \Xi_S} h_j(P^{n-1}U\varphi^{(f)})_j.
	\end{equation}
\end{lemma}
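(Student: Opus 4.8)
The plan is to unravel both sides of \eqref{eq:Sn-as-PkU} using the definitions of $P$, $U$ and the product Parry measure, and to match the result against the bijection $\Phi\colon\Paths_{n-1}^{S\to F}\to S_n(G)$ established above. First I would compute $(P^{n-1}U\varphi^{(f)})_{j_0}$ for a start state $j_0\in\Xi_S$ by iterating \eqref{eq:PandU}. Writing out the $n-1$ applications of $P$ followed by one application of $U$, we get a sum over all sequences $j_0\to j_1\to\dots\to j_{n-1}$ admissible for $\Pi$, with weight equal to the product of transition probabilities $p_{j_0j_1}\cdots p_{j_{n-2}j_{n-1}}$ times the composition of shift operators $T^{-1}_{\g(j_0)}T^{-1}_{\g(j_1)}\cdots T^{-1}_{\g(j_{n-2})}T^{-1}_{\u(j_{n-1})}$ applied to $(\varphi^{(f)})_{\iota(j_{n-1})}$. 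Since $\varphi^{(f)}$ is supported on $\Xi_S$, the term $(\varphi^{(f)})_{\iota(j_{n-1})}$ vanishes unless $\iota(j_{n-1})\in\Xi_S$, i.e. unless $j_{n-1}\in\iota(\Xi_S)=\Xi_F$; thus only sequences in $\Paths_{n-1}^{S\to F}$ survive. On each such surviving term, $(\varphi^{(f)})_{\iota(j_{n-1})} = h_{j_{n-1}}^{-1}f$ because $\iota(\iota(j_{n-1}))=j_{n-1}$.

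Next I would collapse the operator composition. The $G$-action satisfies $T^{-1}_aT^{-1}_b = T^{-1}_{ab}$ (since $T_gf=f\circ T_g^{-1}$ gives $T_aT_b=T_{ab}$, hence $T^{-1}_aT^{-1}_b=T^{-1}_{ab}$), so
\begin{equation*}
T^{-1}_{\g(j_0)}\cdots T^{-1}_{\g(j_{n-2})}T^{-1}_{\u(j_{n-1})} = T^{-1}_{\g(j_0)\g(j_1)\cdots\g(j_{n-2})\u(j_{n-1})}.
\end{equation*}
Here I must be careful about the order in which the generators multiply: applying $P$ first multiplies on the left by $\g(j_0)$ from the outside, so after $n-1$ steps the accumulated element, read from the innermost application outward, is $\g(j_{n-2})\cdots\g(j_0)$, and then $U$ contributes $\u(j_{n-1})$ on the far left. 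This gives the group element $\Phi(j_0\to\dots\to j_{n-1})=\u(j_{n-1})\g(j_{n-2})\cdots\g(j_0)$ exactly as in the lemma defining $\Phi$. For the probability weight I would use the Parry product formula \eqref{eq:Parry-prod}: $p_{j_0j_1}\cdots p_{j_{n-2}j_{n-1}} = \dfrac{h_{j_{n-1}}}{\lambda^{n-1}h_{j_0}}\,\Pi_{j_0j_1}\cdots\Pi_{j_{n-2}j_{n-1}}$, and since we restrict to admissible sequences the product of $\Pi$-entries is $1$.

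Assembling these, $(P^{n-1}U\varphi^{(f)})_{j_0} = \displaystyle\sum_{\underline j\in\Paths_{n-1}^{S\to F},\,j_0\text{ fixed}} \frac{h_{j_{n-1}}}{\lambda^{n-1}h_{j_0}}\cdot\frac{1}{h_{j_{n-1}}}\, T^{-1}_{\Phi(\underline j)}f = \dfrac{1}{\lambda^{n-1}h_{j_0}}\displaystyle\sum_{\underline j} T^{-1}_{\Phi(\underline j)}f$, where the factors $h_{j_{n-1}}$ cancel. Multiplying by $h_{j_0}$ and summing over $j_0\in\Xi_S$ kills the remaining $h_{j_0}$ denominator and produces $\lambda^{-(n-1)}\sum_{\underline j\in\Paths_{n-1}^{S\to F}} T^{-1}_{\Phi(\underline j)}f$. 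Finally I invoke that $\Phi$ is a bijection onto $S_n(G)=\{g:|g|=n\}$ (the preceding lemma), so the sum over $\underline j$ equals $\sum_{|g|=n}T^{-1}_g f = \widetilde{\mathbf S}_n(f)$. Rearranging gives precisely \eqref{eq:Sn-as-PkU}. The only place demanding genuine care — and the step I expect to be the main subtlety — is tracking the left-versus-right order of the generators $\g(j_k)$ through the iterated operator composition, making sure the convention $T_gf=f\circ T_g^{-1}$ in \eqref{eq:PandU} and the word order in the definition of $\Phi$ are mutually consistent; the rest is bookkeeping with \eqref{eq:Parry-prod} and the support of $\varphi^{(f)}$.
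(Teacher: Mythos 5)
Your argument is the same computation as the paper's, just run in the opposite direction: you expand $(P^{n-1}U\varphi^{(f)})_{j_0}$ from \eqref{eq:PandU}, use the support of $\varphi^{(f)}$ together with $\iota(\Xi_S)=\Xi_F$ to restrict to $\Paths_{n-1}^{S\to F}$, substitute $(\varphi^{(f)})_{\iota(j_{n-1})}=h_{j_{n-1}}^{-1}f$, apply \eqref{eq:Parry-prod}, cancel the $h$-factors, and re-index via the bijection $\Phi$; the paper starts instead from $\widetilde{\mathbf S}_n(f)$ and folds the sum into $P^{n-1}U\varphi^{(f)}$, arriving at exactly the same chain of identities.

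One slip worth fixing, since you half-catch it yourself: the identity you assert, $T_a^{-1}T_b^{-1}=T_{ab}^{-1}$, is reversed. From $T_aT_b=T_{ab}$ one has $T_a^{-1}T_b^{-1}=T_{a^{-1}}T_{b^{-1}}=T_{a^{-1}b^{-1}}=T_{(ba)^{-1}}=T_{ba}^{-1}$, so the subscript in your displayed equation should be $\u(j_{n-1})\g(j_{n-2})\cdots\g(j_0)$ rather than $\g(j_0)\cdots\g(j_{n-2})\u(j_{n-1})$ --- in general a different group element. The sentence immediately after the display gets the order right and correctly identifies the product with $\Phi(\underline j)$, so the bookkeeping that follows (and the conclusion) is sound; just make the display and the stated commutation rule consistent with that corrected order.
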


\begin{proof}Indeed,
	\begin{multline*}
	\widetilde{\mathbf{S}}_n(f)=
	\sum_{\substack{i_0\in \Xi_S,i_{n-1}\in \Xi_F,\\i_1,\dots,i_{n-2}\in\Xi}} \Pi_{i_0i_1}\dots \Pi_{i_{n-2}i_{n-1}} T^{-1}_{\u(i_{n-1})\g(i_{n-2})\dots \g(i_0)}f={}\\
	\lambda^{n-1}\sum_{\substack{i_0\in \Xi_S,i_{n-1}\in \Xi_F,\\i_1,\dots,i_{n-2}\in\Xi}} h_{i_0}p_{i_0i_1}\dots p_{i_{n-2}i_{n-1}}\frac{1}{h_{i_{n-1}}}
	T_{\g(i_0)}^{-1}\dots T_{\g(i_{n-2})}^{-1}T_{\u(i_{n-1})}^{-1}f={}\\
	\lambda^{n-1}\sum_{i_0\in \Xi_S} h_{i_0} \biggl(\sum_{i_1} p_{i_0i_1}T_{\g(i_0)}^{-1}\biggl(\dots
	\biggl(\sum_{i_{n-1}} p_{i_{n-2}i_{n-1}}T_{\g(i_{n-2})}^{-1}
	 \underbrace{T_{\u(i_{n-1})}^{-1}\biggl(\frac{\chi_{\Xi_F}(i_{n-1})}{h_{i_{n-1}}}f\biggr)}_{(U\varphi^{(f)})_{i_{n-1}}}
	\biggr)\dots\biggr)\biggr)=\\
	\lambda^{n-1}\sum_{i_0} h_{i_0} (P^{n-1}U \varphi^{(f)})_{i_0}.\qedhere
	\end{multline*}
\end{proof}

\subsection{The dual (adjoint) operator}
Let us recall that for $\varphi,\psi\in L^2(Y,\nu)$ we have
\begin{equation*}
\langle \varphi,\psi\rangle=\sum_{k\in\Xi}p_k\langle \varphi_k,\psi_k\rangle.
\end{equation*}
A short computation shows that if an operator $Q$ has the form
\begin{equation*}
(Q\varphi)_i=\sum_{j\in\Xi} p_{ij} T_{ij} \varphi_j,\quad\text{then its dual satisfies}  \quad (Q^*\psi)_j=\sum_{k\in\Xi} \frac{p_kp_{kj}}{p_j} T^*_{kj} \psi_k.
\end{equation*}
Therefore, for $P$ defined by \eqref{eq:PandU} we have
\begin{equation}\label{eq:P-adjoint}
(P^*\psi)_j=\sum_{k\in\Xi} \frac{p_kp_{kj}}{p_j}T_{\g(k)}\psi_k.
\end{equation}

\begin{lemma}\label{lem:adjoint}
	The Markov operators $P$ and $U$ defined by~\eqref{eq:PandU} satisfy the following identities:
	\begin{equation*}
	U=U^{-1}=U^*,\qquad P^*=UPU.
	\end{equation*}
\end{lemma}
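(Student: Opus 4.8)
The plan is to verify the two identities directly from the definitions in~\eqref{eq:PandU}, using Lemma~\ref{lem:g-and-u} and Proposition~\ref{prop:Parry-inv} as the two algebraic inputs. First I would deal with the involution $U$. Since $\iota$ is an involution and $(U\varphi)_j=T^{-1}_{\u(j)}\varphi_{\iota(j)}$, applying $U$ twice gives $(U^2\varphi)_j=T^{-1}_{\u(j)}(U\varphi)_{\iota(j)}=T^{-1}_{\u(j)}T^{-1}_{\u(\iota(j))}\varphi_{\iota(\iota(j))}=T^{-1}_{\u(j)}T^{-1}_{\u(\iota(j))}\varphi_j$. By part~1) of Lemma~\ref{lem:g-and-u}, $\u(\iota(j))=\u(j)^{-1}$, so $T^{-1}_{\u(j)}T^{-1}_{\u(\iota(j))}=T^{-1}_{\u(j)}T_{\u(j)}=\mathrm{id}$, giving $U^2=\mathrm{id}$, i.e.\ $U=U^{-1}$. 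For $U=U^*$ I would use that $p_{\iota(j)}=p_j$ by Proposition~\ref{prop:Parry-inv}: for $\varphi,\psi\in L^2(Y,\nu)$,
\[
\langle U\varphi,\psi\rangle=\sum_j p_j\langle T^{-1}_{\u(j)}\varphi_{\iota(j)},\psi_j\rangle
=\sum_j p_j\langle \varphi_{\iota(j)},T_{\u(j)}\psi_j\rangle,
\]
and reindexing $j\mapsto\iota(j)$ (a bijection of $\Xi$), using $p_{\iota(j)}=p_j$ and $\u(\iota(j))=\u(j)^{-1}$ (so $T_{\u(\iota(j))}=T^{-1}_{\u(j)}$), turns this into $\sum_j p_j\langle\varphi_j,T^{-1}_{\u(j)}\psi_{\iota(j)}\rangle=\langle\varphi,U\psi\rangle$. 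Hence $U^*=U$.

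Next I would establish $P^*=UPU$. Using $U=U^{-1}$ this is equivalent to $UP^*U=P$, which is the form I would actually check coordinatewise. From~\eqref{eq:P-adjoint}, $(P^*\psi)_j=\sum_k \frac{p_kp_{kj}}{p_j}T_{\g(k)}\psi_k$, so
\[
(UP^*U\varphi)_j=T^{-1}_{\u(j)}(P^*U\varphi)_{\iota(j)}
=T^{-1}_{\u(j)}\sum_k \frac{p_k p_{k\iota(j)}}{p_{\iota(j)}}T_{\g(k)}(U\varphi)_k
=T^{-1}_{\u(j)}\sum_k \frac{p_k p_{k\iota(j)}}{p_{\iota(j)}}T_{\g(k)}T^{-1}_{\u(k)}\varphi_{\iota(k)}.
\]
Reindex the sum by $k=\iota(i)$, so $i$ runs over $\Xi$. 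By Proposition~\ref{prop:Parry-inv}, $p_{\iota(i)\iota(j)}=\frac{p_i p_{ij}}{p_j}$, and also $p_{\iota(i)}=p_i$, $p_{\iota(j)}=p_j$; therefore the coefficient $\frac{p_k p_{k\iota(j)}}{p_{\iota(j)}}=\frac{p_{\iota(i)}p_{\iota(i)\iota(j)}}{p_{\iota(j)}}=\frac{p_i}{p_j}\cdot\frac{p_i p_{ij}}{p_j}\cdot\frac{1}{p_i}\cdot p_j$. I will need to be careful to simplify this to exactly $p_{ij}$ (the anticipated bookkeeping: the factors involving $p_i,p_j$ must cancel cleanly, which they do because $p_{\iota(i)\iota(j)}p_j=p_ip_{ij}$ and $p_{\iota(i)}=p_i$, giving $\frac{p_{\iota(i)}p_{\iota(i)\iota(j)}}{p_{\iota(j)}}=p_{ij}$). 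Using $\u(\iota(i))=\u(i)^{-1}$ and $\g(\iota(i))$ as it stands, the $k=\iota(i)$ term contributes $p_{ij}\,T^{-1}_{\u(j)}T_{\g(\iota(i))}T_{\u(i)}\varphi_i$.

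It remains to identify the group element $\u(j)^{-1}\g(\iota(i))\,\u(i)$ with $\g(j)^{-1}$ whenever the relevant transition is admissible, so that $T^{-1}_{\u(j)}T_{\g(\iota(i))}T_{\u(i)}=T^{-1}_{\g(j)}$ and the whole expression becomes $\sum_i p_{ij}T^{-1}_{\g(j)}\varphi_i$; but wait --- this is $(P\varphi)_j$ only if the roles of the indices match~\eqref{eq:PandU}, so I must track the direction of the transition carefully. Part~2) of Lemma~\ref{lem:g-and-u} states $\g(k)=\u(j)^{-1}\g(\iota(j))^{-1}\u(k)$ whenever $k\to j$ is admissible; rearranged, $\g(\iota(j))^{-1}=\u(j)\g(k)\u(k)^{-1}$, equivalently $\g(\iota(j))=\u(k)\g(k)^{-1}\u(j)^{-1}$. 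The terms surviving in the sum above are exactly those $i$ with $p_{ij}\ne 0$, i.e.\ $i\to j$ admissible, and for such $i$ Lemma~\ref{lem:g-and-u}(2) with $k=i$ gives precisely $\g(\iota(i))=\u(j)\cdots$ --- here I expect to have to apply the lemma with the correct assignment of $j$ and $k$ and possibly compose with the $U=U^{-1}$ relation and the first identity $\u(\iota(\cdot))=\u(\cdot)^{-1}$ once more. I anticipate this final identification --- matching the group-element word $\u(j)^{-1}\g(\iota(i))\u(i)$ against $\g(j)^{-1}$ via the correct instance of Lemma~\ref{lem:g-and-u}(2) --- to be the only genuinely delicate point; everything else is substitution and reindexing. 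Once the words match, summing over the admissible $i$ yields $(UP^*U\varphi)_j=\sum_i p_{ij}T^{-1}_{\g(i)}\varphi_j$... at which point I realize the cleanest route is instead to verify $\langle P\varphi,\psi\rangle=\langle \varphi,UPU\psi\rangle$ directly, expanding both sides with~\eqref{eq:PandU} and reindexing by $\iota$, so that the Lemma~\ref{lem:g-and-u}(2) relation is applied along a single admissible edge $i\to j$ with no ambiguity; I would present the proof in that symmetric pairing form to minimize index juggling.
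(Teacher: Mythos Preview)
Your treatment of $U=U^{-1}=U^*$ is correct and complete.

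For $P^*=UPU$, your strategy of showing $UP^*U=P$ is legitimate, but your bookkeeping goes wrong at the application of Proposition~\ref{prop:Parry-inv}. That proposition reads $p_{\iota(j)\iota(k)}=\dfrac{p_kp_{kj}}{p_j}$; substituting to evaluate $p_{\iota(i)\iota(j)}$ gives $\dfrac{p_jp_{ji}}{p_i}$, not $\dfrac{p_ip_{ij}}{p_j}$ as you wrote. Consequently the coefficient $\dfrac{p_{\iota(i)}p_{\iota(i)\iota(j)}}{p_{\iota(j)}}$ simplifies to $p_{ji}$, not $p_{ij}$. This is in fact what you want, since $(P\varphi)_j=\sum_i p_{ji}T^{-1}_{\g(j)}\varphi_i$. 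The group-element identification then follows from Lemma~\ref{lem:g-and-u}(2) applied to the admissible transition $j\to i$ (which is what $p_{ji}\ne 0$ means): with the lemma's $k=j$ and $j=i$ one gets $\g(j)=\u(i)^{-1}\g(\iota(i))^{-1}\u(j)$, hence $\u(j)^{-1}\g(\iota(i))\u(i)=\g(j)^{-1}$, and the result follows. Your displayed final sum $\sum_i p_{ij}T^{-1}_{\g(i)}\varphi_j$ has every index wrong; it should read $\sum_i p_{ji}T^{-1}_{\g(j)}\varphi_i=(P\varphi)_j$.

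The paper avoids this tangle by going in the opposite direction: it expands $(UPU\psi)_j$ using the simple formula for $P$, substitutes $l=\iota(k)$, and arrives at $\sum_k p_{\iota(j),\iota(k)}T_{\u(j)^{-1}\g(\iota(j))^{-1}\u(k)}\psi_k$, which by Proposition~\ref{prop:Parry-inv} and Lemma~\ref{lem:g-and-u}(2) (with $k\to j$ admissible) is exactly $\sum_k \dfrac{p_kp_{kj}}{p_j}T_{\g(k)}\psi_k=(P^*\psi)_j$. Because the expansion uses $P$ rather than $P^*$, the messy coefficient $\dfrac{p_kp_{kj}}{p_j}$ appears only once, at the end, rather than being carried through the whole calculation. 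Your proposed pairing verification $\langle P\varphi,\psi\rangle=\langle\varphi,UPU\psi\rangle$ would also work and amounts to the same computation.
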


\begin{proof}
These identities follow from Lemma~\ref{lem:g-and-u} and Proposition~\ref{prop:Parry-inv}. For example, let us prove the second:
	\begin{multline*}
	(UPU\psi)_j=T^{-1}_{\u(j)}(PU\psi)_{\iota(j)}=T^{-1}_{\u(j)}\sum_l p_{\iota(j),l}T^{-1}_{\g(\iota(j))}(U\psi)_l=\\
	\sum_l p_{\iota(j),l}T^{-1}_{\u(j)}T^{-1}_{\g(\iota(j))}T^{-1}_{\u(l)}\psi_{\iota(l)}=
	\sum_k p_{\iota(j),\iota(k)}T_{\u(j)^{-1}\g(\iota(j))^{-1}\u(k)}\psi_k.
	\end{multline*}
	For the last equality we substitute $l=\iota(k)$ and use the first identity in Lemma~\ref{lem:g-and-u}.
	Now using Proposition~\ref{prop:Parry-inv}, the second identity in  Lemma~\ref{lem:g-and-u} and formula \eqref{eq:P-adjoint} one can see that the right-hand side equals $(P^*\psi)_j$.
\end{proof}

\section{Proof of the main theorem}
\label{sec:proof-main}

To prove our main result, Theorem~\ref{thm:main}, we use a new theorem on pointwise convergence for powers of a Markov operator, Theorem~\ref{thm:convergence}, which is stated in  Subsection~\ref{subsec:thm-convergence}; its proof is postponed to Section~\ref{sec:proof-convergence}. The result is an elaboration  of that used in~\cite{Buf-Annals} under weaker assumptions.
  In order to apply  Theorem~\ref{thm:convergence} to the operators defined by~\eqref{eq:PandU} some work is needed to check that these assumptions hold.  
 
In  Subsection~\ref{subsec:solve-eqns} we check Assumptions~\ref{asm:Qn}, \ref{asm:QstarmQm} in the ergodic case, that is,  when the sigma-algebra~$\I_{G_0^2}$ is trivial.  The remaining Assumption~\ref{asm:ineq} is checked in Subsection~\ref{subsec:proof-ineq}, concluding the proof of Theorem~\ref{thm:main} in the ergodic case. Finally in Subsection~\ref{subsec:conclusion} we deal with the general, non-ergodic, case.

\subsection{General theorem on pointwise convergence}
\label{subsec:thm-convergence}

Let $(Z, \eta)$ be a Lebesgue probability space, and let $Q$ be a measure-preserving Markov operator on $L^1(Z,\eta)$. In order to state our convergence result, we need the following assumptions.

\begin{asm}\label{asm:adjoint}
	There exists a decomposition $Q=VW$, where $V$ and $W$ are measure-preserving Markov operators, so that $Q^*=WV$.
\end{asm}

\begin{asm}\label{asm:Qn}
	For every $n\in\mathbb{N}$ the equation $Q^n\psi=\psi$ has only constant solutions in $L^2(Z,\eta)$.
\end{asm}

\begin{asm}\label{asm:QstarmQm}
	There exists $m\in\mathbb N$ such that the equation $(Q^*)^m Q^m\psi=\psi$ has only constant solutions in $L^2(Z,\eta)$.
\end{asm}

\begin{asm}\label{asm:ineq}
	There exists a sequence of operators $A_n$ and constants $C,K>0$ and $a,b\in\mathbb N$ so that for all $n\ge m_0:=\lceil a/2\rceil$ the following inequality holds for any nonnegative $\varphi\in L^1(Z,\eta)$:
	\begin{equation}\label{eq:asm-ineq}
	WQ^{2n-a}\varphi\le C\sum_{j=-b}^b (Q^*)^n Q^{n+j}\varphi+A_n\varphi.
	\end{equation}
Here $W$ is the operator from Assumption~\ref{asm:adjoint}. The operators $A_n\colon L^1(Z,\eta)\to L^1(Z,\eta)$ map nonnegative functions into nonnegative ones, and for any $p\in[1,\infty]$ map $L^p(Z,\eta)$ to itself. Moreover  $\|A_n\|_{L^p}\le \alpha_n$, with $\sum_{n=m_0}^\infty \alpha_n\le K$.
\end{asm}

\begin{remark}
	Applying $V'=QV$ to both sides of \eqref{eq:asm-ineq}, we arrive at the inequality
	\begin{equation}\label{eq:var-asm-ineq}\tag{\ref{eq:asm-ineq}${}'$}
		Q^{2n-a'}\varphi\le CV'\sum_{j=-b}^b (Q^*)^n Q^{n+j}\varphi+A'_n\varphi
	\end{equation}
	with the same estimates on the norms of the operators $A'_n$. We will use both \eqref{eq:asm-ineq} and \eqref{eq:var-asm-ineq} below.
\end{remark}

\begin{thm}\label{thm:convergence}
	Let $Q\colon L^1(Z,\eta)\to L^1(Z,\eta)$ be a measure-preserving Markov operator acting on a Lebesgue probability space $(Z,\eta)$ and satisfying Assumptions \ref{asm:adjoint}--\ref{asm:ineq}.
	Then for every function $\varphi\in L\log L(Z,\eta)$ the sequence $Q^n\varphi$ converges almost surely and in $L^1$ to $\int_Z\varphi\,d\eta$ as $n\to\infty$.
\end{thm}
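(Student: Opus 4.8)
The plan is to follow the ``Alternierende Verfahren'' strategy of~\cite{Buf-Annals}, adapting it to accommodate the error term $A_n$ in Assumption~\ref{asm:ineq}. The core object is the decreasing (in a suitable sense) sequence obtained by iterating the martingale-type operators; the goal is a maximal inequality that dominates $\sup_n Q^n\varphi$ by a maximal function for which Rota's theorem applies, together with an identification of the limit.

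First I would set up the reduction. By Assumption~\ref{asm:Qn} and a standard argument (the Markov operator $Q$ is a contraction on every $L^p$, $1\le p\le\infty$, and on $L^2$ the fixed space of $Q^n$ is the constants), the mean ergodic theorem gives $Q^n\varphi\to\int_Z\varphi\,d\eta$ in $L^2$, hence in $L^1$, for all $\varphi$; so only almost sure convergence needs proof, and by density it suffices to prove a maximal inequality of weak type for $\varphi\in L\log L$, say $\eta\{\sup_n Q^n\varphi > t\}\le \frac{C}{t}\int |\varphi|\log^+|\varphi|\,d\eta + (\text{something small})$, or more precisely the $L\log L\to L^{1,\infty}$ bound that makes the a.s.\ convergence set closed. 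I would then invoke the key consequence of Assumptions~\ref{asm:adjoint} and~\ref{asm:QstarmQm}: exactly as in~\cite{Buf-Annals}, $Q^*=WV$ with $Q=VW$ means that $(Q^*)^nQ^n = W(VW)^{2n-1}V = WQ^{2n-1}V$ up to the bookkeeping of $a$, and Rota's theorem applies to the operator $Q^*Q$-type composition: the sequence $n\mapsto (Q^*)^nQ^n\varphi$ is, after the standard dilation trick, a reversed martingale, so its maximal function obeys the Hardy--Littlewood--Doob $L\log L$ maximal inequality. This is the input that makes the right-hand side of~\eqref{eq:asm-ineq} controllable.

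Next, the heart of the matter: combine the geometric inequality~\eqref{eq:asm-ineq} (equivalently~\eqref{eq:var-asm-ineq}) with the Rota bound. Writing $\varphi\ge 0$, inequality~\eqref{eq:var-asm-ineq} gives
\begin{equation*}
Q^{2n-a'}\varphi \le C\,V'\!\!\sum_{j=-b}^{b}(Q^*)^nQ^{n+j}\varphi + A'_n\varphi .
\end{equation*}
Since $V'$ is a Markov operator (hence an $L^1$ and $L^\infty$ contraction and positive), and $Q$ is likewise a positive contraction, each summand $V'(Q^*)^nQ^{n+j}\varphi$ is dominated by $V'(Q^*)^n Q^n(Q^{\pm?}\varphi)$; more cleanly, for the maximal function one bounds $\sup_n Q^{2n-a'}\varphi$ by $C(2b+1)\sup_n V'(Q^*)^nQ^n\varphi' + \sup_n A'_n\varphi$, where $\varphi' = \sup_{|j|\le b}Q^{j'}\varphi$ is still in $L\log L$ with comparable norm (finitely many contractions). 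The first term is handled by Rota/Doob as above, giving the $L\log L$ maximal bound. The second term is where $\sum_n\alpha_n\le K$ enters: $\sup_n A'_n\varphi \le \sum_n A'_n\varphi$, and this series converges in every $L^p$ with norm $\le K\|\varphi\|_p$, in particular it lies in $L^1$ (indeed in $L\log L$ by interpolation of the uniform $L^p$ bounds), so it contributes a genuinely finite, controlled maximal term. Finally, odd powers are reduced to even ones by $Q^{2n+1}\varphi = Q(Q^{2n}\varphi)$ and positivity, and the shift by $a'$ is absorbed by applying the bound to $Q^{a'}\varphi$. Assembling, $\sup_n Q^n\varphi$ is dominated in $L^{1,\infty}$ (in fact $L^1$, via $L\log L$) by a fixed multiple of the $L\log L$-norm of $\varphi$. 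Together with $L^2$-density and the already-established $L^1$ convergence on a dense set, this yields a.s.\ convergence for all $\varphi\in L\log L$, and the limit is the $L^2$-limit $\int_Z\varphi\,d\eta$ by Assumption~\ref{asm:Qn}.

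The main obstacle I expect is the careful propagation of the error term through the maximal-function estimate: one must check that $A'_n$ (obtained from $A_n$ by composing with the Markov operator $V'=QV$) still maps nonnegative functions to nonnegative functions, still acts on every $L^p$ with $\|A'_n\|_{L^p}\le\alpha_n$ (this is immediate since $\|QV\|_{L^p\to L^p}\le 1$), and — crucially — that passing from the pointwise family inequality~\eqref{eq:var-asm-ineq} valid for each fixed $n\ge n_0$ to the supremum over $n$ does not lose summability of the $\alpha_n$; this is fine because $\sup_n A'_n\varphi\le\sum_{n\ge n_0}A'_n\varphi$ and the latter is an honest convergent sum in $L^p$. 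A secondary technical point, handled exactly as in~\cite{Buf-Annals}, is the verification that the relevant composition of $V$, $W$ and powers of $Q$ genuinely produces the reversed-martingale structure required for Rota's theorem; Assumption~\ref{asm:adjoint}'s factorization $Q^*=WV$ with $Q=VW$ is precisely what is needed, and Assumption~\ref{asm:QstarmQm} guarantees that the limiting $\sigma$-algebra of that martingale is trivial, pinning the limit to the space average.
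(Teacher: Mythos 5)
Your overall architecture is the same as the paper's (Rota/Alternierende Verfahren maximal inequality on $(Q^*)^nQ^n$, propagate through inequality~\eqref{eq:asm-ineq} with the error term absorbed into a convergent series, then extend from a dense set to $L\log L$), and your handling of the error operators $A_n$, $A_n'$ is essentially right. But there is a genuine gap in how you establish convergence on the dense set, and it is not a small bookkeeping issue.

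You claim that Assumption~\ref{asm:Qn} together with the mean ergodic theorem gives $Q^n\varphi\to\int_Z\varphi\,d\eta$ in $L^2$, and hence in $L^1$. This is false for a general non-self-adjoint Markov operator: the mean ergodic theorem only gives convergence of the Ces\`aro averages $\frac1N\sum_{n<N}Q^n\varphi$, not of the iterates $Q^n\varphi$. Triviality of the fixed space of $Q$ (or even of all $Q^n$) does not force norm convergence of powers --- an irrational rotation on $L^2(\mathbb T)$ is a counterexample. What is actually needed is triviality of the tail sigma-algebra of the trajectory space attached to $Q$, after which Kaimanovich's lemma (Lemma~\ref{lem:L12-conv}) gives $L^1$ and $L^2$ convergence of $Q^n\varphi$. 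The paper devotes a substantial block of the proof (Lemmas~\ref{lem:mixingQm}--\ref{lem:0-2} and the three lemmas following Lemma~\ref{lem:0-2}) to exactly this: first mixing of $Q$ from Assumption~\ref{asm:QstarmQm} via the $K$-property and the Rokhlin--Sinai characterization, then a Kaimanovich-style 0--2 law to rule out a totally nontrivial tail (this step uses Assumption~\ref{asm:ineq} a \emph{second} time, paired against the mixing, not only in the maximal inequality), then passing from the tail of $Q^*$ to the tail of $Q$ via the factorization $Q=VW$, and finally ruling out a finite nontrivial tail using Assumption~\ref{asm:Qn}. Your proposal collapses all of this into an incorrect appeal to the mean ergodic theorem, and the later mention that Assumption~\ref{asm:QstarmQm} ``pins the limit to the space average'' does not replace it --- that assumption governs the limiting $\sigma$-algebra of the reversed martingale $(Q^*)^nQ^n$, which is a different object from the tail $\sigma$-algebra controlling $\lim_n Q^n\varphi$. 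Until that tail-triviality step is supplied, the dense-set half of the argument is missing, and the maximal inequality by itself cannot identify the pointwise limit.

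One secondary inaccuracy worth flagging: you assert $\sum_n A_n'\varphi\in L\log L$ ``by interpolation of the uniform $L^p$ bounds.'' This is not needed (and is not quite a theorem as stated --- uniform boundedness on each $L^p$ does not by itself give $L\log L\to L\log L$ boundedness without a quantitative growth control in $p$); the paper only requires the $L^1$-summability $\sum_n\|A_n'\varphi\|_{L^1}\le K\|\varphi\|_{L^1}$, which is all that the maximal inequality~\eqref{eq:max-ineq} uses.
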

As remarked above, the proof of this theorem is deferred to Section~\ref{sec:proof-convergence}.

We now proceed to check that the above assumptions hold in our case.

\subsection{Checking Assumptions~\ref{asm:Qn} and~\ref{asm:QstarmQm}.}
\label{subsec:solve-eqns}

Let $P, U$ be the Markov operators defined  in~\eqref{eq:PandU} and define 
\begin{equation}\label{eq:QVW}
Q=P^2,\quad V=PU,\quad\text{and}\quad W=UP.
\end{equation}
 In this subsection we  check Assumptions~\ref{asm:Qn} and~\ref{asm:QstarmQm} of Theorem~\ref{thm:convergence} for these $Q, U, V$ in the case in which  the sigma-algebra $\I_{G_0^2}$ is trivial.
To do this we express the equations from these assumptions in terms of the components $\varphi_j$, $j\in\Xi$, of a function $\varphi \in L^2(Y,\nu)$.

\begin{prop}\label{prop:eqns-to-systems}Let $P$ be the Markov operator defined by \eqref{eq:PandU}. Then the following hold.\\
	1. A function $\varphi\in L^2(Y,\nu)$ is a solution to the equation $P^k\varphi=\varphi$ if and only if for any admissible sequence $i_0\to i_1\to\dots\to i_k$ of states we have
	\begin{equation}\label{eq:asmQn-comp}
	\varphi_{i_0}=T_{\g(i_0)}^{-1}\dots T_{\g(i_{k-1})}^{-1}\varphi_{i_k}.
	\end{equation}
	2. For $k\ge N_0$, where $N_0$ is defined in Corollary~\ref{cor:PiN-positive}, a function $\varphi\in L^2(Y,\nu)$ is a solution to $(P^*)^kP^k\varphi=\varphi$ if and only if for any admissible sequences $i_0\to i_1\to\dots\to i_k$ and $j_0\to j_1\to\dots\to j_k$ with $i_0=j_0$ we have 
	\begin{equation*}
	T_{\g(i_1)}^{-1}\dots T_{\g(i_{k-1})}^{-1}\varphi_{i_k}=T_{\g(j_1)}^{-1}\dots T_{\g(j_{k-1})}^{-1}\varphi_{j_k}.
	\end{equation*}
\end{prop}

\begin{proof}
	1. The equation $P^k\varphi=\varphi$ is equivalent to
	\begin{equation*}
	\varphi_{i_0}=(P^k\varphi)_{i_0}=\sum_{i_1,\dots, i_k}p_{i_0i_1}\dots p_{i_{k-1}i_k}T^{-1}_{\g(i_0)}\dots T^{-1}_{\g(i_{k-1})}\varphi_{i_k},
	\end{equation*}
	hence, since the $T_{g}$'s are unitary,
	\begin{equation}\label{eq:norm-asmQn}
	\norm{\varphi_{i_0}}_{L^2}\le \sum_{i_1,\dots, i_k}p_{i_0i_1}\dots p_{i_{k-1}i_k}\norm{\varphi_{i_k}}_{L^2}.	\end{equation}
	Multiplying these inequalities by $p_{i_0}$ and summing them up for all ${i_0}\in\Xi$ we obtain
	\begin{equation*}
	\sum_{i_0} p_{i_0}\norm{\varphi_{i_0}}_{L^2}\le \sum_{i_k} \biggl[\sum_{i_0,\dots,i_{k-1}}p_{i_0} p_{i_0i_1}\dots p_{i_{k-1}i_k}\biggr]\norm{\varphi_{i_k}}_{L^2}=\sum_{i_k} p_{i_k} \norm{\varphi_{i_k}}_{L^2}.
	\end{equation*}
	Therefore, for each $i_0$ inequality \eqref{eq:norm-asmQn} is indeed an equality, and the vector $(\norm{\varphi_i}_{L^2})_{i\in\Xi}$ is a right eigenvector of the stochastic matrix $\pi^k$, where the matrix $\pi=(p_{ij})$ is defined by~\eqref{eq:Parry-pij}. Corollary~\ref{cor:PiN-positive} and the Perron--Frobenius theorem yield that $(1,\dots,1)$ is the only eigenvector of $\pi$ with nonnegative coordinates up to scaling, hence all $\varphi_i$ have the same $L^2$-norm.
	
	Finally, in the Hilbert space $L^2(X,\mu)$ the triangle inequality \eqref{eq:norm-asmQn} attains equality only if all nonzero summands are proportional to each other with positive coefficients, whence $\varphi_{i_0}=c\cdot T^{-1}_{\g(i_0)}\dots T^{-1}_{\g(i_{n-1})}\varphi_{i_n}$. Calculating the $L^2$-norms of both sides, we get $c=1$.
	
	2. Similarly, $(P^*)^kP^k\varphi=\varphi$ yields
	\begin{multline*}
	\varphi_{j_k}=\sum_{\substack{j_0,\dots,j_{k-1}\\i_1,\dots,i_k}}
	\biggl[\frac{p_{j_0}p_{j_0j_1}\dots p_{j_{k-1}j_k}}{p_{j_k}}p_{j_0i_1}p_{i_1i_2}\dots p_{i_{k-1}i_k} \times\\[-1ex]
	T_{\g(j_{k-1})}\dots T_{\g(j_1)} T^{-1}_{\g(i_1)}\dots T^{-1}_{\g(i_{k-1})}\varphi_{i_k}\biggr].
	\end{multline*}
	The remaining proof is the same as in the first statement: $(\norm{\varphi_i})_{i\in\Xi}$ is a right eigenvector of the stochastic matrix $(\pi^*)^k\pi^k$ with positive entries, where $(\pi^*)_{ij}=p_j p_{ji}/p_i$;
	hence the $L^2$-norms of all $\varphi_i$'s are equal, and the same argument with the triangle inequality completes the proof.
\end{proof}

\begin{lemma}\label{lem:eqn-to-inv}
	Let $M$ and $N_0$ be defined as in Proposition~\ref{prop:tail-paths}
	and Corollary~\ref{cor:PiN-positive}.
	Then for any $l\ge l^*:=\max(2M,N_0)$ the following holds: if a function $\varphi\in L^2(Y,\nu)$ satisfies equalities
	\begin{equation}\label{eq:lem-eqn-to-inv}
	T_{\g(i_1)}^{-1}\dots T_{\g(i_{l-1})}^{-1}\varphi_{i_l}=T_{\g(j_1)}^{-1}\dots T_{\g(j_{l-1})}^{-1}\varphi_{j_l}
	\end{equation}
	for all admissible sequences $i_0\to i_1\to\dots\to i_l$, $j_0\to j_1\to\dots\to j_l$ with $i_0=j_0$, then $\varphi(x,k)$ does not depend on $k \in \Xi$:
	$\varphi(x,k)=\varphi^\circ(x)$, and $\varphi^\circ(x)$ is $G_0^2$-invariant.
\end{lemma}

\begin{remark}If \eqref{eq:lem-eqn-to-inv} holds for all pairs of sequences of a given length $l$, then it holds for any pair of sequences $i_0\to i_1\to\dots\to i_{l'}$, $j_0\to j_1\to\dots\to j_{l'}$ of length $l'\le l$ with $i_0=j_0$. Indeed, append an arbitrary prefix $i_{-(l-l')}\to\dots\to i_0$ to these sequences and apply \eqref{eq:lem-eqn-to-inv} to the resulting sequences of length $l$.
	One can see that $T_{\g(i_{-(l-l')+1})}^{-1}\dots T_{\g(i_{0})}^{-1}$ cancels out and we arrive at \eqref{eq:lem-eqn-to-inv} for the original sequences of length $l'$.
\end{remark}

Let us first deduce Assumptions~\ref{asm:Qn} and~\ref{asm:QstarmQm} from Lemma~\ref{lem:eqn-to-inv}.

\begin{cor}\label{cor:asm-eqns}
	In the ergodic case, that is, assuming $\I_{G_0^2}$ is trivial, Assumptions~\ref{asm:Qn} and~\ref{asm:QstarmQm} hold for the operator $Q$ defined by \eqref{eq:PandU} and \eqref{eq:QVW}.
\end{cor}

\begin{proof}
	1. Suppose that $Q^n\varphi=\varphi$. Choose $s$ such that $l=2ns\ge l^*$. Then $P^l\varphi=(Q^n)^s\varphi=\varphi$. Therefore,
	\eqref{eq:lem-eqn-to-inv} holds, as both sides are equal to $T_{\g(i_0)}\varphi_{i_0}$ by \eqref{eq:asmQn-comp}.
	Lemma~\ref{lem:eqn-to-inv} then implies that all $\varphi_j$ are equal to the same function $\varphi^\circ$, where $\varphi^\circ$ is $G_0^2$-invariant and hence, by ergodicity,  constant.
	
	2. Suppose that $(Q^*)^mQ^m\varphi=\varphi$, where $m$ satisfies $2m\ge l^*$. Proposition~\ref{prop:eqns-to-systems} implies that \eqref{eq:lem-eqn-to-inv} holds for $\varphi$ with $l=2m$, so $\varphi$ is constant.
\end{proof}

It remains to prove Lemma~\ref{lem:eqn-to-inv}.

\begin{proof}[\proofname\ of Lemma~\ref{lem:eqn-to-inv}]
	For every $e\in G_0$ let $\underline\H^e$, $\underline\T^e$, $\underline i^e$, $\underline j^e$, $\mathbf{h}(e)$, and $\mathbf{t}(e)$ be defined as in Proposition~\ref{prop:tail-paths}.
	Recall that $\T^e_0=\H^e_0=\R$ and define $g_e,h_e\in G$ such that $\T^e_{M-1}=g_e\R$, $\H^e_{-M+1}=h_e\R$, whence
	\begin{equation*}
	g_e=\g(i^e_0)^{-1}\dots \g(i^e_{M-2})^{-1},\quad h_e=\g(j^e_{-1})\dots\g(j^e_{-M+1}).
	\end{equation*}
	Denote $\psi_e=T_{g_e}\varphi_{\mathbf{t}(e)}$.
	
	Take any $e_1,e_2$ and choose $\hat e\ne e_1^{-1},e_2^{-1}$. Let $\underline\S^\alpha$ ($\alpha=1,2$) be the paths from the first part of Proposition~\ref{prop:HT-combin} applied to $e_\alpha$ and $\hat e$, and let $\underline k^\alpha=(k^\alpha_{-M}\to\dots\to k^\alpha_{M-1})$ be the corresponding sequences of states. Then $\S^\alpha_{-M+1}=\H^{\hat e}_{-M+1}=h_{\hat e}\R$, $\S^{\alpha}_{M-1}=\T^{e_\alpha}_{M-1}=g_{e_\alpha}\R$, hence
	\begin{equation*}
	g_{e_\alpha}=h_{\hat e}\g(k^\alpha_{-M+1})^{-1}\dots \g(k^\alpha_{M-2})^{-1}.
	\end{equation*}
	Therefore,
	\eqref{eq:lem-eqn-to-inv} for the sequences $\underline k^\alpha$ yields
	\begin{equation*}
	T_{h_{\hat e}^{-1}}T_{g_{e_1}^{\phantom{1}}}\varphi_{\mathbf{t}(e_1)}=T_{h_{\hat e}^{-1}}T_{g_{e_2}^{\phantom{1}}}\varphi_{\mathbf{t}(e_2)},
	\end{equation*}
	whence $\psi_{e_1}=\psi_{e_2}$. We thus obtain that all $\psi_e$ are equal to the same function $\psi^\circ$.
	
	Now  again take any $e_1,e_2$, choose $\hat e\ne e_1^{-1},e_2^{-1}$, and apply the same argument to the paths from the second part of Proposition~\ref{prop:HT-combin} for $\hat e$, $e_\alpha$ and $\tilde e_\alpha$. We have that
	\begin{equation*}
	T_{h_{\hat e}^{-1}e_1}\psi_{\tilde e_1}=T_{h_{\hat e}^{-1} e_1 g_{\tilde e_1}^{\phantom{1}}}\varphi_{\mathbf{t}(\tilde e_1)}=T_{h_{\hat e}^{-1} e_2 g_{\tilde e_2}^{\phantom{1}}}\varphi_{\mathbf{t}(\tilde e_2)}=T_{h_{\hat e}^{-1} e_2}\psi_{\tilde e_2},
	\end{equation*}
	Therefore, $T_{e_1^{-1}e_2}\psi^\circ=\psi^\circ$, so $\psi^\circ$ is $G_0^2$-invariant. Then the function $T_h\psi^\circ$ is independent of $h\in G_0$; denote it by $\psi^\bullet$.
	The function $\psi^\bullet$ is also $G_0^2$-invariant and $T_h\psi^\bullet=\psi^\circ$ for all $h\in G_0$.
	
	Finally, fix $\tilde\imath=\mathbf{t}(e)$ and take any $\tilde\jmath\in\Xi$. With $N = N_0$ as in Corollary~\ref{cor:PiN-positive}, we can consecutively choose $i_{N-1},\dots,i_1,i_0$ so that the sequence $i_0\to\dots\to i_{N-1}\to i_N=\tilde\imath$ is admissible,
	and then, since $(\Pi^{N_0})_{i_0\tilde\jmath}>0$, we can choose $j_1,\dots,j_{N-1}$ such that $i_0=j_0\to j_1\to\dots\to j_{N-1}\to j_N=\tilde\jmath$ is also admissible.
	Using \eqref{eq:lem-eqn-to-inv} for these sequences and taking into account the definition of $\psi_e$, we see that
	\begin{equation*}
	\varphi_{\tilde\jmath}=T_{\g(j_{N-1})\dots\g(j_1)\g(i_1)^{-1}\dots\g(i_{N-1})^{-1}\g(i^e_{M-2}) \ldots \g(i^e_{0})}\psi^\circ.
	\end{equation*}
	The number $L=2N+M-3$ of the generators in the product is the same for all $\tilde\jmath$, hence all  the $\varphi_{\tilde\jmath}$ are the same; they are equal to either $\psi^\circ$ or $\psi^\bullet$ depending on the parity of~$L$.
\end{proof}

\subsection{Checking Assumption~\ref{asm:ineq}}
\label{subsec:proof-ineq}

\begin{lemma}\label{lem:asm-ineq}
	Assumption~\ref{asm:ineq} holds for the operators defined by~\eqref{eq:PandU} and \eqref{eq:QVW}. Precisely, inequality~\eqref{eq:asm-ineq} holds for $a=6$ and $b=2$.
\end{lemma}

The proof rests on a rather complicated geometric statement which is needed to compare terms on the two sides of estimate~\eqref{eq:asm-ineq} in Assumption~\ref{asm:ineq}.  The underlying meaning of this geometric statement, made precise in Lemma~\ref{lem:wye}, is the following. 
Consider a thickened path $\underline{\S}=(\S_0=\A,\dots,\S_{2n}=\B)$.
Then if this thickened path does not belong to a small set of ``exceptional'' paths, it can be embedded into a triangle $\A\B\C$ of thickened paths such that (1) the lengths of $\A\C$ and $\C\B$ are not more than $n+\mathrm{const}$, and (2) the triangle has, informally speaking, ``zero angles'' in all its vertices. In the simplest case of free group ``zero angle'' in vertex $\A$ means the coincidence of the levels of the paths $\A\B$ and $\A\C$ that are adjacent to $\A$. One can easily see that here we have no exceptional paths: choose $\C$ to be any neighbour of $\S_n$ other than $\S_{n-1}$ and $\S_{n+1}$, then $\C\A=(\C,\S_n,\S_{n-1},\dots,\S_0)$ and  $\C\B=(\C,\S_n,\S_{n+1},\dots,\S_{2n})$ have length $n+1$.

In the general case the states of our Markov chain are not uniquely recovered from the configurations of the domains, so we say that ``zero angle'' in $\A$ means the coincidence of the first elements in the sequences of states generating $\A\B$ and $\A\C$. Another amendment in the formal statement of the lemma is that we deal with any sequences generates by our Markov chain, not only with thickened paths. 

To find such a triangle we cut the sequence $\A\B$ near its midpoint and then modify the halves to construct $\A\C$ and $\C\B$. This is done by the lemmas from Section~\ref{sec:operations}, which allow us to keep track of the generating sequences of states throughout the modification process.

%%%%%%%%%%%%%%%%%%%%
\begin{figure}[hbt]
	\centering
	\includegraphics{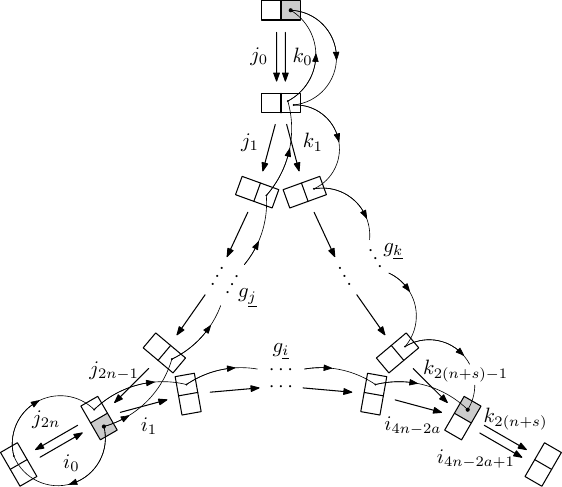}
	\caption{Illustrating Lemma~\ref{lem:wye} and formula~\eqref{eq:wye-identity}}
	\label{fig:wye}
\end{figure}
%%%%%%%%%%%%%%%%

Denote by $\Paths_{r}$ the set of all admissible sequences $\underline i=(i_0\to\dots\to i_r)$ of states in the Markov chain and let  $\lambda>\nobreak1$ be the Perron--Frobenius eigenvalue of its adjacency matrix~$\Pi$.

\begin{lemma}\label{lem:wye}
	For all sufficiently large $N$, there exists an exceptional subset $E_{2N-1}\subset\Paths_{2N-1}$ with $\# E_{2N-1}=O(\lambda^{N})$,  such that the following holds for every $\underline i\in\Paths_{2N-1}\setminus E_{2N-1}$:
	there exist $\alpha\in\{1,2,3,4\}$, $\beta\in\{-1,0,1,2\}$, and admissible sequences $\underline j=(j_l)_{l=0}^{N-\beta+\alpha-1}$, $\underline k=(k_l)_{l=0}^{N+\beta+\alpha-1}$ with the following properties.
	\begin{enumerate}[itemsep=0pt,label={(\roman*)}]
		\item\label{item:wye-states} $j_0=k_0$, $j_{N-\beta+\alpha-1}=\iota(i_0)$, $k_{N+\beta+\alpha-1}=i_{2N-1}$.
		\item\label{item:wye-domains} Let $\underline\S=(
		\S_0,\dots,\S_{2N})$ be any sequence of domains generated by $\underline i$  
		and let   
		$\underline\U=(\U_0,\dots,\U_{N-\beta+\alpha})$, $\underline\V=(\V_0,\dots,\V_{N+\beta+\alpha})$ be the unique sequences generated by $\underline j$ and $\underline k$ respectively with the property that $\U_{N-\beta+\alpha}=\S_0$, $\V_{N+\beta+\alpha}=\S_{2N}$.
		Then $\U_0=\V_0$. 
		\item\label{item:wye-minimal} $j_1\ne k_1$.
	\end{enumerate}
 Moreover the mapping $\underline i\mapsto (\underline j,\underline{\vphantom{j}k})$ with  $(\underline j,\underline{\vphantom{j}k})$ satisfying (i)-(iii) is injective.
\end{lemma}

This lemma is illustrated in Figure~\ref{fig:wye}. Every state from the sequences $\underline{\vphantom{j}i}, \underline j, \underline{\vphantom{j}k}$ is represented by a straight arrow, while the past and the future domains of the state are shown as the pairs of squares near the start and the end of this arrow. Other details of this figure, including the numbering for elements of $\underline j$ and $\underline{\vphantom{j} k}$, which is different from that in the statement of the lemma, are discussed below when proving equality~\eqref{eq:wye-identity}.

\begin{remark}\label{rem:item-minimal} 
In the lemma, $\beta$  can be set to zero except  in the special case of Remark~\ref{rem:specialcase}.

Statements~\ref{item:wye-states} and~\ref{item:wye-domains} remain true if the same admissible sequence $\underline t=(t_l)_{l=-s}^0$ with $t_0=j_0=k_0$ is prepended to both $\underline j$ and $\underline{\vphantom{j}k}$. Statement~\ref{item:wye-minimal} requires $\underline j$ and $\underline{\vphantom{j}k}$ to be chosen without this common initial part. The possibility of adding or removing such a common initial segment will be used later. 

\end{remark}

\begin{proof}[Proof of Lemma~\ref{lem:asm-ineq} assuming Lemma~\ref{lem:wye}.]The values of $a$ and $b$ in the statement of Lemma~\ref{lem:asm-ineq} are in fact
	$b=\max|\beta|$, $a=b+\max\alpha$, where the possible values of $\alpha$ and $\beta$ are described in the course of the proof of Lemma~\ref{lem:wye}, see Claims~\ref{clm:cut-point}  and~\ref{clm:restore-i}. 
	
Let us examine the terms on each side of~\eqref{eq:asm-ineq}. First, for a nonnegative function $\varphi\in L^1(Y,\nu)$ we have
	\begin{multline}\label{eq:equal-ineq-LHS}
	(WQ^{2n-a}\varphi)_l=(UP^{4n-2a+1}\varphi)_l\\
	=\sum_{i_1,\dots,i_{4n-2a+1}}p_{\iota(l),i_1}p_{i_1i_2}\dots p_{i_{4n-2a}i_{4n-2a+1}} T^{-1}_{\u(l)}T^{-1}_{\g(\iota(l))}T^{-1}_{\g(i_1)}\dots T^{-1}_{\g(i_{4n-2a})}\varphi_{i_{4n-2a+1}}.
	\end{multline}
The coefficient in a term of the last sum is nonzero if and only if the sequence $\iota(l)\to i_1\to\dots\to i_{4n-2a+1}$ is admissible. Since $(p_{ij})$ is the matrix for the Parry measure, formula~\eqref{eq:Parry-prod} yields
	\begin{equation}\label{eq:est-ineq-LHS}
	(WQ^{2n-a}\varphi)_l\le \widetilde{C}_1 \lambda^{-4n} \sum_{\substack{\underline i\in\Paths_{4n-2a+1},\\ i_0=\iota(l)}}
	T_{\u(i_0)}T^{-1}_{\g(i_0)}T^{-1}_{\g(i_1)}\dots T^{-1}_{\g(i_{4n-2a})}\varphi_{i_{4n-2a+1}}.
	\end{equation}
	where we use $\omega(\iota(l)) = \omega(l)^{-1}$, see Lemma~\ref{lem:g-and-u}.
	Similarly,
	\begin{multline*}
	((Q^*)^nQ^{n+s}\varphi)_l=\sum_{\substack{j_{2n-1},\dots,j_0,\\k_1,\dots, k_{2n+2s}}}
	\frac{p_{j_0}}{p_{l}} p_{j_{2n-1}l}p_{j_{2n-2}j_{2n-1}}\dots p_{j_0j_1}p_{j_0k_1}p_{k_1k_2}\dots p_{k_{2n+2s-1}k_{2n+2s}}\\
	{}\times T_{\g(j_{2n-1})}\dots T_{\g(j_0)}T^{-1}_{\g(j_0)}T^{-1}_{\g(k_1)}\dots T^{-1}_{\g(k_{2n+2s-1})}\varphi_{k_{2n+2s}},
	\end{multline*}
	hence~\eqref{eq:Parry-prod} yields the estimate
	\begin{multline}\label{eq:est-ineq-RHS}
	((Q^*)^nQ^{n+s}\varphi)_l\\
	{}\ge\widetilde{C}_2 \lambda^{-4n}
	\sum_{\substack{\underline j\in\Paths_{2n},\\\underline k\in\Paths_{2n+2s},\\ j_0=k_0,j_{2n}=l}}
	T_{\g(j_{2n-1})}\dots T_{\g(j_0)}T^{-1}_{\g(k_0)}T^{-1}_{\g(k_1)}\dots T^{-1}_{\g(k_{2n+2s-1})}\varphi_{k_{2n+2s}}.
	\end{multline}
	Here $\widetilde{C}_2$ is chosen in such a way that this inequality holds for any $s$ with $|s|\le b$.
	Apply Lemma~\ref{lem:wye} to a sequence $\underline i$ from~\eqref{eq:est-ineq-LHS} with $N=2n-a+1$. There are $O(\lambda^{2n})$ sequences in the exceptional set $E_{4n-2a+1}$,
	and the corresponding terms in~\eqref{eq:equal-ineq-LHS} comprise $(A_n\varphi)_l$. Thus $\|A_n\|=O(\lambda^{-2n})$, hence the series $\sum_n \|A_n\|$ converges.
	
	Suppose now that $\underline i\notin E_{4n-2a+1}$. Choose paths $\underline{\tilde\jmath}\in\Paths_{2n-a-\beta+\alpha}$ and $\underline{\vphantom{j}\tilde k}\in\Paths_{2n-a+\beta+\alpha}$ as in Lemma~\ref{lem:wye}.
	Denote $\eta=a-\alpha+\beta\ge 0$, consider any admissible sequence $\underline t=(t_{-\eta}\to\dots\to t_0=j_0=k_0)$, and adjoin $\underline t$ to both $\underline{\tilde\jmath}$ and $\underline{\vphantom{j}\tilde k}$ to construct $\underline j\in\Paths_{2n}$ and $\underline{\vphantom{j}k}\in\Paths_{2n+2\beta}$.
	
	Let us prove that the terms in~\eqref{eq:est-ineq-LHS} and \eqref{eq:est-ineq-RHS} (with $s=\beta$) corresponding to this choice of  $\underline{\vphantom{j}i}$, $\underline j$, and~$\underline{\vphantom{j} k}$ are equal.
	Indeed, $l=\iota(i_0)=j_{2n}$ and $k_{2n+2s}=i_{4n-2a+1}$, so it remains to prove that
	\begin{equation}\label{eq:wye-identity}
	\underbrace{\u(i_0)\g(i_0)^{-1}\g(i_1)^{-1}\dots \g(i_{4n-2a}^{-1})}_{g_{\underline i}}=
	\underbrace{\g(j_{2n-1})\dots \g(j_0)}_{g_{\underline j}}\underbrace{\g(k_0)^{-1}\dots \g(k_{2n+2s-1})^{-1}}_{g_{\underline k}},
	\end{equation}
	where we define $g_{\underline i}$, $g_{\underline j}$, and $g_{\underline k}$ as shown.
	
	Statements~\ref{item:wye-states} and~\ref{item:wye-domains} of Lemma~\ref{lem:wye} hold for $\underline j$ and $\underline{\vphantom{j}k}$, so let $\underline\S$, $\underline\U$, and $\underline\V$ be generated by $\underline{\vphantom{j} i}$, $\underline j$, $\underline{\vphantom{j} k}$ as in statement~\ref{item:wye-domains}.
	Let $h\R$ be the right domain in $\S_1$, then the definitions of $\g(\,\cdot\,)$ and $\u(\,\cdot\,)$ in formula \eqref{eq:g-and-u} imply that $h\u(i_0)\R$ is the left domain in $\S_0$, $h\u(i_0)\g(i_0)^{-1}\R$ is the left domain in $\S_1$,~$\dots$, $hg_{\underline i}\R$ is the left domain in $\S_{4n-2a+1}$.
	
	On the other hand, as $\S_0=\U_{2n+1}$ and $i_0=\iota(j_{2n})$, we have that $\S_1=\U_{2n}$ and $h\R$ is the left domain in $\U_{2n}$.
	The same argument as above gives us that $hg_{\underline j}\R$ is the left domain in $\U_0$, which coincides with the left domain in $\V_0$, so $hg_{\underline j}g_{\underline{\vphantom{j}k}}\R$ is the left domain in $\V_{2n+2s}$. But as $\V_{2n+2s+1}=\S_{4n-2a+2}$ and $k_{2n+2s}=i_{4n-2a+1}$, we obtain that the left domains in $\V_{2n+2s}$ and $\S_{4n-2a+1}$ coincide. Therefore, $hg_{\underline i}\R=hg_{\underline j}g_{\underline{\vphantom{j}k}}\R$, so \eqref{eq:wye-identity} holds.
	This is illustrated in Figure~\ref{fig:wye}: the curved arrows link the domains $hg\R$, where $g$ is an initial segment of either the left-hand or the right-hand side of \eqref{eq:wye-identity}; the shaded squares correspond to $g=\mathrm{id}$ (left), $g=g_{\underline j}$ (top), $g=g_{\underline i}=g_{\underline j}g_{\underline{\vphantom{j}k}}$ (right).
	
	Therefore, we have proved that for every term in the right-hand side of \eqref{eq:est-ineq-LHS} except those with $\underline i\in E_{4n-2a+1}$,  there exists an equal term in the right-hand side of \eqref{eq:est-ineq-RHS} for some $s$ with $|s|\le b$.

	Finally we check that different terms in \eqref{eq:est-ineq-LHS} correspond to different terms in \eqref{eq:est-ineq-RHS}. 
	Indeed, by the last statement of Lemma~\ref{lem:wye} different sequences $\underline i\in\Paths_{4n-2a+1}\setminus E_{4n-2a+1}$ correspond to different pairs 
	$(\underline{\tilde\jmath},\underline{\vphantom{j}\tilde k})$. On the other hand, by item~\ref{item:wye-minimal} of this lemma the pair $(\underline{\tilde\jmath},\underline{\vphantom{j}\tilde k})$ is uniquely determined by the pair $(\underline{j},\underline{\vphantom{j}k})$: to see this find maximal $s\ge 0$ such that $j_l=k_l$ for all $l=0,\dots,s$ and remove the common initial segment $(j_0,\dots,j_{s-1})$ from $\underline{j}$ and~$\underline{\vphantom{j} k}$. Therefore, different sequences $\underline i$ yield different pairs $(\underline j,\underline{\vphantom{j} k})$, hence
	\begin{equation*}
	\sum_{\substack{\underline i\in\Paths_{4n-2a+1}\setminus E_{4n-2a+1},\\ i_0=\iota(l)}}
	T_{g_{\underline i}}\varphi_{i_{4n-2a+1}}\le
	\sum_{s=-b}^b
	\sum_{\substack{\underline j\in\Paths_{2n},\\\underline k\in\Paths_{2n+2s},\\ j_0=k_0,j_{2n}=l}}
	T_{g_{\underline j}g_{\underline k}}\varphi_{k_{2n+2s}}.
	\end{equation*}
	Combining this inequality with \eqref{eq:est-ineq-LHS} and \eqref{eq:est-ineq-RHS}, we establish \eqref{eq:asm-ineq}.
\end{proof}

\begin{proof}[\proofname\ of Lemma~\ref{lem:wye}]
The proof will be carried out in a number of steps. At various points we perform certain operations from Section~\ref{sec:operations} on the sequences in question and then check that the number of sequences for which this alters states so as to make the requirement  (i) in the statement of the lemma impossible   is $O(\lambda^N)$.

As in Lemma~\ref{lem:change-states}, let $n_0$ be the maximal value of $n(v)$ for all vertices of $\R$, $n_0=2$ if $\R$ has no vertices inside $\DD$. From now on we assume that $N>n_0+5$; otherwise one can set $E_{2N-1}:=\Paths_{2N-1}$. 
Take any $\underline{i}\in \Paths_{2N-1}$ and consider a sequence $\underline\S=(\S_0,\dots,\S_{2N})$ that is generated by $\underline i$; let $s_l=\S_l\cap\S_{l+1}$. 

\smallskip

\noindent\textit{Step 1.} We begin by splitting $\underline\S$ at a suitable point $\S_{N - \beta} $ near $\S_N$, where $\beta$ is chosen as in the next claim.   Note that   if $N(\R) \geq 4$ we can take $\beta = 0$ and the proof simplifies. 
	
	\begin{claim}\label{clm:cut-point}There exists $\beta\in\{-1,0,1,2\}$ and a domain $\A\in\S_{N-\beta}$ with a side $\tilde s$ not belonging to $s_{N-\beta-1}\cup s_{N-\beta}$.
	In the special case from Remark~\ref{rem:specialcase} we also require  that $\tilde s$ has an end $v$ that either belongs to $\dd\DD$ or is incident to at most $n(v)-1$ domains in~$\underline\S$.
	\end{claim}
	
	\begin{proof}
		Assume first that there exists a level $\S_{N-\beta}$, $\beta\in\{-1,0,1,2\}$ which contains two fundamental domains. Then $\S_{N-\beta}$ has $2N(\R)$ sides with at most six of them included in $s_{N-\beta-1}\cup s_{N-\beta}$. Any other side in $\dd\S_{N-\beta}$ can be chosen as $\tilde s$ with $\A$ being the domain in $\S_{N-\beta}$ adjacent to $\tilde s$.
		
		Thus further consideration is needed only when $N(\R)=3$ and both states $i_{N-\beta-1}$ and $i_{N-\beta}$ are of type $E$. The states of type~$E$ exists only if there are two adjacent vertices in $\dd\R$ that lie inside $\DD$, hence we are in the special case from Remark~\ref{rem:specialcase}. However, $(E\to E)$-transition needs a vertex $u$ with $n(u)=2$, and this is ruled out in this remark. We have thus proved that there exists a side $\tilde s\not\subset s_{N-\beta-1}\cup s_{N-\beta}$. Note that in the special case the side $\tilde s$ is not incident to the common vertex of two fundamental domains in $\S_{N-\beta}$, whence $\tilde s$ is non-compact.
		
		Now assume that each $\S_{N-\beta}$, $\beta\in\{-1,0,1,2\}$, contains only one domain.
		Then $s_{N-1}\cup s_{N}$ consists of two sides of $\S_N$. In the non-special case any other side in $\dd\S_N$ can be chosen as $\tilde s$. In the special case this can fail if the side $s=\dd\S_N\setminus (s_{N-1}\cup s_{N})$ is compact. Then let $u$ be the common vertex of $s$ and $s_{N-1}$. If $u$ is incident to $n(u)\ge 3$ levels of $\underline\S$, then $u$ is incident to the only domain in $\S_{N-2}$. Therefore, $\tilde s=\dd\S_{N-1}\setminus(s_{N-2}\cup s_{N-1})$ is the only side of $\S_{N-1}$ non-adjacent to $u$, hence $\tilde s$ is non-compact.
	\end{proof}
	
\noindent\textit{Step 2.} Having split the sequence $\underline\S$ into two halves at $\S_{N-\beta}$, the next step is to narrow these halves, reducing $\S_{N-\beta}$ to the domain $\A$ chosen as above in Claim~\ref{clm:cut-point} (and fixed for the remainder of this proof). This we do by applying Lemma~\ref{lem:narrowing} to the sequences $(i_l)_{l=0}^{N-\beta-1}$ and $(\S_l)_{l=0}^{N-\beta}$ to obtain sequences $(j'_l)_{l=0}^{N-\beta-1}$ and $(\U'_l)_{l=0}^{N-\beta}$ with $\U'_{N-\beta}=\A$. Similarly, from $(i_l)_{l=N-\beta}^{2N-1}$ and $(\S_l)_{l=N-\beta}^{2N}$ we obtain sequences $(k_l')_{l=N-\beta}^{2N-1}$ and $(\V_l')_{l=N-\beta}^{2N}$ with $\V'_{N-\beta}=\A$.

	 \begin{claim}\label{clm:narrow-ok}
		Let $E^{(1)}_{2N-1}$ be the set of sequences $\underline i\in\Paths_{2N-1}$ such that $j'_0\ne i_0$ or $k'_{2N-1}\ne i_{2N-1}$. Then $\# E^{(1)}_{2N-1}=O(\lambda^N)$.
	\end{claim}
	
	\begin{proof}
		As one can see from the second statement of Lemma~\ref{lem:narrowing}, $j'_0\ne i_0$ implies that $\S_{n_0-1}\ne\U'_{n_0-1}$.Hence all states in the sequence $(i_l)_{l=n_0-1}^{N-\beta-1}$ are of types $C$ and $E_\alpha$, where $\alpha=L$ if $\A=\L_{N-\beta}$ and $\alpha=R$ if $\A=\R_{N-\beta}$. This means that the states $(i_l)_{l=n_0-1}^{N-\beta-1}$ are uniquely determined by $i_{N-\beta-1}$. 
		Therefore, there are finitely many possibilities for $(i_l)_{l=0}^{N-\beta-1}$
		and $O(\lambda^N)$ possibilities for $(i_l)_{l=N-\beta}^{2N-1}$.  
	\end{proof}
	
\noindent\textit{Step 3.} Assume now that $\underline i\notin E^{(1)}_{2N-1}$. The next step is to shift the numbering in the sequences constructed above and invert the first pair, after which we join  a short head sequence as defined in Proposition~\ref{prop:tail-paths} to the beginning of each of them.

 Let $M$ be the number from Proposition~\ref{prop:tail-paths},
then we define $\underline j''_+=(j''_l)_{l=M}^{M+N-\beta-1}$, $\underline\U''_+=(\U''_l)_{l=M}^{M+N-\beta}$ as
	\begin{equation*}
	\U''_l=\U'_{M+N-\beta-l},\qquad
	j''_l=\iota(j'_{M+N-\beta-1-l})
	\end{equation*}
	and 
	$\underline k''_+=(k''_l)_{l=M}^{M+N+\beta-1}$, $\underline\V''_+=(\V''_l)_{l=M}^{M+N+\beta}$ as
	\begin{equation*}
	\V''_l=\V'_{N-\beta-M+l},\qquad
	k''_l=k'_{N-\beta-M+l}.
	\end{equation*}
	Further, let $\A=a\R$ and let $\tilde e$ be the label on the side $\tilde s$ inside $\A$. Define
	\begin{equation*}
	\underline j''_-=\underline k''_-=(j^{\tilde e}_{l-M})_{l=0}^{M-1},\qquad
	\underline\U''_-=\underline\V''_-=(a\H^{\tilde e}_{l-M})_{l=0}^{M},
	\end{equation*}
	where $\underline j^{\tilde e}$ and $\underline\H^{\tilde e}$ are defined in Proposition~\ref{prop:tail-paths}.
		
	Apply Lemma~\ref{lem:joining} to join $\underline\U''_-$ and $\underline\U''_+$. 
	Except in the special case of Remark~\ref{rem:specialcase} 
	this is possible  because $  j^{\tilde e}_{-1}$ is type $A_0$
so that  the path $\underline\U''_-$ adds only one domain incident to each end $u$ of $\tilde s$, so for the union of these paths the vertex $u$ is either convex or minimally concave. In the special case of Remark~\ref{rem:specialcase} and compact side~$\tilde s$ this is amended as follows: $\underline\U''_-$ adds two domains to one of the ends of $\tilde s$; by choosing~$\H^{\tilde e}$ to be either the path from Figure~\ref{fig:tail-paths}j or its mirror image we make this end to be the vertex~$v$ from Claim~\ref{clm:cut-point}.
	
	Thus by joining  $\underline\U''_-$ to $\underline\U''_+$ we obtain 
new sequences and states which we rename as  $\underline\U=(\U_l)_{l=0}^{N+M-\beta}$ and $\underline j=(j_l)_{l=0}^{N+M-\beta-1}$. We have   $j_0=j^{\tilde e}_{-M}$ by the construction of the path $\underline\H^{\tilde e}$. 
	Similarly, we join $\underline\V''_-$ and $\underline\V''_+$ to obtain
	$\underline\V=(\V_l)_{l=0}^{N+M+\beta}$ and $\underline k=(k_l)_{l=0}^{N+M+\beta-1}$, and we have $k_0=j^{\tilde e}_{-M}=j_0$,  as well as $\U_0=a\H^{\tilde e}_{-M}=\V_0$.
	
	\begin{claim}\label{clm:join-ok}
		Let $E^{(2)}_{2N-1}$ be the set of sequences $\underline i\in\Paths_{2N-1}\setminus E^{(1)}_{2N-1}$ such that $j_{N+M-\beta-1}\ne j''_{N+M-\beta-1}$ or $k_{N+M+\beta-1}\ne k''_{N+M+\beta-1}$. Then $\# E^{(2)}_{2N-1}=O(\lambda^N)$.
	\end{claim}
	
	\begin{proof} 
		We first consider changes to states made at the joining step to see to what extent the joining changes the states $j$, see Lemma~\ref{lem:joining}. As above, $j_{N+M-\beta-1}\ne j''_{N+M-\beta-1}$ implies $\U_{M+N-\beta-n_0}\ne \U''_{M+N-\beta-n_0}$. Assume that $\tilde s$ lies on the left boundary of $\S_{N-\beta}$. Then each  level $(\U''_l)_{l=M}^{M+N-\beta-n_0}$ contains only one domain and these domains are the consecutive domains adjacent to a geodesic segment on the right boundary of $\underline\U''_+$, that is, the left boundary of $\underline\U'$.
		Moreover the states $(j''_l)_{l=M}^{M+N-\beta-n_0-1}$ are uniquely defined by $j''_M$, so there are finitely many $\underline j''_+$'s (or, equivalently, $\underline j'$'s) such that $j_{N+M-\beta-1}\ne j''_{N+M-\beta-1}$.
		
		Now we  consider changes to states made at the narrowing step, for which we use Lemma~\ref{lem:narrowing}. Let us show that each of these $\underline j'$'s can be obtained from finitely many $(i_l)_{l=0}^{N-\beta-1}$. Indeed, assume that $\U'_{N-\beta-3}\ne \S_{N-\beta-3}$, i.e. that the narrowing step changes at least the four last domains in $(\S_l)_{l=0}^{N-\beta}$. Then for $l=1,2,3$ we have that $\U'_{N-\beta-l}=\L_{N-\beta-l}$ and all $\dd_R\U'_{N-\beta-l}$ belong to the same geodesic segment (see Figure~\ref{fig:narrowing}a). 
		
		We have shown that the same holds for $\dd_L\U'_{N-\beta-l}=\dd_R\U''_{M+l}$, $l=1,2,3$ since the joining step adds domains to all levels up to $\U''_{M+N-\beta-n_0+1}$. Therefore, each of $\dd_{L,R}\U'_{N-\beta-l}$ is either a side or a vertex since the $l = 2$ region is joined to the $l=1$ and $l=3$ regions across one side only.  
		This means that $N(\R)\le 4$ and $\R$ is compact.
		Assumption~\ref{asm:R} now yields that $N(\R)=4$ and each of $\dd_{L,R}\U'_{N-\beta-l}$ is a segment. Thus $\dd\underline\U'$ has a straight angle at every vertex $u$ of $\U'_{N-\beta-2}$. On the other hand, there are only two domains in $\underline\U'$ that are adjacent to $u$, hence $n(u)=2$. This contradicts Assumption~\ref{asm:R}.
		
		Therefore, $j_{N+M-\beta-1}\ne j''_{N+M-\beta-1}$ only  for finitely many sequences $(i_l)_{l=0}^{N-\beta}$ and hence for $O(\lambda^N)$ sequences $\underline i=(i_l)_{l=0}^{2N-1}$.
	\end{proof}
	
\noindent\textit{Step 4.} Assume now that $\underline i\notin E_{2N-1}:=E^{(1)}_{2N-1}\cup E^{(2)}_{2N-1}$. Then we have
	\begin{equation*}
	j_{N+M-\beta-1}=j''_{N+M-\beta-1}=\iota(j'_0)=\iota(i_0)\quad\text{and}\quad
	\U_{N+M-\beta}=\U''_{N+M-\beta}=\U'_0=\S_0.
	\end{equation*}
	Similarly we have $k_{N+M+\beta-1}=i_{2N-1}$ and $\V_{N+M+\beta}=\S_{2N}$.  Also we have seen above that $j_0=k_0$ and $\U_0=\V_0$. 	Therefore, statements~\ref{item:wye-states} and \ref{item:wye-domains} of the lemma hold for the constructed sequences.
	
\smallskip

\noindent\textit{Step 5.} The next claim allows us to prove both (iii) and the final statement of the lemma, that is, that the map $\underline i \to ( \underline j, \underline k)$ is injective.   The claim itself will proved in Step 6 below.

	\begin{claim}\label{clm:restore-i}Assume that we have sequences $\U, \V$ as in (i) and (ii) with $\U_0 = \V_0$ and let $M$ be as in Proposition~\ref{prop:tail-paths} and Step 3 above.  Then:
	
		1. Let $s$ be the maximal number such that $j_l=k_l$ for $l=0,\dots, s$. Then $s<M$.
		
		2. Let $s'$ be the maximal number $l$ such that $\U_l$ and $\V_l$ have a common domain. Then $s'=M$ and $\U_M\cap\V_M=\A$ with $\A$  as in Claim~\ref{clm:cut-point} above.  
		
		3. Lemma~\ref{lem:narrowing} applied to the sequence $(\U_l)_{l=M}^{M+N-\beta}$ and the domain $\A$ yields the sequence $\underline\U''_+$. Similarly, $(\V_l)_{l=M}^{M+N+\beta}$ yields $\underline\V''_+$.
		
 4. Lemma~\ref{lem:joining} applied to the sequences 
		$\underline{\U}'$ and $\underline{\V}'$ produces the original sequence $\underline\S$. 	\end{claim}
	
	 As we have noted in Remark~\ref{rem:item-minimal}, one needs to remove from $\underline j$ and $\underline{\vphantom{j} k}$ their common initial segment to satisfy statement~\ref{item:wye-minimal}.
Item (1) of the claim means that,  after removing their common initial segment, the sequences $\underline j$ and $\underline{\vphantom{j} k}$ belong to $\Paths_{N\pm\beta+\alpha-1}$ with $\alpha=M-s\in\{1,2,3,4\}$, proving (iii).
	
	Now assuming Claim~\ref{clm:restore-i}, let us check that the map $\underline i\mapsto (\underline j,\underline{\vphantom{j}k})$ is injective.  	
	Suppose given $\underline j$ and $\underline{\vphantom{j}k}$ satisfying (i) and  consider the sequences $\underline\U$ and $\underline\V$ constructed as in (ii), so that $\U_0=\V_0$. The domain $\A$ is identified uniquely from  (2) of the  claim, then by (3) one can restore $\underline\U''_+$ and $\underline\V''_+$ (and hence $\underline{\U}'$ and $\underline{\V}'$), and by (4) the original sequence~$\underline\S$. Thus we have a unique sequence of domains $\underline\S$ generated by $\underline i$, together with  its initial and final states $i_0=\iota(j_{N-\beta+\alpha-1})$ and  $i_{2N-1}=k_{N+\beta+\alpha-1}$. Hence we can uniquely restore the whole sequence~$\underline i$ as in item 2 of the first part in the proof of Theorem~\ref{thm:th-path-vs-sphere} as required. 
\smallskip

\noindent\textit{Step 6.} Finally we establish the last claim.
	
	\begin{proof}[Proof of Claim~\ref{clm:restore-i}]
		It is convenient to deal with sequences $\underline i$ which start and end in the states $\Xi_S, \Xi_F$ respectively.
		Thus we begin by showing that  the statements of Claim~\ref{clm:restore-i} for suitably extended sequences imply the same statements for the original ones. 
		
		Consider any admissible sequences $(i_l)_{l=-\delta}^0$ and $(i_l)_{l=2N-1}^{2N-1+\varepsilon}$ such that $i_{-\delta}\in\Xi_S$, $i_{2N-1+\varepsilon}\in\Xi_F$ and denote $\underline{\hat\imath}=(i_l)_{l=-\delta}^{2N-1+\varepsilon}$.
		This extends the sequence $\underline\S$  to a sequence $\underline{\hat\S}=(\S_l)_{l=-\delta}^{2N+\varepsilon}$ generated by $\underline{\hat\imath}$. Provided that $\underline i\notin E_{2N-1}$, we can apply the above procedure of narrowing and joining to these extended sequences. For all sequences involved in this procedure we use the notation as above with an added  hat, for example  $\underline{\hat\U}''_+=(\hat\U''_l)_{l=M}^{M+N-\beta+\delta}$.
		
		Note that the narrowing and the joining in the original procedure does not change the terminal elements in $\underline\S$ and $\underline i$. Therefore, the same operations for the extended sequences do not modify any of the added segments. In other words, the sequences $\underline{\hat\jmath}$, $\underline{\hat k}'$, $\underline{\hat\U}''_+$, etc. are the extensions of the corresponding sequences without hats by the segments $(i_l)_{l=-\delta}^{-1}$, $(i_l)_{l=2N}^{2N-1+\varepsilon}$, $(\S_l)_{l=-\delta}^{-1}$, and $(\S_l)_{l=2N+1}^{2N+\varepsilon}$ or their inversions.
		
		Thus the statements of Claim~\ref{clm:restore-i} for the extended sequences imply the same statements for the original ones. From now on we deal with the extended sequences only.
		
		\smallskip
		
		Above we have constructed a \textsf{Y}-shaped combination of the three thickened paths
		\begin{equation}\label{eq:three-paths}
		\underline{\hat\U}''_+,\quad \underline{\hat\V}''_+,\quad a\underline\H^{\tilde e}, 
		\end{equation}
		all meeting in the domain $\A$. Now we construct a related \textsf{Y}-shaped triple of rays. To do this, consider generic points 
		\begin{equation*}
		O\in\Int\A,\quad X_\U\in\dd_O\U''_{M+N-\beta+\delta},\quad
		X_\V\in\dd_O\V''_{M+N+\beta+\varepsilon},\quad X_\H\in\dd_O(a\H^{\tilde e}_{-M}) 
		\end{equation*}
		such that the lines $\ell(OX_J)$, $J=\U,\V,\H$, do  not contain any vertices of $\TR$, where we recall that $\dd_O$ denotes those part of the boundary of the terminal domain in a path that is not shared with the adjacent domain of this path. 
		The segments $OX_J$ lie inside the corresponding convex sets:
		\begin{equation*}
		OX_\U\subset \bigcup\underline{\hat\U}''_+,\quad
		OX_\V\subset \bigcup\underline{\hat\V}''_+,\quad
		OX_\H\subset \bigcup a\underline{\H}^{\tilde e}.
		\end{equation*}
		These sets are the thickened paths between their ends hence each of these segments crosses all consecutive levels in its respective  thickened path; see Subsection~\ref{subsec:thick-paths-convex}.
		In particular, these segments leave $\A$ via different sides: $OX_\U$ crosses a side from $s_\U:=s_{N-\beta-1}\cap\dd\A$, $OX_\V$ crosses a side from $s_\V:=s_{N-\beta}\cap\dd\A$, and $OX_\H$ crosses $\tilde s$. 
		
		Define $\alpha_J$ to be the ray on $\ell(OX_J)$ that starts at $O$ and contains $X_J$ and let $\alpha^+_J\subset\alpha_J$  be the ray starting at $X_J$.
		The rays $\alpha_J$ cut $\DD$ into three sectors; denote the sector bounded by $\alpha_J$ and $\alpha_{J'}$ by $\Sigma_{JJ'}$.
		
		From these definitions one can see that none  of the curves $\tilde s, s_\U, s_\V$  intersect with the ``opposite'' sector. Hence every path in~\eqref{eq:three-paths} intersects   the ``opposite'' sector only in an appropriate part of the domain $\A$. Indeed, $\bigcup \underline{\hat\U}''_+$ is a convex set. If $x\in \bigcup\underline{\hat\U}''_+ \setminus\A$, there is a point $y\in Ox$ that belongs to $s_\U$. On the other hand, if $x\in\Sigma_{\V\H}$, then $y\in Ox\subset \Sigma_{\V\H}$ and we arrive at a contradiction.
		
		The ends of $\tilde s$ belong to $\Sigma_{\U\H}$ and $\Sigma_{\V\H}$, we denote them by $v_\U$, and $v_\V$ respectively. Then the domains added to $\underline{\hat\U}''$ by the application of Lemma~\ref{lem:joining} belong to $\Sigma_{\U\H}$. Indeed, the set $\bigcup\underline{\hat\U}\setminus \bigcup\underline{\hat\U}''$ is a connected set that contains $v_\U\in\Sigma_{\U\H}$ on its boundary. This set cannot intersect the curve $X_\U OX_\H$ which lies inside $\bigcup\underline{\hat\U}''$. And if it intersects, say, $\alpha^+_\U$, then the intersection $\alpha_\U\cap \bigcup\underline{\hat\U}$ is a segment that goes beyond $X_\U$. This means that 
		$\bigcup\underline{\hat\U}$ contains the domain bordering $\U''_{M+N-\beta+\delta}$ at $X_\U$. But we have required that no domains adjacent to $\U''_{M+N-\beta+\delta}$ are added to $\underline{\hat\U}''$.
		
		We now pass to the proof of the statement of the claim.
		
		1. This statement follows directly from the second one: if $j_l=k_l$ for $l=0,\dots,M$ then $\U_0=\V_0$ yields $\U_l=\V_l$ for $l=1,\dots,M+1$, hence $\U_{M+1}\cap\V_{M+1}$ is nonempty.
		
		2. Let us show that $\bigcup\underline{\hat\U}\cap\bigcup\underline{\hat\V}=
		\bigcup a\underline\H^{\tilde e}$. Indeed, the domains in $\bigcup\underline{\hat\U}$ fall into three classes: (a)~those from $a\underline\H^{\tilde e}$, (b)~those from $\bigcup\underline{\hat\U}''_+\setminus\A$, (c)~those added by the joining. The first two classes are disjoint since $\underline{\hat\U}$ is a thickened path between its ends, so its different levels do not intersect.
		The domains in $\bigcup\underline{\hat\V}$ are similarly classified into the classes (a), (b${}'$), and (c${}'$). The classes (b) and (b${}'$) belong to the two different halves of the thickened path $\underline{\hat\S}$ and hence do not intersect. The classes $\text{(b)}\cup\text{(c)}$ and (c${}'$) do not intersect since the former contains no domains intersecting $\Sigma_{\V\H}$, while the latter lies in this sector.
		Hence the intersection $\bigcup\underline{\hat\U}\cap\bigcup\underline{\hat\V}$ consists of the domains of the class (a) only.
		
		3. This follows directly from Corollary~\ref{cor:join-narrow}.
		
		4. A joining as in (3) is a minimal convex union of fundamental domains that contains both $\underline{\hat\U}''_+$ and $\underline{\hat\V}''_+$, hence it lies inside $\bigcup\underline{\hat\S}$. 
		Since $\underline i\notin E_{2N-1}$, the union $\bigcup\underline{\hat\U}''_+\cap\bigcup\underline{\hat\V}''_+=
		\bigcup\underline{\hat\U}'\cap\bigcup\underline{\hat\V}'$ contains all domains in $\S_l$ with $l\le n_0-1$ or $l\ge2N-n_0+1$. Therefore, the joining adds no domains to  these levels and hence yields an admissible sequence $\underline{\hat{\textbf{\i}}}$ with $\mathbf{i}_l=i_l$ for $l\le 0$ and $l\ge 2N-1$. 
		Then each of $\underline{\hat{\textbf{\i}}}$ and $\underline{\hat{\imath}}$ belongs to $\Paths^{S\to F}_{2N-1+\delta+\varepsilon}$ and generates the thickened path between $\S_{-\delta}$ and $\S_{2N+\varepsilon}$, hence $\underline{\hat{\textbf{\i}}}=\underline{\hat{\imath}}$ by~Theorem~\ref{thm:th-path-vs-sphere}.
	\end{proof}
	
	This completes the proof of Lemma~\ref{lem:wye}.\end{proof}

\subsection{Conclusion of the proof of Theorem~\ref{thm:main}}
\label{subsec:conclusion}

Take an ergodic decomposition of the measure $\mu$ with respect to the action of the subgroup of $G$ generated by $G_0^2=\{g_1g_2:g_1,g_2\in G_0\}$ and consider an ergodic $G_0^2$-invariant measure $\tilde\mu$.

Note that in general the operator $P$ does not preserve the measure $\tilde\mu\times p$, but the operators $Q,V,W$ defined by~\eqref{eq:QVW} do, as they contain only terms of the form $f\circ T_{g_1}\circ T_{g_2}$ for $g_1,g_2\in G_0$.
Formula~\eqref{eq:Sn-as-PkU} then yields
\begin{equation*}
\widetilde{\mathbf{S}}_{2n}(f)=\lambda^{2n-1}\sum_{j\in \Xi_S} h_j(Q^{n-1}V\varphi^{(f)})_j.
\end{equation*}
Note also that $\#S(2n)$ equals the number of paths from $\Xi_S$ to $\Xi_F$ of  length $2n$, thus
\begin{equation*}
\#S(2n)=\sum_{\substack{i\in\Xi_S,\\ j\in\Xi_F}}(\Pi^{2n-1})_{ij}=C\lambda^{2n-1}(1+o(1)),
\end{equation*}
whence
\begin{equation*}
\mathbf{S}_{2n}(f)=\tilde C \sum_{j\in \Xi_S} h_j(Q^{n-1}V\varphi^{(f)})_j\cdot (1+o(1)).
\end{equation*}
Now we apply Theorem~\ref{thm:convergence} to the operators \eqref{eq:QVW} acting on the space $L^1(Y,\tilde\nu)$, where $\tilde{\nu}=\tilde{\mu}\times p$. Recall that we have checked Assumptions~\ref{asm:Qn}--\ref{asm:ineq} for these operators in Corollary~\ref{cor:asm-eqns} and Lemma~\ref{lem:asm-ineq}. Hence we obtain that the following holds for $\tilde\mu$-almost every $x$:
\begin{itemize}
	\item $\mathbf{S}_{2n}(f)(x)$ converges to some limit, which we denote as $\tilde f(x)$.
	\item $\tilde f(x)=\tilde f(T_{g_1g_2}x)$ for any $g_1,g_2\in G_0$.
\end{itemize}
The second item results from the fact that $\tilde f$ is constant $\tilde\mu$-almost everywhere.

Therefore, the set of $x\in X$ such that these two conditions hold, is of full measure with respect to every convex combination of the ergodic measures,
in particular, with respect to the initial measure $\mu$.
Thus $\lim_{n\to\infty}\mathbf{S}_{2n}(f)(x)$ exists $\mu$-almost surely and is $G_0^2$-invariant. On the other hand, for every $A\in \I_{G_0^2}$ one has
\begin{equation*}
\int_A f\,d\mu=\int_A \mathbf{S}_{2n}(f)\,d\mu\to \int_A \tilde f\,d\mu,
\end{equation*}
whence $\tilde f=\EE(f|\I_{G_0^2})$.
The proof of Theorem~\ref{thm:main} is now complete.

\section{Proof of Theorem~\ref{thm:convergence}}
\label{sec:proof-convergence}

In this section we prove Theorem~\ref{thm:convergence}, which gives conditions for the  pointwise convergence of powers of a Markov operator. This result is a generalization of Theorem~1 in \cite{Buf-Annals}, and the proof here follows the same general scheme.
After defining the   space of trajectories  corresponding to $Q$, we prove first  that $Q$ is mixing, next that the tail sigma-algebra of the space of trajectories is trivial, and finally use this to prove convergence for functions in $L\log L$ both in $L^1$  and pointwise.

\subsection{The space of trajectories}\label{subsec:trajectories}

Recall that $Q\colon L^1(Z,\eta)\to L^1(Z,\eta)$ is a measure-preserving Markov operator.

The \emph{space of trajectories} corresponding to $Q$ is the space $(\mathbf{Z},\mathbb{P}_Q)$, where $\mathbf Z=Z^{\mathbb Z}$ with the usual Borel sigma-algebra~$\mathcal B_{\mathbf Z}$, and the measure $\mathbb P_Q$ is defined below.
This is essentially an application of the Ionescu Tulcea Extension Theorem, where the stochastic kernels depend only on the previous element of the trajectory and are the same:
\begin{equation}\label{eq:kernel}
	\mathbb{P}_Q(z,A)=\mathbb{P}_{Q,z}(A):=Q[\mathbf{1}_A](z).
\end{equation} 
For the details of this construction we refer the reader to~\cite[Ch.~14]{Klenke}. However, the direct application of this approach faces the following difficulty: the right-hand side of~\eqref{eq:kernel} is defined for a fixed $A$ up to a modification on a set of $z$ of zero measure, so we cannot assert that $\mathbb{P}_{Q,z}$ is a sigma-additive measure.

We circumvent this problem by another approach to defining~$\mathbb P_Q$ below. First, to motivate our definition, let us pretend for a moment that  $\mathbb{P}_{Q,z}$ is indeed a sigma-additive measure on $Z$ for any $z$.
Then for an integral with respect to this measure we have
\begin{equation}\label{eq:kernel-int}
	\int_{w\in Z} f(w)\,d\mathbb{P}_{Q,z}(w)=Q[f](z)
\end{equation}
(for $f=\mathbf{1}_A$ this is~\eqref{eq:kernel}, then use linearity and the monotone convergence theorem).

Now if we say that the conditional distribution of $z_n$ with respect to $(z_m,\dots,z_{n-1})$ should be equal to $\mathbb{P}_{Q,z_{n-1}}$, we get the following formula for the probability of a cylinder set:
\begin{multline*}
	\mathbb P_Q\{z_m\in A_m,\dots, z_n\in A_n\}={}\\
	\int_{z_m\in A_m} 
	\biggl[\int_{z_{m+1}\in A_{m+1}}\biggl[\dots
	\biggl[\int_{z_n\in A_n} d\mathbb P_{Q,z_{n-1}}(z_n)\biggr]\dots\biggr]d\mathbb P_{Q,z_m}(z_{m+1})\biggr] d\eta(z_m) 
\end{multline*}
(compare with \cite[Theorem 14.22]{Klenke}).
This formula can be rewritten as follows:
\begin{multline}\label{eq:prob-traj}
	\mathbb P_Q\{z_m\in A_m,\dots, z_n\in A_n\}={}\\
	\mathbb P_m^n(A_m\times\dots\times A_n):=
	\EE(\mathbf{1}_{A_m}\cdot
	Q(\mathbf{1}_{A_{m+1}}\cdot Q(\dots Q(\mathbf{1}_{A_n})\dots)))
\end{multline}
Indeed, the innermost integral equals $\mathbb{P}_{Q,z_{n-1}}(A_n)=Q(\mathbf{1}_{A_n})(z_{n-1})$, then we apply~\eqref{eq:kernel-int} for all integrals going from inside out.

Now we may \emph{define} the measure~$\mathbb{P}_Q$ as the measure with finite-dimen\-sional distributions $\mathbb P_m^n$ given by~\eqref{eq:prob-traj}.  Let us check that these $\mathbb{P}_m^n$ satisfy assumptions of the Kolmogorov Extension Theorem.
	
\begin{lemma}\label{lem:prob-traj}1) For any $\varphi\in L^1(Z,\eta)$ we have $\EE(Q\varphi)=\EE \varphi$.\\
	2) $\mathbb P_m^n$ is a finitely-additive measure on the semi-ring of cylinders.\\ 
	3) If $B_k\subset A_k$, then
	\begin{equation*}
		\mathbb P_m^n(A_m\times\dots\times A_n)-\mathbb P_m^n(B_m\times\dots\times B_n)\le \sum_{k=m}^n \eta(A_k)-\eta(B_k).
	\end{equation*}
	4) The measure $\mathbb P_m^n$ is $\sigma$-additive. \\
	5) The distributions~$\mathbb P^n_m$ are consistent:
	\begin{equation*}
		\mathbb P_m^n(A_m\times\dots\times A_n)=\mathbb P_{m-1}^n(Z\times A_m\times\dots\times A_n)=\mathbb P_m^{n+1}(A_m\times\dots\times A_n\times Z).
	\end{equation*}
\end{lemma}

\begin{proof}
	1. Since both sides are $L^1$-continuous, it is sufficient to consider $\varphi\in L^\infty(Z,\eta)$: $-C\le \varphi\le C$. Then $C-\varphi\ge 0$, hence
	\begin{equation*}
		C-\EE(Q\varphi)=\EE(Q(C-\varphi))=\|Q(C-\varphi)\|_{L^1}\le \|C-\varphi\|_{L^1}=\EE(C-\varphi)=C-\EE\varphi.
	\end{equation*} 
	Therefore, $\EE(Q\varphi)\ge\EE \varphi$, and the same argument for $-\varphi$ yields $\EE(Q\varphi)\le\EE\varphi$.
	
	2. As usual, this is reduced to the case $\mathbb P_m^n(C_1)+\mathbb P_m^n(C_2)=\mathbb P_m^n(C_1\sqcup C_2)$, 
	where $C_{1,2}$ have the same projections on all coordinates except one, and this case is clear.
	
	3. This follows from the inclusion
	\begin{equation*}
		A_m\times\dots\times A_n\setminus B_m\times\dots\times B_n\subset\bigcup_{k=m}^n Z\times\dots\times Z\times (A_k\setminus B_k)\times Z\times\dots\times Z
	\end{equation*}
	and the first statement of the lemma.
	
	4. Since $(Z,\eta)$ is a Lebesgue space, we may assume that it is a union of a segment and an at most countable set of atoms. Then the usual proof works: let $\hat C=\bigsqcup_{i=1}^\infty C_i$, then $\sum_{i=1}^\infty \mathbb P_m^n(C_i)\le \mathbb P_m^n(\hat C)$ follows from the finite additivity, and to obtain the opposite inequality, find open cylinders $D_i\supset C_i$ with $\mathbb P_m^n(D_i)\le \mathbb P_m^n(C_i)+\varepsilon/2^i$ and a compact cylinder $\hat D\subset \hat C$ with $\mathbb P_m^n(\hat D)\ge \mathbb P_m^n(\hat C)-\varepsilon$. These cylinders are constructed coordinate-wise using the estimate from the previous item. Then $\hat D$ is covered by $D_i$'s and hence by a finite number of them. Finite additivity then yields $\mathbb P_m^n(\hat D)\le \sum_{i=1}^N \mathbb P_m^n(D_i)\le  \sum_{i=1}^\infty\mathbb P_m^n(D_i)$, hence $\mathbb P_m^n(\hat C)\le \sum_{i=1}^\infty\mathbb P_m^n(C_i)+2\varepsilon$. It remains to take a limit as $\varepsilon\to +0$.  This proves the $\sigma$-additivity on the semi-ring of cylinders, and the Carath\'edory Extension Theorem then extends $\mathbb P_m^n$ to a $\sigma$-additive measure on the Borel $\sigma$-algebra on $Z^{n-m+1}$. 
		
	5. This is straightforward using the first statement of this lemma.
\end{proof}

Therefore, $\mathbb{P}_Q$ defined by~\eqref{eq:prob-traj} exists by the Kolmogorov Extension Theorem.

It is also clear from the definition that \emph{the left shift map} $\sigma\colon \mathbf{Z}\to\mathbf{Z}$,
$(\sigma(\mathbf{z}))_n=z_{n+1}$ preserves the measure $\mathbb P_Q$.

We can clearly define a measure $\mathbb{P}_{Q^*}$ in a similar way. The following calculation relates $\mathbb{P}_Q$ and $\mathbb{P}_{Q^*}$. We have
\begin{multline*}
	\mathbb P_Q\{z_m\in A_m,\dots, z_n\in A_n\}=
	\langle \mathbf{1}_{A_m},
	Q(\mathbf{1}_{A_{m+1}}\cdot Q(\dots Q(\mathbf{1}_{A_n})\dots))\rangle={}\\
	\langle Q^*(\mathbf{1}_{A_m}),
	\mathbf{1}_{A_{m+1}}\cdot Q(\dots Q(\mathbf{1}_{A_n})\dots)\rangle=
	\langle \mathbf{1}_{A_{m+1}}\cdot(Q^*(\mathbf{1}_{A_m})),
	 Q(\dots Q(\mathbf{1}_{A_n})\dots)\rangle={}\\
	 \dots=\mathbb{P}_{Q^*}\{z_{-n}\in A_n,\dots,z_{-m}\in A_m\}.
\end{multline*} 
In other words, $\mathbb{P}_{Q^*}$ is a pullback of $\mathbb{P}_Q$ under the time-reversal map $(z_n)_{n=-\infty}^\infty\mapsto (z_{-n})_{n=-\infty}^\infty$.

We now derive an  important result  concerning conditional expectations. Let $\F_k^l$, $k,l\in\mathbb {Z} \cup\{+\infty,-\infty\}$ be the minimal complete sigma-algebras such that all functions $\pi_j\colon\mathbf{z}=(z_n)\mapsto z_j$ are measurable for $k\le j\le l$. For brevity we write $\F_n^n=\F_n$.
Let us also recall that the \emph{tail sigma-algebra} is defined as
\begin{equation*}
\F_{\mathrm{tail}}=\bigcap_{n=0}^\infty \F_n^\infty.
\end{equation*}
For any function $\varphi\in L^1(Z,\eta)$ we define the functions $\varphi^r\in L^1(\mathbf Z,\mathbb P_Q)$, $r\in\mathbb Z$ by the formula $\varphi^r(\mathbf{z})=\varphi(z_r)$.
The next lemma shows how to find the conditional expectation of a function on $\mathbf{Z}$ that depends on only one coordinate with respect to the sigma-algebra generated by some other coordinates.

\begin{lemma}\label{lem:CondExp}For any $\varphi\in L^1(Z,\eta)$ and $n>0$
\begin{equation}\label{eq:CondExp}
	\begin{gathered}
		\EE(\varphi^r|\F^{-n+r}_{-\infty})(\mathbf z)=\EE(\varphi^r|\F_{-n+r})(\mathbf z)=(Q^n\varphi)(z_{-n+r}),\\
		\EE(\varphi^r|\F_{n+r}^{+\infty})(\mathbf z)=\EE(\varphi^r|\F_{n+r})(\mathbf z)=((Q^*)^n\varphi)(z_{n+r}).
	\end{gathered}
\end{equation}
\end{lemma}
 Note that this lemma proves the starting point of our heuristic approach above, the formula~\eqref{eq:kernel}: the first equality in~\eqref{eq:CondExp} for $n=1$ and $\varphi=\mathbf{1}_A$ yields $\mathbb{P}_Q(z_r \in A\mid z_{r-1},z_{r-2},\dots)=Q[\mathbf{1}_A](z_{r-1})$.  
 
\begin{proof}[Proof of Lemma~\ref{lem:CondExp}]
	 Let us check that $\EE(\varphi^r|\F_{-\infty}^{-n+r})(\mathbf{z})=(Q^n\varphi)(z_{-n+r})$.  The sets of the form $\{z_m\in A_m,\dots, z_{-n+r}\in A_{-n+r}\}$ for all $m\le -n+r$ and all Borel $A_j$'s
	generate the sigma-algebra $\F_{-\infty}^{-n+r}$, hence it is sufficient to check that for all such sets $B$ we have 
	\begin{equation}\label{eq:CondExp-proof}
		\EE(\mathbf{1}_{B}(\mathbf z)\cdot \varphi(z_r))=\EE(\mathbf{1}_{B}(\mathbf z)(Q^n\varphi)(z_{-n+r})).
	\end{equation}
	To do this, observe that
	\begin{equation*}
		\EE_{\mathbb{P}_Q}(\mathbf{1}_{A_m}(z_m)\cdots\mathbf{1}_{A_{k-1}}(z_{k-1})\cdot\psi(z_{k}))=
		\EE_\eta (\mathbf{1}_{A_m}\cdot Q(\dots Q(\mathbf{1}_{A_{k-1}}\cdot Q(\psi))\dots)).
	\end{equation*}
	Indeed, for $\psi=\mathbf{1}_{A_k}$ this is~\eqref{eq:prob-traj}; the general case follows by the linearity and $L^1$-continuity of both sides.
	Therefore, both sides of~\eqref{eq:CondExp-proof} are equal to 
	\begin{equation*}
	  \EE_\eta (\mathbf{1}_{A_m}\cdot Q(\dots Q(\mathbf{1}_{A_{-n-1+r}}\cdot Q(\mathbf{1}_{A_{-n+r}}\cdot Q^n(\varphi)))\dots)).	
	\end{equation*}
	
	 Also, since $\F_{-n+r}\subset\F_{-\infty}^{-n+r}$ and we have seen that $\EE(\varphi^r|\F_{-\infty}^{-n+r})$ is $\F_{-n+r}$-measurable, we obtain $\EE(\varphi^r|\F_{-\infty}^{-n+r})=\EE(\varphi^r|\F_{-n+r})$. This proves the first formula in~\eqref{eq:CondExp}, and the second one  follows by time reversal as above.
\end{proof}

 We have seen in the previous lemma that the conditional expectation of a function depending only on $r$-th coordinate, $r>k$, with respect to the ``past'' $\sigma$-algebra $\F_{-\infty}^k$ depends only on $z_{k}$, the last coordinate in this interval. This statement can be extended to any function that is measurable with respect to the ``future'' $\sigma$-algebra $\F_{k+1}^\infty$. 

\begin{lemma}\label{lem:EE-depend}
	Assume  that a function $\Phi\in L^1(\mathbf{Z},\mathbb{P}_Q)$ is $\F_{k+1}^\infty$-measurable. Then $\EE(\Phi|\F_{-\infty}^k)$ depends on $z_k$ only.
\end{lemma}

\begin{proof}
	If $\Phi=\mathbf{1}_C$, where $C=\{z_{k+1}\in C_{k+1},\dots, z_{k+s}\in C_{k+s}\}$, we use the same argument as for~\eqref{eq:CondExp-proof} and obtain
	\begin{equation*}
		\EE(\Phi|\F_{-\infty}^k)(\mathbf{z})=
		Q(\mathbf{1}_{C_{k+1}}\cdot Q(\mathbf{1}_{C_{k+2}}\cdot Q(\dots Q(\mathbf{1}_{C_{k+s}})\dots)))(z_k).
	\end{equation*}
	The general case follows by linearity and the fact that $\Phi_n\to\Phi$ in $L^1$ yields $\EE(\Phi_n|\G)\to\EE(\Phi|\G)$ in $L^1$.
\end{proof}

\subsection{Mixing of the operator $Q$}

We  start by proving mixing for $\tilde Q=Q^m$ where $m$ is defined in Assumption~\ref{asm:QstarmQm}.

\begin{lemma}\label{lem:mixingQm}Let $\tilde Q$ be a measure-preserving Markov operator on $L^1(Z,\eta)$ such that the equation $\tilde Q^*\tilde Q\varphi=\varphi$ has only constant solutions in $L^2(Z,\eta)$. Then for any $\varphi,\psi\in L^2(Z,\eta)$ we have
	\begin{equation}\label{eq:mixing}
	\langle \tilde Q^n\varphi,\psi\rangle=\int_Z \tilde Q^n\varphi\cdot \overline\psi\,d\eta\to \int_Z\varphi\,d\eta \int_Z\overline\psi\,d\eta\text{ as }n\to\infty.
	\end{equation}
\end{lemma}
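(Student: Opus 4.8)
The plan is to deduce this mixing statement from the spectral hypothesis on $\tilde Q$ via the standard Jacobs--de~Leeuw--Glicksberg / mean ergodic argument for contractions on Hilbert space. The key point is that $\tilde Q$, being a measure-preserving Markov operator, is a contraction on $L^2(Z,\eta)$, and so is $\tilde Q^*$; moreover both fix the constants. Set $H = L^2(Z,\eta)$ and let $H_0 = \{\varphi\in H : \int_Z\varphi\,d\eta = 0\}$ be the orthogonal complement of the constants, which is invariant under both $\tilde Q$ and $\tilde Q^*$. It suffices to prove that $\langle \tilde Q^n\varphi,\psi\rangle\to 0$ for $\varphi,\psi\in H_0$; splitting off the constant parts $\int\varphi\,d\eta$ and $\int\psi\,d\eta$ and using $\tilde Q 1 = 1$, $\langle 1,\psi\rangle = \int\overline\psi$, reduces the general case to this one.

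First I would record the elementary identity that for a contraction $T$ on a Hilbert space, $\|T\varphi\| = \|\varphi\|$ implies $T^*T\varphi = \varphi$: indeed $\|T^*T\varphi - \varphi\|^2 = \|T^*T\varphi\|^2 - 2\operatorname{Re}\langle T^*T\varphi,\varphi\rangle + \|\varphi\|^2 \le \|T\varphi\|^2 - 2\|T\varphi\|^2 + \|\varphi\|^2 = \|\varphi\|^2 - \|T\varphi\|^2 = 0$. Hence the hypothesis ``$\tilde Q^*\tilde Q\varphi = \varphi$ has only constant solutions'' says precisely that $\|\tilde Q\varphi\| < \|\varphi\|$ for every nonconstant $\varphi$, i.e. $\|\tilde Q\varphi\| < \|\varphi\|$ for every nonzero $\varphi\in H_0$. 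Equivalently, $\tilde Q$ restricted to $H_0$ has no eigenvector of eigenvalue of modulus $1$; combined with the spectral theory of Hilbert-space contractions (the unitary part of the Nagy--Foias decomposition, or directly the Jacobs--de~Leeuw--Glicksberg splitting), this forces the restriction of $\tilde Q$ to $H_0$ to be \emph{completely non-unitary}, hence weakly stable: $\tilde Q^n\varphi \to 0$ weakly for all $\varphi\in H_0$.

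Concretely, the cleanest route is: let $\varphi\in H_0$ and consider the decreasing sequence $a_n = \|\tilde Q^n\varphi\|^2$, which converges to some $a\ge 0$. I claim $a = 0$. If not, consider the reversible-type averaging: by the JdLG theory the weak closure of $\{\tilde Q^n\varphi\}$ contains a ``reversible'' vector, or more elementarily, pass to a weak limit point $\varphi_\infty$ of $\tilde Q^{n_k}\varphi$ along a subsequence; then $\|\varphi_\infty\| \le \sqrt a$, while for any fixed $j$, $\tilde Q^j\varphi_\infty$ is a weak limit of $\tilde Q^{n_k + j}\varphi$, so $\|\tilde Q^j\varphi_\infty\| \le \sqrt a \le \|\varphi_\infty\|$, forcing $\|\tilde Q^j\varphi_\infty\| = \|\varphi_\infty\|$ for all $j$; by the elementary identity above $\tilde Q^*\tilde Q(\tilde Q^j\varphi_\infty) = \tilde Q^j\varphi_\infty$, so $\tilde Q^j\varphi_\infty$ is constant for all $j$, hence (being in $H_0$) zero, so $\varphi_\infty = 0$ and $a = 0$. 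Then $\|\tilde Q^n\varphi\|\to 0$ strongly, which gives \eqref{eq:mixing} with room to spare for $\varphi\in H_0$, and the general case follows by the reduction above. The main obstacle, and the only genuinely delicate point, is making rigorous the passage to the weak limit point $\varphi_\infty$ and the claim $\|\varphi_\infty\| \le \sqrt a$: weak convergence gives $\|\varphi_\infty\| \le \liminf \|\tilde Q^{n_k}\varphi\| = \sqrt a$, so this is in fact automatic, and the argument is complete; care is only needed that $H_0$ is weakly closed (it is, being a closed subspace) so that $\varphi_\infty\in H_0$.
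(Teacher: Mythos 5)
Your concrete argument has a genuine gap in the chain of inequalities. Weak lower semicontinuity gives $\|\varphi_\infty\|\le\sqrt a$ and $\|\tilde Q^j\varphi_\infty\|\le\sqrt a$, but you also assert $\sqrt a\le\|\varphi_\infty\|$, which does \emph{not} follow (only the reverse inequality does), so nothing forces $\|\tilde Q^j\varphi_\infty\|=\|\varphi_\infty\|$; the subsequential weak limit $\varphi_\infty$ could simply be $0$ with $a>0$. In fact the conclusion you draw, namely $a=0$ (strong stability), is false in this generality: the weighted shift $Te_n=c_ne_{n+1}$ on $\ell^2$ with $0<c_n<1$ and $\prod c_n>0$ has no nonzero isometric vector, yet $\|T^ne_0\|\to\prod c_n>0$. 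So no argument using only the contraction property plus your no-isometric-vector hypothesis can give strong stability; the lemma must be attacked at the level of \emph{weak} convergence. There is also a slip in the first paragraph: ``$\|\tilde Q\varphi\|<\|\varphi\|$ for all nonzero $\varphi\in H_0$'' is \emph{not} equivalent to the absence of unimodular eigenvectors (the unilateral shift has no eigenvectors yet is an isometry), and absence of unimodular eigenvectors does not imply complete non-unitarity (multiplication by $z$ on $L^2$ of the circle with Lebesgue measure is unitary with no eigenvectors), so the implication you want is not there.

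Your high-level plan is nevertheless salvageable through the correct implication: no nonzero isometric vector on $H_0$ means there is no reducing subspace on which $\tilde Q$ acts unitarily, i.e.\ $\tilde Q|_{H_0}$ is completely non-unitary; by the Sz.-Nagy--Foias theorem the minimal unitary dilation of a c.n.u.\ contraction has absolutely continuous spectral measure, and Riemann--Lebesgue then gives $\langle\tilde Q^n\varphi,\psi\rangle\to 0$. Set up that way, this would be a valid operator-theoretic alternative to the paper's proof. The paper instead works in the trajectory space $(\mathbf Z,\mathbb P_{\tilde Q})$: via the Rokhlin--Sinai theorem the Pinsker $\sigma$-algebra of the shift is shown to lie inside $\F_0$, so that Pinsker-measurable functions depend only on $z_0$ and are fixed by $\tilde Q^*\tilde Q$; the hypothesis then forces the Pinsker algebra to be trivial, giving the $K$-property, hence mixing of the shift and of the operator. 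The paper's route exploits the Markov structure and meshes with the trajectory-space formalism used throughout the section, whereas a corrected version of yours uses none of that beyond the contraction property.
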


\begin{proof}The statement follows from the mixing of the shift map $\sigma$ in the trajectory space $(\mathbf{Z},\mathbb{P}_{\tilde Q})$. To obtain the latter we shall prove that $\sigma$ has the $K$-property: that is, there exists a sub-sigma-algebra $\mathcal K$ of the Borel sigma-algebra $\mathcal B_{\mathbf Z}$ such that $\mathcal K\subset \sigma\mathcal K$, $\bigvee_{n=0}^\infty \sigma^n\mathcal K=\mathcal B_{\mathbf Z}$, $\bigcap_{n=0}^\infty \sigma^{-n}\mathcal K=\{\varnothing, \mathbf Z\}$.
	
By the Rokhlin--Sinai theorem (see \cite{RokhSin}, \cite[Ch.~18]{Glasner}) the $K$-property is equivalent to the triviality of the Pinsker sigma-algebra $\Pi(\sigma)$ (the smallest sigma-algebra containing all  measurable partitions of zero entopy). Consider $\F_-=\F_{-\infty}^0$. Then $\sigma\F_-\subset\F_-$ and $\bigvee_{k\in\mathbb Z} \sigma^k\F_-=\mathcal B_{\mathbf Z}$. Thus $\Pi(\sigma^{-1})\subset\F_-$ (see, e.g., Lemma 18.7.3 in \cite{Glasner}). Similarly, for $\F_+=\F_0^\infty$ one has $\Pi(\sigma)\subset \F_+$. 	Therefore, $\Pi(\sigma)=\Pi(\sigma^{-1})\subset \F_-\cap \F_+=\F_0$.
	
We have proved that any $\Pi(\sigma)$-measurable function $\varphi\in L^2(\mathbf Z,\mathbb{P}_{\tilde Q})$ depends only on the zeroth coordinate: $\varphi(\mathbf z)=\varphi_0(z_0)$; with the notation of~\ref{subsec:trajectories},  $\varphi = (\varphi_0)^0$. More generally, $\varphi(\mathbf z)=\varphi_k(z_k)$.

Now we can calculate $\EE(\varphi|\F_{-1})$ in two ways. On the one hand, $\varphi$ is $\F_{-1}$-measurable, so it equals $\varphi=\varphi_{-1}(z_{-1})$, on the other hand, it equals $\EE((\varphi_0)^0|\F_{-1})=(\tilde Q\varphi_0)(z_{-1})$ by~\eqref{eq:CondExp}. Hence~$\varphi_{-1}=\tilde Q\varphi_0$. Similarly, from $\EE(\varphi|\F_0)=\varphi=\EE((\varphi_{-1})^{-1}|\F_0)$ we obtain $\varphi_0=\tilde Q^*\varphi_{-1}$. Therefore, $\varphi_0=\tilde Q^*\tilde Q\varphi_0$, hence by assumption of the lemma $\varphi_0=\mathrm{const}$, thus $\Pi(\sigma)$ is trivial.
\end{proof}

\begin{cor}\label{cor:mixingQ}The operator $Q$ is also mixing, that is, \eqref{eq:mixing} holds for $Q$ instead of $\tilde Q$.
\end{cor}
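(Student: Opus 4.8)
The plan is to deduce mixing of $Q$ from mixing of $\tilde Q=Q^m$, where $m$ is the integer furnished by Assumption~\ref{asm:QstarmQm}, by splitting the time parameter into residue classes modulo $m$.

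First I would apply Lemma~\ref{lem:mixingQm} to $\tilde Q=Q^m$. Its hypothesis is exactly that $\tilde Q^*\tilde Q\varphi=\varphi$ has only constant solutions in $L^2(Z,\eta)$; since $(Q^m)^*=(Q^*)^m$, one has $\tilde Q^*\tilde Q=(Q^*)^mQ^m$, so this hypothesis holds by the choice of $m$ in Assumption~\ref{asm:QstarmQm}. Hence $Q^m$ is mixing: $\langle (Q^m)^k\varphi,\psi\rangle\to\int_Z\varphi\,d\eta\,\int_Z\overline\psi\,d\eta$ as $k\to\infty$ for all $\varphi,\psi\in L^2(Z,\eta)$.

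Next, for arbitrary $\varphi,\psi\in L^2(Z,\eta)$ and $n\in\mathbb N$, write $n=mk+r$ with $0\le r<m$ and $k\to\infty$ as $n\to\infty$ along this residue class. Since $Q$ is a measure-preserving Markov operator, it is a contraction of $L^2(Z,\eta)$ and preserves integrals, so $Q^r\varphi\in L^2(Z,\eta)$ and $\int_Z Q^r\varphi\,d\eta=\int_Z\varphi\,d\eta$. Then
\begin{equation*}
\langle Q^n\varphi,\psi\rangle=\langle (Q^m)^k\,(Q^r\varphi),\psi\rangle\xrightarrow[k\to\infty]{}\int_Z Q^r\varphi\,d\eta\int_Z\overline\psi\,d\eta=\int_Z\varphi\,d\eta\int_Z\overline\psi\,d\eta,
\end{equation*}
by mixing of $Q^m$ applied to the pair $(Q^r\varphi,\psi)$. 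Since there are only finitely many residues $r\in\{0,1,\dots,m-1\}$ and the limit is the same for each of them, it follows that $\langle Q^n\varphi,\psi\rangle\to\int_Z\varphi\,d\eta\int_Z\overline\psi\,d\eta$ as $n\to\infty$, which is~\eqref{eq:mixing} for $Q$.

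The argument is essentially elementary once Lemma~\ref{lem:mixingQm} is in hand; the only point requiring a little care is the reduction through residue classes, which relies on $Q^r$ preserving the integral so that the limit does not depend on $r$. No appeal to Assumption~\ref{asm:Qn} is needed for this corollary.
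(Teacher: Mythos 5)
Your argument is correct and matches the paper's proof: both split $n$ into residue classes modulo $m$, apply Lemma~\ref{lem:mixingQm} (i.e. mixing of $Q^m$) to the pair $(Q^r\varphi,\psi)$, and use that $Q^r$ preserves the integral so the limit is independent of $r$. The paper phrases it more tersely but the reasoning is identical.
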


\begin{proof}The sequence $(\langle Q^n\varphi,\psi\rangle)_{n\ge 0}$ is the union of the subsequences $(\langle Q^{nm+r}\varphi,\psi\rangle)_{n\ge 0}$, each of which converges to the desired limit by Lemma~\ref{lem:mixingQm} applied to the pair of functions $(Q^r\varphi,\psi)$.
\end{proof}

\subsection{Triviality of the tail sigma-algebra}

The next step is to prove that the tail sigma-algebra for $Q$ is trivial. First, we  prove that the tail sigma-algebra cannot be \emph{totally nontrivial}, that is,
it cannot contain infinitely many different sets (up to sets of measure zero).
The proof follows that of Lemma 6 in \cite{Buf-Annals}, which is a version of the 0--2 law in the form of Kaimanovich \cite{Kaiman}.

\begin{lemma}\label{lem:0-2}For a measure-preserving Markov operator $R$ on $L^1(Z,\eta)$ the following holds.
	If the tail sigma-algebra of~$R$ is totally nontrivial then for any $b\in\mathbb N$ and any $\eps>0$
	there exist nonnegative functions $\varphi,\psi\in L^\infty(Z,\eta)$ with averages equal to~$1$ such that
	\begin{equation}\label{eq:0-2}
	\limsup_{n\to\infty}\langle (R^*)^{n+b}\varphi,(R^*)^n\psi\rangle_{L^2(Z,\eta)}+\dots+\langle (R^*)^{n-b}\varphi,(R^*)^n\psi\rangle_{L^2(Z,\eta)}<\eps.
	\end{equation}
\end{lemma}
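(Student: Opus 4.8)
The plan is to adapt the 0--2 law argument of Kaimanovich~\cite{Kaiman}, exactly as used in Lemma~6 of~\cite{Buf-Annals}, to the present setting where we track $b+1$ adjacent "diagonal" inner products simultaneously. Concretely, I would work in the trajectory space $(\mathbf Z,\mathbb P_R)$ and reformulate the quantities $\langle (R^*)^{n+j}\varphi,(R^*)^n\psi\rangle$ in terms of conditional expectations: by formula~\eqref{eq:CondExp}, $(R^*)^n\psi$ evaluated at $z_n$ is $\EE(\psi^0\mid\F_n)$, so each inner product in~\eqref{eq:0-2} is an integral over $\mathbf Z$ of a product of a function measurable with respect to $\F_{n+j}$ and one measurable with respect to $\F_n$, and letting $n\to\infty$ pushes everything into the tail sigma-algebra $\F_{\mathrm{tail}}=\bigcap_{k}\F_k^\infty$.

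First I would use the hypothesis that $\F_{\mathrm{tail}}$ is totally nontrivial: it then contains a sequence of pairwise distinct sets, and by a standard martingale/separation argument one can extract two disjoint sets $E_1,E_2\in\F_{\mathrm{tail}}$ of positive measure that are, moreover, "asymptotically far apart" along the shift in the sense needed below — or, more precisely, one produces for each $b$ and $\eps$ a pair of bounded nonnegative tail-measurable functions $F,G$ with $\int F\,d\mathbb P_R=\int G\,d\mathbb P_R=1$ whose supports are essentially disjoint. Because $F,G$ are tail-measurable, for each fixed $b$ the random variables $F\circ\sigma^{j}$ for $|j|\le b$ are all still tail-measurable (the tail algebra is shift-invariant up to null sets), so the "spread" over the $2b+1$ shifts costs nothing in the limit. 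Then I would set $\varphi = \EE(F\mid\F_0)$ and $\psi=\EE(G\mid\F_0)$, viewed as functions on $Z$; these are nonnegative, bounded (since $F,G$ are), and have average~$1$.

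The core computation is then: $\langle(R^*)^{n+j}\varphi,(R^*)^n\psi\rangle_{L^2(Z,\eta)} = \EE_{\mathbb P_R}\!\big[\,\EE(\varphi^0\mid\F_{n+j})\cdot\EE(\psi^0\mid\F_n)\big]$, and as $n\to\infty$ both conditional expectations converge (in $L^2$, by the martingale convergence theorem applied to the decreasing family $\F_n^\infty$) to $\EE(\varphi^0\mid\F_{\mathrm{tail}})$ and $\EE(\psi^0\mid\F_{\mathrm{tail}})$ respectively — and by the defining property of $\varphi,\psi$ together with the tower property these limits are (up to the shift, which is harmless on the tail) $F$ and $G$. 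Since $F$ and $G$ have essentially disjoint supports, each of the $2b+1$ limiting inner products is $\int F\cdot G\,d\mathbb P_R = 0$, so the $\limsup$ of their sum is~$0<\eps$. I would also need to be slightly careful that $\varphi,\psi$ really lie in $L^\infty$ and not merely $L^2$, which follows because conditional expectation does not increase the $L^\infty$ norm and $F,G$ were chosen bounded.

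The main obstacle I anticipate is the very first step: genuinely producing, from "totally nontrivial tail," two tail-measurable functions whose supports become asymptotically disjoint in a way compatible with all $2b+1$ shifts at once. In~\cite{Buf-Annals} this is the heart of the 0--2 law and uses Kaimanovich's technique of comparing $R^n\delta_x$ for different starting points; here one must check that the extra shifts $\sigma^{j}$, $|j|\le b$, do not destroy the separation — intuitively clear since $b$ is fixed while $n\to\infty$, but it requires invoking that the tail is shift-invariant and that the separating sets can be chosen within the tail algebra itself. Everything after that is the routine martingale-convergence bookkeeping sketched above.
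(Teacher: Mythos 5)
There is a genuine gap at the heart of your ``core computation,'' and it is precisely the place where the paper's proof has to work harder than a plain application of the tower property. You set $\varphi=\EE(F\mid\F_0)$, $\psi=\EE(G\mid\F_0)$ and then assert that the limits $\EE(\varphi^0\mid\F_{\mathrm{tail}})$ and $\EE(\psi^0\mid\F_{\mathrm{tail}})$ recover $F$ and $G$ ``by the defining property \dots together with the tower property.'' This fails: $\EE\bigl(\EE(F\mid\F_0)\mid\F_{\mathrm{tail}}\bigr)$ is a composition of two $L^2$-orthogonal projections onto the non-nested sigma-algebras $\F_0$ and $\F_{\mathrm{tail}}$, and neither $\F_{\mathrm{tail}}\subset\F_0$ nor $\F_0\subset\F_{\mathrm{tail}}$ holds, so this double projection does \emph{not} return $F$ (iterating such alternating projections converges to projection onto $\F_0\cap\F_{\mathrm{tail}}$, not onto $\F_{\mathrm{tail}}$). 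Consequently the $2b+1$ limiting inner products need not vanish, and the argument as written does not close. (A second, smaller slip: your displayed formula equates $\langle(R^*)^{n+j}\varphi,(R^*)^n\psi\rangle$ with $\EE_{\mathbb P_R}[\EE(\varphi^0\mid\F_{n+j})\cdot\EE(\psi^0\mid\F_n)]$; but the two conditional expectations live at different coordinates $z_{n+j}$ and $z_n$, so their product integrated over $\mathbf Z$ is not the $L^2(Z,\eta)$ inner product — one of the two lifts has to be shifted so that both factors sit at the same coordinate before the conditioning sigma-algebra can be matched.)

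The paper's fix is to work with the whole family $\varphi_k=\EE(\Phi\mid\F_{-\infty}^k)=\EE(\Phi\mid\F_k)$, $\psi_k=\EE(\Psi\mid\F_k)$ (the Markov property collapses $\F_{-\infty}^k$ to $\F_k$, so these are genuine functions on $Z$), and to exploit \emph{increasing} martingale convergence $\varphi_k\to\Phi$, $\psi_k\to\Psi$ in $L^1$ as $k\to\infty$. For a fixed $k$, the inner product is rewritten — after aligning coordinates, as $\int_{\mathbf Z}\EE(\varphi_k(z_{k-j})\mid\F_{n+k}^\infty)\,\EE(\psi_k(z_k)\mid\F_{n+k}^\infty)\,d\mathbb P_R$ — and \emph{reverse} martingale convergence in $n$ drives this to $\int\EE(\varphi_k(z_{k-j})\mid\F_{\mathrm{tail}})\,\EE(\psi_k(z_k)\mid\F_{\mathrm{tail}})\,d\mathbb P_R$, which in turn converges (as $k\to\infty$, by the increasing martingale limit and bounded convergence) to $\int(\Phi\circ\sigma^{-j})\Psi\,d\mathbb P_R=0$. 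So one takes $k$ large first and then lets $n\to\infty$; with $k=0$ frozen as in your sketch, the argument simply does not give smallness. Finally, the ``main obstacle'' you flagged — constructing $F,G$ with supports disjoint under all $2b+1$ shifts — is in fact the routine part: total nontriviality of $\F_{\mathrm{tail}}$ produces, by repeated splitting, a tail set $A$ of measure below $1/(2b+1)$; one then sets $\Phi=\mathbf 1_A/\mathbb P_R(A)$ and $\Psi=\mathbf 1_B/\mathbb P_R(B)$ with $B=\mathbf Z\setminus\bigcup_{|s|\le b}\sigma^s(A)$, and $(\Phi\circ\sigma^{-j})\Psi\equiv 0$ for $|j|\le b$ is immediate. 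You had the difficulty located in the wrong place.
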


\begin{proof}Let $(\mathbf Z,\mathbb P_{R})$ be the corresponding trajectory space.
	If $\F_{\mathrm{tail}}$ contains infinitely many subsets, it contains a subset of arbitrarily small measure.
	Indeed, split $\mathbf Z=A_1^{(2)}\sqcup A_2^{(2)}$, where each set $A_i^{(2)}\in\F_{\mathrm{tail}}$ has nonzero measure.  Then
	at least one of these parts can be split into two sets of nonzero measure (otherwise $\F_{\mathrm{tail}}$ contains only finitely many sets,
	the union of some of the $A_i^{(2)}$). Repeating this procedure, we get $\mathbf Z =A_1^{(n)}\sqcup\dots\sqcup A_n^{(n)}$. Then the measure of at least one of $A_j^{(n)}$ is not more than $1/n$.
	
	Take any set $A\in\F_{\mathrm{tail}}$ with $\mathbb P_R(A)<1/(2b+1)$. Then the set $B=\mathbf Z\setminus \bigcup_{s=-b}^{b} \sigma^s(A)$
	has positive measure. Denote
	\begin{equation*}
	\Phi(\mathbf{z})=\mathbf{1}_{A}(\mathbf{z})/\mathbb P_R(A),\qquad \Psi(\mathbf{z})=\mathbf{1}_{B}(\mathbf{z})/\mathbb P_R(B).
	\end{equation*}
	Observe that $\Phi$ and $\Psi$ are nonnegative  $\F_{\mathrm{tail}}$-measurable functions, bounded by some constant $M$, and with expectations  equal to~$1$. Moreover $(\Phi\circ\sigma^{-j})\cdot \Psi=0$ for $j=-b,\dots,b$.
	
	Set $\varphi_k=\EE(\Phi|\F_{-\infty}^k)$, $\psi_k=\EE(\Psi|\F_{-\infty}^k)$. By Lemma~\ref{lem:EE-depend}  $\varphi_k(\mathbf z)$ depends only on $z_k$, so abusing notation we use the same symbol $\varphi_k$ for the corresponding function in $L^1(Z,\eta)$. For example, we will write $\varphi_k\circ \sigma^j(\mathbf z)=\varphi_k(z_{k+j})$.
	
	Clearly, $\varphi_k$ and $\psi_k$ are nonnegative and bounded by $M$.
	Therefore, the martingale convergence theorem gives that $\varphi_k\to\Phi$, $\psi_k\to\Psi$ in $L^1(\mathbf Z,\mathbb P_R)$.
	Moreover, $\varphi_k(z_{k-j})=\varphi_k\circ\sigma^{-j}( {\mathbf z)}$ and $\varphi_k\circ\sigma^{-j} \to\Phi\circ \sigma^{-j}$.
	Hence
	\begin{equation*}
	 \EE(\varphi_k(z_{k-j})|\F_{\mathrm{tail}})\to\EE(\Phi\circ\sigma^{-j}|\F_{\mathrm{tail}})=\Phi\circ\sigma^{-j}(\mathbf z),\quad
	\EE(\psi_k(z_k)|\F_{\mathrm{tail}})\to\Psi(\mathbf z)
	\end{equation*}
	in $L^1(\mathbf Z,\mathbb P_R)$.
	Since all these functions are bounded by the same constant~$M$, for large $k$ we have that
	\begin{equation*}
	\int_{\mathbf Z} \EE(\varphi_k(z_{k-j})|\F_{\mathrm{tail}})\EE(\psi_k(z_k)|\F_{\mathrm{tail}})\,d\mathbb P_R<\frac{\eps}{2b+1}.
	\end{equation*}
 Applying the second formula in \eqref{eq:CondExp} to $\varphi^{k-j}_k(\mathbf z)= \varphi_k(z_{k-j})$ and $n+j$ in place of $n$, we obtain $\EE(\varphi_k(z_{k-j})|\F_{n+k}^\infty)=[(R^*)^{n+j}\varphi_k](z_{n+k})$.  
Hence for any $j=-b,\dots,b$
	\begin{multline*}
	\int_Z [(R^*)^{n+j}\varphi_k](z_{n+k})\cdot[(R^*)^{n}\psi_k](z_{n+k})\,d\eta=
	\int_{\mathbf Z} \EE(\varphi_k(z_{k-j})|\F_{n+k}^\infty)\cdot
	\EE(\psi_k(z_k))|\F_{n+k}^\infty)\,d\mathbb P_R\\
	\to
	\int_{\mathbf Z} \EE(\varphi_k(z_{k-j})|\F_{\mathrm{tail}})\cdot
	\EE(\psi_k(z_k))|\F_{\mathrm{tail}})\,d\mathbb P_R<\frac{\eps}{2b+1}\text{ as }n\to\infty.
	\end{multline*}
	Therefore, the functions $\varphi_k$ and $\psi_k$ for large $k$ satisfy~\eqref{eq:0-2}.
\end{proof}

\begin{lemma}Under the assumptions of Theorem~\ref{thm:convergence} the tail sigma-algebra for $Q^*$ cannot be totally nontrivial.
\end{lemma}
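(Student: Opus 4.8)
The statement to prove is that the tail sigma-algebra for $Q^*$ cannot be totally nontrivial, given Assumptions~\ref{asm:adjoint}--\ref{asm:ineq}. The plan is to argue by contradiction: suppose the tail sigma-algebra of $Q^*$ is totally nontrivial. We want to apply Lemma~\ref{lem:0-2} to the operator $R=Q^*$ to produce nonnegative functions $\varphi,\psi\in L^\infty(Z,\eta)$ of average $1$ for which the sum of correlations $\sum_{|j|\le b}\langle (R^*)^{n+j}\varphi,(R^*)^n\psi\rangle$ is eventually small, where $b$ is the constant from Assumption~\ref{asm:ineq}. Since $R^*=Q$, this says precisely that $\sum_{|j|\le b}\langle Q^{n+j}\varphi, Q^n\psi\rangle<\eps$ for large $n$. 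On the other hand, I will show this quantity is bounded below by a positive constant independent of $\eps$, using the inequality~\eqref{eq:asm-ineq} (or its variant~\eqref{eq:var-asm-ineq}) together with the mixing of $Q$ from Corollary~\ref{cor:mixingQ}, yielding the desired contradiction.

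\textbf{Key steps.} First I would take $\varphi,\psi$ as supplied by Lemma~\ref{lem:0-2} applied to $R=Q^*$ with the $b$ from Assumption~\ref{asm:ineq} and a small $\eps$ to be fixed at the end; these are nonnegative, bounded, with $\int\varphi\,d\eta=\int\psi\,d\eta=1$. The chain of inequalities runs as follows. Pair both sides of~\eqref{eq:asm-ineq} with $\psi$ (nonnegative, so the pairing preserves the inequality): since $W$ is a measure-preserving Markov operator it has an adjoint $W^*$ which is also a Markov operator, and $\langle WQ^{2n-a}\varphi,\psi\rangle=\langle Q^{2n-a}\varphi,W^*\psi\rangle$. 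By mixing (Corollary~\ref{cor:mixingQ}) the left-hand pairing converges to $\int\varphi\,d\eta\cdot\int W^*\psi\,d\eta = 1\cdot 1 = 1$ as $n\to\infty$, because $W^*$ preserves the integral. For the right-hand side, the term $\langle A_n\varphi,\psi\rangle$ is bounded by $\|A_n\|_{L^1}\|\varphi\|_{L^1}\|\psi\|_\infty\le \alpha_n\|\psi\|_\infty$, which tends to $0$ since $\sum\alpha_n<\infty$. Hence $C\sum_{|j|\le b}\langle (Q^*)^nQ^{n+j}\varphi,\psi\rangle = C\sum_{|j|\le b}\langle Q^{n+j}\varphi, Q^n\psi\rangle$ has $\liminf$ at least $1$ for $n$ large, i.e. $\sum_{|j|\le b}\langle Q^{n+j}\varphi,Q^n\psi\rangle\ge \tfrac{1}{2C}$ eventually. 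But Lemma~\ref{lem:0-2}, with $\eps<\tfrac{1}{2C}$, gives $\limsup_n \sum_{|j|\le b}\langle (R^*)^{n+j}\varphi,(R^*)^n\psi\rangle<\eps<\tfrac{1}{2C}$, and since $R^*=Q$ these are the same expressions — contradiction.

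\textbf{Anticipated obstacles.} The main delicate point is matching indices and operators precisely: I must be careful that Lemma~\ref{lem:0-2} is applied to $R=Q^*$ so that $R^*=Q$ (using that $Q$ is a Markov operator on a Lebesgue space, hence $(Q^*)^*=Q$), and that the correlation sum appearing in Lemma~\ref{lem:0-2} for $R$ coincides term-by-term with the sum $\sum_{|j|\le b}\langle Q^{n+j}\varphi,Q^n\psi\rangle$ (after the harmless relabeling $j\mapsto -j$, which is symmetric). A second point is the legitimacy of pairing the $L^1$-inequality~\eqref{eq:asm-ineq} against the bounded function $\psi$: both sides are in $L^1$, $\psi\in L^\infty$, so all pairings are finite and the inequality is preserved. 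Finally one should note that $a$ is a fixed constant, so the shift $Q^{2n-a}$ versus $Q^{2n}$ is immaterial for the limit — if needed one applies $Q^a$ to both sides or invokes mixing along the shifted subsequence. I would also remark that the same argument applies verbatim to the tail sigma-algebra of $Q$ itself by using~\eqref{eq:var-asm-ineq} in place of~\eqref{eq:asm-ineq}, but for the present lemma only the $Q^*$ version is needed.
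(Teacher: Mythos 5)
Your proof is correct and follows essentially the same route as the paper: apply Lemma~\ref{lem:0-2} to $R=Q^*$, pair inequality~\eqref{eq:asm-ineq} against $\psi$, use mixing (Corollary~\ref{cor:mixingQ}) to show $\langle Q^{2n-a}\varphi,W^*\psi\rangle\to 1$, bound the error term $\langle A_n\varphi,\psi\rangle\to 0$, and conclude the correlation sum stays above $1/(2C)$, contradicting the bound $<\eps$ from the 0--2 law. The careful remarks about $R^*=Q$, the legitimacy of pairing with bounded $\psi$, and the symmetric index range are all consistent with the paper's (more terse) argument.
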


\begin{proof}
	Assuming the contrary, the inequality \eqref{eq:0-2} in Lemma~\ref{lem:0-2} for $R=Q^*$ yields that for some nonnegative functions $\varphi,\psi$ with their averages equal to $1$ and for all sufficiently large $n$ we have
	\begin{equation}\label{eq:lem-tot-nontr}
		\langle (Q^{n+b}+\dots+Q^{n-b})\varphi,Q^n\psi\rangle_{L^2(Z,\eta)}<\eps.
	\end{equation}
	On the other hand, by Assumption~\ref{asm:ineq} the left-hand side of \eqref{eq:lem-tot-nontr} is not less than
	\begin{equation*}
	\frac1C \langle WQ^{2n-a}\varphi-A_n\varphi,\psi\rangle=
	\frac1C \langle Q^{2n-a}\varphi,W^*\psi\rangle-\frac1C\langle A_n\varphi,\psi\rangle\to \frac1C+0.
	\end{equation*}
	Here we use Corollary~\ref{cor:mixingQ}; note that the average values of both $\varphi$ and $W^*\psi$ are equal to~$1$. Therefore, for large~$n$ the left-hand side of \eqref{eq:lem-tot-nontr} is larger than $1/C-\eps$, so taking $\eps<1/2C$ we arrive at a contradiction.
\end{proof}

\begin{lemma}Under the assumptions of Theorem~\ref{thm:convergence} the tail sigma-algebra for $Q$ cannot be totally nontrivial.
\end{lemma}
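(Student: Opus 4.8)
The statement is the symmetric counterpart of the previous lemma: having shown that the tail $\sigma$-algebra of $Q^*$ cannot be totally nontrivial, we want the same for $Q$ itself. The plan is to exploit Assumption~\ref{asm:adjoint}, which makes the roles of $Q$ and $Q^*$ essentially interchangeable up to an intertwining by the factors $V,W$. Concretely, I would argue by contradiction: suppose the tail $\sigma$-algebra for $Q$ is totally nontrivial. Then Lemma~\ref{lem:0-2} applied to $R=Q$ (rather than $R=Q^*$) produces, for any prescribed $b$ and $\eps>0$, nonnegative functions $\varphi,\psi\in L^\infty(Z,\eta)$ with $\int\varphi=\int\psi=1$ such that
\begin{equation*}
\limsup_{n\to\infty}\;\sum_{j=-b}^{b}\langle Q^{n+j}\varphi,\,Q^{n}\psi\rangle_{L^2(Z,\eta)}<\eps .
\end{equation*}
The task is then to contradict this using inequality~\eqref{eq:asm-ineq} (or its variant~\eqref{eq:var-asm-ineq}), exactly as in the proof of the $Q^*$ case but now with the roles of $Q$ and $Q^*$ swapped.

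\textbf{Key steps.} First I would replace, in the displayed estimate above, $\langle Q^{n+j}\varphi, Q^n\psi\rangle$ by $\langle \varphi, (Q^*)^{n+j} Q^n \psi\rangle = \langle (Q^*)^n Q^{n+j}\psi,\varphi\rangle$ using that $Q$ is a measure-preserving Markov operator (so it is a genuine adjoint pair on $L^2$); thus the hypothesis reads $\limsup_n \sum_{j=-b}^b\langle (Q^*)^n Q^{n+j}\psi,\varphi\rangle<\eps$. Next, apply Assumption~\ref{asm:ineq} to the nonnegative function $\psi$: for $n\ge n_0$,
\begin{equation*}
WQ^{2n-a}\psi\le C\sum_{j=-b}^{b}(Q^*)^nQ^{n+j}\psi+A_n\psi .
\end{equation*}
Pairing both sides with the nonnegative function $\varphi$ and using positivity of all operators involved gives
\begin{equation*}
\langle WQ^{2n-a}\psi,\varphi\rangle\le C\sum_{j=-b}^b\langle (Q^*)^nQ^{n+j}\psi,\varphi\rangle+\langle A_n\psi,\varphi\rangle .
\end{equation*}
The right-hand side has $\limsup$ at most $C\eps + 0$, since $\|A_n\psi\|_{L^1}\le\alpha_n\|\psi\|_{L^1}\to 0$ (the series $\sum\alpha_n$ converges). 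On the other hand, rewrite the left-hand side as $\langle Q^{2n-a}\psi, W^*\varphi\rangle$, and invoke Corollary~\ref{cor:mixingQ}: since $\int\psi=1$ and $W^*\varphi$ is the image of $\varphi$ under the adjoint of a measure-preserving Markov operator, $\int W^*\varphi\,d\eta=\int\varphi\,d\eta=1$, so $\langle Q^{2n-a}\psi,W^*\varphi\rangle\to 1$. Hence $1\le C\eps$, and choosing $\eps<1/(2C)$ yields the contradiction.

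\textbf{Main obstacle.} The proof is almost verbatim the previous lemma with $Q\leftrightarrow Q^*$, so the only real point requiring care is the very first step — making sure the hypothesis coming out of Lemma~\ref{lem:0-2} for $R=Q$ can be transported into a statement about $(Q^*)^n Q^{n+j}$ that is compatible with the \emph{fixed} orientation of inequality~\eqref{eq:asm-ineq} (which always has $W$ and $Q^{2n-a}$ on the left, regardless of whether we are studying $Q$ or $Q^*$). This works because $W$ appears only through the identity $\langle WQ^{2n-a}\psi,\varphi\rangle=\langle Q^{2n-a}\psi,W^*\varphi\rangle$ and because $W^*$, being the adjoint of a measure-preserving Markov operator, preserves the integral; no structural asymmetry between $Q$ and $Q^*$ intervenes in~\eqref{eq:asm-ineq} beyond the presence of $W$ (resp.\ the need for $Q^*=WV$ in Assumption~\ref{asm:adjoint}, which is symmetric in form). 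Thus I do not expect a genuine difficulty here; I would simply be careful that the functions $\varphi,\psi$ produced by Lemma~\ref{lem:0-2} enter~\eqref{eq:asm-ineq} in the correct slots (the nonnegative function to which~\eqref{eq:asm-ineq} is applied must be $\psi$, the one tested against $Q^n$ in~\eqref{eq:0-2}), and that all pairings remain between nonnegative functions so that the inequality is preserved under $\langle\,\cdot\,,\varphi\rangle$.
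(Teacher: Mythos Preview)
Your approach has a genuine gap, and it occurs precisely at the point you flagged as ``the only real point requiring care''.

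Lemma~\ref{lem:0-2} applied to $R=Q$ does \emph{not} produce $\sum_{j}\langle Q^{n+j}\varphi,Q^n\psi\rangle<\eps$. The conclusion of that lemma is stated in terms of $R^*$: with $R=Q$ you obtain
\[
\limsup_{n\to\infty}\sum_{j=-b}^{b}\bigl\langle (Q^*)^{n+j}\varphi,(Q^*)^n\psi\bigr\rangle<\eps,
\]
which unwinds to $\sum_j\langle Q^{n+j}(Q^*)^n\psi,\varphi\rangle<\eps$. This controls products of the form $Q^{m_1}(Q^*)^{m_2}$, whereas inequality~\eqref{eq:asm-ineq} is stated for $(Q^*)^{n}Q^{n+j}$, i.e.\ products in the \emph{opposite} order. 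Your subsequent rewriting $\langle\varphi,(Q^*)^{n+j}Q^n\psi\rangle=\langle(Q^*)^nQ^{n+j}\psi,\varphi\rangle$ is also incorrect: commuting the arguments of the inner product gives $\langle(Q^*)^{n+j}Q^n\psi,\varphi\rangle$, and the exponents cannot simply be swapped between $Q$ and $Q^*$. Neither~\eqref{eq:asm-ineq} nor its adjoint (which yields $(Q^*)^{n+j}Q^n$) matches what Lemma~\ref{lem:0-2} with $R=Q$ actually delivers, so the contradiction does not close.

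This asymmetry is real, not cosmetic: the previous lemma worked because Lemma~\ref{lem:0-2} with $R=Q^*$ gives $R^*=Q$ and hence exactly the $(Q^*)^nQ^{n+j}$ appearing in~\eqref{eq:asm-ineq}. The paper therefore does \emph{not} rerun the $0$--$2$ argument for $Q$. Instead it exploits Assumption~\ref{asm:adjoint} structurally: build the trajectory space for the alternating chain $\dots,V,W,V,W,\dots$; the even-coordinate marginal is the $Q$-chain and the odd-coordinate marginal is the $Q^*$-chain. A Kolmogorov $0$--$1$ argument (using conditional independence of odd coordinates given the even ones) shows that the full tail $\sigma$-algebra coincides with both $\F_{\mathrm{tail},0}$ and $\F_{\mathrm{tail},1}$. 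Hence the tails of $Q$ and $Q^*$ are isomorphic as $\sigma$-algebras, and ``not totally nontrivial'' transfers from $Q^*$ to $Q$ directly.
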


\begin{proof}
 Consider the trajectory space $(\mathbf{Z},\mathbb{P})$ for the infinite sequence $\dots,V,W,V,W,\dots$ of Markov operators, that is,
	\begin{equation*}
	\mathbb{P}(z_{2n+1}\in A\mid z_{2n})=V[\mathbf 1_A](z_{2n}),\quad
	\mathbb{P}(z_{2n+2}\in A\mid z_{2n+1})=W[\mathbf 1_A](z_{2n+1}).
	\end{equation*}
	In other words, we use the construction from Subsection~\ref{subsec:trajectories}, but with~\eqref{eq:prob-traj} replaced by
	\begin{equation}\label{eq:prob-traj-diff}
		\mathbb P\{z_m\in A_m,\dots, z_n\in A_n\}=
		\EE(\mathbf{1}_{A_m}\cdot
		R_m(\mathbf{1}_{A_{m+1}}\cdot R_{m+1}(\dots R_{n-1}(\mathbf{1}_{A_n})\dots))),
	\end{equation}
	where $R_{2k}=V$, $R_{2k+1}=W$ for all $k\in\mathbb Z$. In fact, Lemma~\ref{lem:prob-traj} holds for the finite-dimensional distributions~\eqref{eq:prob-traj-diff} with any sequence of Markov operators~$(R_k)$. In our case we have that 
	\begin{equation*}
		\mathbb P\{z_{2k}\in A_k,z_{2(k+1)}\in A_{k+1}\dots, z_{2l}\in A_l\}=
		\mathbb P_{Q}\{z_{k}\in A_k,z_{k+1}\in A_{k+1}\dots, z_{l}\in A_l\},		
	\end{equation*}
	hence the projection $\pi_0\colon\mathbf{z}=(z_n)\mapsto (z_{2n})$ maps the trajectory space $(\mathbf{Z},\mathbb{P})$ to the trajectory space $(\mathbf Z, \mathbb P_Q)$ for the operator~$Q=VW$. Similarly, $\pi_1\colon\mathbf{z}=(z_n)\mapsto (z_{2n+1})$ maps it to the trajectory space for $Q^*=WV$. Therefore, the total non-triviality of the tail sigma-algebras in the trajectory spaces for $Q$ and $Q^*$ is equivalent respectively to that of the sigma-algebras
	\begin{equation*}
	\F_{\mathrm{tail},0}=\bigcap_n \bigvee_{2k\ge n}\F_{2k}\quad\text{and}\quad
	\F_{\mathrm{tail},1}=\bigcap_n \bigvee_{2k+1\ge n}\F_{2k+1}
	\end{equation*}
	in the trajectory space $(\mathbf{Z},\mathbb P)$.
	Since we already know that $\F_{\mathrm{tail},1}$ cannot be totally non-trivial, it is sufficient to prove that
	\begin{equation*}
	\F_{\mathrm{tail},j}=\F_{\mathrm{tail},\mathbf Z}:=\bigcap_n\bigvee_{k\ge n}\F_k.
	\end{equation*}
	Clearly, $\F_{\mathrm{tail},j}\subset\F_{\mathrm{tail},\mathbf Z}$. Let us prove the converse inclusion.	
	Consider any $A\in\F_{\mathrm{tail},\mathbf Z}$ and check that, say, $A\in\F_{\mathrm{tail},0}$. Indeed, $A\in \bigvee_{m\ge 2n}\F_{m}$ for every $n$, and we can eliminate any finite number of $\F_k$ with odd $k$ from this formula:
	\begin{equation}\label{eq:remove-odds}
	A\in \F_{2n}\vee\F_{2n+2}\vee\dots\vee\F_{2(n+s-1)}\vee\bigvee_{m\ge 2(n+s)}\F_m.
	\end{equation}
	Consider the conditional probability $\mathbb{P}({\,\cdot\,}\mid z_{2n},z_{2n+2},\dots)$ with respect to the sigma-algebra $\bigvee_{k\ge n}\F_{2k}$. As~\eqref{eq:remove-odds} shows, with respect to this conditional probability $A$ depends only on the ``odd tail'' $\bigvee_{k\ge n+s}\F_{2k+1}$. But since the odd coordinates $z_{2n+1},\dots,z_{2(n+s)+1},\dots$ are independent for  fixed even coordinates $z_{2n},\dots,z_{2(n+s)},\dots$, by Kolmogorov's 0--1 Law we obtain that $A$ is trivial with respect to this conditional probability, so $A$ is measurable with respect to $\bigvee_{k\ge n}\F_{2k}$, and hence $A\in\F_{\mathrm{tail},0}$.
\end{proof}

\begin{lemma}Under the assumptions of Theorem~\ref{thm:convergence} the tail sigma-algebra for $Q$ is trivial.
\end{lemma}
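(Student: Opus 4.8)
We already know, by the previous three lemmas, that the tail sigma-algebra of $Q$ is not totally nontrivial, i.e.\ it contains only finitely many sets up to null sets. Such a sigma-algebra is generated by a finite measurable partition $Z^{\mathbb{Z}}=B_1\sqcup\dots\sqcup B_r$ into atoms, and since it is $\sigma$-invariant in the appropriate sense, the shift $\sigma$ permutes the atoms $B_i$; moreover $\mathbb{P}_Q(\sigma^{-1}B_i)=\mathbb{P}_Q(B_i)$. The plan is to show that this permutation must be trivial and that $r=1$. First I would translate tail-measurability into a statement about $Q$: if $\mathbf{1}_{B_i}$ is $\F_{\mathrm{tail}}$-measurable, then for each $n$ it is $\F_n^\infty$-measurable, so $\mathbf{1}_{B_i}(\mathbf{z})=g_{i,n}(z_n)$ for some function $g_{i,n}$ on $Z$; the consistency formulas~\eqref{eq:CondExp} (applied to the future sigma-algebras) force $g_{i,n}=(Q^*)^{\,\cdot}$-images of one another, and in particular, letting $g_i := g_{i,0}$, one gets $Q^* g_i = g_i'$ where $g_i'$ is the indicator pulled back one step — so the $g_i$ are eigenfunctions of some power of $Q^*$ with eigenvalue $1$. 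More precisely, since $\sigma$ permutes the atoms, some power $\sigma^p$ fixes each atom, and one obtains $(Q^*)^p g_i = g_i$.

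**Killing the periodic part.** Now apply Assumption~\ref{asm:Qn}: it says $Q^n\psi=\psi$ has only constant solutions for every $n$. The functions $g_i\in L^\infty(Z,\eta)\subset L^2(Z,\eta)$ satisfy $(Q^*)^p g_i = g_i$. Since $Q^*$ is also a measure-preserving Markov operator, one would like to invoke the same assumption for $Q^*$; but Assumption~\ref{asm:Qn} is stated for $Q$. This is the point where I would use Assumption~\ref{asm:adjoint}, exactly as in the proof of the previous lemma: the triviality of the tail of $Q^*$ has already been established (``cannot be totally nontrivial'' plus the final reduction), so in fact it suffices to argue for one of $Q, Q^*$ and transfer. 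Alternatively, and more cleanly, note that $(Q^*)^p g = g$ with $g\in L^2$ implies, via $\|Q^p g\|\le\|g\|$ and the identity $\langle Q^p g, g\rangle = \langle g, (Q^*)^p g\rangle = \|g\|^2$, that $Q^p g = g$ as well (equality in Cauchy--Schwarz for a contraction that fixes the target). Then Assumption~\ref{asm:Qn} with $n=p$ gives $g=\mathrm{const}$. Hence each $g_i$ is constant, so each atom $B_i$ has $\mathbf{1}_{B_i}$ constant, forcing $r=1$ and $\F_{\mathrm{tail}}$ trivial.

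**The main obstacle.** The genuine difficulty is not the eigenfunction argument but justifying that a tail sigma-algebra which is ``not totally nontrivial'' is honestly generated by finitely many atoms \emph{and} that $\sigma$ acts on these atoms by a permutation of finite order with the measure preserved — i.e.\ converting the qualitative ``only finitely many sets'' into the rigid combinatorial picture $\sigma^p=\mathrm{id}$ on atoms. This requires knowing that $\F_{\mathrm{tail}}$ is $\sigma$-invariant (which follows from $\sigma^{-1}\F_n^\infty = \F_{n+1}^\infty \subset \F_n^\infty$, so $\sigma^{-1}\F_{\mathrm{tail}}\subset\F_{\mathrm{tail}}$, and since $\sigma$ preserves $\mathbb{P}_Q$ and $\F_{\mathrm{tail}}$ is finite, $\sigma$ restricted to the atoms is a bijection of finite order). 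The rest is the linear-algebra observation above. I would present the argument in roughly this order: (1) reduce to finitely many atoms permuted by $\sigma$; (2) pick $p$ with $\sigma^p$ fixing each atom; (3) extract $g_i\in L^2$ with $(Q^*)^p g_i = g_i$ from~\eqref{eq:CondExp}; (4) upgrade to $Q^p g_i = g_i$ by the Cauchy--Schwarz equality case; (5) apply Assumption~\ref{asm:Qn} to conclude $g_i$ constant, hence $\F_{\mathrm{tail}}$ trivial.
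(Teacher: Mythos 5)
Your overall strategy is the same as the paper's: reduce (via the previous lemmas) to a tail sigma-algebra with finitely many atoms, observe that $\sigma$ permutes the atoms with some finite period $p$, extract from each atom a function on $Z$ fixed by a power of the Markov operator, and apply Assumption~\ref{asm:Qn}. However, there is a genuine gap in how you extract that function. The claim ``$\mathbf{1}_{B_i}$ is $\F_n^\infty$-measurable, so $\mathbf{1}_{B_i}(\mathbf{z})=g_{i,n}(z_n)$'' is false: being measurable with respect to the \emph{future} sigma-algebra $\F_n^\infty$ does not make a function depend on the single coordinate $z_n$. (Formula~\eqref{eq:CondExp} only applies to functions of the form $\varphi^0(\mathbf z)=\varphi(z_0)$, which $\mathbf{1}_{B_i}$ is not.) Because of this, the asserted relation $(Q^*)^p g_i=g_i$ is never actually derived. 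The correct move is to take a conditional expectation that genuinely collapses to a single coordinate: set $\varphi_k(z_k)=\EE(\Phi\,|\,\F_{-\infty}^k)$, which by the Markov property is a function of $z_k$. The \emph{past} filtration $\F_{-\infty}^k$ is nested, so the tower property plus~\eqref{eq:CondExp} gives $\varphi_{k}=Q^n\varphi_{k+n}$, and shift-invariance ($\sigma^n A=A$) gives $\varphi_{k}=\varphi_{k+n}$; combining these yields $Q^n\varphi_{k+n}=\varphi_{k+n}$ directly. This is exactly what the paper does.

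Once this is corrected, your Cauchy--Schwarz ``upgrade'' from $(Q^*)^p g=g$ to $Q^p g=g$ becomes unnecessary: the past-filtration argument produces a fixed point of $Q^p$ with no detour through the adjoint. (The Cauchy--Schwarz argument itself is sound as a standalone fact about Markov contractions, so it could be kept as a remark, but here it is solving a problem you would not have if you used the right filtration.) With $Q^n\varphi_{k+n}=\varphi_{k+n}$ in hand, Assumption~\ref{asm:Qn} forces $\varphi_{k+n}$ to be a constant, namely $1$ by normalization; martingale convergence $\varphi_l\to\Phi$ then gives $\Phi\equiv1$, contradicting $r>1$ (or, equivalently, forcing $r=1$ as you conclude). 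So the endgame of your proof is fine; the gap is entirely in the passage from tail-measurability to a function of a single coordinate, and it is filled by conditioning on the past rather than reading off coordinates from the future.
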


\begin{proof}It remains to eliminate the case in which $\F_{\mathrm{tail}}$ contains only finitely many different sets.
	Assume that $\mathbf Z=A_1\sqcup\dots\sqcup A_r$, $r>1$, where each $A_j\in\F_{\mathrm{tail}}$ has no nontrivial subsets belonging to $\F_{\mathrm{tail}}$.
	The shift map $\sigma$ interchanges these subsets, whence for $A=A_1$ there exists $n$ such that $\sigma^nA=A$. As in Lemma~\ref{lem:0-2}, we define $\Phi=\mathbf{1}_A/\mathbb P_Q(A)$ and $\varphi_k(z_k)=\varphi_k(\mathbf{z})=\EE(\Phi|\F_{-\infty}^{k})$.
	Then
	\begin{equation*}
	\EE(\Phi|\F_{-\infty}^k)\circ\sigma^n=\EE(\Phi\circ\sigma^n|\F_{-\infty}^{k+n})
	=\EE(\Phi|\F_{-\infty}^{k+n})=\varphi_{k+n},
	\end{equation*}
	hence
	\begin{multline*}
	\varphi_{k+n}(z_k)=\varphi_{k+n}\circ\sigma^{-n}(\mathbf{z})=\EE(\Phi|\F_{-\infty}^k)=\\
	 =\EE(\EE(\Phi|\F_{-\infty}^{k+n})|\F_{-\infty}^k)=\EE(\varphi_{k+n}(z_{k+n})|\F_{-\infty}^k)=[Q^n\varphi_{k+n}](z_k).
	\end{multline*}
	Thus we arrive at the equation $\varphi_{k+n}(z_k)=[Q^n\varphi_{k+n}](z_k)$ and Assumption~\ref{asm:Qn} implies that $\varphi_{k+n}$ is constant.
	Taking averages, we get $\EE(\varphi_{k+n})=\EE(\Phi)=1$, thus $\varphi_l\equiv 1$ for all~$l$. But this contradicts  the convergence $\varphi_l\to\Phi\not\equiv 1$, which was obtained in proof of Lemma~\ref{lem:0-2}.
\end{proof}

\subsection{Convergence}
\begin{prop}[see \cite{Kaiman}; {\cite[Propositions 4, 5]{Buf-Annals}}]\label{prop:L12-conv}
	For a measure-preserving Markov operator $R$ on~$(Z,\eta)$ with  trivial tail sigma-algebra we have
	$R^n\varphi\to \int_Z \varphi\,d\eta$, where the convergence takes place in $L^1$ for $\varphi\in L^1(Z,\eta)$ and
	in $L^2$ for $\varphi\in L^2(Z,\eta)$.
\end{prop}

It remains to prove almost everywhere pointwise convergence for functions in $L^p, p>1$ and in  $L \log L$. Recall that the norm in $L \log L(Z, \eta)$ can be defined by the \emph{Orlicz--Luxemburg norm} 
\begin{equation*}
 \|\varphi\|_{L\log L} = \inf \biggl\{ c:  \int_Z \frac{ |\varphi|}{c} \cdot \log (\frac{ |\varphi|}{c}+e) d \eta \leq 1\biggr\},
\end{equation*}
 see for example~\cite{Zygmund}. In particular, since 
$ \|\varphi\|_{L^1} = \inf \bigl\{ c:   \int_Z (|\varphi|/c) d \eta \leq 1\bigr\}$,
we have
  $\|\varphi\|_{L^1} \leq  A \|\varphi\|_{L\log L}$ for some constant $A$.
More generally, we have the following maximal inequalities:
\begin{lemma}[{\cite[Lemma 8]{Buf-Annals}}]\label{lem:max-ineq}
	For a measure-preserving Markov operator $R$ on $(Z,\eta)$ for any $p>1$ there exists a constant $A_p>0$ such that for any nonnegative function $\varphi\in L^p(Z,\eta)$ we have
	\begin{equation*}
	\Bigl\|\sup_{n\ge 0} (R^*)^nR^n\varphi\Bigr\|_{L^p}\le A_p\|\varphi\|_{L^p}.
	\end{equation*}
	Similarly, there exists a constant $A_{\log}>0$ such that for any nonnegative function $\varphi\in L\log L(Z,\eta)$ we have
	\begin{equation*}
	\Bigl\|\sup_{n\ge 0} (R^*)^nR^n\varphi\Bigr\|_{L^1}\le A_{\log}\|\varphi\|_{L\log L}.
	\end{equation*}
\end{lemma}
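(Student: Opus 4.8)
The plan is to realize each operator $(R^*)^nR^n$ inside the trajectory space $(\mathbf Z,\mathbb P_R)$ as the conditional expectation, given the ``present'', of a single reversed martingale living on the ``past'', and then to apply Doob's maximal inequality. So first I would form the trajectory space $(\mathbf Z,\mathbb P_R)$ associated to $R$ exactly as in Section~\ref{sec:conv-general}, together with its shift $\sigma$, the coordinate maps $z_n$, and the sigma-algebras $\F_k^l$ (each $z_n$ being $\eta$-distributed); recall that \eqref{eq:CondExp}, applied to $R$, reads $\EE(g^0\mid\F_{-n})(\mathbf z)=(R^ng)(z_{-n})$ and $\EE(g^0\mid\F_n)(\mathbf z)=((R^*)^ng)(z_n)$ for $g\in L^1(Z,\eta)$, where $g^0(\mathbf z)=g(z_0)$. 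Fix a nonnegative $\varphi\in L^1(Z,\eta)$ and set
\[
M_n:=\EE\bigl(\varphi^0\mid\F_{-\infty}^{-n}\bigr)=\EE\bigl(\varphi^0\mid\F_{-n}\bigr),\qquad n\ge0,
\]
the second equality being the Markov property, so that $M_n(\mathbf z)=(R^n\varphi)(z_{-n})\ge0$; since the sigma-algebras $\F_{-\infty}^{-n}$ decrease in $n$, $(M_n)_{n\ge0}$ is a reversed martingale and the maximal function $M^*:=\sup_{n\ge0}M_n$ is well defined with values in $[0,+\infty]$.

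The key identity I would then establish is
\[
\bigl[(R^*)^nR^n\varphi\bigr]^0=\EE\bigl(M_n\mid\F_0^{+\infty}\bigr),\qquad n\ge0.
\]
Indeed $M_n$ is $\F_{-\infty}^0$-measurable, so by the conditional independence of past and future given $z_0$ one has $\EE(M_n\mid\F_0^{+\infty})=\EE(M_n\mid\F_0)$, a function of $z_0$; and since the pair $(z_{-n},z_0)$ has, by stationarity, the same law as $(z_0,z_n)$, this function equals, at $z_0=x$, the quantity $\EE\bigl((R^n\varphi)(z_0)\mid z_n=x\bigr)=\bigl((R^*)^nR^n\varphi\bigr)(x)$, by the second formula in \eqref{eq:CondExp} applied to $g=R^n\varphi$. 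Because $\EE(\,\cdot\mid\F_0^{+\infty})$ is a positive operator and $M_n\le M^*$ for every $n$, we obtain, almost surely,
\[
\sup_{n\ge0}\bigl[(R^*)^nR^n\varphi\bigr]^0\;\le\;\EE\bigl(M^*\mid\F_0^{+\infty}\bigr).
\]

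To conclude, the left-hand side above depends only on $z_0$, whose law is $\eta$, and $\EE(\,\cdot\mid\F_0^{+\infty})$ is an $L^p$-contraction for every $p$, so
\[
\Bigl\|\sup_{n\ge0}(R^*)^nR^n\varphi\Bigr\|_{L^p(Z,\eta)}
=\Bigl\|\sup_{n\ge0}\bigl[(R^*)^nR^n\varphi\bigr]^0\Bigr\|_{L^p(\mathbf Z,\mathbb P_R)}
\le\bigl\|M^*\bigr\|_{L^p(\mathbf Z,\mathbb P_R)}.
\]
For $p>1$ the reversed-martingale form of Doob's $L^p$ maximal inequality (obtained from the usual one by reversing finite index ranges $\{0,\dots,N\}$ and letting $N\to\infty$ by monotone convergence) gives $\|M^*\|_{L^p}\le\tfrac{p}{p-1}\|\varphi^0\|_{L^p}=\tfrac{p}{p-1}\|\varphi\|_{L^p(Z,\eta)}$, so one may take $A_p=p/(p-1)$. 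For the $L\log L$ statement I would use the weak type $(1,1)$ bound $\lambda\,\mathbb P_R(M^*\ge\lambda)\le\int_{\{M^*\ge\lambda\}}\varphi^0\,d\mathbb P_R$ together with the elementary estimate $\int_{\{M^*\ge\lambda\}}\varphi^0\le\int_{\{\varphi^0>\lambda/2\}}\varphi^0+\tfrac\lambda2\,\mathbb P_R(M^*\ge\lambda)$, and integrate in $\lambda$, which yields $\|M^*\|_{L^1}\le A_{\log}\|\varphi\|_{L\log L(Z,\eta)}$ in the standard way.

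I expect the only genuinely delicate point to be the displayed identity $\bigl[(R^*)^nR^n\varphi\bigr]^0=\EE(M_n\mid\F_0^{+\infty})$, that is, keeping straight the interplay of the shift, stationarity and the Markov property so that the ``forward then adjoint'' composition $(R^*)^nR^n$ appears precisely as the conditional expectation, given the present, of the reversed-martingale term $M_n$ supported on the past; everything else is classical martingale theory. This is in essence the content of Rota's ``Alternierende Verfahren'' \cite{Rota} (see also \cite[Lemma~8]{Buf-Annals}), and one could alternatively quote that theorem directly in place of the trajectory-space computation.
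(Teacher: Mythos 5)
Your proof is correct, and it is essentially the argument the paper relies on by citing \cite[Lemma~8]{Buf-Annals}: realize $(R^*)^nR^n\varphi$ as the conditional expectation, given the present coordinate, of the reversed-martingale term $M_n=\EE(\varphi^0\mid\F_{-\infty}^{-n})$ in the trajectory space, and then invoke Doob's maximal inequality (this is precisely Rota's ``Alternierende Verfahren'' \cite{Rota}). The paper does not reproduce the proof, but your reconstruction — including the key identity $[(R^*)^nR^n\varphi]^0=\EE(M_n\mid\F_0^{+\infty})$ obtained via the Markov property and shift-invariance — matches the standard argument behind the cited lemma.
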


\begin{cor}
	The following inequalities hold for any $s\in\mathbb{Z}$:
	\begin{equation}\label{eq:max-AV-ineq}
	\Bigl\|\sup (R^*)^nR^{n+s}\varphi\Bigr\|_{L^p}\le A_p\|\varphi\|_{L^p},
	\quad
	\Bigl\|\sup (R^*)^nR^{n+s}\varphi\Bigr\|_{L^1}\le A_{\log}\|\varphi\|_{L\log L},
	\end{equation}
	the suprema here being  taken over all $n$ such that both $n$ and $n+s$ are nonnegative. 
	\end{cor} 
	\begin{proof}We treat the $L\log L$ norm, the $L^p$ norms are similar. For $s>0$, apply the lemma to $R^s\varphi$ and use the inequality $\|R^s\varphi\|_{L\log L}\le \|\varphi\|_{L\log L}$.
	For $s<0$ we use the inequality
	\begin{equation*}
	(R^*)^{|s|}\Bigl[\sup_{n\ge 0} (R^*)^nR^n\varphi\Bigr]\ge \sup_{n\ge 0} (R^*)^{n+|s|}R^n\varphi
	\end{equation*}
from which we obtain the same inequality  for the $L^1$-norms of both sides. Note also that the $L^1$-norm of the left-hand side does not exceed  $\|\sup_{n\ge 0} (R^*)^nR^n\varphi \|_{L^1}$.    Hence
	\begin{equation*}
	A_{\log}\|\varphi\|_{L\log L}\ge\Bigl\|\sup_{n\ge 0} (R^*)^nR^n\varphi\Bigr\|\ge
	\Bigl\|\sup_{n\ge 0} (R^*)^{n+|s|}R^n\varphi\Bigr\|,
	\end{equation*}
	and it remains to replace $n$ with  $n+s$. 
\end{proof}

\begin{proof}[Proof of Theorem~\ref{thm:convergence}] It remains to  prove pointwise convergence.
	Combining \eqref{eq:max-AV-ineq} for $R=Q$ and Assumption~\ref{asm:ineq} in the form~\eqref{eq:var-asm-ineq}, for any nonnegative function $\varphi\in L\log L(Z,\eta)$ we obtain
	\begin{multline}\label{eq:max-ineq}
	\Bigl\|\sup_{n\ge n_0} Q^{2n-a'}\varphi\Bigr\|_{L^1}\le
	C\Bigl\|V'\Bigl(\sup_{n\ge n_0}\sum_{j=-b}^b (Q^*)^n Q^{n+j}\varphi\Bigr)\Bigr\|_{L^1}+\bigl\|\sup_{n\ge n_0} A_n'\varphi\bigr\|_{L^1}\\
	{}\le (2b+1)A_{\log}C\|\varphi\|_{L\log L}+\sum_{n\ge n_0}\|A_n'\varphi\|_{L^1}\le B_{\log}\|\varphi\|_{L\log L}.
	\end{multline}
	Decomposing a function $\varphi$ into its positive and negative parts we obtain~\eqref{eq:max-ineq} for all real-valued $\varphi\in L\log L(Z,\eta)$
	with a larger $B_{\log}$. The same estimates hold for the $L^p$-norm with $p>1$.
	
	Now consider a real-valued function $\varphi\in L^2(Z,\eta)$ with  zero average. Applying \eqref{eq:max-ineq} to $(Q^{2k}\varphi)$ we have
	\begin{equation*}
	\Bigl\|\sup_{m\ge n_0+k} Q^{2m-a'}\varphi\Bigr\|_{L^2}=\Bigl\|\sup_{n\ge n_0} Q^{2n+2k-a'}\varphi\Bigr\|_{L^2}\le B_2\|Q^{2k}\varphi\|_{L^2}.
	\end{equation*}
	Since the right-hand side tends to zero by Proposition~\ref{prop:L12-conv}, the sequence  $Q^{2m-a'}\varphi$ tends to zero almost everywhere and in $L^2$ as $m\to\infty$.
	
	We now extend pointwise convergence to all $\varphi\in L\log L$.
	Namely, for a real-valued function $\varphi\in L\log L(Z,\eta)$ with zero average, consider $\varphi'\in L^2(Z,\eta)$ with zero average such that
	$\|\varphi-\varphi'\|_{L\log L}\le\eps/B_{\log}$. Then almost surely we have
	\begin{equation*}
	\limsup_{n\to\infty} |Q^{2n-a'}\varphi(z)|\le
	\limsup_{n\to\infty} |Q^{2n-a'}\varphi'(z)|+
	\limsup_{n\to\infty} |Q^{2n-a'}(\varphi-\varphi')(z)|.
	\end{equation*}
	By convergence for functions in $L^2$, the first term in the right-hand side equals zero, while, by the maximal inequality,  the second satisfies 
	$$\|\limsup_{n\to\infty} |Q^{2n-a'}(\varphi-\varphi')(z)|\|_{L^1}\le B_{\log}\|\varphi-\varphi'\|_{L\log L}\le \eps.
	$$ Therefore we have $\limsup |Q^{2n-a'}\varphi(z)|\le\delta$ outside a set of measure less than $\eps/\delta$  for any $\delta>0$. Taking 
	$\eps=1/l^2 $ and then $\delta=1/l $ with $l \to \infty$
	we obtain that this upper limit equals zero almost everywhere. The convergence in $L^1$ follows from the same decomposition:
	\begin{equation*}
	\|Q^{2n-a'}\varphi(z)\|_{L^1}\le \|Q^{2n-a'}\varphi'\|_{L^1}+\|Q^{2n-a'}(\varphi-\varphi')(z)\|_{L^1},
	\end{equation*}
	where the first term tends to zero even with the $L^2$-norm instead of $L^1$, and the second term is less than $\|\varphi-\varphi'\|_{L^1}\le \eps/B_{\log}$.
	
	Finally, combining the  convergence $Q^{2m-a'}\varphi\to 0$ already obtained with the same convergence for $Q\varphi$ in place of $\varphi$, we conclude that $Q^n\varphi\to 0$ almost everywhere as claimed.
\end{proof}


\begin{thebibliography}{99}

\bibitem{ArnKr}V.\ I.\ Arnold, A.\ L.\  Krylov.
Equidistribution of points on a sphere and ergodic properties of solutions of ordinary differential equations in a complex domain.
\textit{Dokl.\ Akad.\ Nauk SSSR} \textbf{148} (1963), 9--12.

\bibitem{Bea} A.~Beardon.
An introduction to hyperbolic geometry.
In \textit{Ergodic Theory and Symbolic Dynamics in Hyperbolic Spaces}, T. Bedford, M. Keane and C. Series, eds., Oxford Univ. Press, 1991.

\bibitem{BirSer} J.~Birman and C.~Series.
Dehn's algorithm revisited, with application to simple curves on surfaces.
In \textit{Combinatorial Group Theory and Topology}, S. Gersten and J. Stallings, eds., Ann. of Math. Studies III, Princeton U.P., 1987, 451--478.

\bibitem{Bow} L.~Bowen.
Invariant measures on the space of horofunctions of a word hyperbolic group,
\textit{Ergodic Th. Dynam. Syst.}, \textbf{30}:1 (2010), 97--129.

\bibitem{BowBufRom}L. Bowen, A. Bufetov, O. Romaskevich.
Mean convergence of Markovian spherical averages for measure-preserving actions of the free group.
\textit{Geom. Dedicata}, \textbf{181}:1 (2016), 293-306.

\bibitem{BowNe-EM} L.~Bowen and A.~Nevo.
Geometric covering arguments and ergodic theorems for free groups.
\textit{L'Enseignement Math\'ematique}, \textbf{59}, Issue 1/2 (2013), 133--164.

\bibitem{BowNe-JAM} L.~Bowen and A.~Nevo.
Amenable equivalence relations and the construction of ergodic averages for group actions.
\textit{Journal d'Analyse Math\'ematique}, \textbf{126}:1 (2015), 359--388.

\bibitem{BowNe-AnENS} L.~Bowen and A.~Nevo.
Von Neumann and Birkhoff ergodic theorems for negatively curved groups.
\textit{Annales scientifiques de l'\'Ecole normale sup\'erieure}, \textbf{48}:5 (2015), 1113--1147.

\bibitem{BowNe-GGD} L.~Bowen and A.~Nevo.
A horospherical ratio ergodic theorem for actions of free groups.
\textit{Groups Geom. Dyn}. \textbf{8}:2 (2014), 331--353.

\bibitem{BowSer} R. Bowen and C. Series.
Markov maps associated with Fuchsian groups.
\textit{IHES Publications}, \textbf{50}, (1979), 153--170.

\bibitem{Buf-RMS}A.~I.~Bufetov.
Ergodic theorems for actions of several mappings.
\textit{Russian Math.\ Surveys}, \textbf{54}:4 (1999), 835--836.

\bibitem{Buf-FAA}A. I.~Bufetov.
Operator ergodic theorems for actions of free semigroups and groups.
\textit{Funct.\ Anal.\ Appl}, \textbf{34} (2000), 239--251.

\bibitem{Buf-AMSTr}
A. I.~Bufetov. Markov averaging and ergodic theorems for several operators.
In \textit{Topology, Ergodic Theory, and Algebraic Geometry}, \textit{AMS Transl}. \textbf{202} (2001), 39--50.

\bibitem{Buf-Annals}A.~I.\ Bufetov.
Convergence of spherical averages for actions of free groups.
\textit{Ann. of Math. (2)}, \textbf{155}:3 (2002), 929--944.

\bibitem{BufKhrKlim}A.~I.\ Bufetov, M. Khristoforov, A. Klimenko.
Ces\`aro convergence of spherical averages for measure-preserving actions of Markov semigroups and groups.
\textit{Int.\ Math.\ Res.\ Not.}, \textbf{2012}:21 (2012), 4797--4829.


\bibitem{BufKlim-EJC}A.~I.\ Bufetov, A.~Klimenko.
On Markov operators and ergodic theorems for group actions.
\textit{European J. Combinatorics}, \textbf{33}:7 (2012), 1427--1443.

\bibitem{BufKlim-TrMIAN}A.~I.\ Bufetov, A.~V.\ Klimenko.
Maximal inequality and ergodic theorems for Markov groups.
\textit{Proc.\ Steklov Inst.\ Math.}, \textbf{277} (2012), 27--42.

\bibitem{BKS-arXv1}A.I.~Bufetov, A. Klimenko, and C.~Series. Convergence of spherical averages  for actions of Fuchsian Groups. arXiv:1805.11743v1 [math.DS], 2018.

\bibitem{BKS-MRR}A.I.~Bufetov, A. Klimenko, and C.~Series.
A symmetric Markov coding and the
ergodic theorem for actions of Fuchsian groups.
\textit{Mathematics Research Reports}, \textbf{1}, (2020), 5--14.

\bibitem{BufSer}
A.I.~Bufetov and C.~Series.
A pointwise ergodic theorem for Fuchsian Groups.
\textit{Math.\ Proc.\ Cam.\ Phil.\ Soc.}, \textbf{151}:1 (2011), 145--159.

\bibitem{CalFuji}D. Calegari,  K. Fujiwara.
Combable functions, quasimorphisms, and the central limit theorem.
\textit{Ergodic Theory Dynam. Systems}, \textbf{30}:5 (2010), 1343--1369.

\bibitem{Cannon}J. Cannon.
The combinatorial structure of cocompact discrete hyperbolic groups.
\textit{Geom. Dedicata}, \textbf{16}:2 (1984), 123--148.

\bibitem {FlPlot}W. Floyd, S. Plotnick.
Growth functions on Fuchsian groups and the Euler characteristic,
\textit{Invent.\ Math.}, \textbf{88} (1987), 1--29.

\bibitem{FuNe}K.\ Fujiwara and A.\ Nevo.
Maximal and pointwise ergodic theorems for word-hyperbolic groups.
\textit{Ergodic Theory  Dynam.\ Systems\/}, \textbf{18} (1998), 843--858.

\bibitem{GhysDelaharpe}
Sur les groupes hyperboliques d'apr\`es Mikhael Gromov.
Papers from the Swiss Seminar on Hyperbolic Groups held in Bern, 1988.
Edited by \'E.~Ghys and P.~de la Harpe. \textit{Progress in Mathematics}, \textbf{83}.
Birkh\"auser Boston, Inc., Boston, MA, 1990.

\bibitem{Glasner}E. Glasner.
Ergodic theory via joinings (\textit{Mathematical Surveys and Monographs}, \textbf{101}). American Mathematical Society, 2003.

\bibitem {GorNev} A.~Gorodnik and A.~Nevo.
The ergodic theory of lattice subgroups.
Annals of Mathematics Studies, \textbf{172}. Princeton University Press, Princeton, NJ, 2010.

\bibitem{Grig86}R.\ I.\ Grigorchuk.
Pointwise ergodic theorems for actions of free groups.
\textit{Proc.\ Tambov Workshop in the Theory of Functions\/}, 1986.

\bibitem{Grig99}R.\ I.\ Grigorchuk.
Ergodic theorems for actions of free semigroups and groups.
\textit{Math.\ Notes\/}, \textbf{65} (1999),  654--657.

\bibitem{Grig00}R.\ I.\ Grigorchuk.
An ergodic theorem for actions of a free semigroup.
\textit{Proc.\ Steklov Inst.\ Math.}, 2000, no. 4 (231), 113--127.

\bibitem{Gromov}M. Gromov.
Hyperbolic groups.
In \textit{Essays in Group Theory\/}, \textit{MSRI Publ\/}.\ \textbf{8} (1987), 75--263, Springer-Verlag, New York.

\bibitem{Guivarch}
{Y.\ Guivarc'h}, G\'en\'eralisation d'un th\'eor\`eme de von Neumann,
\textit{C.\ R.\ Acad.\ Sci.\ Paris S\'er. A--B\/} \textbf{268} (1969), 1020--1023.

\bibitem{Kaiman}V.\ A.\ Kaimanovich.
Measure-theoretic boundaries of Markov chains, 0--2 laws and entropy.
In \textit{Harmonic Analysis and Discrete Potential Theory}, M.A. Picardello, ed., Plenum, New York (1992), 145--181.

\bibitem{KemSn} J. Kemeny, J. Snell.
Finite Markov Chains,
\textit{Springer Undergraduate Texts in Mathematics},  1976.

\bibitem{Klenke}A. Klenke. Probability Theory --- A Comprehensive Course, Springer, London, 2008.

\bibitem{MarNeSt}G. A.~Margulis, A.~Nevo and E.M.~Stein.
Analogs of Wiener's ergodic theorems for semisimple Lie groups. II.
\textit{Duke Math. J.}, \textbf{103}:2 (2000), 233--259.

\bibitem{Nevo94} A.~Nevo.
Harmonic analysis and pointwise ergodic theorems for noncommuting transformations.
\textit{J. Amer.\ Math.\ Soc.} \textbf{7}:4 (1994), 875--902.

\bibitem{Nevo06}A.~Nevo.
Pointwise ergodic theorems for actions of groups.
In \textit{Handbook of dynamical systems, Vol. 1B}, 871--982, Elsevier B. V., Amsterdam, 2006.

\bibitem{NeSt94}A.\ Nevo, E.\ M.\ Stein.
A generalization of Birkhoff's pointwise ergodic theorem.
\textit{Acta Math.},\ \textbf{173} (1994), 135--154.

\bibitem{NeSt97}A.\ Nevo, E.\ M.\ Stein.
Analogs of Wiener's ergodic theorems for semisimple groups. I.
\textit{Ann. of Math. (2)}, \textbf{145}:3 (1997), 565--595.

\bibitem{Ornst}D.\ Ornstein.
On the pointwise behavior of iterates of a self-adjoint operator.
\textit{J.\ Math.\ Mech.}, \textbf{18} (1968/1969) 473--477.

\bibitem{Oseled}V.\ I.\ Oseledets.
Markov chains, skew-products, and ergodic theorems for general dynamical systems.
\textit{Th.\ Prob.\ App.}, \textbf{10} (1965), 551--557.

\bibitem{PolSh}M. Pollicott, R. Sharp.
Ergodic theorems for actions of hyperbolic groups,
\textit{Proc.\ Amer.\ Math.\ Soc.}, {\bf 141} (2013), 1749--1757.

\bibitem{RokhSin}V.A. Rokhlin, Ya.G. Sinai.
Construction and properties of invariant measurable partitions.
\textit{Soviet Math. Dokl.}, \textbf{2} (1961), 1611--1614.

\bibitem{Rota}G.-C.\ Rota.
An ``Alternierende Verfahren'' for general positive operators.
\textit{Bull.\ A.\ M.\ S.} \textbf{68} (1962), 95--102.

\bibitem{Ser-Inf} C.~Series.
The infinite word problem and limit sets in Fuchsian groups.
\textit{Ergodic Theory and Dynamical Systems}, \textbf{1} (1981), 337--360.

\bibitem{Ser-Trieste}C.~Series.
Geometrical methods of symbolic coding.
In \textit{Ergodic Theory and Symbolic Dynamics in Hyperbolic Spaces}, T. Bedford, M. Keane and C. Series, eds., Oxford Univ. Press, 1991.

\bibitem{Ser-Zeeman} C.~Series.
The growth function of a Fuchsian group.
In \textit{Papers presented to E.C. Zeeman}, Warwick, 1988, 281--291.

\bibitem{Tao}T.~Tao.
Failure of the $L^1$ pointwise and maximal ergodic theorems for the free group.
\textit{Forum of Mathematics, Sigma}, \textbf{3}, E27. \texttt{doi:10.1017/fms.2015.28}

\bibitem{Wroten} M.~Wroten.
The eventual Gaussian distribution of self-intersection numbers on closed surfaces. arXiv: 1405.7951 [math.DS], 2014.


\bibitem{Zygmund} A.~Zygmund.
Trigonometric series. Cambridge Univ. Press, Cambridge, 1968.  

\end{thebibliography}
\end{document}